%% file: main.tex
\documentclass[a4paper,10pt]{article}
\usepackage[british]{babel}
\usepackage{microtype}
\usepackage{amsmath,amssymb,amsfonts,amsthm}
\usepackage{mathtools}
\usepackage{algorithm}
\usepackage{algpseudocode}
\usepackage{bbm}
\usepackage{mathrsfs}
\usepackage[multiple,stable]{footmisc}
\usepackage{enumitem}
\usepackage{csquotes}
\usepackage{multirow}
\usepackage{booktabs}
\usepackage{float}
\usepackage{cancel}

\allowdisplaybreaks[4]
\usepackage[dvipsnames]{xcolor}
\usepackage[colorlinks=true, urlcolor=Thistle,linkcolor=BlueViolet,citecolor=Thistle]{hyperref}
\usepackage{geometry}
\usepackage{listings}
\usepackage[bf]{caption}
\usepackage{graphicx, subfig}
\usepackage{graphics}
\usepackage{caption}
\usepackage{tikz}
\usepackage{pgfplots}
\usepgfplotslibrary{patchplots}
\usetikzlibrary{patterns, positioning, arrows}
\pgfplotsset{compat=1.15}
\usepackage{tikz-3dplot}
\usepackage{tensor}
\usepackage{mdframed}
\usepackage[citestyle=authoryear,bibstyle=authortitle]{biblatex}
\newcommand*{\Var}{\operatorname{Var}}

\newcommand{\beq}{\begin{equation}}
\newcommand{\eeq}{\end{equation}}
\newcommand{\bes}{\begin{equation*}}
\newcommand{\ees}{\end{equation*}}
\newcommand{\bea}{\begin{equation}\begin{aligned}}	
\newcommand{\eea}{\end{aligned}\end{equation}}	
\newcommand{\beas}{\begin{equation*}\begin{aligned}}  
\newcommand{\eeas}{\end{aligned}\end{equation*}}
\newcommand{\indep}{\mathrel{\perp\!\!\!\perp}}
\newmdtheoremenv{theo}{Theorem}

\definecolor{identifiercolor}{rgb}{.4,.6,.56}
\definecolor{stringcolor}{gray}{0.5}
\definecolor{thistle}{rgb}{0.85, 0.75, 0.85}
\definecolor{inactivecolor}{rgb}{0.15,0.15,0.5}
\colorlet{Violet}{black}

\providecommand{\id}{\leavevmode\hbox{\small$\mathrm{1}$\kern-3.8pt\normalsize$\mathrm{1}$}}
\renewcommand{\thefootnote}{\fnsymbol{footnote}}
\newtheorem{Definition}{Definition}

\newtheorem{Theorem}{Theorem}
\newtheorem{Lemma}{Lemma}
\newtheorem{Corollary}{Corollary}
\newtheorem{Proposition}{Proposition}

\newtheorem{Example}{Example}
\newtheorem{Assumption}{Assumption}
\numberwithin{Theorem}{section}
\numberwithin{Definition}{section}
\numberwithin{Lemma}{section}
\numberwithin{Algorithm}{section}
\numberwithin{Proposition}{section}
\numberwithin{Corollary}{section}
\numberwithin{equation}{section}
\numberwithin{Assumption}{section}
\numberwithin{Example}{section}

\let\oldproof\proof
\renewcommand{\proof}{\oldproof}

\algnewcommand\Input{\item[\textbf{Input:}]}
\algnewcommand\Output{\item[\textbf{Output:}]}

\begin{document}

\vspace{1cm}

\title{ {\Large \bf Learning sufficient low-dimensional structures through conditional optimal transport} }

\vspace{2cm}

\author{Kaiqiang Alan Zeng$^{1,\hyperlink{corresponding-email}{\textcolor{blue}{*}}}$ and Efstathia Bura$^{1,\hyperlink{corresponding-email}{\textcolor{blue}{\dagger}}}$}
\date{}
\maketitle

\begin{center}
    {\small $^{1}$\textit{Institute of Statistics and Mathematical Methods in Economics,\\ Faculty of Mathematics and Geoinformation,\\ Technische Universität Wien, Vienna, Austria}}
\end{center}
\footnotetext[1]{\hypertarget{corresponding-email}{\url{kaiqiang.zeng@tuwien.ac.at}}}
\footnotetext[2]{\hypertarget{corresponding-email}{Corresponding author: \url{efstathia.bura@tuwien.ac.at}}}

\rule[-8pt]{420pt}{0.05em}

\begin{abstract}
\noindent

Sufficient dimension reduction seeks a low-dimensional covariate representation that preserves the conditional law of a response. We introduce SDR-COT, which represents that law by conditional optimal transport from an independent reference response. On separable Hilbert spaces, sufficiency forces the response component of the optimal triangular map to factor through the reduction. For quadratic cost, the induced interpolation has a Borel current-state velocity on every truncated time interval, without global injectivity of the terminal map, and this velocity has the same factorisation. These results motivate a conditional-flow-matching criterion. For linear reductions, we prove consistency using a suitably tuned relaxed empirical coupling. Euclidean responses are treated through slicewise Caffarelli bounds; Hilbert-valued responses are treated through Gaussian Sobolev regularity, interpolation compression and uniqueness of a Gaussian continuity equation. Numerical studies with Euclidean and functional data show competitive performance, especially  when sufficient information is not solely contained in the conditional mean.

\end{abstract}

\rule[-8pt]{420pt}{0.05em}

%\tableofcontents

%\thispagestyle{empty}
%\setcounter{page}{0}
%\newpage

\renewcommand{\thefootnote}{\arabic{footnote}}
%--------------------------------------------------------------------------------
%----------------------------Single Agent---------------------------------------
%--------------------------------------------------------------------------------
\section{Introduction}
\label{sec:introduction}

Sufficient dimension reduction (SDR) replaces a covariate $X$ by a lower-dimensional representation without losing information about a response $Y$. In its classical linear form, one seeks a matrix $B$ with few columns such that
\[
    Y\indep X\mid B^{T}X,
\]
where $\indep$ denotes stochastic independence. The column space of a minimal such matrix is the central subspace, and its dimension is the structural dimension. Beginning with sliced inverse regression \parencite{Li1991}, a large literature has developed methods for estimating this subspace; see, for example, \textcite{Cook2007,Li2018}. The defining conditional independence is stronger than a statement about the conditional mean: a sufficient reduction must preserve the entire conditional law of $Y$ given $X$.

Classical SDR methods approach this distributional target indirectly. Inverse-regression procedures, including sliced inverse regression, sliced average variance estimation, directional regression and contour regression, are computationally attractive but typically rely on linearity, coverage or related conditions on the covariate distribution \parencite{Li1991,CookWeisberg1991,Lietal2005,LiWang2007}. Forward-regression methods such as principal Hessian directions, minimum average variance estimation and conditional variance estimation relax some of these requirements, but use selected regression moments or nonparametric smoothing \parencite{Li1992,Xiaetal2002,FertlBura2022}. Such objectives can be highly effective, yet they may overlook sufficient information expressed through heteroscedasticity, tail behaviour, mixtures or multimodality rather than through the particular moments being modelled.

Nonlinear and non-Euclidean SDR methods broaden this picture. Kernel methods extend inverse-regression and support-vector ideas beyond linear projections \parencite{Fukumizuetal2004,li2011principal,lee2013general}, while neural-network methods increase the flexibility of the representation class \parencite{Huang2020,chen2024deep}. Functional SDR treats covariates as Hilbert-space-valued random elements \parencite{ferre2003functional,hsing2009rkhs,lee2022functional}, and nonlinear functional SDR combines basis truncation, kernels, or other finite-dimensional approximations \parencite{li2017nonlinear}. These extensions replace the central subspace by a central class or central $\sigma$-algebra, as formalised by \textcite{lee2013general}. They also make the statistical problem harder: the representation may be nonlinear, the covariate space may be infinite-dimensional, and kernel-based estimators may require spectral decompositions of large Gram matrices.

A recent line of work targets the conditional distribution more directly using generative modelling. \textcite{xu2025conditional} propose GenSDR, which learns nonlinear sufficient representations through conditional stochastic interpolation and proves exhaustiveness guarantees for the central $\sigma$-algebra in a Euclidean framework. \textcite{dong2026flowsdr} propose FlowSDR, a likelihood-based method that jointly learns a linear projection and a conditional density parameterised by normalising flows. These methods are close in spirit to the present paper because they learn reductions by modelling the conditional law itself, rather than only inverse moments or conditional means. The present work takes a different route: it uses conditional optimal transport (COT) as a geometric representation of how the regular conditional response law varies with the covariate.

The connection is already visible in scalar quantile regression. If $U\sim \mathcal{U}([0,1])$, the conditional quantile function $u\mapsto Q_{Y\mid X}(u\mid x)$ transports the reference law of $U$ to the conditional law $\mathbb{P}_{Y\mid X}(\cdot\mid x)$ and is the monotone optimal transport map on the line. For multivariate responses, \textcite{Carlieretal2016} formulate conditional vector quantiles through optimal transport maps; in the scalar case their construction reduces to classical conditional quantiles. This viewpoint connects naturally with quantile-based SDR\@. For example, \textcite{KongXia2014} show that, when $Y\indep X\mid B^{T}X$, gradients of conditional quantile functions lie in the column space of $B$, and they recover the central subspace by aggregating conditional-quantile information across quantile levels. The present paper extends this idea beyond scalar quantiles: conditional transport provides a global description of distributional change. In consequence, an SDR structure should appear as low-dimensional dependence in the transport construction rather than only through local quantile-gradient information. This perspective is suitable for multivariate and Hilbert-valued responses.

We work in a general setting in which both the covariate and the response may take values in separable Hilbert spaces. When distinguishing source and target responses, we write the observed response as $Y_1$. Let $\eta$ denote the law of $X$, let $\rho$ be a reference measure on the response space, and let $Y_0\sim\rho$ be an artificial source response independent of $(X,Y_1)$. The source and target joint laws are then
\[
    \mu_0=\eta\otimes\rho,\qquad \mu_1=\mathbb{P}_{(X,Y_1)},
\]
respectively. Conditional optimal transport seeks a triangular map $T^\star(x,y)=(x,T_{\mathcal{Y}}^\star(x,y))$ that transports $\mu_0$ to $\mu_1$ while preserving the covariate coordinate \parencite{baptista2020conditional,Hosseinietal2025}. If a Borel representation $R$ is sufficient, so that $Y_1\indep X\mid R(X)$, then the conditional law of $Y_1$ given $X$ is, up to null sets, determined by $R(X)$. The corresponding COT problem can therefore be compared with a reduced transport problem indexed by the sufficient representation. The main theoretical question is whether this reduced dependence is inherited by the optimal triangular map and by the Monge-induced velocity used in the dynamic formulation.

Our contribution is fourfold. The first contribution is a Benamou--Brenier type optimality statement for dynamic COT. Existing dynamic COT theory provides conditional Wasserstein geometry and Benamou--Brenier formulations \parencite{Kerriganetal2024,chemseddine2025conditional}; however, \textcite{chemseddine2025conditional} work in a Euclidean setting, whereas \textcite{Kerriganetal2024} treat Hilbert spaces but assume injectivity of the triangular Monge map when constructing a Monge-induced velocity. In the quadratic product-source setting, we prove that the Monge interpolation admits a Borel current-state velocity representative on every truncated interval $[0,1-\tau]$ without assuming global injectivity of the terminal triangular map.

The second contribution is the SDR-COT factorisation theorem. Under a mild regularity condition on the reference response law and the conditional independence condition $Y_1\indep X\mid R(X)$, we prove that the response component of the optimal triangular COT map satisfies
\[
    T_{\mathcal{Y}}^\star(x,y)=G_{\mathcal{Y}}^\star(R(x),y)
\]
up to null sets. That is, the response part of the optimal triangular COT map factors through the reduction and depends on the covariate only through its reduced form. We further show that the same dependence carries over from the static to the dynamic setting: the optimal triangular velocity field generating the Monge interpolation also depends on the covariate only through the sufficient reduction. Thus, the conditional independence assumption underlying SDR becomes a structural property shared by both the static transport map and the time-varying velocity field.

The third contribution is methodological. The factorisation result motivates SDR-COT, a conditional flow-matching training objective in which the response velocity is parameterised by time, a learned reduction of the covariate, and the current response value, rather than by the raw covariate. We develop the population objective, its exhaustiveness property, and both Euclidean and functional algorithms within a unified estimation framework. Because conditional flow matching reduces to a regression problem once endpoint pairs are supplied, the resulting procedure scales efficiently to large samples.

The fourth contribution is statistical, establishing the consistency of linear SDR in both Euclidean and separable Hilbert settings. Because our Euclidean proof relies fundamentally on Caffarelli's contraction theorem \parencite{kolesnikov2011mass}, a result tied strictly to finite dimensions, handling functional responses necessitates a distinct technical toolkit. For Euclidean responses, Caffarelli's theorem provides crucial Lipschitz bounds on the current-state velocity, allowing us to identify the central subspace via a zero-loss condition. Conversely, for Hilbert-valued responses, we bypass the absence of this result by leveraging Cameron–Martin-valued Sobolev velocities and the uniqueness of the Gaussian continuity equation. Under standard assumptions, this alternative route guarantees zero-loss identification and consistency for the function-on-function case, notably without requiring continuity assumptions on the ambient response space.

For general nonlinear representations, our population guarantee remains an exhaustiveness result: every representation whose generated $\sigma$-algebra contains the central $\sigma$-algebra attains the full-information value. As with other exhaustiveness results \parencite{xu2025conditional}, this property alone does not select a minimal representation. In contrast, the linear theory achieves identification by fixing the output dimension at the structural dimension and imposing explicit regularity and correct-specification conditions. %\textcolor{blue}{Finally, numerical experiments provide a proof of concept across Euclidean, functional-covariate, and function-on-function settings. An analysis of the Capital Bikeshare dataset further compares SDR-COT with functional PCA (FPCA) and weak conditional-moment methods, utilising functional temperature as the covariate and functional demand as the response.}

We evaluate SDR-COT in synthetic nonlinear Euclidean, linear Euclidean, functional-covariate, and function-on-function settings, and in a real-data analysis of Capital Bikeshare demand. Across these studies, SDR-COT is competitive with established SDR benchmarks and is especially advantageous when the information distinguishing covariate values is carried by distributional features beyond the conditional mean, the typical target of classical moment-based SDR theory. 
In the functional Bikeshare analysis, SDR-COT's test error is smaller than both functional PCA (FPCA) and weak-conditional-moment-based competitors' \parencite{li2022weakfunctional} across most  reduction dimensions.

The rest of the paper is organised as follows. Section~\ref{sec:preliminaries} introduces the probability and optimal-transport background used throughout. Section~\ref{sec:restricted_dynamic_cot} establishes the dynamic COT construction. Section~\ref{sec:sdr_cot} reviews the measure-theoretic SDR formulation and proves the COT map and velocity factorisation results. Section~\ref{sec:sdr_cot_estimation} develops the population flow-matching criterion, the relaxed empirical coupling, and the Euclidean and functional algorithms. Section~\ref{sec:linear_sdr_cot_statistics} develops the statistical theory for linear SDR-COT in Euclidean and functional settings. Section~\ref{sec:numerics} presents the simulation studies and Bikeshare data analysis; these experiments are intended as a proof of concept rather than a fully optimised benchmarking study. Section~\ref{sec:conclusion} summarises the findings, relates the framework to optimal-transport barycentre problems, and outlines directions for future work. Appendix~\ref{app:proof_sec_pre} contains deferred proofs. Appendix~\ref{app:gaussian_analysis_background} provides a self-contained introduction to Gaussian analysis on Hilbert spaces, which is utilised in the consistency proof for the functional case.

\section{Background and setup}
\label{sec:preliminaries}

This section introduces the probability and optimal-transport notation, definitions, and previously established results used throughout the article. The restricted dynamic result needed for SDR-COT is stated separately in Section~\ref{sec:restricted_dynamic_cot}.

\subsection{Notation}
\label{sec:notation}
Throughout the paper, we primarily work with separable Hilbert spaces and occasionally with separable Banach spaces. Polish spaces appear when an argument needs only the standard Borel machinery of probability theory, such as regular conditional measures, disintegration, kernels, and measurable selection. Every separable Hilbert space is Polish, so these statements apply to the Hilbert-valued setting used in the main results. Some static optimal-transport definitions are meaningful on general Polish spaces, but the results used below are stated in the Hilbert setting. For a Polish space $\mathcal{X}$, we write $\mathcal{B}(\mathcal{X})$ for its Borel $\sigma$-algebra and $\mathcal{P}(\mathcal{X})$ for the set of Borel probability measures on $\mathcal{X}$, equipped with the topology of weak convergence unless stated otherwise. If $\mathcal{X}$ is a normed space and $p\in[1,\infty)$, then $\mathcal{P}^{p}(\mathcal{X})$ denotes the subset of measures with finite $p$-th moment. Given product spaces, $\pi^i$ denotes the $i$th coordinate projection, and $\pi^{i,j}$ denotes the projection onto the $i$th and $j$th coordinates. For a measurable map $T:\mathcal{X}\to\mathcal{Y}$ and a measure $\mu$ on $\mathcal{X}$, $T_{\#}\mu$ denotes the pushforward measure:
\[
    (T_{\#}\mu)(A)\coloneqq\mu(T^{-1}(A)),\qquad A\in\mathcal{B}(\mathcal{Y}).
\]
For a measure $\mu$ and a Banach space $\mathcal{E}$, we write $L^{p}(\mu;\mathcal{E})$ for the corresponding Bochner $L^{p}$ space of $\mathcal{E}$-valued, $p$-integrable functions. When $\mathcal{E}$ is Euclidean, this notation denotes the usual vector-valued $L^p$ space.

Throughout, a superscript $\star$ denotes an optimal object, whereas $A^*$ denotes the adjoint of a linear operator $A$; notation such as $\mathcal{X}^*$ retains its standard meaning as a continuous dual space. This distinction is used consistently for maps, couplings, curves, velocities, and objective values.

Expectations and conditional expectations of Hilbert-valued random elements are understood throughout in the Bochner sense. In particular, if $\mathcal{E}$ is a separable Hilbert space and $Z\in L^1(\mathbb{P};\mathcal{E})$, then
\[
    \mathbb{E}[Z]\coloneqq\int_{\Omega}Z(\omega)\mathbb{P}(d\omega)\in\mathcal{E}.
\]
Equivalently, $\mathbb{E}[Z]$ is the Riesz representative characterised by $\langle\mathbb{E}[Z],h\rangle_{\mathcal{E}}=\mathbb{E}[\langle Z,h\rangle_{\mathcal{E}}]$ for every $h\in\mathcal{E}$. If $Z\in L^2(\mathbb{P};\mathcal{E})$ and $(u\otimes v)h\coloneqq\langle v,h\rangle_{\mathcal{E}}u$, its covariance operator is the Bochner integral
\[
    \Gamma_{Z}\coloneqq\mathbb{E}\bigl[(Z-\mathbb{E}[Z])\otimes(Z-\mathbb{E}[Z])\bigr]\in\mathcal{S}_{1}(\mathcal{E}),
\]
where $\mathcal{S}_{1}(\mathcal{E})$ is the space of trace-class operators on $\mathcal{E}$. Equivalently, $\Gamma_Z$ is the unique positive self-adjoint operator satisfying
\[
    \langle\Gamma_{Z}h,k\rangle_{\mathcal{E}}=\mathbb{E}\bigl[\langle Z-\mathbb{E}[Z],h\rangle_{\mathcal{E}}\langle Z-\mathbb{E}[Z],k\rangle_{\mathcal{E}}\bigr],\qquad h,k\in\mathcal{E},
\]
which is its Riesz-representation characterisation. Thus the Bochner and Riesz constructions agree under the stated moment assumptions; see \parencite[Section 7.2]{hsing2015theoretical}.

If $\mathcal{H}\subseteq\mathcal{F}$ is a sub-$\sigma$-algebra of a probability space $(\Omega,\mathcal{F},\mathbb{P})$, we write
\[
    \overline{\mathcal{H}}^{\mathbb{P}}\coloneqq\left\{ A\in\mathcal{F}:A\mathbin{\triangle}B\subseteq N\text{ for some }B\in\mathcal{H},\ N\in\mathcal{F}\text{ with }\mathbb{P}(N)=0\right\} 
\]
for its $\mathbb{P}$-completion within the ambient $\sigma$-algebra $\mathcal{F}$. We use the analogous notation for completions on a state space, such as $\overline{\sigma(S)}^{\,\eta}$.

\subsection{Regular conditional measures}\label{sec:regularcondmeas}
Throughout the paper, conditional distributions are understood as regular conditional measures. We only use the Polish-space case, where regular conditional measures exist and are unique up to marginal null sets; equivalently, one may view them as disintegrations of probability measures \parencite[Theorem 2.4]{Ambrosioetal2021}. For a comprehensive discussion, see \parencite[Section 10.4]{bogachev2007measure1}.

\begin{Definition}[Regular conditional measures]
\label{def:regular_cond}
Let $\mathcal{X}$ and $\mathcal{Y}$ be Polish spaces. For a set $C\in\mathcal{B}(\mathcal{X}\times \mathcal{Y})$, define the slices $C_{x}\coloneqq\{y\in\mathcal{Y}:(x,y)\in C\}$. For a probability measure $\mu\in\mathcal{P}(\mathcal{X}\times \mathcal{Y})$ with first marginal $\pi^{1}_{\#}\mu=\eta$, there exists a system of regular conditional measures $\{\mu(\cdot\mid x)\}_{x\in\mathcal{X}}\subset\mathcal{P}(\mathcal{Y})$, uniquely determined $\eta$-a.e., such that:
\begin{enumerate}[label=\roman*)]
    \item for every $x\in\mathcal{X}$, the map $A\mapsto\mu(A\mid x)$ is a probability measure on $(\mathcal{Y},\mathcal{B}(\mathcal{Y}))$;
    \item for every $A\in\mathcal{B}(\mathcal{Y})$, the map $x\mapsto \mu(A\mid x)$ is $\mathcal{B}(\mathcal{X})$-measurable;
    \item one has
    \[
       \mu(C)=\int_{\mathcal{X}}\mu(C_{x}\mid x)\eta(dx),\qquad\forall\;C\in\mathcal{B}(\mathcal{X}\times\mathcal{Y}).
    \]
\end{enumerate}
\end{Definition}
We call the kernel
\[
   \mu(\cdot\mid\cdot):\mathcal{B}(\mathcal{Y})\times\mathcal{X}\to[0,1]
\]
a regular conditional measure; it is equivalently a Markov kernel from $\mathcal{X}$ to $\mathcal{Y}$ \parencite[Section 2.3]{wald2025flow}. We use this terminology to distinguish kernels such as $\mu(\cdot\mid x)$ from conditional expectations, whose versions are defined only up to almost-sure equality.

We now specialise this notation to probability measures induced by random elements. Let $(\Omega,\mathcal{F},\mathbb{P})$ be a probability space, let $X:\Omega\to\mathcal{X}$ and $Y:\Omega\to\mathcal{Y}$ be random elements taking values in Polish spaces $\mathcal{X}$ and $\mathcal{Y}$. We write
\[
   \mathbb{P}_{X}:=X_{\#}\mathbb{P},\qquad\mathbb{P}_{Y}:=Y_{\#}\mathbb{P},\qquad\mathbb{P}_{(X,Y)}:=(X,Y)_{\#}\mathbb{P}
\]
for the laws of $X$, $Y$, and $(X,Y)$, respectively. The notation $\mathbb{P}_{Y\mid X}$ denotes the regular conditional measure of $Y$ given $X$, defined by disintegrating the joint law $\mathbb{P}_{(X,Y)}$ with respect to its first marginal $\mathbb{P}_X$. Thus
\[
    \mathbb{P}(X\in B,Y\in A)=\int_{B}\mathbb{P}_{Y\mid X}(A\mid x)\mathbb{P}_{X}(dx),\qquad\forall\;A\in\mathcal{B}(\mathcal{Y}),B\in\mathcal{B}(\mathcal{X}).
\]
It is important not to interpret $\mathbb{P}_{Y\mid X}(\cdot\mid x)$ as the distribution of a random element ``$Y$ given $X=x$'' constructed by conditioning on the event $\{X=x\}$ inside the original probability space. Indeed, if $\mathbb{P}_X(\{x\})=0$, then the event $\{X=x\}$ has probability zero, and the elementary conditional-probability formula does not apply. 

The next lemma states the precise relation between regular conditional measures and ordinary conditional expectations.

\begin{Lemma}
\label{lem:MK_condP}
For every $A\in\mathcal{B}(\mathcal{Y})$, the random element $\omega\mapsto \mathbb{P}_{Y\mid X}(A\mid X(\omega))$ is a version of $\mathbb{P}(Y\in A\mid\sigma(X))$. Equivalently,
\[
    \mathbb{P}_{Y\mid X}(A\mid X(\cdot))=\mathbb{E}\bigl[\mathbbm{1}_{\{Y\in A\}}\mid\sigma(X)\bigr]\qquad\mathbb{P}\text{-a.s.}
\]
\end{Lemma}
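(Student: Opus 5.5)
We verify directly the two defining properties of a version of the conditional expectation $\mathbb{E}[\mathbbm{1}_{\{Y\in A\}}\mid\sigma(X)]$: measurability with respect to $\sigma(X)$, and the averaging identity over every set in $\sigma(X)$. Uniqueness of conditional expectations up to $\mathbb{P}$-null sets then yields the almost-sure equality in the statement.

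\emph{Measurability.} By property ii) of Definition~\ref{def:regular_cond}, the map $x\mapsto \mathbb{P}_{Y\mid X}(A\mid x)$ is $\mathcal{B}(\mathcal{X})$-measurable with values in $[0,1]$. Composing with the $\mathcal{F}/\mathcal{B}(\mathcal{X})$-measurable map $X$, the function $\omega\mapsto \mathbb{P}_{Y\mid X}(A\mid X(\omega))$ is $\sigma(X)$-measurable. It is bounded, hence integrable.

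\emph{Averaging identity.} Every $G\in\sigma(X)$ has the form $G=\{X\in B\}$ for some $B\in\mathcal{B}(\mathcal{X})$. The change-of-variables formula for the pushforward $\mathbb{P}_X=X_{\#}\mathbb{P}$, applied to the bounded measurable function $x\mapsto\mathbbm{1}_B(x)\mathbb{P}_{Y\mid X}(A\mid x)$, gives
\[
\int_{\{X\in B\}}\mathbb{P}_{Y\mid X}(A\mid X(\omega))\,\mathbb{P}(d\omega)=\int_B\mathbb{P}_{Y\mid X}(A\mid x)\,\mathbb{P}_X(dx).
\]
This is standard and is first checked on indicators, then extended by linearity and monotone convergence. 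Next, apply property iii) of Definition~\ref{def:regular_cond} to the rectangle $C=B\times A$, whose slice is $C_x=A$ for $x\in B$ and $C_x=\emptyset$ otherwise. This gives
\[
\int_B\mathbb{P}_{Y\mid X}(A\mid x)\,\mathbb{P}_X(dx)=\mathbb{P}_{(X,Y)}(B\times A)=\mathbb{P}(X\in B,Y\in A)=\mathbb{E}\bigl[\mathbbm{1}_{\{Y\in A\}}\mathbbm{1}_G\bigr].
\]
Together with the measurability step, this establishes the version claim and hence the almost-sure equality.

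There is no genuine obstacle. The only point needing care is the change-of-variables step, together with the observation that $\sigma(X)=\{X^{-1}(B):B\in\mathcal{B}(\mathcal{X})\}$, so no further $\pi$--$\lambda$ extension is needed.
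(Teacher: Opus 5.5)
Your proposal is correct and follows essentially the same route as the paper's proof. Both establish $\sigma(X)$-measurability by composing with $X$, write each $G\in\sigma(X)$ as $X^{-1}(B)$, and obtain the averaging identity by change of variables together with property iii) of Definition~\ref{def:regular_cond} applied to $B\times A$.
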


\begin{proof}
See Appendix~\ref{app:deferred_proofs}.
\end{proof}

\subsection{A brief summary of optimal transport}
\label{sec:intro_ot}
This section presents the optimal transport facts used later in the paper. The Monge and Kantorovich formulations can be stated on Polish spaces with a lower semicontinuous cost. The dynamic Benamou--Brenier formulation, however, uses a continuity equation and a velocity field, so we state it on separable Hilbert spaces. For a broader treatment of optimal transport, see \textcite{Villani2009,ambrosio2005gradient}.

Let $\mathcal{X}$ be a separable Hilbert space. Given $\mu,\nu\in\mathcal{P}^{p}(\mathcal{X})$, and $p\in [1,\infty)$, the Monge problem is defined as
\begin{equation}
   \inf_{T_{\#}\mu=\nu}\int_{\mathcal{X}}\Vert x-T(x)\Vert^{p}\mu(dx).
\label{eq:ot_monge}
\end{equation}
It admits a convex relaxation known as the Kantorovich problem
\begin{equation}
   W^{p}_{p}(\mu,\nu)\coloneqq\inf_{\gamma\in\Gamma(\mu,\nu)}\int_{\mathcal{X}\times\mathcal{X}}\Vert x-\widetilde{x}\Vert^{p}\gamma(dx,d\widetilde{x}),
\label{eq:ot_kantorovich}
\end{equation}
where 
\[
    \Gamma(\mu,\nu)\coloneqq\{\gamma\in\mathcal{P}^{p}(\mathcal{X}\times\mathcal{X}):\pi^{1}_{\#}\gamma=\mu,\;\pi^{2}_{\#}\gamma=\nu\}
\]
denotes the set of couplings of $\mu$ and $\nu$. We refer to the Monge and Kantorovich problems as static formulations of optimal transport.

The infimum of the Kantorovich problem \eqref{eq:ot_kantorovich} is always attained, and $W_p$ defines a metric on $\mathcal{P}^{p}(\mathcal{X})$ known as the $p$-Wasserstein distance.\footnote{The special case $p=1$ is also called the earth mover's distance or Kantorovich-Rubinstein metric.} Moreover, $W_p$ characterises weak convergence together with convergence of the $p$-th moments \parencite[Theorem 6.9]{Villani2009}. The space $(\mathcal{P}^{p}(\mathcal{X}),W_p)$ is called the $p$-Wasserstein space. The minimisers of \eqref{eq:ot_kantorovich} are called optimal couplings, and we denote the set of minimisers by $\Gamma^\star(\mu,\nu)$. If the minimiser is unique, then we write it as $\gamma^\star$. 

If $\mathcal{X}$ is Euclidean and $p=2$, Brenier's theorem \parencite{Brenier1987} states that absolute continuity of the source measure with respect to Lebesgue measure yields a unique optimal coupling induced by a transport map. For every $p\in(1,\infty)$, the same existence and uniqueness conclusion holds for the strictly convex $p$-power cost; this is also covered by Proposition~\ref{prop:sol_ot} below. Thus
\[
    W_{p}(\mu,\nu)\coloneqq\left(\min_{T_{\#}\mu=\nu}\int_{\mathcal{X}}\Vert x-T(x)\Vert^{p}\mu(dx)\right)^{\frac{1}{p}}.
\]
In other words, the Monge problem \eqref{eq:ot_monge} and the Kantorovich problem \eqref{eq:ot_kantorovich} are equivalent, in the sense that the optimal coupling is uniquely given by $\gamma^{\star}=(\mathrm{id}, T^{\star})_{\#}\mu$, where $T^{\star}$ is the unique optimal transport map solving \eqref{eq:ot_monge}. 

When $\mathcal{X}$ is an infinite-dimensional separable Hilbert space, the Lebesgue measure is not available. Existence and uniqueness of Monge solutions can nevertheless be obtained under a replacement for absolute continuity. We first recall the notions of Gaussian null sets and regular measures \parencite[Definition 6.2.1]{ambrosio2005gradient}.

\begin{Definition}[Gaussian null sets and regular measures]
\label{def:gaussian_null}
Let $\mathcal{X}$ be a separable Banach space with dual $\mathcal{X}^*$. A measure $\lambda\in\mathcal{P}(\mathcal{X})$ is Gaussian if \(\ell_{\#}\lambda\) is a possibly degenerate one-dimensional Gaussian measure for every \(\ell\in\mathcal{X}^*\). It is nondegenerate if \(\ell_{\#}\lambda\) has strictly positive variance for every nonzero \(\ell\in\mathcal{X}^*\).

We say $B\in\mathcal{B}(\mathcal{X})$ is a Gaussian null set if $\lambda(B)=0$ for every nondegenerate Gaussian measure $\lambda\in\mathcal{P}(\mathcal{X})$. Furthermore, we say that a measure $\mu$ is regular if $\mu(B)=0$ for every Gaussian null set $B$. The class of regular measures in $\mathcal{X}$ is denoted by $\mathcal{P}_{r}(\mathcal{X})$. We also write $\mathcal{P}^{p}_{r}(\mathcal{X})\coloneqq\mathcal{P}_{r}(\mathcal{X})\cap\mathcal{P}^{p}(\mathcal{X})$.
\end{Definition}
The class $\mathcal{P}^{p}_{r}(\mathcal{X})$ is non-empty. Indeed, by Fernique's theorem \parencite[Section 2.2.1]{da2014stochastic}, every Gaussian measure on a separable Banach space has finite moments of all orders; hence every nondegenerate Gaussian measure belongs to $\mathcal{P}^{p}_{r}(\mathcal{X})$ for every $p\geq 1$. In Euclidean spaces, Gaussian null sets coincide with Lebesgue null sets. Consequently, a probability measure on $\mathbb{R}^{d}$ is regular in the above sense if and only if it is absolutely continuous with respect to Lebesgue measure. This notion of regularity allows us to state the following existence and uniqueness result for static optimal transport. 

\begin{Proposition}[{\textcite[Theorem 6.2.10]{ambrosio2005gradient}}]
\label{prop:sol_ot}
Let $\mathcal{X}$ be a separable Hilbert space, $\mu \in \mathcal{P}^{p}_{r}(\mathcal{X})$ and $\nu \in \mathcal{P}^{p}(\mathcal{X})$. Then the Kantorovich problem with cost 
\[
    c(x,\widetilde{x})=\Vert x-\widetilde{x}\Vert^{p}_{\mathcal{X}},\qquad p\in(1,\infty)
\]
has a unique solution $\gamma^{\star}=(\mathrm{id}, T^{\star})_{\#}\mu$, where  $T^{\star}:\mathcal{X}\rightarrow\mathcal{X}$ is a Borel map in $L^{p}(\mu;\mathcal{X})$ solving the corresponding Monge problem, and $\mathrm{id}$ denotes the identity mapping. 

If, in addition, $\nu\in\mathcal{P}^{p}_{r}(\mathcal{X})$, then $T^{\star}$ is $\mu$-essentially injective. That is, there exists a Borel set $\Omega\subset \mathcal{X}$ with $\mu(\Omega)=1$ such that the restriction $T^{\star}\mid_{\Omega}:\Omega\to T^{\star}(\Omega)$ is injective.
\end{Proposition}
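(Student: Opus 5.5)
The plan is to follow the strategy of \textcite[Theorem 6.2.10]{ambrosio2005gradient}: existence comes from compactness, the Monge structure comes from differentiability of a Kantorovich potential on a set of full $\mu$-measure, and uniqueness then follows by convexity. Concretely, the argument proceeds in four steps.

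\textbf{Existence of an optimal coupling.} Since $\mu,\nu\in\mathcal{P}^p(\mathcal{X})$, the set $\Gamma(\mu,\nu)$ is nonempty; it contains $\mu\otimes\nu$, which has finite cost. The set is tight because its marginals are tight on the Polish space $\mathcal{X}$, and it is weakly closed. The cost $c(x,\widetilde x)=\Vert x-\widetilde x\Vert^p$ is continuous and nonnegative, so $\gamma\mapsto\int c\,d\gamma$ is weakly lower semicontinuous. Prokhorov's theorem therefore yields a minimiser $\gamma^\star$.

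\textbf{Kantorovich duality.} Duality provides a $c$-concave potential $\varphi$ and its $c$-transform $\varphi^c$ such that
\[
\varphi(x)+\varphi^c(\widetilde x)\le c(x,\widetilde x)\quad\text{everywhere},
\]
with equality $\gamma^\star$-a.e. Hence $\operatorname{supp}\gamma^\star$ lies in the $c$-superdifferential of $\varphi$.

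\textbf{Key structural step: differentiability of the potential.} This is the main obstacle. For $p>1$, $\varphi$ is an infimum of the functions $x\mapsto\Vert x-\widetilde x\Vert^p-\varphi^c(\widetilde x)$. On bounded sets these functions are locally semiconcave, or at least locally Lipschitz after a localisation argument that restricts $\widetilde x$ to balls. So $\varphi$ is locally Lipschitz on the open set where it is finite. In finite dimensions Rademacher's theorem gives differentiability Lebesgue-a.e. In Hilbert space the replacement is the Aronszajn--Phelps theorem: a locally Lipschitz function on a separable Banach space is Gâteaux differentiable outside a Gaussian null set. Because $\mu$ is regular, $\varphi$ is therefore differentiable $\mu$-a.e.

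One technical point needs care. The potential may take the value $-\infty$, and the domain of finiteness must carry full $\mu$-measure. I would handle this by exhausting $\mathcal{X}$ with balls and using the moment assumptions.

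\textbf{Identification of the map.} At a point $x$ of differentiability with $(x,\widetilde x)\in\operatorname{supp}\gamma^\star$, the function $x'\mapsto\Vert x'-\widetilde x\Vert^p-\varphi(x')$ is minimised at $x'=x$. The first-order condition gives
\[
p\Vert x-\widetilde x\Vert^{p-2}(x-\widetilde x)=\nabla\varphi(x).
\]
The map $z\mapsto p\Vert z\Vert^{p-2}z$ is injective for $p>1$, so this equation determines $\widetilde x=T^\star(x)$ uniquely. Explicitly,
\[
T^\star(x)=x-\Vert\nabla\varphi(x)/p\Vert^{q-2}\,\nabla\varphi(x)/p,\qquad q=\tfrac{p}{p-1}.
\]
This shows $\gamma^\star=(\mathrm{id},T^\star)_\#\mu$. $T^\star$ is Borel, since the Gâteaux gradient of a Lipschitz function is Borel on the Borel differentiability set. $T^\star\in L^p(\mu;\mathcal{X})$ because $\nu$ has a finite $p$-th moment.

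\textbf{Uniqueness.} If $\gamma_1,\gamma_2$ are both optimal, then $\tfrac12(\gamma_1+\gamma_2)$ is also optimal. By the previous step it is induced by a map, which forces $\gamma_1=\gamma_2$.

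\textbf{Injectivity when $\nu$ is also regular.} Apply the same reasoning to the reverse problem from $\nu$ to $\mu$, using the symmetric cost and $\varphi^c$. This gives a Borel map $S$ with $\gamma^\star=(S,\mathrm{id})_\#\nu$, so that $S\circ T^\star=\mathrm{id}$ $\mu$-a.e. Let $\Omega$ be a Borel set of full $\mu$-measure on which $S(T^\star(x))=x$. Then $T^\star|_\Omega$ is injective.
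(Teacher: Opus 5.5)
The paper gives no proof of its own and simply cites \textcite[Theorem 6.2.10]{ambrosio2005gradient}. Your outline follows the architecture of that argument: compactness, a $c$-concave potential, Aronszajn--Phelps, inversion of $z\mapsto p\Vert z\Vert^{p-2}z$, averaging two optimal plans, and the reverse problem for injectivity. The step you single out as the main obstacle, however, is resolved incorrectly, and this is a genuine gap.

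\textbf{The gap.} You infer: ``$\varphi$ is locally Lipschitz on the open set where it is finite; since $\mu$ is regular, $\varphi$ is differentiable $\mu$-a.e.'' This requires that open set to carry full $\mu$-mass. In infinite dimensions this can fail for every Kantorovich potential, because regularity only excludes Gaussian null sets, not sets with empty interior. Take $p=2$, $\mathcal{X}=\ell^2$, $\mu=\mathcal{N}(0,\operatorname{diag}(k^{-4}))$, $T(x)=(kx_k)_k$ and $\nu=T_\#\mu\in\mathcal{P}^2(\ell^2)$.
\begin{itemize}
\item Any Kantorovich potential has the form $\varphi=\Vert\cdot\Vert^2-g$, with $g$ convex and lower semicontinuous and $2T(x)\in\partial g(x)$ for $\mu$-a.e.\ $x$.
\item For instance $g(x)=\sum_k kx_k^2$ works; it certifies optimality of $(\mathrm{id},T)_\#\mu$ and is finite only on a dense proper subspace of full $\mu$-measure.
\item Suppose $\operatorname{dom}g$ had an interior point $x$. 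Then $\partial g$ would be bounded on some ball $B(x,r)$.
\item But for large $k$ one has $\Vert T\Vert\geq kr/8$ on $B(x+\tfrac r2e_k,\tfrac r4)\subset B(x,r)$, and this ball has positive $\mu$-measure. This is a contradiction.
\end{itemize}
Hence $\{\varphi>-\infty\}$ has empty interior, $\varphi$ is locally Lipschitz nowhere, and the gradient $\nabla\varphi(x)$ used in your identification step does not exist. Your proposed fix, exhausting $\mathcal{X}$ by balls and invoking moments, does not address this. Finiteness of $\varphi$ $\mu$-a.e.\ is automatic; what is missing is local Lipschitz control around $\mu$-a.e.\ point.

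\textbf{The repair.} Restricting $\widetilde x$ to balls, which you mention in passing, is the right device. It must, however, produce new everywhere-finite functions rather than a regularity property of $\varphi$ itself. For $R\in\mathbb{N}$ set
\[
    \varphi_R(x)\coloneqq\inf_{\Vert\widetilde x\Vert\le R}\bigl[\Vert x-\widetilde x\Vert^p-\varphi^c(\widetilde x)\bigr].
\]
\begin{itemize}
\item Since $\varphi^c(\widetilde x)\le\Vert x_0-\widetilde x\Vert^p-\varphi(x_0)$ whenever $\varphi(x_0)$ is finite, $\varphi^c$ is bounded above on balls.
\item So for $R$ large enough that $\varphi^c$ is finite somewhere on the ball of radius $R$, each $\varphi_R$ is real-valued on all of $\mathcal{X}$.
\item As an infimum of functions that are uniformly Lipschitz on each ball, $\varphi_R$ is locally Lipschitz.
\item Apply Aronszajn--Phelps to every $\varphi_R$ and let $N$ be the countable union of the exceptional Gaussian null sets, so $\mu(N)=0$.
\item Because $\varphi=\varphi^{cc}\le\varphi_R$, with equality at $x$ whenever $(x,\widetilde x)$ is a contact pair with $\Vert\widetilde x\Vert\le R$, the function $x'\mapsto\Vert x'-\widetilde x\Vert^p-\varphi_R(x')$ is minimised at $x$.
\end{itemize}
For $x\notin N$ this gives
\[
    \nabla\varphi_R(x)=p\Vert x-\widetilde x\Vert^{p-2}(x-\widetilde x).
\]
Two partners of the same $x\notin N$ satisfy this identity with a common $R$ exceeding both norms, so they coincide. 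This yields the graph structure. The rest of your argument (uniqueness by averaging, $T^\star\in L^p(\mu;\mathcal{X})$, injectivity through the reverse problem) then goes through, with $\nabla\varphi$ replaced by $\nabla\varphi_R$ for any $R>\Vert T^\star(x)\Vert$.
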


We now introduce the dynamic formulation of OT, also known as the Benamou--Brenier formulation \parencite{benamou2000computational}. Let $\mathcal{X}$ be a separable Hilbert space. Given $\mu_0,\mu_1\in \mathcal{P}^{p}(\mathcal{X})$ and $p\in (1,\infty)$, the dynamic formulation is
\begin{equation}
   \inf_{(\mu_{t},v_{t})} \int^{1}_{0}\Vert v_{t}\Vert^{p}_{L^{p}(\mu_{t};\mathcal{X})}\,dt,
\label{eq:ot_dynamic}    
\end{equation}
where the minimum is taken over all pairs $(\mu_t,v_t)$ satisfying the following conditions: 
\begin{enumerate}[label=\roman*)]
    \item the curve $t\mapsto \mu_t$ is a narrowly continuous map from $[0,1]$ into $\mathcal{P}^{p}(\mathcal{X})$ that satisfies the boundary conditions $\mu_t\big|_{t=0}=\mu_0$ and $\mu_t\big|_{t=1}=\mu_1$;\footnote{The curve $t\mapsto \mu_t$ is called a narrowly continuous curve if $\mu_{t}\rightharpoonup\mu_{t^{\prime}}$ for $t\to t^{\prime}$.}
    \item the velocity field $v_t:\mathcal{X}\to \mathcal{X}$ is Borel and satisfies $v_{t}\in L^{p}(\mu_{t};\mathcal{X})$, $\int^{1}_{0}\|v_{t}\|^{p}_{L^{p}(\mu_{t};\mathcal{X})}dt<\infty$;
    \item the pair $(\mu_t,v_t)$ fulfils the continuity equation 
    \begin{equation}
       \partial_{t}\mu_{t}+\nabla\cdot(v_{t}\mu_{t})=0.
    \label{eq:continuity_eq}
    \end{equation}
\end{enumerate}
The distributional formulation uses the following standard class of test functions \parencite[Definition 5.1.11]{ambrosio2005gradient}.

\begin{Definition}[Smooth cylindrical test functions]
\label{def:smooth_cylindrical_test_functions}
Let \(\mathcal{E}\) be a separable Hilbert space and let \(T>0\). We denote by \(\operatorname{Cyl}_{c}^{\infty}((0,T)\times\mathcal{E})\) the class of functions of the form
\[
    \varphi(t,z)=\psi\bigl(t,\langle z,e_1\rangle_{\mathcal{E}},\ldots,
    \langle z,e_d\rangle_{\mathcal{E}}\bigr),
\]
where \(d\in\mathbb{N}\), \(e_1,\ldots,e_d\in\mathcal{E}\) are orthonormal, and \(\psi\in C_c^\infty((0,T)\times\mathbb{R}^d)\).
\end{Definition}

The continuity equation \eqref{eq:continuity_eq} is understood in the distributional sense:
\begin{equation}
    \int^{1}_{0}\int_{\mathcal{X}}(\partial_{t}\varphi(t,x)+v_{t}(x)\cdot\nabla_{x}\varphi(t,x))\mu_{t}(dx)dt=0,\qquad\forall\;\varphi\in\operatorname{Cyl}_{c}^{\infty}((0,1)\times\mathcal{X}),
\label{eq:continuity_eq_weak}
\end{equation}

It can be shown that the minimal value of \eqref{eq:ot_dynamic} equals $W^{p}_{p}(\mu_{0},\mu_{1})$, and that an optimal curve $\mu_{t}^{\star}$ is absolutely continuous in $(\mathcal{P}^{p}(\mathcal{X}),W_p)$.\footnote{Meaning that there exists $m\in L^{1}(dt|_{(0,1)};\mathbb{R})$ such that $W_{p}(\mu_{s},\mu_{t})\leq\int^{t}_{s}m(u)\,du$ for $s,t\in(0,1)$.} Its metric derivative\footnote{The metric derivative of a curve $\mu_t$ is defined as 
\[
   \vert\mu^{\prime}\vert(t)\coloneqq\lim_{s\to t}\frac{W_{p}(\mu_{s},\mu_{t})}{\vert s-t\vert},\qquad s,t\in(0,1).
\]
} is given by 
\[
    \vert(\mu^{\star})^{\prime}\vert(t)=W_{p}(\mu_{0},\mu_{1}),\qquad\forall\;t\in(0,1),
\]
i.e., $\mu_{t}^{\star}$ is a constant-speed geodesic in $(\mathcal{P}^{p}(\mathcal{X}),W_p)$ connecting $\mu_0$ and $\mu_1$.\footnote{Recall that a geodesic connecting $\mu_0$ and $\mu_1$ satisfies $W_{p}(\mu_{s}^{\star},\mu_{t}^{\star})=|s-t|W_{p}(\mu_{0},\mu_{1})$ for $s,t\in[0,1]$.} We refer to Section 7 and Theorem 8.3.1 of \textcite{ambrosio2005gradient}, and to \textcite{lisini2007characterization}, for detailed discussions of dynamic OT and geodesics in Wasserstein spaces. 

To summarise, for $p\in(1,\infty)$, the Kantorovich problem \eqref{eq:ot_kantorovich} and the dynamic problem \eqref{eq:ot_dynamic} are equivalent. Under the additional regularity condition that the source measure assigns zero mass to Gaussian null sets, the Monge problem \eqref{eq:ot_monge} is also equivalent to them. If $T^\star$ is the optimal Monge map from $\mu_0$ to $\mu_1$, then the induced coupling and displacement interpolation are
\[
   \gamma^{\star}=(\mathrm{id},T^{\star})_{\#}\mu_{0},\qquad \mu^{\star}_{t}=(T^{\star}_{t})_{\#}\mu_{0},
\]
where $T^{\star}_{t}\coloneqq(1-t)\mathrm{id}+tT^{\star}$. When $T_t^\star$ admits a measurable inverse on a full-measure image, the corresponding velocity field is represented by
\[
    v_t^\star=(T^\star-\mathrm{id})\circ(T_t^\star)^{-1}.
\]
The curve $\mu_t^\star$ is called the displacement, or McCann, interpolation between $\mu_0$ and $\mu_1$.

\subsection{Conditional optimal transport}
\label{sec:intro_cot}
This subsection reviews the conditional optimal transport (COT) framework used later in the SDR derivations. The map-factorisation result developed in Section~\ref{sec:sdr_cot} is static, whereas the algorithmic objective is motivated by dynamic COT.

Static COT, including conditional Monge and conditional Kantorovich formulations, was introduced in \textcite{Carlieretal2016} as a multivariate extension of conditional quantiles. \textcite{Hosseinietal2025} develop a general separable-Hilbert-space treatment of static COT\@. Dynamic COT and the conditional Wasserstein space were developed in parallel by \textcite{chemseddine2025conditional} in the Euclidean setting and by \textcite{Kerriganetal2024} in a Hilbert-space setting.

Connecting a static Monge map to an Eulerian dynamic velocity typically requires an inverse for the interpolated transport maps. In finite-dimensional settings, such invertibility can be obtained under strong regularity assumptions through Caffarelli-type contraction arguments; see, for example, the proof of \textcite[Proposition 17]{chemseddine2025conditional} and \textcite{kolesnikov2011mass}. In the Hilbert-space setting there is no directly analogous regularity theorem, and \textcite{Kerriganetal2024} assume injectivity of the triangular Monge map. Section~\ref{sec:restricted_dynamic_cot} addresses the specific gap needed here for the quadratic cost and a product source law.

As in Section~\ref{sec:intro_ot}, we first introduce the conditional Monge and Kantorovich problems, then the dynamic formulation. For static COT, we use $\mu$ and $\nu$ for source and target measures. We reserve $\mu_0$ and $\mu_1$ for endpoint measures of a time-indexed curve $(\mu_t)_{t\in[0,1]}$, as in displacement interpolation and Benamou--Brenier formulations.

Given a fixed $\eta\in\mathcal{P}(\mathcal{X})$, we define the space of measures sharing a common $\mathcal{X}$-marginal as
\[
   \mathcal{P}^{\eta}(\mathcal{X}\times\mathcal{Y})\coloneqq\{\mu\in\mathcal{P}(\mathcal{X}\times\mathcal{Y}):\pi^{1}_{\#}\mu=\eta\}.
\]
Let $\mu,\nu\in\mathcal{P}^{\eta}(\mathcal{X}\times\mathcal{Y})$, and let $\mu(\cdot\mid x)$ and $\nu(\cdot\mid x)$ denote regular conditional measures with respect to the common $\mathcal{X}$-marginal $\eta$. In static COT, the goal is to find a measurable map $T_{\mathcal{Y}}:\mathcal{X}\times\mathcal{Y}\rightarrow \mathcal{Y}$ such that
\begin{equation}
    T_{\mathcal{Y}}(x,\cdot)_{\#}\mu(\cdot\mid x)=\nu(\cdot\mid x),\qquad\text{for }\eta\text{-a.e. }x\in\mathcal{X}.
\label{eq:slice_pushforward}
\end{equation}
In practice, one rarely has direct access to the conditional measures; instead, one observes samples from the joint law. \textcite{baptista2020conditional} therefore use a triangular map $T:\mathcal{X}\times\mathcal{Y}\rightarrow \mathcal{X}\times\mathcal{Y}$ of the form $T(x,y)=(x,T_{\mathcal{Y}}(x,y))$. If $T_{\#}\mu=\nu$, then the slicewise condition \eqref{eq:slice_pushforward} also holds \parencite{baptista2020conditional,Hosseinietal2025}. In the applications below, the source is a product measure $\mu=\eta \otimes \rho$, where $\rho$ is a simple reference law on the response space.

The conditional Monge problem is
\begin{equation}
\begin{aligned}
    \inf_{T_{\#}\mu=\nu}&\;\int_{\mathcal{X}\times\mathcal{Y}}\Vert(x,y)-T(x,y)\Vert^{p}\mu(dx,dy)\\\textrm{s.t.}&\;T(x,y)=(x,T_{\mathcal{Y}}(x,y)).
\end{aligned}
\label{eq:cond_Monge}
\end{equation}
The corresponding conditional Kantorovich problem is
\begin{equation}
    \inf_{\gamma\in\Gamma_{\eta}(\mu,\nu)}\int_{(\mathcal{X}\times\mathcal{Y})^{2}}\Vert(x,y)-(\widetilde{x},\widetilde{y})\Vert^{p}\gamma(dx,dy,d\widetilde{x},d\widetilde{y}),
\label{eq:cond_Kantorovich}  
\end{equation}
where 
\[
    \Gamma_{\eta}(\mu,\nu)\coloneqq\left\{ \gamma\in\mathcal{P}((\mathcal{X}\times\mathcal{Y})^{2}):\pi^{1,2}_{\#}\gamma=\mu,\pi^{3,4}_{\#}\gamma=\nu,\pi^{1,3}_{\#}\gamma=(\mathrm{id}\times\mathrm{id})_{\#}\eta\right\}.
\]
This is the set of admissible triangular couplings. A triangular coupling $\gamma\in\Gamma_{\eta}(\mu,\nu)$ couples $\mu$ and $\nu$ while forcing the two $\mathcal{X}$-coordinates to agree $\gamma$-a.s. As in the ordinary optimal transport case, the conditional Kantorovich problem leads to the conditional Wasserstein distance \parencite{chemseddine2025conditional,Kerriganetal2024}.

\begin{Definition}[Conditional Wasserstein space and distance]
\label{def:cond_Wp}
Let $\mathcal{X}$ and $\mathcal{Y}$ be separable Hilbert spaces. Given a fixed marginal $\eta\in\mathcal{P}(\mathcal{X})$, for $p\in(1,\infty)$, let $\mathcal{P}^{p,\eta}(\mathcal{X}\times\mathcal{Y})\coloneqq\mathcal{P}^{p}(\mathcal{X}\times\mathcal{Y})\cap\mathcal{P}^{\eta}(\mathcal{X}\times\mathcal{Y})$. For $\mu,\nu\in \mathcal{P}^{p,\eta}(\mathcal{X}\times\mathcal{Y})$, define
\begin{equation}
    W_{p}^{\eta}(\mu,\nu)\coloneqq \left(\int_{\mathcal{X}}W_{p}^{p}\bigl(\mu(\cdot\mid x),\nu(\cdot\mid x)\bigr)\eta(dx)\right)^{\frac{1}{p}},
\label{eq:cond_Wp}
\end{equation}
where $W_{p}$ is the $p$-Wasserstein distance on $\mathcal{P}^{p}(\mathcal{Y})$. We call $W_{p}^{\eta}$ the conditional $p$-Wasserstein distance, and $(\mathcal{P}^{p,\eta}(\mathcal{X}\times\mathcal{Y}),W_{p}^{\eta})$ the conditional Wasserstein space.
\end{Definition}

The conditional $p$-Wasserstein distance equals the minimal conditional Kantorovich cost. For separable Hilbert spaces, the existence and uniqueness results for the conditional Monge and Kantorovich problems are established under suitable assumptions by \textcite{Hosseinietal2025}. We restate the special case of their results where the source is chosen to be a product measure.

\begin{Proposition}[{\textcite[Proposition 3.8]{Hosseinietal2025}}]
\label{prop:condMonge}
Let $\mathcal{X}$ and $\mathcal{Y}$ be separable Hilbert spaces, and take $p\in(1,\infty)$. Let $\mu,\nu\in \mathcal{P}^{p,\eta}(\mathcal{X}\times\mathcal{Y})$, where $\mu=\eta\otimes\rho$, $\rho\in\mathcal{P}^{p}_{r}(\mathcal{Y})$.

Then there exists a jointly measurable triangular map $T^{\star}(x,y)=(x,T^{\star}_{\mathcal{Y}}(x,y))$ such that for $\eta$-a.e.\ $x\in\mathcal{X}$, the map $T^{\star}_{\mathcal{Y}}(x,\cdot)$ is the $\rho$-essentially unique optimal transport map from $\rho$ to $\nu(\cdot\mid x)$ for the cost $c(y,\widetilde{y})=\Vert y-\widetilde{y}\Vert_{\mathcal{Y}}^{p}$. In particular, $T^{\star}$ is the unique solution to the conditional Monge problem \eqref{eq:cond_Monge}, up to $\mu$-a.e.\ equality. Moreover, $\gamma^{\star}\coloneqq(\mathrm{id},T^{\star})_{\#}\mu$ is the unique solution to the conditional Kantorovich problem \eqref{eq:cond_Kantorovich}.
\end{Proposition}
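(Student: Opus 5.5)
The plan is to reduce the conditional problem to a family of ordinary optimal transport problems, one for each slice, using Proposition~\ref{prop:sol_ot}, and then glue the slice solutions together measurably.

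\textbf{Slicewise solutions.} Since $\mu=\eta\otimes\rho$, we may take the regular conditional measure to be $\mu(\cdot\mid x)=\rho$ for every $x$. Disintegrating $\nu$ along $\eta$ gives a kernel $x\mapsto\nu(\cdot\mid x)$. Because $\nu\in\mathcal{P}^{p}$, we have $\int\int\Vert y\Vert^p\nu(dy\mid x)\eta(dx)<\infty$, so $\nu(\cdot\mid x)\in\mathcal{P}^p(\mathcal{Y})$ for $\eta$-a.e.\ $x$; on the remaining null set we replace the kernel by $\rho$. Since $\rho\in\mathcal{P}^p_r(\mathcal{Y})$, Proposition~\ref{prop:sol_ot} applies to each slice. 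It gives a $\rho$-essentially unique optimal map $T_x$ from $\rho$ to $\nu(\cdot\mid x)$, and $(\mathrm{id},T_x)_\#\rho$ is the unique optimal coupling of that slice.

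\textbf{Reduction of the conditional problems.} Take any $\gamma\in\Gamma_\eta(\mu,\nu)$ and disintegrate it with respect to $\pi^{1}$. The constraint $\pi^{1,3}_\#\gamma=(\mathrm{id}\times\mathrm{id})_\#\eta$ forces $\widetilde x=x$ almost surely. Hence $\gamma$ is described by a measurable family $\gamma_x\in\Gamma(\rho,\nu(\cdot\mid x))$, and its cost equals $\int\int\Vert y-\widetilde y\Vert^p\gamma_x\,\eta(dx)$, because the $\mathcal{X}$-distance vanishes. Conversely, every measurable family of slice couplings glues to an element of $\Gamma_\eta$. It follows that
\[
\inf_{\Gamma_\eta}\text{cost}\ \ge\ \int W_p^p\bigl(\rho,\nu(\cdot\mid x)\bigr)\eta(dx),
\]
with equality if and only if $\gamma_x$ is optimal for $\eta$-a.e.\ $x$. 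By slicewise uniqueness, any minimiser must then satisfy $\gamma_x=(\mathrm{id},T_x)_\#\rho$ for $\eta$-a.e.\ $x$. The same argument applied to triangular maps $T$ with $T_\#\mu=\nu$ gives uniqueness of the Monge solution $\mu$-a.e., via Fubini.

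\textbf{Main obstacle: joint measurability.} The key difficulty is to show that some version of $(x,y)\mapsto T_x(y)$ is jointly Borel, so that $T^\star$ is a genuine map and the lower bound is attained. I would proceed in three steps.

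First, the map $x\mapsto\nu(\cdot\mid x)$ is Borel into $(\mathcal{P}^p(\mathcal{Y}),W_p)$. The map $\sigma\mapsto\gamma^\star(\rho,\sigma)$ is continuous on $\mathcal{P}^p$, by stability of optimal couplings together with uniqueness. Hence $x\mapsto\gamma_x^\star$ is a measurable kernel, and $\gamma^\star:=\int\delta_x\otimes\gamma_x^\star\,\eta(dx)$ is a well-defined measure on $\mathcal{X}\times\mathcal{Y}\times\mathcal{Y}$.

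Second, disintegrate $\gamma^\star$ with respect to the $(x,y)$-marginal, which is $\mu$. Since each $\gamma_x^\star$ is concentrated on the graph of $T_x$, the conditional law of $\widetilde y$ given $(x,y)$ is a Dirac mass for $\mu$-a.e.\ $(x,y)$.

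Third, define $T^\star_{\mathcal{Y}}(x,y)$ as the barycentre $\int\widetilde y\,\gamma^\star(d\widetilde y\mid x,y)$. This is jointly Borel, since it is a Bochner integral of a measurable kernel and $p\ge1$ moments are finite. It coincides $\mu$-a.e.\ with $T_x(y)$.

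Consequently $(\mathrm{id},T^\star)_\#\mu=\gamma^\star$ attains the lower bound, and therefore solves both \eqref{eq:cond_Monge} and \eqref{eq:cond_Kantorovich}.
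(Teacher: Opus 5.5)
Your proof is correct. The paper does not prove this proposition: it imports it from \textcite[Proposition~3.8]{Hosseinietal2025} and then uses the jointly measurable $T^\star$ as a black box. What you have written is therefore a self-contained alternative, not a rederivation.

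Your slice reduction is the standard one. Triangular couplings are exactly measurable families $\gamma_x\in\Gamma(\rho,\nu(\cdot\mid x))$. This gives the lower bound $\int W_p^p(\rho,\nu(\cdot\mid x))\,\eta(dx)$, and uniqueness then follows from Proposition~\ref{prop:sol_ot}.

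Where you genuinely differ is in how you obtain joint measurability. When the paper needs a Borel family of slicewise optimal couplings, in the proof of Theorem~\ref{thm:disp_interp}, it invokes measurable selection \textcite[Corollary~5.22]{Villani2009}. It then compares the selected kernel with the one induced by the cited $T^\star$. You instead use the fact that the source slice is the same regular measure $\rho$ for every $x$, so uniqueness holds for every target in $\mathcal{P}^p(\mathcal{Y})$. Hence $\sigma\mapsto\gamma^\star(\rho,\sigma)$ is a single-valued map on $(\mathcal{P}^p(\mathcal{Y}),W_p)$, and it is continuous. A narrow limit point of $\gamma^\star(\rho,\sigma_n)$ couples $\rho$ and $\sigma$, and by lower semicontinuity its cost is at most $\lim_n W_p^p(\rho,\sigma_n)=W_p^p(\rho,\sigma)$. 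It is therefore the unique optimiser.

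This makes the Borel kernel $x\mapsto\gamma_x^\star$ automatic; it could also replace the selection step in the proof of Theorem~\ref{thm:disp_interp}. Reading the map off the Dirac-valued disintegration of $\gamma^\star$ given $(x,y)$ then produces $T^\star_{\mathcal{Y}}$ constructively. The citation buys brevity and the more general source setting of Hosseini et al., of which the paper restates only the product-source case. Your route buys a short, self-contained proof tailored to exactly that case.

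One step should be written out rather than asserted. You cannot say directly that $\gamma^\star$ is concentrated on $\{(x,y,\widetilde y):\widetilde y=T_x(y)\}$, because the measurability of that set is exactly what is in question. Instead:
\begin{itemize}
\item Write $\gamma^\star(dx,dy,d\widetilde y)=\mu(dx,dy)\,K((x,y),d\widetilde y)$.
\item Test on a countable $\pi$-system of rectangles $C\times D$. This shows that, for $\eta$-a.e.\ $x$, the kernel $y\mapsto K((x,y),\cdot)$ disintegrates $\gamma_x^\star$ with respect to $\rho$. It therefore equals $\delta_{T_x(y)}$ for $\rho$-a.e.\ $y$.
\item Dirac masses form a closed subset of $\mathcal{P}(\mathcal{Y})$. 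So the set where $K((x,y),\cdot)$ is a Dirac mass is Borel, and Fubini gives it full $\mu$-measure.
\item Composing $K$ with the inverse of the homeomorphism $z\mapsto\delta_z$ onto that set yields the Borel map directly, with no moment bookkeeping. Your barycentre also works, since $\int\Vert y\Vert_{\mathcal{Y}}\,\nu(dx,dy)<\infty$.
\end{itemize}
The same countable-$\pi$-system argument is what identifies the second marginal of $\gamma_x$ with $\nu(\cdot\mid x)$ in your reduction step. Finally, Borel measurability of $x\mapsto\nu(\cdot\mid x)$ into $(\mathcal{P}^p(\mathcal{Y}),W_p)$ uses the fact that $W_p$-Borel sets are narrowly Borel. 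This holds, for example, because $W_p(\cdot,\sigma_0)$ is narrowly lower semicontinuous and $(\mathcal{P}^p(\mathcal{Y}),W_p)$ is separable.
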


As in OT, the conditional Kantorovich problem \eqref{eq:cond_Kantorovich} is equivalent to the dynamic COT 
\begin{equation}
\begin{aligned}
    \inf_{(\mu_{t},v_{t})}\;&\int^{1}_{0}\Vert v_{t}\Vert^{p}_{L^{p}(\mu_{t};\mathcal{X}\times\mathcal{Y})}dt\\\textrm{s.t.}\;&\partial_{t}\mu_{t}+\nabla\cdot(v_{t}\mu_{t})=0\\&v_{t}(x,y)=(0,v^{\mathcal{Y}}_{t}(x,y))\\&\mu_{t}\big|_{t=0}=\mu_{0},\quad\mu_{t}\big|_{t=1}=\mu_{1}.
\end{aligned}
\label{eq:cond_BB}
\end{equation}
Here the infimum is over narrowly continuous curves $t\mapsto\mu_t$ in $\mathcal{P}^{p,\eta}(\mathcal{X}\times\mathcal{Y})$ and jointly Borel velocity fields with finite action that satisfy the continuity equation in the weak cylindrical sense. A velocity field satisfying $v_{t}(x,y)=(0,v^{\mathcal{Y}}_{t}(x,y))$ is called a triangular velocity field, and it ensures that admissible curves remain in $\mathcal{P}^{p,\eta}(\mathcal{X}\times\mathcal{Y})$.

\section{Monge dynamics for conditional optimal transport}
\label{sec:restricted_dynamic_cot}

Dynamic COT and conditional Wasserstein geometry are established frameworks \parencite{chemseddine2025conditional,Kerriganetal2024}. The result in this section is more specific: it supplies the Monge-induced current-state velocity needed for SDR-COT without assuming global injectivity of the terminal triangular map. In the quadratic product-source setting, cyclic monotonicity instead yields injectivity of each interpolating map on a common full-measure set for every $t<1$. This permits a jointly Borel velocity representative and, on each truncated interval, a Benamou--Brenier optimality statement.

The next theorem gives the dynamic form of the conditional Monge construction needed later. In the quadratic product-source setting, it builds the displacement interpolation and a Borel velocity representative on the full-measure images reached before time $t=1$.

\begin{Theorem} 
\label{thm:disp_interp}
Let $\mathcal{X}$ and $\mathcal{Y}$ be separable Hilbert spaces, and let $\eta\in \mathcal{P}^{2}(\mathcal{X})$. Assume that $\mu_{0},\mu_{1}\in\mathcal{P}^{2,\eta}(\mathcal{X}\times\mathcal{Y})$, where $\mu_0=\eta\otimes \rho$ with $\rho\in\mathcal{P}^{2}_{r}(\mathcal{Y})$.  

Then there exist a Borel map $T_{\mathcal{Y}}^{\star}:\mathcal{X}\times\mathcal{Y}\to\mathcal{Y}$ and a Markov kernel $x\mapsto\gamma_x^{\star}\in\mathcal{P}(\mathcal{Y}\times\mathcal{Y})$ such that, for $\eta$-a.e.\ $x\in\mathcal{X}$,
\[
    \gamma_x^{\star}=(\mathrm{id},T^{\star}_{\mathcal{Y}}(x,\cdot))_{\#}\rho\in\Gamma^{\star}(\rho,\mu_{1}(\cdot\mid x)),
\]
where $T_{\mathcal{Y}}^\star(x,\cdot)$ is the $\rho$-essentially unique optimal transport map from $\rho$ to $\mu_{1}(\cdot\mid x)$ for the quadratic cost $c(y,\widetilde y)=\|y-\widetilde y\|^2$. Define
\[
    T^{\star}(x,y)\coloneqq(x,T^{\star}_{\mathcal{Y}}(x,y)),\qquad\gamma^{\star}\coloneqq(\mathrm{id},T^{\star})_{\#}\mu_{0},
\]
and for $t\in[0,1]$:
\[
    T^{\star}_{t}(x,y)\coloneqq\bigl(x,(1-t)y+tT^{\star}_{\mathcal{Y}}(x,y)\bigr),\qquad\mu^{\star}_{t}\coloneqq(T^{\star}_{t})_{\#}\mu_{0}.
\]
Then:
\begin{enumerate}[label=\roman*)]
    \item The coupling $\gamma^\star\in\Gamma_\eta(\mu_0,\mu_1)$ is the unique conditional Kantorovich optimiser in~\eqref{eq:cond_Kantorovich} with $p=2$, and
the triangular map \(T^\star\) solves the conditional Monge problem~\eqref{eq:cond_Monge}, uniquely
\(\mu_0\)-a.e. 
    \item 
    Let \(G\subset\mathcal{X}\) be  a Borel set, with \(\eta(G)=1\), such that, for every $x\in G$,
    \[
        \gamma_x^{\star}=(\mathrm{id},T^{\star}_{\mathcal{Y}}(x,\cdot))_{\#}\rho\in\Gamma^{\star}(\rho,\mu_{1}(\cdot\mid x)).
    \]
    Define
    \[
    \begin{aligned}
        S&\coloneqq\{(x,y_{0},y_{1})\in\mathcal{X}\times\mathcal{Y}\times\mathcal{Y}:(y_{0},y_{1})\in\operatorname{supp}(\gamma_x^{\star})\},\\
        O&\coloneqq\{(x,y)\in G\times\mathcal{Y}:(x,y,T^{\star}_{\mathcal{Y}}(x,y))\in S\}.
    \end{aligned}
    \]
    Then $S\in\mathcal{B}(\mathcal{X}\times\mathcal{Y}\times\mathcal{Y})$, $O\in\mathcal{B}(\mathcal{X}\times\mathcal{Y})$ and $\mu_0(O)=1$.
    \item For every $t\in[0,1)$, the map $T_t^\star$ is injective on $O$, the set $O_t\coloneqq T_t^\star(O)$ is Borel, and the inverse map $(T_t^\star)^{-1}:O_t\to O$ is Borel. Moreover, \(\mu_t^\star(O_t)=1\).
    \item For every $t\in[0,1)$, define
    \[
        v^{\star}_{t}(x,y)\coloneqq\begin{cases}
(T^{\star}-\mathrm{id})\circ(T^{\star}_{t})^{-1}(x,y), & (x,y)\in O_{t},\\
0, & (x,y)\notin O_{t}. 
\end{cases}
    \]
    Then $v_t^\star$ is Borel. Moreover, the map $(t,x,y)\mapsto v_t^\star(x,y)$ is Borel on $[0,1)\times\mathcal{X}\times\mathcal{Y}$, and
    \[
        v^{\star}_{t}(T^{\star}_{t}(x,y))=(0,T^{\star}_{\mathcal{Y}}(x,y)-y)\qquad\forall\;(x,y)\in O.
    \]
    \item The curve $t\mapsto \mu_t^\star$, $t\in[0,1]$, is absolutely continuous in $(\mathcal{P}^{2,\eta}(\mathcal{X}\times\mathcal{Y}),W^{\eta}_{2})$. 
\end{enumerate}
\end{Theorem}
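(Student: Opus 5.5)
I will build everything on Proposition~\ref{prop:condMonge} with $p=2$. It directly provides a jointly Borel triangular map $T^\star(x,y)=(x,T^\star_{\mathcal{Y}}(x,y))$ such that, for $\eta$-a.e.\ $x$, the slice $T^\star_{\mathcal{Y}}(x,\cdot)$ is the $\rho$-essentially unique optimal map from $\rho$ to $\mu_1(\cdot\mid x)$. It also shows that $\gamma^\star=(\mathrm{id},T^\star)_\#\mu_0$ is the unique conditional Kantorovich optimiser and that $T^\star$ is the $\mu_0$-a.e.\ unique conditional Monge solution, which is item i). I define the kernel by $\gamma^\star_x\coloneqq(\mathrm{id},T^\star_{\mathcal{Y}}(x,\cdot))_\#\rho$. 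It is a Markov kernel because, for bounded Borel $f$, the map $x\mapsto\int f(y,T^\star_{\mathcal{Y}}(x,y))\rho(dy)$ is Borel by Fubini and joint measurability. Optimality of $\gamma^\star_x$ for $\eta$-a.e.\ $x$ is the slicewise statement of Proposition~\ref{prop:condMonge}. I will also record the disintegration $\gamma^\star=\int\delta_x\otimes\delta_x\otimes\gamma^\star_x\,\eta(dx)$, suitably rearranged.

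For ii), I write $S=\{(x,y_0,y_1): \gamma^\star_x(B_r(y_0,y_1))>0\ \forall r\in\mathbb{Q}_{>0}\}$. Separability means it suffices to check countably many balls, and each condition $\gamma^\star_x(B_r(z))>0$ is Borel in $(x,z)$ by the Borel kernel property. To see this, approximate the ball indicator by increasing continuous functions $f_k(\|w-z\|)$, which makes $(x,z)\mapsto\int f_k\,d\gamma^\star_x$ Borel. Then $O$ is the preimage of $S$ under the Borel map $(x,y)\mapsto(x,y,T^\star_{\mathcal{Y}}(x,y))$, intersected with $G\times\mathcal{Y}$. For fixed $x\in G$, the measure $\gamma^\star_x$ gives full mass to its support, so $\rho(\{y:(y,T^\star_{\mathcal{Y}}(x,y))\in\operatorname{supp}\gamma^\star_x\})=1$. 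Fubini then gives $\mu_0(O)=1$.

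Item iii) is the key step. Since $\gamma^\star_x$ is optimal for the quadratic cost with finite second moments, its support is cyclically monotone, and in particular monotone: $\langle y_1-y_1',y_0-y_0'\rangle\ge0$ for all pairs of points in the support. Now take $(x,y),(x',y')\in O$ with $T^\star_t(x,y)=T^\star_t(x',y')$ and $t<1$. The first coordinate forces $x=x'$. Writing $z=T^\star_{\mathcal{Y}}(x,y)$ and $z'=T^\star_{\mathcal{Y}}(x,y')$, we have $(1-t)(y-y')=-t(z-z')$. Pairing with $y-y'$ gives $(1-t)\|y-y'\|^2=-t\langle z-z',y-y'\rangle\le0$, so $y=y'$. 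Because $T^\star_t$ is Borel and injective on the Borel set $O$ of a Polish space, the Lusin--Souslin theorem shows that $O_t$ is Borel and the inverse is Borel. Finally, $\mu_t^\star(O_t)\ge\mu_0(O)=1$.

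For iv), Borel measurability of $v_t^\star$ for fixed $t$ follows by composing Borel maps and extending by zero on the Borel set $O_t$. For joint measurability in $(t,x,y)$, I apply Lusin--Souslin once more to the map $\Phi(t,x,y)=(t,T^\star_t(x,y))$ on $[0,1)\times O$. It is Borel and injective by iii), so its image $\{(t,w):w\in O_t\}$ is Borel with a Borel inverse, and $v^\star$ equals $(T^\star-\mathrm{id})$ composed with the projection of $\Phi^{-1}$. The identity $v^\star_t(T^\star_t(x,y))=(0,T^\star_{\mathcal{Y}}(x,y)-y)$ on $O$ is immediate from the definition. For v), note that the triangular coupling $(T^\star_s,T^\star_t)_\#\mu_0$ disintegrates slicewise over $x$ into couplings of $\mu^\star_s(\cdot\mid x)$ and $\mu^\star_t(\cdot\mid x)$. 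Hence
\[
    W_2^\eta(\mu^\star_s,\mu^\star_t)^2\le\int\|T^\star_s-T^\star_t\|^2\,d\mu_0=(t-s)^2W_2^\eta(\mu_0,\mu_1)^2,
\]
so the curve is Lipschitz and therefore absolutely continuous. I expect the measurability of $S$ and the joint Lusin--Souslin argument to be the main technical obstacles; the monotonicity argument itself is short.
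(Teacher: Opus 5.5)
Your proposal is correct and follows the paper's proof essentially step for step. You use Proposition~\ref{prop:condMonge} for i), a Borel support graph and Fubini for ii), monotonicity of the optimal supports plus Lusin--Souslin (applied to $T^\star_t$ and to the space--time map) for iii)--iv), and the slicewise coupling bound for v). The differences are minor: you define $\gamma^\star_x$ directly as $(\mathrm{id},T^\star_{\mathcal{Y}}(x,\cdot))_\#\rho$, whereas the paper takes a measurable selection of optimal couplings and then identifies it with this kernel $\eta$-a.e.; your shortcut is legitimate and avoids measurable selection. You also check that $S$ is Borel via rational-radius balls rather than a countable base.
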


\begin{proof}[Proof of Theorem~\ref{thm:disp_interp}]
Since $\mathcal{X}$ and $\mathcal{Y}$ are Polish, the disintegration theorem gives a regular conditional measure $x\mapsto\mu_1(\cdot\mid x)\in\mathcal{P}(\mathcal{Y})$ of the second coordinate under $\mu_1$. Let
\[
    A\coloneqq \{x\in\mathcal{X}:\mu_1(\cdot\mid x)\in\mathcal{P}^{2}(\mathcal{Y})\}.
\]
By assumption, $\eta(A)=1$. Moreover, since $x\mapsto\mu_1(\cdot\mid x)$ is measurable and $\mathcal{P}^{2}(\mathcal{Y})$ is a Borel subset of $\mathcal{P}(\mathcal{Y})$, the set $A$ is Borel.

Fix an arbitrary probability measure $\overline\nu\in\mathcal{P}^{2}(\mathcal{Y})$ and define a modified kernel
\[
    \overline{\mu}_{1}(\cdot\mid x)\coloneqq\begin{cases}
\mu_{1}(\cdot\mid x), & x\in A,\\
\overline{\nu}, & x\notin A.
\end{cases}
\]
Then $x\mapsto\overline\mu_1(\cdot\mid x)$ is Borel measurable as a map into $\mathcal{P}(\mathcal{Y})$, takes values in $\mathcal{P}^{2}(\mathcal{Y})$, and agrees with $x\mapsto\mu_1(\cdot\mid x)$ for $\eta$-a.e.\ $x$. Moreover, the second-moment map
\[
    x\mapsto\int_{\mathcal{Y}}\|y\|^{2}\overline{\mu}_{1}(dy\mid x)
\]
is measurable; hence $x\mapsto\overline\mu_1(\cdot\mid x)$ is Borel as a map into $(\mathcal{P}^{2}(\mathcal{Y}),W_2)$. The measure induced by the kernel $\overline\mu_1(\cdot\mid x)$ and the marginal $\eta$ equals $\mu_1$, because the two kernels agree for $\eta$-a.e.\ $x$. Since $\rho\in\mathcal{P}^{2}_{r}(\mathcal{Y})$, Proposition~\ref{prop:condMonge}, applied with source $\mu_0=\eta\otimes\rho$ and target $\mu_1$, yields a Borel triangular map $T^\star(x,y)=(x,T_{\mathcal{Y}}^\star(x,y))$ such that, for $\eta$-a.e.\ $x$, the map $T_{\mathcal{Y}}^\star(x,\cdot)$ is the $\rho$-essentially unique optimal transport map from $\rho$ to $\mu_1(\cdot\mid x)$ for the cost $\Vert y-\widetilde y\Vert^2$. Moreover, $\gamma^\star\coloneqq(\mathrm{id},T^{\star})_{\#}\mu_0$ is the unique solution of the conditional Kantorovich problem.

By measurable selection of optimal couplings \parencite[Corollary 5.22]{Villani2009}; see also \textcite{fontbona2010measurability}, we may choose a Borel kernel
\[
    x\mapsto \gamma_x^{\star}\in \Gamma^\star(\rho,\overline\mu_1(\cdot\mid x)),\qquad x\in\mathcal{X}.
\]
For each $x\in\mathcal{X}$, also define
\[
    \kappa_x\coloneqq\bigl(\mathrm{id},T^{\star}_{\mathcal{Y}}(x,\cdot)\bigr)_{\#}\rho.
\]
We first check that $x\mapsto\kappa_x$ is a measurable kernel. Let $\psi:\mathcal{Y}\times\mathcal{Y}\to\mathbb{R}$ be bounded and Borel. Then
\[
    x\mapsto\int_{\mathcal{Y}\times\mathcal{Y}}\psi(y_{0},y_{1})\kappa_x(dy_{0},dy_{1})=\int_{\mathcal{Y}}\psi\bigl(y,T^{\star}_{\mathcal{Y}}(x,y)\bigr)\rho(dy)
\]
is measurable, because $(x,y)\mapsto\psi(y,T_{\mathcal{Y}}^\star(x,y))$ is Borel. Hence $x\mapsto\kappa_x$ is a measurable kernel. By uniqueness of the optimal coupling in each conditional problem, for $\eta$-a.e.\ $x$,
\[
    \gamma_x^{\star}=\kappa_x=\bigl(\mathrm{id},T^{\star}_{\mathcal{Y}}(x,\cdot)\bigr)_{\#}\rho\in\Gamma^{\star}(\rho,\mu_{1}(\cdot\mid x)).
\]
Since both $x\mapsto\gamma_x^{\star}$ and $x\mapsto\kappa_x$ are Borel maps into the standard Borel space $\mathcal{P}(\mathcal{Y}\times\mathcal{Y})$, the set
\[
    G\coloneqq A\cap\{x\in\mathcal{X}:\gamma_x^{\star}=\kappa_x\}
\]
is Borel and satisfies $\eta(G)=1$.
This proves the asserted existence of $T_{\mathcal{Y}}^\star$ and $\gamma_x^\star$.

The assertion in (i) follows directly from Proposition~\ref{prop:condMonge}, since the modified kernel $\overline\mu_1(\cdot\mid x)$ agrees with $\mu_1(\cdot\mid x)$ for $\eta$-a.e.\ $x$ and therefore induces the same measure $\mu_1$.

We now prove (ii). Since $x\mapsto \gamma_x^\star$ is the chosen measurable version of the family of conditional optimal couplings, it is a measurable kernel from $\mathcal{X}$ to $\mathcal{Y}\times\mathcal{Y}$. Therefore, by Lemma~\ref{lem:Borel_graph} applied with state space $\mathcal{Y}\times\mathcal{Y}$,
\[
   S=\{(x,y_{0},y_{1})\in \mathcal{X}\times\mathcal{Y}\times\mathcal{Y}:(y_{0},y_{1})\in \mathrm{supp}(\gamma_x^\star)\}\in\mathcal{B}(\mathcal{X}\times\mathcal{Y}\times\mathcal{Y}).
\]
Because $G$ is Borel and $T^{\star}_{\mathcal{Y}}$ is Borel, the map $(x,y)\mapsto (x,y,T^{\star}_{\mathcal{Y}}(x,y))$ is Borel, and hence
\[
   O=\{(x,y)\in G\times\mathcal{Y}:(x,y,T^{\star}_{\mathcal{Y}}(x,y))\in S\}\in\mathcal{B}(\mathcal{X}\times\mathcal{Y}).
\]
We next show that $\mu_{0}(O)=1$. For every $x\in G$, the support of $\gamma_x^\star$ is closed, $\gamma_x^\star(\mathrm{supp}(\gamma_x^\star))=1$, and $\gamma_x^\star=(\mathrm{id},T^{\star}_{\mathcal{Y}}(x,\cdot))_{\#}\rho$. Hence
\[
    \rho\bigl(\{y\in\mathcal{Y}:(y,T^{\star}_{\mathcal{Y}}(x,y))\in\mathrm{supp}(\gamma_x^\star)\}\bigr)=1,\qquad x\in G.
\]
Therefore,
\[
    \mu_{0}(O)=\int_{\mathcal{X}}\rho(O_{x})\,\eta(dx)=\eta(G)=1, \qquad O_{x}\coloneqq\{y:(x,y)\in O\}.
\]
This proves (ii).

We now prove (iii). Fix $t\in[0,1)$. If two points of $O$ have different $\mathcal{X}$-coordinates, then their images under $T_t^\star$ have different $\mathcal{X}$-coordinates. It remains to consider $(x,y),(x,\widetilde y)\in O$ with $y\neq \widetilde y$. Then
\[
    (y,T^{\star}_{\mathcal{Y}}(x,y)),\ (\widetilde y,T^{\star}_{\mathcal{Y}}(x,\widetilde y))\in \mathrm{supp}(\gamma_x^\star).
\]
Since $x\in G$, $\gamma_x^\star$ is an optimal coupling for the quadratic cost, and its support is cyclically monotone \parencite[Section 6.2.3]{ambrosio2005gradient}. In particular,
\[
    \langle T^{\star}_{\mathcal{Y}}(x,y)-T^{\star}_{\mathcal{Y}}(x,\widetilde{y}),y-\widetilde{y}\rangle_{\mathcal{Y}}\ge0.
\]
Hence
\[
\begin{aligned}
    &\left\langle (1-t)(y-\widetilde{y})+t\bigl(T^{\star}_{\mathcal{Y}}(x,y)-T^{\star}_{\mathcal{Y}}(x,\widetilde{y})\bigr),y-\widetilde{y}\right\rangle _{\mathcal{Y}}\\
    &\qquad=(1-t)\|y-\widetilde{y}\|^{2}_{\mathcal{Y}}
    +t\langle T^{\star}_{\mathcal{Y}}(x,y)-T^{\star}_{\mathcal{Y}}(x,\widetilde{y}),y-\widetilde{y}\rangle_{\mathcal{Y}}\\
    &\qquad\ge(1-t)\|y-\widetilde{y}\|^{2}_{\mathcal{Y}}>0.
\end{aligned}
\]
Therefore $(1-t)y+tT^{\star}_{\mathcal{Y}}(x,y)\neq(1-t)\widetilde{y}+tT^{\star}_{\mathcal{Y}}(x,\widetilde{y})$, so $T^{\star}_{t}$ is injective on $O$. Note that $O$ is a Borel subset of the Polish space $\mathcal{X}\times\mathcal{Y}$, and $T^{\star}_{t}:O\to\mathcal{X}\times\mathcal{Y}$ is Borel and injective. By the Lusin-Souslin theorem \parencite[Corollary 15.2]{kechris2012classical}, the image $O_{t}=T^{\star}_{t}(O)$ is Borel, and the inverse map $(T_{t}^{\star})^{-1}:O_{t}\to O$ is Borel.
Since $\mu_t^\star=(T_t^\star)_\#\mu_0$ and $\mu_0(O)=1$, it also follows that $\mu_t^\star(O_t)=1$.

For (iv), since $T^{\star}$ and $(T_{t}^{\star})^{-1}$ are Borel on their domains, $v_{t}^{\star}$ is Borel. To prove joint measurability in $(t,x,y)$, define the space-time interpolation map
\[
    \mathcal{T}:[0,1)\times O\to [0,1)\times\mathcal{X}\times\mathcal{Y},\qquad \mathcal{T}(t,z)\coloneqq(t,T_t^\star(z)),
\]
where $z\coloneqq (x,y)$. The map $\mathcal{T}$ is Borel and injective, because each $T_t^\star$ is injective on $O$. By the Lusin-Souslin theorem, the image
\[
    \mathcal{O}\coloneqq \mathcal{T}([0,1)\times O)=\{(t,z):z\in O_t\}
\]
is Borel and the inverse $\mathcal{T}^{-1}:\mathcal{O}\to[0,1)\times O$ is Borel. Let $\pi^O:[0,1)\times O\to O$ denote the projection onto the second coordinate. Hence
\[
    (t,z)\mapsto
    \begin{cases}
        (T^\star-\mathrm{id})(\pi^O(\mathcal{T}^{-1}(t,z))), & (t,z)\in\,\mathcal{O},\\
        0, & (t,z)\notin\mathcal{O},
    \end{cases}
\]
is Borel. This is precisely $(t,z)\mapsto v_t^\star(z)$. Moreover, for every $(x,y)\in O$,
\[
    v^{\star}_{t}(T^{\star}_{t}(x,y))=(T^{\star}-\mathrm{id})(x,y)=(0,T^{\star}_{\mathcal{Y}}(x,y)-y).
\]

Finally, for (v), note that \(\mu_t^\star\in\mathcal{P}^{2,\eta}\).
Write
\[
    \mu_t^\star(\cdot\mid x)\coloneqq\bigl((1-t)\mathrm{id}+tT_{\mathcal{Y}}^\star(x,\cdot)\bigr)_\#\rho.
\]
Then $\mu_t^\star(\cdot\mid x)$ is a regular conditional measure for $\mu_t^\star$ with respect to $\eta$. Let $0\leq s<t\leq 1$. The coupling induced by
\[
   y\mapsto((1-s)y+sT^{\star}_{\mathcal{Y}}(x,y),(1-t)y+tT^{\star}_{\mathcal{Y}}(x,y))
\]
is an admissible coupling between $\mu_s^\star(\cdot\mid x)$ and $\mu_t^\star(\cdot\mid x)$. Hence
\[
\begin{aligned}
    (W^{\eta}_{2}(\mu^{\star}_{s},\mu^{\star}_{t}))^{2}=&\int_{\mathcal{X}}W^{2}_{2}\bigl(\mu^{\star}_{s}(\cdot\mid x),\mu^{\star}_{t}(\cdot\mid x)\bigr)\eta(dx)\\\leq&\int_{\mathcal{X}}\int_{\mathcal{Y}}\Vert\bigl((1-t)y+tT^{\star}_{\mathcal{Y}}(x,y)\bigr)-\bigl((1-s)y+sT^{\star}_{\mathcal{Y}}(x,y)\bigr)\Vert^{2}\rho(dy)\eta(dx)\\=&|t-s|^{2}\int_{\mathcal{X}}\int_{\mathcal{Y}}\Vert T^{\star}_{\mathcal{Y}}(x,y)-y\Vert^{2}\rho(dy)\eta(dx)\\=&|t-s|^{2}(W^{\eta}_{2}(\mu_{0},\mu_{1}))^{2}.
\end{aligned}
\]
The final equality 
is valid because \(T_{\mathcal{Y}}^\star(x,\cdot)\) is the conditional optimal transport map for \(\eta\)-a.e. \(x\). The last integral is finite because $\rho\in\mathcal{P}^{2}(\mathcal{Y})$ and $\mu_1\in\mathcal{P}^{2,\eta}(\mathcal{X}\times\mathcal{Y})$. Therefore $t\mapsto\mu_t^\star$ is absolutely continuous in the conditional Wasserstein space.
\end{proof}

Theorem~\ref{thm:disp_interp} turns the slicewise quadratic conditional Monge map into a displacement interpolation and a Borel current-state velocity on the full-measure images $O_t$, for every $t<1$. This is similar to the conditional McCann-interpolant result of \textcite{Kerriganetal2024}. However, rather than imposing injectivity of the triangular Monge map as an a priori assumption, we establish the required injectivity of $T_t^\star$ on a set of full $\mu_0$-measure. The following corollary gives the corresponding truncated Benamou--Brenier type optimality statement.

\begin{Corollary}
\label{cor:weak_cond_BB}
Assume the hypotheses of Theorem~\ref{thm:disp_interp}. Let $v_t^\star$ be the velocity field from Theorem~\ref{thm:disp_interp}, defined for $t\in[0,1)$. For every $\tau\in(0,1)$, set $T_\tau\coloneqq1-\tau$. Then the restricted pair $(\mu_t^\star,v_t^\star)_{t\in[0,T_\tau]}$ solves the dynamic COT \eqref{eq:cond_BB} on the time interval $[0,T_\tau]$ with endpoint measures $\mu_0$ and $\mu_{T_\tau}^\star$.  
\end{Corollary}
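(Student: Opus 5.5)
The plan is to verify admissibility of $(\mu_t^\star,v_t^\star)$ on $[0,T_\tau]$, compute its action, and compare with a lower bound valid for every admissible competitor.

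\emph{Admissibility.} By Theorem~\ref{thm:disp_interp}(iv), $v_t^\star$ is jointly Borel on $[0,T_\tau]\times\mathcal{X}\times\mathcal{Y}$ and triangular, with vanishing $\mathcal{X}$-component. Narrow continuity of $t\mapsto\mu_t^\star$ follows from the absolute continuity in (v); the estimate $W_2^\eta(\mu_s^\star,\mu_t^\star)\le|t-s|W_2^\eta(\mu_0,\mu_1)$ dominates the ordinary $W_2$ distance on $\mathcal{X}\times\mathcal{Y}$, because the triangular couplings used there are admissible couplings. For the continuity equation, I would fix $\varphi\in\operatorname{Cyl}_c^\infty((0,T_\tau)\times\mathcal{X}\times\mathcal{Y})$ and use $\mu_t^\star=(T_t^\star)_\#\mu_0$, $\mu_0(O)=1$ and the identity $v_t^\star\circ T_t^\star=(0,T_{\mathcal{Y}}^\star-y)$ on $O$. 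This gives
\[
\int\bigl(\partial_t\varphi+\langle v_t^\star,\nabla\varphi\rangle\bigr)d\mu_t^\star=\int_O\frac{d}{dt}\bigl[\varphi(t,T_t^\star(x,y))\bigr]\mu_0(dx,dy),
\]
since $t\mapsto T_t^\star(x,y)$ is affine with derivative $(0,T_{\mathcal{Y}}^\star(x,y)-y)$. Integrating in $t$ and applying Fubini then yields zero, because $\varphi$ has compact support in time. Fubini is legitimate since $\nabla\varphi$ is bounded and $T_{\mathcal{Y}}^\star-y\in L^2(\mu_0)$. The same identity gives the action
\[
\int_0^{T_\tau}\|v_t^\star\|^2_{L^2(\mu_t^\star)}dt=T_\tau\,(W_2^\eta(\mu_0,\mu_1))^2 .
\]

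\emph{Lower bound.} Let $(\mu_t,v_t)$ be any admissible triangular pair on $[0,T_\tau]$ joining $\mu_0$ to $\mu_{T_\tau}^\star$. By the superposition principle (Ambrosio--Gigli--Savaré, Theorem 8.2.1, with the Hilbert cylindrical version of Section 8.2), I would represent $\mu_t=(e_t)_\#\boldsymbol\pi$, where $\boldsymbol\pi$ is concentrated on absolutely continuous curves $\dot z=v_t(z)$. Triangularity forces the $\mathcal{X}$-coordinate to be constant along $\boldsymbol\pi$-a.e.\ curve, so $(e_0,e_{T_\tau})_\#\boldsymbol\pi\in\Gamma_\eta(\mu_0,\mu_{T_\tau}^\star)$. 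Jensen's inequality along each curve then gives
\[
\int_0^{T_\tau}\|v_t\|^2_{L^2(\mu_t)}dt\ge\frac{1}{T_\tau}\,\bigl(W_2^\eta(\mu_0,\mu_{T_\tau}^\star)\bigr)^2 .
\]

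\emph{Identifying the endpoint cost.} For $\eta$-a.e.\ $x$, the map $y\mapsto(1-T_\tau)y+T_\tau T_{\mathcal{Y}}^\star(x,y)$ is the gradient of a convex function, because $T_{\mathcal{Y}}^\star(x,\cdot)$ has cyclically monotone graph support. A convex combination with the identity preserves cyclic monotonicity, so this map is optimal from $\rho$ to $\mu_{T_\tau}^\star(\cdot\mid x)$, and its slicewise cost is $T_\tau^2\int\|T_{\mathcal{Y}}^\star-y\|^2d\rho$. Hence $(W_2^\eta(\mu_0,\mu_{T_\tau}^\star))^2=T_\tau^2(W_2^\eta(\mu_0,\mu_1))^2$, and the lower bound equals $T_\tau(W_2^\eta)^2$, matching the action of $(\mu_t^\star,v_t^\star)$. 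Optimality follows.

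The main obstacle is the lower bound in infinite dimensions. The superposition principle must be applied with cylindrical test functions and the finite-action hypothesis, and the conditional structure must be justified: one has to show that the superposed path measure yields a triangular coupling. Cyclic monotonicity of the interpolated support is straightforward by comparison.
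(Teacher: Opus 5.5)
Your proposal is correct. The admissibility check and the action computation coincide with the paper's: both use the chain rule along the affine paths $t\mapsto T_t^\star(x,y)$, the identity $v_t^\star\circ T_t^\star=(0,T_{\mathcal{Y}}^\star-\mathrm{id})$ on $O$, the pushforward, and Fubini. You are slightly more explicit than the paper about narrow continuity, which the paper leaves implicit in part (v) of Theorem~\ref{thm:disp_interp}.

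The routes differ in the two remaining steps.

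\emph{Endpoint cost.} The paper invokes the standard fact that the displacement interpolation generated by an optimal coupling is a constant-speed geodesic, so $W_2(\rho,\mu_{T_\tau}^\star(\cdot\mid x))=T_\tau W_2(\rho,\mu_1(\cdot\mid x))$. Your cyclic-monotonicity argument for $(1-T_\tau)\mathrm{id}+T_\tau T_{\mathcal{Y}}^\star(x,\cdot)$ is a self-contained proof of the same instance. You do not need the claim that this map is a gradient, which in infinite dimensions is only a subdifferential statement. Concentration of the coupling on a cyclically monotone set already gives optimality, because the quadratic cost is dominated by integrable second moments.

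\emph{Lower bound.} The paper does not build a path measure. It cites Kerrigan et al.'s Theorem~4, which gives $|\widetilde\mu'|(t)\le\|\widetilde v_t\|_{L^2(\widetilde\mu_t)}$ in the conditional metric $W_2^\eta$ for any finite-action triangular competitor. It then concludes from $W_2^\eta(\mu_0,\mu^\star_{T_\tau})\le\int_0^{T_\tau}|\widetilde\mu'|\,dt$ and Cauchy--Schwarz. Your superposition-plus-Jensen argument essentially reproves the endpoint consequence of that theorem. It makes visible why triangularity of the velocity forces a triangular endpoint coupling: the $\mathcal{X}$-coordinate is constant along almost every trajectory.

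What each buys: yours avoids relying on a conditional-Wasserstein metric-derivative result and reduces everything to the unconditional superposition principle plus a pathwise Jensen inequality. The price is that you need the superposition principle on a separable Hilbert space with cylindrical test functions, not just the $\mathbb{R}^d$ statement; you correctly flag this as the heavier input. The paper's route is shorter and also places every finite-action competitor in the conditional Wasserstein geometry as an absolutely continuous curve.
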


\begin{proof}
Fix $\tau\in(0,1)$ and set $T_\tau\coloneqq1-\tau$. The measure-velocity pair $(\mu^\star_t, v^\star_t)_{t\in[0,T_\tau]}$ is triangular and has endpoints $\mu_0$ and $\mu_{T_\tau}^\star$. To show that it satisfies the continuity equation on $(0,T_\tau)$, let $\varphi\in\operatorname{Cyl}_{c}^{\infty}((0,T_\tau)\times(\mathcal{X}\times\mathcal{Y}))$. Since $\mu_0(O)=1$, the chain rule and Theorem~\ref{thm:disp_interp} give, for $\mu_{0}$-a.e.\ $(x,y)$,
\[
\begin{aligned}
    \frac{d}{dt}\varphi(t,T^{\star}_{t}(x,y))=&\partial_{t}\varphi(t,T^{\star}_{t}(x,y))+\langle\nabla_{x,y}\varphi(t,T^{\star}_{t}(x,y)),\frac{d}{dt}T^{\star}_{t}(x,y)\rangle_{\mathcal{X}\times\mathcal{Y}}\\=&\partial_{t}\varphi(t,T^{\star}_{t}(x,y))+\langle\nabla_{x,y}\varphi(t,T^{\star}_{t}(x,y)),v^{\star}_{t}(T^{\star}_{t}(x,y))\rangle_{\mathcal{X}\times\mathcal{Y}}.
\end{aligned}
\]
Integrating over $(0,T_\tau)\times\mathcal{X}\times\mathcal{Y}$ with respect to $dt\,\mu_0(dx,dy)$, we obtain
\[
\begin{aligned}
    &\int^{T_\tau}_{0}\int_{\mathcal{X}\times\mathcal{Y}}\frac{d}{dt}\varphi(t,T^{\star}_{t}(x,y))\mu_{0}(dx,dy)dt\\
    &\qquad=\int^{T_\tau}_{0}\int_{\mathcal{X}\times\mathcal{Y}}
    \bigl[\partial_{t}\varphi(t,T^{\star}_{t}(x,y))\\
    &\qquad\quad+\langle\nabla_{x,y}\varphi(t,T^{\star}_{t}(x,y)),v^{\star}_{t}(T^{\star}_{t}(x,y))\rangle\bigr]\mu_{0}(dx,dy)dt.
\end{aligned}
\]
Since $\mu^{\star}_{t}=(T^{\star}_{t})_{\#}\mu_{0}$, the right-hand side equals
\[
    \int^{T_\tau}_{0}\int_{\mathcal{X}\times\mathcal{Y}}[\partial_{t}\varphi(t,x,y)+\langle\nabla_{x,y}\varphi(t,x,y),v^{\star}_{t}(x,y)\rangle]\mu^{\star}_{t}(dx,dy)dt.
\]
By the fundamental theorem of calculus, $\varphi$ vanishes outside its compact time support in $(0,T_\tau)$,
\[
    \int^{T_\tau}_{0}\frac{d}{dt}\varphi(t,T^{\star}_{t}(x,y))dt=0,
\]
consequently,
\[
    \int^{T_\tau}_{0}\int_{\mathcal{X}\times\mathcal{Y}}\frac{d}{dt}\varphi(t,T^{\star}_{t}(x,y))\mu_{0}(dx,dy)dt=0.
\]
Therefore,
\[
    \int^{T_\tau}_{0}\int_{\mathcal{X}\times\mathcal{Y}}[\partial_{t}\varphi(t,x,y)+\langle\nabla_{x,y}\varphi(t,x,y),v^{\star}_{t}(x,y)\rangle]\mu^{\star}_{t}(dx,dy)dt=0,
\]
which verifies that $(\mu^{\star}_t, v^{\star}_t)$ satisfies the continuity equation, and hence it is admissible.

For the constructed pair, the action attains the conditional Wasserstein cost on the interval $[0,T_\tau]$. Indeed, since $\mu^{\star}_t=(T^{\star}_t)_{\#}\mu_0$ and $\mu_0(O)=1$, Theorem~\ref{thm:disp_interp} gives, for a.e.\ $t\in(0,T_\tau)$,
\[
\begin{aligned}
    \Vert v^{\star}_{t}\Vert^{2}_{L^{2}(\mu^{\star}_{t};\mathcal{X}\times\mathcal{Y})}&=\int_{\mathcal{X}\times\mathcal{Y}}\Vert v^{\star}_{t}(x,y)\Vert^{2}_{\mathcal{X}\times\mathcal{Y}}\mu^{\star}_{t}(dx,dy)=\int_{\mathcal{X}\times\mathcal{Y}}\Vert v^{\star}_{t}(T^{\star}_{t}(x,y))\Vert^{2}_{\mathcal{X}\times\mathcal{Y}}\mu_{0}(dx,dy)\\&=\int_{\mathcal{X}}\int_{\mathcal{Y}}\Vert T_{\mathcal{Y}}^{\star}(x,y)-y\Vert^{2}_{\mathcal{Y}}\rho(dy)\eta(dx)=\int_{\mathcal{X}}W^{2}_{2}\bigl(\rho,\mu_1(\cdot\mid x)\bigr)\eta(dx)=\left(W^{\eta}_{2}(\mu_{0},\mu_{1})\right)^{2}.
\end{aligned}
\]
Consequently,
\[
    \int^{T_\tau}_{0}\Vert v^{\star}_{t}\Vert^{2}_{L^{2}(\mu^{\star}_{t};\mathcal{X}\times\mathcal{Y})}dt=T_\tau\left(W^{\eta}_{2}(\mu_{0},\mu_{1})\right)^{2}.
\]
Moreover, for $\eta$-a.e.\ $x$, the curve
\[
    \mu_t^\star(\cdot\mid x)=\bigl((1-t)\mathrm{id}+tT_{\mathcal{Y}}^\star(x,\cdot)\bigr)_{\#}\rho
\]
is the displacement interpolation generated by the optimal coupling $(\mathrm{id},T_{\mathcal{Y}}^\star(x,\cdot))_{\#}\rho$ between $\rho$ and $\mu_1(\cdot\mid x)$. Hence it is a constant-speed geodesic in $(\mathcal{P}^{2}(\mathcal{Y}),W_2)$, and
\[
    W_2\bigl(\rho,\mu_{T_\tau}^{\star}(\cdot\mid x)\bigr)=T_\tau W_2\bigl(\rho,\mu_1(\cdot\mid x)\bigr)
    \qquad\text{for }\eta\text{-a.e. }x.
\]
Integrating the squared identity over $\eta$ gives
\[
    \left(W_2^\eta(\mu_0,\mu_{T_\tau}^\star)\right)^2
    =T_\tau^2\int_{\mathcal{X}}W_2^2\bigl(\rho,\mu_1(\cdot\mid x)\bigr)\eta(dx)
    =T_\tau^2\left(W_2^\eta(\mu_0,\mu_1)\right)^2.
\]
Thus the constructed action equals
\[
    \int^{T_\tau}_{0}\Vert v^{\star}_{t}\Vert^{2}_{L^{2}(\mu^{\star}_{t};\mathcal{X}\times\mathcal{Y})}dt
    =\frac{\left(W^{\eta}_{2}(\mu_{0},\mu_{T_\tau}^\star)\right)^2}{T_\tau}.
\]

To show optimality, let $(\widetilde{\mu}_{t},\widetilde{v}_{t})_{t\in[0,T_\tau]}$ be any admissible measure-velocity pair with endpoints $\mu_0$ and $\mu_{T_\tau}^\star$. If its action is infinite, there is nothing to prove. Otherwise, finite action implies $t\mapsto\Vert\widetilde{v}_{t}\Vert_{L^{2}(\widetilde{\mu}_{t};\mathcal{X}\times\mathcal{Y})}$ belongs to $L^{1}(dt|_{(0,T_\tau)};\mathbb{R})$, and the weak continuity equation gives a narrowly continuous representative of $t\mapsto\widetilde\mu_t$. By \textcite[Theorem 4]{Kerriganetal2024} on $(0,T_\tau)$, 
\[
    \vert \widetilde{\mu}^{\prime}\vert(t)\le\Vert\widetilde{v}_{t}\Vert_{L^{2}(\widetilde{\mu}_{t};\mathcal{X}\times\mathcal{Y})}\qquad\text{for a.e. }t\in(0,T_{\tau}).
\]
Therefore, 
\[
    W^{\eta}_{2}(\mu_{0},\mu^{\star}_{T_{\tau}})\leq\int^{T_{\tau}}_{0}\vert\widetilde{\mu}^{\prime}\vert(t)dt\le\int^{T_{\tau}}_{0}\Vert\widetilde{v}_{t}\Vert_{L^{2}(\widetilde{\mu}_{t};\mathcal{X}\times\mathcal{Y})}dt,
\]
and Cauchy-Schwarz gives
\[
    \frac{\left(W^{\eta}_{2}(\mu_{0},\mu_{T_\tau}^\star)\right)^{2}}{T_\tau}\leq\int^{T_\tau}_{0}\Vert\widetilde{v}_{t}\Vert^{2}_{L^{2}(\widetilde{\mu}_{t};\mathcal{X}\times\mathcal{Y})}dt.
\] 
Hence no admissible pair has smaller action than the constructed pair on $[0,T_\tau]$. 
\end{proof}

Corollary~\ref{cor:weak_cond_BB} should be read as a truncated,
measure-theoretic Benamou--Brenier statement along the conditional Monge
transport paths. The truncation to \(T_\tau<1\) ensures that the interpolating
maps are injective on full-measure sets, so that the velocity field admits a
Borel current-state representative. This identifies an optimal velocity field
on its natural \(dt\,\mu_t^\star\)-a.e.\ equivalence class, which is the level of
precision needed for the SDR factorisation below.

\section{Sufficient dimension reduction and COT factorisation}
\label{sec:sdr_cot}

This section first recalls the measure-theoretic formulation of SDR needed for the transport arguments. It then shows that the optimal COT map and its Monge-induced velocity factor through any finite-dimensional representation generating the relevant SDR $\sigma$-algebra. Section~\ref{sec:sdr_cot_estimation} turns these factorisations into population and empirical estimation procedures.

\subsection{General theory of sufficient dimension reduction}
\label{sec:SDRintro}
\textcite{lee2013general}'s measure-theoretic formulation of sufficient dimension reduction introduces the central $\sigma$-algebra, a rigorous object that connects to COT. We adapt this framework below, defining the central $\sigma$-algebra via the forward conditional measure $\mathbb{P}_{Y\mid X}$ and relating it to Lee's original inverse conditional law formulation.

Throughout, for a sub-$\sigma$-algebra $\mathcal{G}\subseteq\sigma(X)$, the notation 
\[
   Y \indep X \mid \mathcal{G}
\]
means that the random elements $X$ and $Y$ are conditionally independent given $\mathcal{G}$. Since $\mathcal{G}\subseteq\sigma(X)$, this condition is equivalent to
\[
    \mathbb{E}[g(Y)\mid\sigma(X)]
    =
    \mathbb{E}[g(Y)\mid\mathcal{G}]
    \qquad\mathbb{P}\text{-a.s.}
\]
for every bounded Borel function $g:\mathcal{Y}\to\mathbb{R}$. In this case, $\mathcal{G}$ is called an SDR $\sigma$-algebra for $Y$ versus $X$. The next proposition states the conditional independence condition in terms of the regular conditional law $\mathbb{P}_{Y\mid X}$.

\begin{Proposition}
\label{prop:sdr_MK}
Assume that \(\mathcal{Y}\) is Polish. Let $\mathbb{P}_{Y\mid X}$ be the regular conditional measure of $Y$ given $X$, and let $\mathcal{G}\subseteq\sigma(X)$. The following are equivalent:
\begin{enumerate}[label=\roman*)]
    \item $Y \indep X \mid \mathcal{G}$;
    \item The $\mathcal{P}(\mathcal{Y})$-valued random element $\omega\mapsto \mathbb{P}_{Y\mid X}(\cdot\mid X(\omega))$ admits a $\mathcal{G}$-measurable version. That is, there exists a \(\mathcal{G}\)-measurable map $\widetilde Z:\Omega\to\mathcal{P}(\mathcal{Y})$ such that $\widetilde Z=Z
\; \mathbb{P}\text{-a.s.}$, where $Z(\omega)=\mathbb{P}_{Y\mid X}(\cdot\mid X(\omega))$.
\end{enumerate}
\end{Proposition}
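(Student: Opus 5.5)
The argument works with conditional expectations of indicators and uses the countably generated structure of $\mathcal{B}(\mathcal{Y})$ to pass to the $\mathcal{P}(\mathcal{Y})$-valued object. Throughout, the Borel $\sigma$-algebra on $\mathcal{P}(\mathcal{Y})$ for the weak topology is the one generated by the evaluation maps $\nu\mapsto\nu(A)$, $A\in\mathcal{B}(\mathcal{Y})$. Since $\mathcal{Y}$ is Polish, this $\sigma$-algebra is already generated by $\nu\mapsto\nu(A)$ for $A$ ranging over a countable $\pi$-system $\mathcal{A}$ generating $\mathcal{B}(\mathcal{Y})$, for instance finite intersections of open balls with rational radii and centres in a countable dense set. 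Hence a map $W:\Omega\to\mathcal{P}(\mathcal{Y})$ is $\mathcal{G}$-measurable if and only if $\omega\mapsto W(\omega)(A)$ is $\mathcal{G}$-measurable for every $A\in\mathcal{A}$. The same then holds for every $A\in\mathcal{B}(\mathcal{Y})$, by a Dynkin-class argument.

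\emph{(ii)$\Rightarrow$(i).} Let $\widetilde Z$ be $\mathcal{G}$-measurable with $\widetilde Z=Z$ a.s., and fix a bounded Borel $g$. By Lemma~\ref{lem:MK_condP} and linearity plus monotone-class approximation by simple functions, $\mathbb{E}[g(Y)\mid\sigma(X)]=\int g\,dZ$ a.s. The map $\nu\mapsto\int g\,d\nu$ is Borel on $\mathcal{P}(\mathcal{Y})$, so $\int g\,d\widetilde Z$ is a $\mathcal{G}$-measurable version of $\mathbb{E}[g(Y)\mid\sigma(X)]$. By the tower property, $\mathbb{E}[g(Y)\mid\mathcal{G}]=\mathbb{E}[\mathbb{E}[g(Y)\mid\sigma(X)]\mid\mathcal{G}]=\int g\,d\widetilde Z$ a.s. Hence $\mathbb{E}[g(Y)\mid\sigma(X)]=\mathbb{E}[g(Y)\mid\mathcal{G}]$ a.s., which is the stated characterisation of~(i).

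\emph{(i)$\Rightarrow$(ii).} By (i) and Lemma~\ref{lem:MK_condP}, for each $A\in\mathcal{B}(\mathcal{Y})$ we have $Z(\cdot)(A)=\mathbb{P}(Y\in A\mid\mathcal{G})$ a.s., so $Z(\cdot)(A)$ agrees a.s.\ with a $\mathcal{G}$-measurable function. The difficulty is to produce a single $\mathcal{G}$-measurable probability-measure-valued version, since the exceptional null sets depend on $A$ and are uncountably many. I would handle this in one of two ways.

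\emph{Route (a): use $Z$ itself.} Fix an enumeration of $\mathcal{A}$ and choose $\mathcal{G}$-measurable versions $h_A$ of $Z(\cdot)(A)$ for each $A\in\mathcal{A}$. The countable union $N$ of the exceptional sets is null. The set $N$ need not lie in $\mathcal{G}$, so I would enlarge it to a $\mathcal{G}$-measurable null set. One option is $N'=\bigcup_{A}\{h_A\neq \mathbb{E}[\mathbbm{1}_{\{Y\in A\}}\mid\mathcal{G}]\}$, but this is circular. The better option is to define $\widetilde Z$ directly from the $\mathcal{G}$-measurable regular conditional distribution, as in route (b), and then verify $\widetilde Z=Z$ a.s.

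\emph{Route (b): construct a $\mathcal{G}$-measurable regular conditional distribution.} Because $\mathcal{Y}$ is Polish, there is a regular conditional distribution $K:\Omega\times\mathcal{B}(\mathcal{Y})\to[0,1]$ of $Y$ given $\mathcal{G}$ (Bogachev, Section~10.4). Its defining properties are:
\begin{enumerate}[label=\arabic*)]
    \item $\omega\mapsto K(\omega,\cdot)$ is a $\mathcal{G}$-measurable map into $\mathcal{P}(\mathcal{Y})$;
    \item $K(\cdot,A)=\mathbb{P}(Y\in A\mid\mathcal{G})$ a.s.\ for each $A$.
\end{enumerate}
Set $\widetilde Z(\omega)=K(\omega,\cdot)$. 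Then for each $A\in\mathcal{A}$, (i) gives $\widetilde Z(\cdot)(A)=Z(\cdot)(A)$ off a null set $N_A$. Off the countable union $\bigcup_{A\in\mathcal{A}}N_A$, the two probability measures $\widetilde Z(\omega)$ and $Z(\omega)$ agree on the $\pi$-system $\mathcal{A}$. By Dynkin's $\pi$--$\lambda$ theorem they agree on all of $\mathcal{B}(\mathcal{Y})$. Hence $\widetilde Z=Z$ a.s.

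The main obstacle is precisely this passage from uncountably many a.s.\ identities to a single a.s.\ equality of measure-valued maps. It is resolved by the countable generator $\mathcal{A}$ together with the existence of a regular version given $\mathcal{G}$, and both rely on $\mathcal{Y}$ being Polish. A minor point is the measurability of the set where $\widetilde Z\neq Z$, but measurability is not needed: it suffices that this set is contained in a null set.
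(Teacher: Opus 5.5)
Your proof is correct. Direction (ii)$\Rightarrow$(i) is essentially the paper's argument. The paper applies the tower property to indicators and then uses a monotone class argument, whereas you first establish $\mathbb{E}[g(Y)\mid\sigma(X)]=\int g\,dZ$ and then apply the tower property; the two orderings are interchangeable.

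For (i)$\Rightarrow$(ii) you take a genuinely different route. You invoke the existence of a regular conditional distribution $K$ of $Y$ given an arbitrary sub-$\sigma$-algebra $\mathcal{G}$; this is standard for Polish $\mathcal{Y}$, for instance Kallenberg's Theorem~6.3 applied to the identity map $(\Omega,\mathcal{F})\to(\Omega,\mathcal{G})$. You then identify $K$ with $Z$ off a single null set by comparing them on a countable $\pi$-system. This hands the version-selection problem to a textbook theorem. It also yields a sharper conclusion: the $\mathcal{G}$-measurable version is $\mathbb{P}(Y\in\cdot\mid\mathcal{G})$ itself.

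The paper instead proves a self-contained stitching result, Lemma~\ref{lem:simultaneous_kernel_version}. It assembles $\mathcal{G}$-measurable versions of $Z(\cdot)(C_n)$ into $W:\Omega\to[0,1]^{\mathbb{N}}$. Lusin--Souslin, applied to the injective Borel map $\Phi(\nu)=(\nu(C_n))_n$, gives a Borel image and a Borel inverse. The paper then redefines $W$ off the $\mathcal{G}$-measurable set $D=W^{-1}(\Phi(\mathcal{P}(\mathcal{Y})))$.

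This is exactly how your abandoned route (a) can be completed, so it is not circular. You do not need to enlarge the exceptional set to a $\mathcal{G}$-measurable null set. You only need a $\mathcal{G}$-measurable set containing the complement of the exceptional set, on which the countable family of versions is consistent with some probability measure.

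The paper's lemma buys generality: it applies to any random probability measure whose countably many evaluations admit $\mathcal{G}$-measurable versions, not only to conditional laws. The paper reuses it in Lemma~\ref{lem:conditional_law_factorisation} and Lemma~\ref{lem:gaussian_sobolev_zero_loss}. Your route is shorter for this proposition alone, and the route (a) paragraph as written can simply be dropped.
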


\begin{proof}
See Appendix~\ref{app:deferred_proofs}.
\end{proof}

An SDR $\sigma$-algebra need not be unique. Indeed, the full $\sigma$-algebra $\sigma(X)$ is always an SDR $\sigma$-algebra, since conditioning on $\sigma(X)$ trivially preserves all information in $X$. The object of interest is therefore the smallest sub-$\sigma$-algebra of $\sigma(X)$ that still retains all information about $Y$ contained in $X$. This motivates the following forward-law definition.

\begin{Definition}[Central $\sigma$-algebra]
\label{def:central_algebra}
Let $\mathbb{P}_{Y\mid X}$ be the regular conditional measure of $Y$ given $X$. The central $\sigma$-algebra for $Y$ versus $X$ is defined by 
\[
    \mathcal{G}_{Y\mid X}\coloneqq\sigma(\omega\mapsto\mathbb{P}_{Y\mid X}(\cdot\mid X(\omega))).
\]
Let $\mathcal{C}\subseteq \mathcal{B}(\mathcal{Y})$ be a countable determining class. Then the central $\sigma$-algebra can be written equivalently as
\[
    \mathcal{G}_{Y\mid X}=\sigma(\omega\mapsto\mathbb{P}_{Y\mid X}(C\mid X(\omega)):C\in\mathcal{C}).
\]
The notation is understood modulo $\mathbb{P}$-null sets: different versions of the regular conditional law generate the same completed $\sigma$-algebra.
\end{Definition}
The following result gives the corresponding minimality statement.

\begin{Proposition}
\label{prop:unique_central_algebra}
The central $\sigma$-algebra $\mathcal{G}_{Y\mid X}\subseteq\sigma(X)$ is an SDR $\sigma$-algebra for $Y$ versus $X$. Moreover, if $\mathcal{G}\subseteq\sigma(X)$ is any SDR $\sigma$-algebra, then
\[
    \mathcal{G}_{Y\mid X}\subseteq\overline{\mathcal{G}}^{\mathbb{P}},
\]
where $\overline{\mathcal{G}}^{\mathbb{P}}$ denotes the $\mathbb{P}$-completion of $\mathcal{G}$. Consequently, $\mathcal{G}_{Y\mid X}$ is the unique minimal SDR $\sigma$-algebra up to $\mathbb{P}$-null sets.
\end{Proposition}

\begin{proof}
The kernel $x\mapsto\mathbb{P}_{Y\mid X}(\cdot\mid x)$ is Borel as a map into $\mathcal{P}(\mathcal{Y})$. Consequently, its composition with $X$ is $\sigma(X)$-measurable, and therefore $\mathcal{G}_{Y\mid X}\subseteq\sigma(X)$. By definition, this composition is also $\mathcal{G}_{Y\mid X}$-measurable. Hence, by Proposition~\ref{prop:sdr_MK},  
\[
    Y\indep X\mid \mathcal{G}_{Y\mid X}.
\]
Thus $\mathcal{G}_{Y\mid X}$ is an SDR $\sigma$-algebra.

Now let $\mathcal{G}\subseteq\sigma(X)$ be any SDR $\sigma$-algebra.
By Proposition~\ref{prop:sdr_MK}, the random probability measure $\mathbb{P}_{Y\mid X}(\cdot\mid X)$ admits a $\mathcal{G}$-measurable version. Therefore
\[
   \sigma\bigl(\omega\to\mathbb{P}_{Y\mid X}(\cdot\mid X(\omega))\bigr)\subseteq\overline{\mathcal{G}}^{\mathbb{P}}.
\]
That is, $\mathcal{G}_{Y\mid X}\subseteq\overline{\mathcal{G}}^{\mathbb{P}}$. Thus, after identifying sub-$\sigma$-algebras with the same $\mathbb{P}$-completion, $\mathcal{G}_{Y\mid X}$ is contained in every SDR $\sigma$-algebra. It is therefore the unique smallest, and hence the unique minimal, SDR $\sigma$-algebra in this sense.
\end{proof}

\paragraph{Relation to the inverse conditional law formulation.}
\textcite{lee2013general} define the central $\sigma$-algebra as the unique minimal SDR $\sigma$-algebra, when such an object exists. Their existence theorem is formulated through the inverse regular conditional distributions $\{\mathbb{P}_{X\mid Y}(\cdot\mid y):y\in\mathcal{Y}\}$ and assumes that this family is dominated by a common $\sigma$-finite measure.

Their proof follows the classical minimal-sufficiency argument. By the Halmos-Savage theorem \parencite[Lemma 2.1]{shao1999mathematical}, the dominating measure may be chosen as a countable convex combination
\[
    \mathbb{Q}=\sum_{k\in\mathbb{N}}c_k\mathbb{P}_{X\mid Y}(\cdot\mid y_k)\quad
    \textrm{where}\quad  c_k>0,\quad \sum_{k\in\mathbb{N}}c_k=1,
\]
so that $\mathbb{P}_{X\mid Y}(\cdot\mid y)\ll\mathbb{Q}$ for every $y\in\mathcal{Y}$.
\textcite{lee2013general} write
\[
    p_y(x)\coloneqq\frac{d\mathbb{P}_{X\mid Y}(\cdot\mid y)}{d\mathbb{Q}}(x),
\]
to characterise SDR $\sigma$-algebras by the essential $\mathcal{G}$-measurability of $p_y$, for every $y$, modulo $\mathbb{Q}$.

The present formulation avoids this inverse-density construction because the transport problem is naturally expressed through the forward conditional law. By Proposition~\ref{prop:sdr_MK}, the SDR condition $Y\indep X\mid \mathcal{G}$ is equivalent to the existence of a $\mathcal{G}$-measurable version of the random conditional law $\mathbb{P}_{Y\mid X}(\cdot\mid X)$. Proposition~\ref{prop:unique_central_algebra} therefore shows that $\mathcal{G}_{Y\mid X}$ is sufficient and minimal up to $\mathbb{P}$-completion. When the domination assumption used by \textcite{lee2013general} also holds, the $\mathbb{P}$-completions of their central $\sigma$-algebra and the present $\mathcal{G}_{Y\mid X}$ coincide, since both are minimal among SDR
\(\sigma\)-algebras. 

Although sub-$\sigma$-algebras provide a clean way to state sufficiency and minimality, they are too abstract to estimate directly from data. For this reason, \textcite{lee2013general} pass from $\sigma$-algebras to estimable classes of functions. We recall the corresponding function-space terminology here. Define
\[
    L^{2}_{0}(\mathbb{P}_{X};\mathbb{R})\coloneqq\left\{ h:\mathcal{X}\rightarrow\mathbb{R}:h\textrm{ is Borel},\;\mathbb{E}[h^{2}(X)]<\infty,\;\mathbb{E}[h(X)]=0\right\}.
\]

\begin{Definition}[SDR class]
For a sub-$\sigma$-algebra $\mathcal{G} \subseteq \sigma(X)$, its SDR class is
\[
    \mathfrak{M}_{\mathcal{G}}\coloneqq\left\{ h\in L^{2}_{0}(\mathbb{P}_{X};\mathbb{R}):h(X)\text{ is }\mathcal{G}\text{-measurable}\right\}.
\]
If $\mathcal{G} = \mathcal{G}_{Y\mid X}$, then $\mathfrak{M}_{\mathcal{G}_{Y\mid X}}$ is called the central class.
\end{Definition}

Thus an SDR class is the function-space counterpart of an SDR $\sigma$-algebra: it records the centred square-integrable functions of $X$ whose information is already contained in $\mathcal{G}$. The central class is the corresponding object for the minimal SDR $\sigma$-algebra. In the framework of \textcite{lee2013general}, the central class is targeted by operator-based estimators such as generalised sliced inverse regression (GSIR) and generalised sliced average variance estimation (GSAVE). These procedures generalise classical sliced inverse regression and sliced average variance estimation via the kernel trick, and their kernel and spectral steps can be computationally demanding for large samples. In the present paper we do not estimate this function class directly. Instead, we assume that the relevant $\sigma$-algebra is generated, up to null sets, by a finite-dimensional Borel map. Accordingly, a Borel map $R:\mathcal{X}\to\mathbb{R}^d$ is called a sufficient representation for $\mathcal{G}$ if
\[
    \overline{\mathcal{G}}^{\mathbb{P}}=\overline{\sigma(R(X))}^{\mathbb{P}}.
\]
The terminology ``sufficient representation'' refers to this finite-generator assumption. It is the nonlinear analogue of the usual finite-dimensional sufficient predictor in linear SDR: when such reductions exist, it is natural to pass between $\sigma$-algebras, classes of functions, and finite-dimensional representations \parencite{lee2013general,Li2018}.

This is also where the present approach departs from the operator-based
estimators developed by \textcite{lee2013general}. Their GSIR and GSAVE
procedures target subspaces of the central class through spectral decompositions
of population operators and their sample analogues, in a framework closely
related to reproducing-kernel approaches to nonlinear SDR
\parencite{Fukumizuetal2004}. In contrast, we do not estimate the central class
through such operators. Instead, we assume that the relevant SDR
\(\sigma\)-algebra admits a finite-dimensional Borel generator and directly
parameterise this generating map. We next show how the optimal transport map
and the corresponding velocity field factor through such a representation.

\subsection{Factorisation of the COT map and velocity}

We now connect conditional optimal transport with the SDR framework. Our goal is to translate the conditional independence assumption underlying SDR into a functional measurability condition that can be exploited by optimal transport maps. The Doob--Dynkin lemma \parencite[Lemma 1.13]{kallenberg1997foundations} plays a central role in the argument; we restate it in Lemma~\ref{lem:doob_dynkin}.

\begin{Lemma}[Doob-Dynkin lemma]
\label{lem:doob_dynkin}
Let $(\Omega,\mathcal{F})$ be a measurable space, and let $\mathcal{S}$ and $\mathcal{E}$ be standard Borel spaces. Let $\xi:\Omega\to \mathcal{S}$ and $\zeta:\Omega\to \mathcal{E}$ be measurable maps. Then $\xi$ is $\sigma(\zeta)$-measurable if and only if there exists a Borel map $h:\mathcal{E}\to \mathcal{S}$ such that $\xi=h\circ\zeta$.
\end{Lemma}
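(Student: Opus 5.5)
The plan is to prove the two directions separately. The reverse implication is elementary. The forward implication reduces to the case of real-valued $\xi$, and that case is obtained from a monotone-class argument.

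\emph{Reverse implication.} Suppose $\xi=h\circ\zeta$ with $h$ Borel. For $B\in\mathcal{B}(\mathcal{S})$ we have $\xi^{-1}(B)=\zeta^{-1}(h^{-1}(B))$, and $h^{-1}(B)\in\mathcal{B}(\mathcal{E})$. Hence $\xi^{-1}(B)\in\sigma(\zeta)$, so $\xi$ is $\sigma(\zeta)$-measurable.

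\emph{Reduction to real values.} For the forward direction I first use that $\mathcal{S}$ is standard Borel. By Kuratowski's isomorphism theorem there is a Borel isomorphism $\iota:\mathcal{S}\to\iota(\mathcal{S})$, where $\iota(\mathcal{S})\subseteq\mathbb{R}$ is a Borel subset (countable or all of $[0,1]$, say). It then suffices to treat the real-valued random variable $\iota\circ\xi$, which is again $\sigma(\zeta)$-measurable.

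\emph{Real-valued case.} I build $g:\mathcal{E}\to\mathbb{R}$ Borel with $\iota\circ\xi=g\circ\zeta$ in three steps.
\begin{enumerate}[label=\roman*)]
    \item If $\iota\circ\xi=\mathbbm{1}_A$ with $A\in\sigma(\zeta)$, then $A=\zeta^{-1}(C)$ for some $C\in\mathcal{B}(\mathcal{E})$. Take $g=\mathbbm{1}_C$.
    \item By linearity the same holds for simple functions.
    \item A general $\sigma(\zeta)$-measurable real function is a pointwise limit of simple functions $s_n=g_n\circ\zeta$. Set $g\coloneqq\limsup_n g_n$ on the Borel set where this limsup is finite, and $0$ elsewhere. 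On the range of $\zeta$ the sequence $g_n$ converges, so $g\circ\zeta=\iota\circ\xi$ pointwise.
\end{enumerate}

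\emph{Pulling back to $\mathcal{S}$.} This is the main obstacle. The function $g$ need not take values in $\iota(\mathcal{S})$ off the range of $\zeta$, so I cannot simply compose with $\iota^{-1}$. The set $D\coloneqq g^{-1}(\iota(\mathcal{S}))$ is Borel because $\iota(\mathcal{S})$ is Borel, and it contains $\zeta(\Omega)$. Fix a point $s_0\in\mathcal{S}$; $\mathcal{S}$ is nonempty whenever $\Omega$ is, and the case $\Omega=\emptyset$ is trivial. Define
\[
    h(e)\coloneqq\iota^{-1}(g(e))\quad\text{for } e\in D,\qquad h(e)\coloneqq s_0\quad\text{for } e\notin D.
\]
Then $h$ is Borel, since $\iota^{-1}$ is Borel on $\iota(\mathcal{S})$. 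Moreover $h\circ\zeta=\iota^{-1}\circ\iota\circ\xi=\xi$, which completes the forward direction.

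Alternatively, the whole argument can be cited directly from \textcite[Lemma 1.13]{kallenberg1997foundations}.
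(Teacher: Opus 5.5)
Your proof is correct, and it follows the standard argument behind the result. The paper does not prove this lemma itself; it cites Kallenberg's Lemma 1.13, and your steps match that proof: embed $\mathcal{S}$ Borel-isomorphically into $\mathbb{R}$, treat indicators, then simple functions, then pointwise limits, and finally redefine $h$ off the Borel set $g^{-1}(\iota(\mathcal{S}))$. As in Kallenberg's version, your argument never uses that $\mathcal{E}$ is standard Borel, so it proves the lemma for an arbitrary measurable space $\mathcal{E}$.
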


We adopt the standard assumptions of sufficient dimension reduction, adapted for our optimal transport framework.

\begin{Assumption}
\label{ass:cond_indep}
Let $Y_0$, $Y_1$, and $X$ be random elements defined on a common probability space $(\Omega,\mathcal{F},\mathbb{P})$. The variables $Y_0$ and $Y_1$ take values in a separable Hilbert space $\mathcal{Y}$, and $X$ takes values in a separable Hilbert space $\mathcal{X}$.\footnote{
Examples of random elements in separable Hilbert spaces include random variables, random vectors, random matrices, multi-way arrays (informally referred to as tensors in the machine learning community), and functional data (square-integrable stochastic processes). 
} We assume $Y_0 \indep (X, Y_1)$ and $Y_1\indep X\mid\mathcal{G}$, where $\mathcal{G}\subseteq \sigma(X)$.
\end{Assumption}

The following assumption supplies the moment and source regularity conditions used to construct the conditional Monge map and the associated conditional interpolation. The regularity condition is imposed on the artificial source law $\rho$, not on the conditional target laws.
\begin{Assumption}
\label{ass:vanish_gauss_null}
Let $\rho\coloneqq\mathbb{P}_{Y_0}$ and $\eta\coloneqq\mathbb{P}_{X}$. We assume $\rho\in\mathcal{P}^{2}_{r}(\mathcal{Y})$, $\eta\in\mathcal{P}^{2}(\mathcal{X})$, and $\mu_1\coloneqq\mathbb{P}_{(X,Y_1)}\in\mathcal{P}^{2,\eta}(\mathcal{X}\times\mathcal{Y})$. We write $\mu_1(\cdot\mid x)\coloneqq\mathbb{P}_{Y_1\mid X}(\cdot\mid x)$ for the regular conditional measure of $Y_1$ given $X$; the finite second-moment condition implies $\mu_1(\cdot\mid x)\in\mathcal{P}^{2}(\mathcal{Y})$ for $\eta$-a.e.\ $x\in\mathcal{X}$. 
\end{Assumption}

We now connect the general SDR theory on the abstract probability space to the dynamic transport problem on the state space. The SDR $\sigma$-algebra $\mathcal{G}$ lives on $\Omega$, whereas the COT maps are Borel maps on $\mathcal{X}\times\mathcal{Y}$. The link between these two levels is obtained by pulling state-space kernels back through the random element $X$. 
The results below apply to any SDR $\sigma$-algebra $\mathcal{G}$; taking $\mathcal{G}=\mathcal{G}_{Y_1\mid X}$, the central $\sigma$-algebra for $Y_1$ versus $X$, gives the corresponding statements for the central $\sigma$-algebra.

The following assumption states that the relevant SDR $\sigma$-algebra admits a finite-dimensional sufficient representation. This is the form needed by the factorisation and by the neural parameterisation used later.

\begin{Assumption}
\label{ass:sigma_algebra}
There exist $d\geq 1$ and a bounded Borel map $R:\mathcal{X}\to \mathbb{R}^d$ such that $\overline{\mathcal{G}}^{\mathbb{P}}=\overline{\sigma(R(X))}^{\mathbb{P}}$.\footnote{We choose $R$ bounded. This entails no loss of generality at the level of generated $\sigma$-algebras, since composing any finite-dimensional generator coordinatewise with the injective bounded Borel map $r\mapsto\arctan(r)$ preserves the generated $\sigma$-algebra.}
\end{Assumption}

Assumption~\ref{ass:sigma_algebra} is a structural finite-dimensional reduction assumption, not an assumption that the whole central class is a finite-dimensional linear subspace of $L^2(\mathbb{P}_X;\mathbb{R})$. It is the $\sigma$-algebraic version of assuming that the regression relation is captured by a finite sufficient predictor, as is standard in SDR \parencite{lee2013general,Li2018}. The bounded representative ensures $R_{\#}\eta\in\mathcal{P}^{2}(\mathbb{R}^d)$ when we invoke the dynamic COT result on the reduced state space.

The integer $d$ should not be interpreted as an intrinsic dimension of a $\sigma$-algebra: a countably generated standard-Borel information structure can be encoded, up to null sets, by a single real-valued Borel variable. Thus the reduction dimension becomes statistically meaningful only after one restricts the candidate representations to a structured class, such as linear maps, smooth maps, or neural networks with a prescribed output dimension. In the theory below, $R$ is used to express the existence of a finite generator for the relevant SDR $\sigma$-algebra; in the algorithm, the chosen output dimension and network architecture define the estimable class of reductions.

The next lemma is the state-space bridge between conditional independence and conditional transport. It also makes explicit the passage from equality of completed $\sigma$-algebras on $\Omega$ to a Borel response kernel indexed by $R(x)$.

\begin{Lemma}[Conditional-law factorisation]
\label{lem:conditional_law_factorisation}
Under Assumptions~\ref{ass:cond_indep},~\ref{ass:vanish_gauss_null}, and~\ref{ass:sigma_algebra}, there exists a Borel map $\kappa:\mathbb{R}^d\to\mathcal{P}(\mathcal{Y})$ such that $\kappa(s)\in\mathcal{P}^{2}(\mathcal{Y})$ for every $s\in\mathbb{R}^d$ and
\[
    \mathbb{P}_{Y_1\mid X}(\cdot\mid X)=\kappa(R(X))
    \qquad\mathbb{P}\text{-a.s.}
\]
Equivalently,
\[
    \mu_1(\cdot\mid x)=\kappa(R(x))
    \qquad\text{for }\eta\text{-a.e. }x\in\mathcal{X}.
\]
\end{Lemma}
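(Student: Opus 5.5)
Starting from Assumption~\ref{ass:cond_indep}, Proposition~\ref{prop:sdr_MK} gives a $\mathcal{G}$-measurable map $\widetilde Z:\Omega\to\mathcal{P}(\mathcal{Y})$ with $\widetilde Z=Z$ $\mathbb{P}$-a.s., where $Z(\omega)=\mathbb{P}_{Y_1\mid X}(\cdot\mid X(\omega))$. The plan is to move this $\mathcal{G}$-measurability onto $\sigma(R(X))$ using Assumption~\ref{ass:sigma_algebra}, then apply the Doob--Dynkin Lemma~\ref{lem:doob_dynkin}, and finally modify the resulting kernel so that every value lies in $\mathcal{P}^2(\mathcal{Y})$.

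\textbf{Step 1: from the completion back to $\sigma(R(X))$.} We know $\widetilde Z$ is measurable with respect to $\overline{\mathcal{G}}^{\mathbb{P}}=\overline{\sigma(R(X))}^{\mathbb{P}}$. The claim is that a $\overline{\sigma(R(X))}^{\mathbb{P}}$-measurable map into the standard Borel space $\mathcal{P}(\mathcal{Y})$ agrees $\mathbb{P}$-a.s.\ with a $\sigma(R(X))$-measurable map $Z'$. This is the step I expect to need the most care, though it is standard. I would fix a Borel isomorphism $\iota:\mathcal{P}(\mathcal{Y})\to B\subseteq\mathbb{R}$, which exists because $\mathcal{P}(\mathcal{Y})$ is Polish. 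A real-valued function measurable for the completion agrees a.s.\ with one measurable for the uncompleted $\sigma$-algebra: approximate by simple functions, replace each set by a $\sigma(R(X))$-set differing on a null set, and take a $\limsup$. Call the result $f$, so that $f=\iota\circ\widetilde Z$ a.s. Then $\{f\in B\}\in\sigma(R(X))$ has full measure. On that set I put $Z'=\iota^{-1}\circ f$, and on its complement I put a fixed element of $\mathcal{P}^2(\mathcal{Y})$. This $Z'$ is $\sigma(R(X))$-measurable and $Z'=Z$ a.s.

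\textbf{Step 2: Doob--Dynkin.} Apply Lemma~\ref{lem:doob_dynkin} with $\xi=Z'$, $\mathcal{S}=\mathcal{P}(\mathcal{Y})$, $\zeta=R(X)$ and $\mathcal{E}=\mathbb{R}^d$. This gives a Borel $\kappa_0:\mathbb{R}^d\to\mathcal{P}(\mathcal{Y})$ with $Z'=\kappa_0(R(X))$, hence $\mathbb{P}_{Y_1\mid X}(\cdot\mid X)=\kappa_0(R(X))$ $\mathbb{P}$-a.s.

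\textbf{Step 3: second moments.} Let $A_0\coloneqq\{s:\int\|y\|^2\kappa_0(s)(dy)<\infty\}$. This set is Borel, because $\nu\mapsto\int\|y\|^2\,d\nu$ is Borel on $\mathcal{P}(\mathcal{Y})$ as a monotone limit of bounded continuous truncations. Set $\kappa=\kappa_0$ on $A_0$ and $\kappa=\overline\nu\in\mathcal{P}^2(\mathcal{Y})$ off $A_0$, mirroring the modification in the proof of Theorem~\ref{thm:disp_interp}. By Assumption~\ref{ass:vanish_gauss_null}, $\mu_1(\cdot\mid x)\in\mathcal{P}^2(\mathcal{Y})$ for $\eta$-a.e.\ $x$, so $R(X)\in A_0$ a.s. Hence $\kappa(R(X))=\kappa_0(R(X))$ a.s., and the first identity holds.

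\textbf{Step 4: state-space form.} Transfer to the state space through the law of $X$. The set $\{x:\mu_1(\cdot\mid x)\neq\kappa(R(x))\}$ is Borel, since both maps are Borel into a standard Borel space whose diagonal is Borel. Its preimage under $X$ is $\mathbb{P}$-null, so the set itself is $\eta$-null. This gives the equivalent formulation $\mu_1(\cdot\mid x)=\kappa(R(x))$ for $\eta$-a.e.\ $x$.
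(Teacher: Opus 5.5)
Your proof is correct and follows the paper's argument step for step. You take the $\mathcal{G}$-measurable version from Proposition~\ref{prop:sdr_MK}, descend from the completion $\overline{\sigma(R(X))}^{\mathbb{P}}$ to $\sigma(R(X))$, apply Doob--Dynkin, make a Borel modification off the finite-second-moment set, and transfer to the state space via the Borel equality set; the only cosmetic difference is in Step~1, where you code $\mathcal{P}(\mathcal{Y})$ into $\mathbb{R}$ by a Borel isomorphism, whereas the paper takes versions of the countably many scalars $K^{\mathcal{G}}(C_n)$ and reassembles them with Lemma~\ref{lem:simultaneous_kernel_version}, and both rest on the same standard-Borel facts.
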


\begin{proof}
By Proposition~\ref{prop:sdr_MK}, there is a $\mathcal{G}$-measurable map $K^{\mathcal{G}}:\Omega\to\mathcal{P}(\mathcal{Y})$ satisfying
\[
    K^{\mathcal{G}}=\mathbb{P}_{Y_1\mid X}(\cdot\mid X)
    \qquad\mathbb{P}\text{-a.s.}
\]
Set $\mathcal{H}\coloneqq\sigma(R(X))$, and let $\{C_n\}_{n\in\mathbb{N}}$ be a countable determining class for $\mathcal{P}(\mathcal{Y})$. For each $n$, the scalar random variable $K^{\mathcal{G}}(C_n)$ is measurable with respect to $\mathcal{G}$ and hence with respect to the common completion
\[
    \overline{\mathcal{G}}^{\mathbb{P}}
    =\overline{\mathcal{H}}^{\mathbb{P}}.
\]
Every real-valued random variable measurable with respect to a completed $\sigma$-algebra has a version measurable with respect to the underlying $\sigma$-algebra. Lemma~\ref{lem:simultaneous_kernel_version}, applied to these countably many versions, therefore yields an $\mathcal{H}$-measurable map $\overline K:\Omega\to\mathcal{P}(\mathcal{Y})$ such that $\overline K=K^{\mathcal{G}}$ $\mathbb{P}$-a.s.

The Doob--Dynkin lemma now gives a Borel map $\kappa_0:\mathbb{R}^d\to\mathcal{P}(\mathcal{Y})$ such that
\[
    \overline K=\kappa_0(R(X)).
\]
Let $\zeta\coloneqq R_\#\eta$. The set
\[
    D_2\coloneqq\{s\in\mathbb{R}^d:\kappa_0(s)\in\mathcal{P}^{2}(\mathcal{Y})\}
\]
is Borel because $\mathcal{P}^{2}(\mathcal{Y})$ is a Borel subset of $\mathcal{P}(\mathcal{Y})$. Assumption~\ref{ass:vanish_gauss_null} and the almost-sure identities above imply $\zeta(D_2)=1$. Define
\[
    \kappa(s)\coloneqq
    \begin{cases}
        \kappa_0(s), & s\in D_2,\\
        \rho, & s\notin D_2.
    \end{cases}
\]
Then $\kappa$ is Borel as a map into $\mathcal{P}(\mathcal{Y})$, takes values in $\mathcal{P}^{2}(\mathcal{Y})$, and satisfies
\[
    \mathbb{P}_{Y_1\mid X}(\cdot\mid X)=\kappa(R(X))
    \qquad\mathbb{P}\text{-a.s.}
\]

Finally, both $x\mapsto\mathbb{P}_{Y_1\mid X}(\cdot\mid x)$ and $x\mapsto\kappa(R(x))$ are Borel maps into the Polish space $\mathcal{P}(\mathcal{Y})$. Hence their equality set is Borel, and the preceding display shows that its $\eta=\mathbb{P}_X$ measure is one. This proves the state-space identity.
\end{proof}

\begin{Proposition}
\label{prop:T_Gmeasurable}
Under Assumptions~\ref{ass:cond_indep},~\ref{ass:vanish_gauss_null}, and~\ref{ass:sigma_algebra}, let $T^\star(x,y)=(x,T_{\mathcal{Y}}^\star(x,y))$ be a Borel representative of the $\mu_0$-a.e.\ unique triangular Monge map pushing $\mu_{0}=\eta\otimes\rho$ to $\mu_{1}=\mathbb{P}_{(X,Y_1)}$ for the quadratic cost. Then there exists a Borel map $G_{\mathcal{Y}}^\star:\mathbb{R}^{d}\times\mathcal{Y}\to\mathcal{Y}$ such that
\[
    T_{\mathcal{Y}}^\star(x,y)=G_{\mathcal{Y}}^\star(R(x),y)
\]
for $\mu_0$-a.e.\ $(x,y)$.
\end{Proposition}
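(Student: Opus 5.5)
The plan is to build the reduced map directly on the reduced state space and then pull it back through $R$. By Lemma~\ref{lem:conditional_law_factorisation} there is a Borel kernel $\kappa:\mathbb{R}^d\to\mathcal{P}(\mathcal{Y})$ with values in $\mathcal{P}^2(\mathcal{Y})$ such that $\mu_1(\cdot\mid x)=\kappa(R(x))$ for $\eta$-a.e.\ $x$. Set $\zeta\coloneqq R_\#\eta$, which lies in $\mathcal{P}^2(\mathbb{R}^d)$ because $R$ is bounded. Define the reduced target $\nu\coloneqq\zeta\otimes\kappa$, that is, $\nu(ds,dy)=\zeta(ds)\kappa(dy\mid s)$. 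It has finite second moment, since $\int\int\|y\|^2\kappa(dy\mid s)\zeta(ds)=\int\int\|y\|^2\mu_1(dy\mid x)\eta(dx)<\infty$. Apply Theorem~\ref{thm:disp_interp}(i), equivalently Proposition~\ref{prop:condMonge}, on the separable Hilbert space $\mathbb{R}^d\times\mathcal{Y}$ with source $\zeta\otimes\rho$ and target $\nu$. This gives a Borel map $G^\star_{\mathcal{Y}}:\mathbb{R}^d\times\mathcal{Y}\to\mathcal{Y}$ such that, for $\zeta$-a.e.\ $s$, $G^\star_{\mathcal{Y}}(s,\cdot)$ is the $\rho$-essentially unique quadratic optimal map from $\rho$ to $\kappa(s)$.

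Next, transfer the null sets. Let $N\subset\mathbb{R}^d$ be the Borel $\zeta$-null set where this optimality fails. Then $R^{-1}(N)$ is Borel and $\eta$-null. Let $E\subset\mathcal{X}$ be the Borel $\eta$-null set where either $\mu_1(\cdot\mid x)\neq\kappa(R(x))$ or $T^\star_{\mathcal{Y}}(x,\cdot)$ fails to be the optimal map from $\rho$ to $\mu_1(\cdot\mid x)$; the latter comes from Proposition~\ref{prop:condMonge}. Fix $x\notin E\cup R^{-1}(N)$. Both $T^\star_{\mathcal{Y}}(x,\cdot)$ and $G^\star_{\mathcal{Y}}(R(x),\cdot)$ are then optimal maps from $\rho$ to the same measure $\kappa(R(x))=\mu_1(\cdot\mid x)$. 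Since $\rho\in\mathcal{P}^2_r(\mathcal{Y})$, Proposition~\ref{prop:sol_ot} gives uniqueness, so the two maps agree $\rho$-a.e.

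Finally, conclude with Fubini. The set $D\coloneqq\{(x,y):T^\star_{\mathcal{Y}}(x,y)\neq G^\star_{\mathcal{Y}}(R(x),y)\}$ is Borel, because both maps are jointly Borel. Its $x$-slices are $\rho$-null for $\eta$-a.e.\ $x$, so $\mu_0(D)=\int\rho(D_x)\,\eta(dx)=0$, which is the claim.

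The step I expect to need the most care is the first one, the measurability bookkeeping of the reduced problem. One must check that $\nu$ is a genuine Borel probability measure in $\mathcal{P}^{2,\zeta}(\mathbb{R}^d\times\mathcal{Y})$ whose disintegration is $\kappa$, and that the reduced conditional Monge map is jointly Borel rather than merely slicewise optimal. The boundedness of $R$ in Assumption~\ref{ass:sigma_algebra} and the Borel kernel $\kappa$ from Lemma~\ref{lem:conditional_law_factorisation} are what make both checks straightforward.

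As a fallback if applying the theorem on $\mathbb{R}^d\times\mathcal{Y}$ looked problematic, I would use the measurable selection of optimal couplings invoked in the proof of Theorem~\ref{thm:disp_interp} for the Borel kernel $s\mapsto\Gamma^\star(\rho,\kappa(s))$. Each selected coupling is induced by a map because $\rho$ is regular. Recovering a jointly Borel $G^\star_{\mathcal{Y}}$ from the kernel would then need the disintegration argument of \textcite{Hosseinietal2025}, so the theorem route is preferable.
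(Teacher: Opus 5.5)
Your proposal is correct and follows the paper's proof essentially step for step: the paper likewise forms the reduced target $\zeta(ds)\kappa(s)(dy)$ from the kernel of Lemma~\ref{lem:conditional_law_factorisation}, applies Proposition~\ref{prop:condMonge} on $\mathbb{R}^d\times\mathcal{Y}$ to obtain a Borel $G^\star_{\mathcal{Y}}$, and concludes by slicewise uniqueness of the quadratic optimal map from $\rho$. Your explicit transfer of null sets through $R^{-1}(N)$ and the final Fubini argument spell out details that the paper compresses into the single phrase ``uniqueness gives''.
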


\begin{proof}
Let $\kappa$ be the response kernel from Lemma~\ref{lem:conditional_law_factorisation}, set $\zeta\coloneqq R_{\#}\eta$, and define the measure $\widetilde\nu_1$ on $\mathbb{R}^d\times\mathcal{Y}$ by
\[
    \widetilde\nu_1(ds,dy)\coloneqq\zeta(ds)\kappa(s)(dy).
\]
Since $R$ is bounded and $\mu_1(\cdot\mid x)=\kappa(R(x))$ for $\eta$-a.e.\ $x$, the measure $\widetilde\nu_1$ belongs to $\mathcal{P}^{2,\zeta}(\mathbb{R}^d\times\mathcal{Y})$. Applying Proposition~\ref{prop:condMonge} on the reduced state space with source law $\zeta\otimes\rho$ and target law $\widetilde\nu_1$ gives a Borel triangular map $(s,y)\mapsto(s,G_{\mathcal{Y}}^\star(s,y))$ such that, for $\zeta$-a.e.\ $s$, $G_{\mathcal{Y}}^\star(s,\cdot)$ is the $\rho$-a.e.\ unique optimal map from $\rho$ to $\kappa(s)$.

Proposition~\ref{prop:condMonge}, applied on the original state space with source $\mu_0=\eta\otimes\rho$ and target $\mu_1$, gives that $T_{\mathcal{Y}}^\star(x,\cdot)$ is the $\rho$-a.e.\ unique optimal map from $\rho$ to $\mathbb{P}_{Y_1\mid X}(\cdot\mid x)$ for $\eta$-a.e.\ $x$. Since $\mathbb{P}_{Y_1\mid X}(\cdot\mid x)=\kappa(R(x))$ for $\eta$-a.e.\ $x$, uniqueness gives
\[
    T_{\mathcal{Y}}^\star(x,y)=G_{\mathcal{Y}}^\star(R(x),y)
\]
for $\mu_0$-a.e.\ $(x,y)$.
\end{proof}

Because the optimal velocity in the Benamou--Brenier formulation is explicitly linked to the Monge map, this $\sigma$-algebraic dependence transfers to the dynamics. We state this dependence both along the source parameterisation, using the initial variable $y_0$, and as a function of the current state variable.

\begin{Proposition}
\label{prop:v_Gmeasurable}
Under Assumptions~\ref{ass:cond_indep},~\ref{ass:vanish_gauss_null}, and~\ref{ass:sigma_algebra}, let $T^\star(x,y)=(x,T_{\mathcal{Y}}^\star(x,y))$ be a Borel representative of the triangular Monge map from $\mu_{0}=\eta\otimes\rho$ to $\mu_{1}=\mathbb{P}_{(X,Y_1)}$, and let $G_{\mathcal{Y}}^\star$ be the reduced $\mathcal{Y}$-component map from Proposition~\ref{prop:T_Gmeasurable}. Set\footnote{\(T_t^\star(x,y)=(x,(1-t)y+tT_{\mathcal{Y}}^\star(x,y))\).}
\[
    T_{t}^\star\coloneqq (1-t)\mathrm{id}+tT^\star,\qquad \mu_{t}^\star\coloneqq (T_{t}^\star)_{\#}\mu_{0}.
\]
Then, for every $t\in(0,1)$, the derivative of the source-parameterised interpolation factors through $R$: for $\mu_0$-a.e.\ $(x,y_0)$,
\[
    \dot{T}_{t}^\star(x,y_0)
    =T^\star(x,y_0)-(x,y_0)
    =\bigl(0,G_{\mathcal{Y}}^\star(R(x),y_0)-y_0\bigr).
\]
Moreover, the Monge-induced Borel velocity representative in Theorem~\ref{thm:disp_interp} can be chosen to factor through the current state in the same SDR coordinates. That is, there exists a Borel map $u_{\mathcal{Y}}^\star:[0,1)\times\mathbb{R}^d\times\mathcal{Y}\to\mathcal{Y}$ such that
\[
    v_t^\star(x,y)=(0,u_{\mathcal{Y}}^\star(t,R(x),y))
\]
for $dt$-a.e.\ $t\in[0,1)$ and $\mu_t^\star$-a.e.\ $(x,y)\in\mathcal{X}\times\mathcal{Y}$.
\end{Proposition}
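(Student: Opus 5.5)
The first claim is immediate. By definition $T_t^\star(x,y_0)=(x,(1-t)y_0+tT_{\mathcal{Y}}^\star(x,y_0))$, which is affine in $t$. Hence $\dot T_t^\star(x,y_0)=T^\star(x,y_0)-(x,y_0)$ pointwise. Proposition~\ref{prop:T_Gmeasurable} gives $T_{\mathcal{Y}}^\star(x,y_0)=G_{\mathcal{Y}}^\star(R(x),y_0)$ off a $\mu_0$-null set, which yields the displayed factorisation $\mu_0$-a.e., and this holds simultaneously for all $t$.

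For the current-state factorisation, I would not try to show directly that $v_t^\star$ from Theorem~\ref{thm:disp_interp} is a function of $R(x)$. Instead I would build the reduced velocity on the reduced state space and pull it back. Set $\zeta\coloneqq R_\#\eta$; since $R$ is bounded, $\zeta\in\mathcal{P}^2(\mathbb{R}^d)$. Let $\widetilde\nu_1(ds,dy)=\zeta(ds)\kappa(s)(dy)$ with $\kappa$ from Lemma~\ref{lem:conditional_law_factorisation}, as in the proof of Proposition~\ref{prop:T_Gmeasurable}. Apply Theorem~\ref{thm:disp_interp} on $\mathbb{R}^d\times\mathcal{Y}$ with source $\zeta\otimes\rho$ and target $\widetilde\nu_1$. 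I would take its Borel map to be the $G_{\mathcal{Y}}^\star$ of Proposition~\ref{prop:T_Gmeasurable}; this is legitimate since both are produced by Proposition~\ref{prop:condMonge} and agree $\zeta\otimes\rho$-a.e., and otherwise I would simply redefine $G^\star_{\mathcal{Y}}$ as the map from Theorem~\ref{thm:disp_interp}. The theorem then provides:
\begin{itemize}
\item a Borel set $\widetilde O\subset\mathbb{R}^d\times\mathcal{Y}$ with $(\zeta\otimes\rho)(\widetilde O)=1$;
\item maps $\widetilde T_t(s,y)=(s,(1-t)y+tG^\star_{\mathcal{Y}}(s,y))$ injective on $\widetilde O$;
\item a jointly Borel velocity $\widetilde v_t=(0,u^\star_{\mathcal{Y}}(t,\cdot,\cdot))$ on $[0,1)\times\mathbb{R}^d\times\mathcal{Y}$ satisfying $u^\star_{\mathcal{Y}}(t,\widetilde T_t(s,y))=G^\star_{\mathcal{Y}}(s,y)-y$ for $(s,y)\in\widetilde O$.
\end{itemize}
This $u^\star_{\mathcal{Y}}$ is the required Borel map.

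Next comes the transfer to the original state space. Define the Borel set
\[
O'\coloneqq\{(x,y)\in O:(R(x),y)\in\widetilde O,\ T^\star_{\mathcal{Y}}(x,y)=G^\star_{\mathcal{Y}}(R(x),y)\}.
\]
It has full $\mu_0$-measure for two reasons. The pushforward of $\eta\otimes\rho$ under $(x,y)\mapsto(R(x),y)$ is $\zeta\otimes\rho$, so the condition $(R(x),y)\in\widetilde O$ holds $\mu_0$-a.e. The equality condition holds $\mu_0$-a.e. by Proposition~\ref{prop:T_Gmeasurable}.

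Fix $t\in[0,1)$ and $(x,y)\in O'$, and write $z=T^\star_t(x,y)=(x,y_t)$. Theorem~\ref{thm:disp_interp}(iv) gives $v_t^\star(z)=(0,T^\star_{\mathcal{Y}}(x,y)-y)=(0,G^\star_{\mathcal{Y}}(R(x),y)-y)$. On the other hand, $(R(x),y_t)=\widetilde T_t(R(x),y)$ with $(R(x),y)\in\widetilde O$, so $u^\star_{\mathcal{Y}}(t,R(x),y_t)=G^\star_{\mathcal{Y}}(R(x),y)-y$. The two expressions agree on $T_t^\star(O')$. Because $\mu_t^\star=(T_t^\star)_\#\mu_0$ and $\mu_0(O')=1$, this image carries full $\mu_t^\star$-measure. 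Its measurability is not an issue: injectivity of $T_t^\star$ on $O$ and the Lusin--Souslin theorem make $T_t^\star(O')$ Borel. Alternatively, one can argue with the Borel set $\{(x,y):v_t^\star(x,y)=(0,u^\star_{\mathcal{Y}}(t,R(x),y))\}$, which contains $T_t^\star(O')$. The conclusion therefore holds for every $t\in[0,1)$, hence for $dt$-a.e. $t$.

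The main obstacle is making sure the identification happens on one common full-measure set, valid for all $t$ at once, rather than on null sets that depend on $t$. The reduced Theorem~\ref{thm:disp_interp} must use the same version of $G^\star_{\mathcal{Y}}$ for which the pointwise relation on $\widetilde O$ holds. Relabelling $G^\star_{\mathcal{Y}}$ to be that version, and building $O'$ once, independent of $t$, resolves this.
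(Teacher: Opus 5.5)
Your proposal is correct and follows essentially the same route as the paper: apply Theorem~\ref{thm:disp_interp} to the reduced problem on $\mathbb{R}^d\times\mathcal{Y}$ (your target $\zeta(ds)\kappa(s)(dy)$ coincides with the paper's $\nu_1=G^\star_{\#}(\zeta\otimes\rho)$), pull the resulting velocity back through $(x,y)\mapsto(R(x),y)$, and compare it with the inverse-map representative along the interpolation paths. Your explicit $t$-independent full-measure set $O'$, together with the Lusin--Souslin measurability remark, makes the comparison step more precise than the paper's version, and it even gives the identity for every $t\in[0,1)$ rather than only for $dt$-a.e.\ $t$.
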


\begin{proof}
By Proposition~\ref{prop:T_Gmeasurable},
\[
    T_{\mathcal{Y}}^\star(x,y_0)=G_{\mathcal{Y}}^\star(R(x),y_0)
\]
for $\mu_0$-a.e.\ $(x,y_0)$, which proves the displayed factorisation of $\dot T_t^\star$.

For the current-state factorisation, let $\zeta\coloneqq R_{\#}\eta$ and define
\[
\nu_0\coloneqq\zeta\otimes\rho,\qquad
    G^\star(s,y_0)\coloneqq(s,G_{\mathcal{Y}}^\star(s,y_0)),\qquad
    \nu_1\coloneqq G^\star_{\#}\nu_0
\]
on $\mathbb{R}^d\times\mathcal{Y}$. By construction, the conditional law of $\nu_1$ given $s$ is $(G_{\mathcal{Y}}^\star(s,\cdot))_\#\rho$ for $\zeta$-a.e.\ $s$.
Since $R$ is bounded, $\zeta\in\mathcal{P}^{2}(\mathbb{R}^d)$. Moreover, $\nu_1\in\mathcal{P}^{2,\zeta}(\mathbb{R}^d\times\mathcal{Y})$, because $G_{\mathcal{Y}}^\star(R(x),y_0)=T_{\mathcal{Y}}^\star(x,y_0)$ for $\mu_0$-a.e.\ $(x,y_0)$ and $T^\star_{\#}\mu_0=\mu_1\in\mathcal{P}^{2,\eta}(\mathcal{X}\times\mathcal{Y})$. By the construction in Proposition~\ref{prop:T_Gmeasurable}, $G_{\mathcal{Y}}^\star(s,\cdot)$ is, for $\zeta$-a.e.\ $s$, the $\rho$-a.e.\ unique quadratic optimal map from $\rho$ to $(G_{\mathcal{Y}}^\star(s,\cdot))_{\#}\rho$. Hence $G^\star$ is the conditional Monge map from $\nu_0$ to $\nu_1$.

Apply Theorem~\ref{thm:disp_interp} to this reduced COT problem. Its Borel velocity representative can be written as $(0,u_{\mathcal{Y}}^\star)$ for a Borel map $u_{\mathcal{Y}}^\star:[0,1)\times\mathbb{R}^d\times\mathcal{Y}\to\mathcal{Y}$ satisfying
\[
    u_{\mathcal{Y}}^\star\bigl(t,s,(1-t)y_0+tG_{\mathcal{Y}}^\star(s,y_0)\bigr)
    =G_{\mathcal{Y}}^\star(s,y_0)-y_0
\]
for $dt\otimes\nu_{0}$-a.e.\ $(t,s,y_0)$. Pulling this identity back by $(x,y_0)\mapsto(R(x),y_0)$ and using the first display gives
\[
    (0,u_{\mathcal{Y}}^\star(t,R(x),T_{t,\mathcal{Y}}^\star(x,y_0)))
    =(0,T_{\mathcal{Y}}^\star(x,y_0)-y_0)
\]
for $dt\otimes\mu_{0}$-a.e.\ $(t,x,y_0)$, where $T_{t,\mathcal{Y}}^\star(x,y_0)=(1-t)y_0+tT_{\mathcal{Y}}^\star(x,y_0)$.

Define
\[
    \widetilde v_t(x,y)\coloneqq(0,u_{\mathcal{Y}}^\star(t,R(x),y)).
\]
The map $(t,x,y)\mapsto\widetilde v_t(x,y)$ is Borel and satisfies
\[
    \widetilde v_t(T_t^\star(x,y_0))=(0,T_{\mathcal{Y}}^\star(x,y_0)-y_0)
\]
for $dt\otimes\mu_0$-a.e.\ $(t,x,y_0)$. The inverse-map representative from Theorem~\ref{thm:disp_interp} satisfies the same identity along the same interpolation paths. Since $\mu_t^\star=(T_t^\star)_{\#}\mu_0$, the two representatives agree for $dt$-a.e.\ $t$ and $\mu_t^\star$-a.e.\ $(x,y)$. Replacing the representative from Theorem~\ref{thm:disp_interp} by $\widetilde v_t$ on this equivalence class gives the asserted current-state factorisation. Thus the velocity field admits a Borel representative that depends on the
current state only through \(R(x)\) and the current \(\mathcal{Y}\)-coordinate.
\end{proof}

Proposition~\ref{prop:v_Gmeasurable} shows that both the time derivative along
the interpolation and a current-state velocity representative depend on the
covariate only through the sufficient representation \(R(x)\). By Corollary~\ref{cor:weak_cond_BB}, this representative is an
optimal triangular velocity representative on every restricted interval
\([0,1-\tau]\), \(\tau\in(0,1)\).

\section{SDR-COT estimation via flow matching}
\label{sec:sdr_cot_estimation}

The preceding factorisation results show that, under the SDR condition, both the terminal response map and the current-state response velocity depend on the covariate only through a sufficient representation. This section turns that structural result into an estimable criterion. We first formulate the population conditional-flow-matching objective and its exhaustiveness property, and then describe the relaxed empirical coupling and the Euclidean and functional implementations used in the numerical experiments.

\subsection{Population flow-matching formulation}
\label{sec:cot_via_flow_matching}
To solve the dynamic COT problem, we use conditional flow matching (CFM), a simulation-free regression method for learning velocity fields from endpoint pairs \parencite{lipman2022flow}. The term ``conditional'' in CFM should not be confused with conditioning on the covariate $X$ in COT; rather, it refers to conditioning on a realised pair of endpoints. Given a coupling $\gamma$, let $(Z_0,Z_1)\sim\gamma$ and define the linear interpolation
\[
   Z_t \coloneqq (1-t)Z_0+tZ_1,\qquad t\in[0,1].
\]
The CFM objective is
\[
    \mathcal{L}_{\mathrm{CFM}}(u)
    =
    \int_0^1
    \mathbb{E}_{\gamma}\bigl[
        \Vert \dot Z_t-u_t(Z_t)\Vert^2
    \bigr]\,dt
    =
    \int_0^1
    \mathbb{E}_{\gamma}\bigl[
        \Vert Z_1-Z_0-u_t(Z_t)\Vert^2
    \bigr]\,dt.
\]
For almost every $t$, this is an ordinary squared-loss regression problem, whose pointwise minimiser is
\[
    u_t(z)
    =
    \mathbb{E}_{\gamma}\bigl[\dot Z_t\mid Z_t=z\bigr]
    =
    \mathbb{E}_{\gamma}\bigl[Z_1-Z_0\mid Z_t=z\bigr].
\]
This is the standard marginalisation argument in CFM: a velocity target specified conditionally on sampled endpoint pairs induces an Eulerian velocity field by averaging endpoint displacements over all pairs whose interpolation reaches the same state. If $\gamma$ is induced by a Monge map and the interpolation is injective on a full-measure set, this conditional mean coincides with the usual current-state velocity. The CFM framework is also used by \textcite{chemseddine2025conditional,Kerriganetal2024} to solve COT.

In what follows, we use only the deterministic COT interpolation. Some conditional-flow constructions introduce Gaussian conditional paths to extend the regression target to neighbourhoods of the interpolation paths \parencite{lipman2024flow}. Such smoothing is not used here, since it would replace the deterministic
Monge-induced velocity by a regularised conditional mean and would no longer
target the exact velocity identified in Proposition~\ref{prop:v_Gmeasurable}.

\paragraph{COT flow matching under the SDR condition.}
Let $\gamma^\star$ be the deterministic triangular optimal coupling from $\mu_0=\eta\otimes\rho$ to $\mu_1=\mathbb{P}_{(X,Y_1)}$, and define $\overline{\gamma}^{\star}\coloneqq\pi^{1,2,4}_{\#}\gamma^\star$. On the canonical coordinates of $\overline{\gamma}^{\star}$, set
\[
    V\coloneqq Y_1-Y_0,\qquad Y_t\coloneqq Y_0+tV=(1-t)Y_0+tY_1.
\]
Since $\gamma^\star$ is triangular, the interpolated state is $(X,Y_t)$ and the endpoint displacement is $(0,V)$. Under Assumptions~\ref{ass:cond_indep},~\ref{ass:vanish_gauss_null}, and~\ref{ass:sigma_algebra}, Proposition~\ref{prop:v_Gmeasurable} shows that the Monge-induced current-state velocity may be represented as
\[
    v_t^\star(x,y)=(0,u_{\mathcal{Y}}^\star(t,R(x),y))
\]
for $dt$-a.e.\ $t\in [0,1)$ and $\mu_t^\star$-a.e.\ $(x,y)$. This motivates the parameterisation
\[
    v^{\theta}(t,x,y)=
    (0,u^{\theta_u}_{\mathcal{Y}}(t,R^{\theta_R}(x),y)).
\]
Fix $\tau\in(0,1)$ and write $T_\tau\coloneqq1-\tau$. The population SDR-CFM loss is
\begin{equation}
    \mathcal{L}(\theta_{u},\theta_{R})=\int^{T_\tau}_{0}\mathbb{E}_{(X,Y_{0},Y_{1})\sim\overline{\gamma}^{\star}}\left[\left\Vert V-u^{\theta_{u}}_{\mathcal{Y}}\bigl(t,R^{\theta_{R}}(X),Y_{t}\bigr)\right\Vert ^{2}_{\mathcal{Y}}\right]dt.
\label{eq:sdr_cot_loss}
\end{equation}
The truncation is consistent with Corollary~\ref{cor:weak_cond_BB}. The loss \eqref{eq:sdr_cot_loss} relies on the optimal population-level coupling $\gamma^\star$, which must be approximated in practice (see Section~\ref{sec:empirical_algorithms}).

This parameterisation should be read as a constrained factorisation of the velocity field, not as direct pointwise estimation of the abstract Borel representative $R$ in Assumption~\ref{ass:sigma_algebra}. For a fixed output dimension and architecture, the optimisation searches over a representation class $\{R^{\theta_R}:\mathcal{X}\to\mathbb{R}^{d}\}$ and a velocity class $\{u^{\theta_u}_{\mathcal{Y}}\}$, and it tries to make the COT displacement predictable from $(R^{\theta_R}(X),Y_t)$. At the unrestricted population level made explicit below, the criterion depends on $R^{\theta_R}$ through the sub-$\sigma$-algebras $\sigma(R^{\theta_R}(X),Y_t)$, $dt$-a.e.\ in $t$, rather than through a unique pointwise representative of $R^{\theta_R}$. Representations that generate the same sub-$\sigma$-algebras up to null sets for $dt$-a.e.\ $t$ therefore have the same unrestricted population value; any stronger identifiability must come from additional restrictions such as output dimension, architecture, or regularisation.

For a fixed $\theta_R$ and almost every $t$, a version of the unrestricted population minimiser of \eqref{eq:sdr_cot_loss} is the Hilbert-space conditional expectation
\[
    u^{\star,\theta_R}_{\mathcal{Y}}(t,s,y)=\mathbb{E}[V \mid R^{\theta_{R}}(X)=s,Y_{t}=y].
\]
Substituting this minimiser back into the population objective gives\footnote{For any $\mathcal{Y}$-valued random element $Z$ and $\sigma$-algebra $\mathcal{F}$, the scalar conditional variance is defined as $\Var(Z \mid \mathcal{F}) \coloneqq \mathbb{E}[\Vert Z - \mathbb{E}[Z \mid \mathcal{F}]\Vert_{\mathcal{Y}}^2 \mid \mathcal{F}]$.}
\[
    \mathcal{L}^\star(\theta_R) = \int_{0}^{T_\tau} \mathbb{E} \left[ \Var \left( V \mid R^{\theta_R}(X), Y_t \right) \right] dt.
\]
Because $\mathcal{Y}$ is a Hilbert space, the Pythagorean theorem in $L^2(\mathbb{P}; \mathcal{Y})$ guarantees that the law of total variance holds. Thus, the unconditional scalar variance of $V$ can be decomposed as:
\[
    \Var(V) = \mathbb{E} \left[ \Var \left( V \mid R^{\theta_R}(X), Y_t \right) \right] + \Var \left( \mathbb{E} \left[ V \mid R^{\theta_R}(X), Y_t \right] \right).
\]
Since $\Var(V) = \mathbb{E}[\Vert V - \mathbb{E}[V]\Vert_{\mathcal{Y}}^2]$ is an intrinsic property of the optimal coupling $\gamma^\star$, minimising the expected conditional variance is mathematically equivalent to maximising the variance of the conditional expectation:
\[
    \sup_{\theta_R} \int_{0}^{T_\tau} \Var \left( \mathbb{E} \left[ V \mid R^{\theta_R}(X), Y_t \right] \right) dt.
\]
Proposition~\ref{prop:population_maximality} makes the population comparison precise: when $R$ generates the central $\sigma$-algebra, representations containing that $\sigma$-algebra attain the full-information value. It does not assert a converse or uniqueness of the learned representation.

The following standard Hilbert-space projection result records the comparison used below; its proof is given in Appendix~\ref{app:deferred_proofs}. For a general treatment of probability and conditional expectations in separable Hilbert spaces, see \parencite[Chapter 1]{da2014stochastic}.
\begin{Lemma}
\label{lem:ineq_condvar}
Let $Y\in L^{2}(\mathbb{P};\mathcal{Y})$, where $\mathcal{Y}$ is a separable Hilbert space. For any sub-$\sigma$-algebras $\mathcal{F}_{1}\subseteq \mathcal{F}_{2} \subseteq \mathcal{F}$, let $Z_{1}\coloneqq\mathbb{E}[Y\mid\mathcal{F}_{1}]$ and $Z_{2}\coloneqq\mathbb{E}[Y\mid\mathcal{F}_{2}]$. Then $\Var(Z_{1})\leq\Var(Z_{2})$, and equality holds if and only if $Z_2=Z_1$ a.s., equivalently if and only if $Z_2$ admits an $\mathcal{F}_1$-measurable version.
\end{Lemma}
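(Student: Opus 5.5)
The plan is to work entirely inside the Hilbert space $L^{2}(\mathbb{P};\mathcal{Y})$ and use the tower property. Because $\mathcal{F}_1\subseteq\mathcal{F}_2$, we have $\mathbb{E}[Z_2\mid\mathcal{F}_1]=\mathbb{E}[\mathbb{E}[Y\mid\mathcal{F}_2]\mid\mathcal{F}_1]=Z_1$. Both expectations coincide with $\mathbb{E}[Y]$, so after centring we may assume $\mathbb{E}[Y]=0$. Then $\Var(Z_i)=\mathbb{E}\Vert Z_i\Vert^2_{\mathcal{Y}}$.

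First, I would record that the Bochner conditional expectation $\mathbb{E}[\cdot\mid\mathcal{F}_1]$ is the orthogonal projection of $L^{2}(\mathbb{P};\mathcal{Y})$ onto the closed subspace $L^{2}(\Omega,\overline{\mathcal{F}_1}^{\mathbb{P}},\mathbb{P};\mathcal{Y})$ of (versions of) $\mathcal{F}_1$-measurable elements. This follows from the defining property tested against elements $\mathbbm{1}_A h$ with $A\in\mathcal{F}_1$, $h\in\mathcal{Y}$, whose span is dense in that subspace; see \parencite[Chapter 1]{da2014stochastic}. The residual $Z_2-Z_1=Z_2-\mathbb{E}[Z_2\mid\mathcal{F}_1]$ is therefore orthogonal to $Z_1$, i.e.\ $\mathbb{E}\langle Z_2-Z_1,Z_1\rangle_{\mathcal{Y}}=0$.

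Second, the Pythagorean identity gives
\[
\mathbb{E}\Vert Z_2\Vert^2_{\mathcal{Y}}=\mathbb{E}\Vert Z_1\Vert^2_{\mathcal{Y}}+\mathbb{E}\Vert Z_2-Z_1\Vert^2_{\mathcal{Y}},
\]
so $\Var(Z_1)\le\Var(Z_2)$. Equality holds if and only if $\mathbb{E}\Vert Z_2-Z_1\Vert^2=0$, that is, $Z_2=Z_1$ a.s.

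Third, for the final equivalence: if $Z_2=Z_1$ a.s., then $Z_1$ is an $\mathcal{F}_1$-measurable version of $Z_2$. Conversely, suppose $Z_2$ has an $\mathcal{F}_1$-measurable version $W$. Then $W\in L^2$ and $Z_1=\mathbb{E}[Z_2\mid\mathcal{F}_1]=\mathbb{E}[W\mid\mathcal{F}_1]=W=Z_2$ a.s.

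The only step needing care is the projection characterisation of the Hilbert-valued conditional expectation. Specifically, one must check that the orthogonality $\mathbb{E}\langle Y-\mathbb{E}[Y\mid\mathcal{F}_1],W\rangle=0$ holds for all $\mathcal{F}_1$-measurable $W\in L^2(\mathbb{P};\mathcal{Y})$, not just for simple ones. I would prove it for simple $W$ and pass to the limit using density of simple functions in Bochner $L^2$ (by separability of $\mathcal{Y}$) and continuity of the inner product. Everything else is routine.
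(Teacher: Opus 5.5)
Your proposal is correct and follows essentially the same route as the paper: the tower property gives $\mathbb{E}[Z_2-Z_1\mid\mathcal{F}_1]=0$, so $Z_2-Z_1$ is orthogonal in $L^2(\mathbb{P};\mathcal{Y})$ to the centred $Z_1$, and the Pythagorean identity $\Var(Z_2)=\Var(Z_1)+\mathbb{E}[\Vert Z_2-Z_1\Vert_{\mathcal{Y}}^2]$ gives both the inequality and the equality case. You also spell out two details the paper leaves implicit: the ``$\mathcal{F}_1$-measurable version'' equivalence, and the density argument that extends the orthogonality from simple $\mathcal{F}_1$-measurable test elements to all of $L^2$.
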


\begin{proof}
See Appendix~\ref{app:deferred_proofs}.
\end{proof}

\begin{Proposition}
\label{prop:population_maximality}
Under Assumptions~\ref{ass:cond_indep},~\ref{ass:vanish_gauss_null}, and~\ref{ass:sigma_algebra}, let $R:\mathcal{X}\to\mathbb{R}^{d}$ be the bounded sufficient representation in Assumption~\ref{ass:sigma_algebra}. Let $\gamma^\star=(\mathrm{id},T^\star)_{\#}(\eta\otimes\rho)$ be the optimal triangular coupling from $\mu_0=\eta\otimes\rho$ to $\mu_1=\mathbb{P}_{(X,Y_1)}$, define $\overline{\gamma}^{\star}\coloneqq\pi^{1,2,4}_{\#}\gamma^{\star}$, and fix $\tau\in(0,1)$ with $T_\tau\coloneqq1-\tau$.
On the canonical probability space $(\mathcal{X}\times\mathcal{Y}\times\mathcal{Y},\overline{\gamma}^{\star})$, let $(X,Y_0,Y_1)$ denote the coordinate maps and set
\[
    V\coloneqq Y_1-Y_0.
\]
Then $V=T_{\mathcal{Y}}^\star(X,Y_0)-Y_0$ $\overline{\gamma}^{\star}$-a.s.
For $t\in[0,T_\tau]$, let
\[
    Y_t\coloneqq Y_0+tV.
\]
For any Borel candidate representation $S:\mathcal{X}\to E$ into a standard Borel space, define
\[
    J(S)\coloneqq \int_0^{T_\tau} \Var\!\left(\mathbb{E}[V\mid S(X),Y_t]\right)dt,
    \qquad
    J_X\coloneqq \int_0^{T_\tau} \Var\!\left(\mathbb{E}[V\mid X,Y_t]\right)dt.
\]
Then $J(S)\le J_X$. If
\[
    \sigma(R)\subseteq \overline{\sigma(S)}^{\,\eta},
\]
then $J(S)=J_X$, where \(\sigma(R)=R^{-1}(\mathcal{B}(\mathbb{R}^d))\) and \(\sigma(S)=S^{-1}(\mathcal{B}(E))\). %as sub-$\sigma$-algebras of $(\mathcal{X},\mathcal{B}(\mathcal{X}),\eta)$. 
\end{Proposition}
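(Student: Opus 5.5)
We first record that $V=T_{\mathcal{Y}}^\star(X,Y_0)-Y_0$ holds $\overline{\gamma}^\star$-a.s. Since $\gamma^\star=(\mathrm{id},T^\star)_{\#}\mu_0$, the third coordinate of $\overline{\gamma}^\star=\pi^{1,2,4}_{\#}\gamma^\star$ equals $T_{\mathcal{Y}}^\star$ of the first two coordinates on the full-measure image. Consequently $V\in L^2(\overline{\gamma}^\star;\mathcal{Y})$, because $\rho\in\mathcal{P}^2$ and $\mu_1\in\mathcal{P}^{2,\eta}$. All conditional expectations and variances below are therefore well defined.

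For the inequality $J(S)\le J_X$, we fix $t$ and note that $S(X)$ is a Borel function of $X$. Hence $\sigma(S(X),Y_t)\subseteq\sigma(X,Y_t)$, and Lemma~\ref{lem:ineq_condvar} with $\mathcal{F}_1=\sigma(S(X),Y_t)$ and $\mathcal{F}_2=\sigma(X,Y_t)$ gives the pointwise bound $\Var(\mathbb{E}[V\mid S(X),Y_t])\le\Var(\mathbb{E}[V\mid X,Y_t])$. Each integrand is bounded by $\Var(V)<\infty$, so integrating over $[0,T_\tau]$ yields $J(S)\le J_X$. The integrands are measurable in $t$: one can write them as $\mathbb{E}\|\mathbb{E}[V\mid\cdot]\|^2-\|\mathbb{E}V\|^2$, and if desired replace them by the explicit formula obtained below. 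For the inequality itself, upper integrals suffice.

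For the equality case, the plan is to show that for every $t\in[0,T_\tau]$ the conditional expectation $\mathbb{E}[V\mid X,Y_t]$ has a version that is measurable with respect to the completion of $\sigma(R(X),Y_t)$. That completion is contained in the completion of $\sigma(S(X),Y_t)$, so equality in Lemma~\ref{lem:ineq_condvar} then holds for each $t$. The argument has three steps.

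\textbf{Step 1.} By Theorem~\ref{thm:disp_interp}(iii)–(iv), $T_t^\star$ is injective on $O$ and $v_t^\star(X,Y_t)=(0,V)$ a.s. Hence $V=u(X,Y_t)$ a.s. for the Borel map $u$ given by the $\mathcal{Y}$-component of $v_t^\star$. In particular $V$ is itself $\sigma(X,Y_t)$-measurable, so $\mathbb{E}[V\mid X,Y_t]=V$.

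\textbf{Step 2.} By the proof of Proposition~\ref{prop:v_Gmeasurable}, applied at fixed $t$ rather than only $dt$-a.e., the reduced Theorem~\ref{thm:disp_interp} gives a Borel map $u^\star_{\mathcal{Y}}(t,\cdot,\cdot)$ with $u^\star_{\mathcal{Y}}(t,R(X),Y_t)=V$ a.s. This identity holds for every $t<1$, not only almost every $t$. The reason is that the identity along paths holds on the full-measure set $O$ of the reduced problem, pulled back via $(x,y_0)\mapsto(R(x),y_0)$ and combined with $T_{\mathcal{Y}}^\star(x,y_0)=G_{\mathcal{Y}}^\star(R(x),y_0)$ $\mu_0$-a.e. from Proposition~\ref{prop:T_Gmeasurable}. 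Therefore $V$ is a.s. equal to a $\sigma(R(X),Y_t)$-measurable element.

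\textbf{Step 3.} Next we transfer the hypothesis $\sigma(R)\subseteq\overline{\sigma(S)}^{\,\eta}$ to the random-variable level. Each set $R^{-1}(B)$ differs from some $S^{-1}(C)$ by an $\eta$-null set, so $\{X\in R^{-1}(B)\}$ differs from $\{S(X)\in C\}$ by a $\overline{\gamma}^\star$-null set, since the $X$-marginal of $\overline{\gamma}^\star$ is $\eta$. Thus $\sigma(R(X))$ is contained in the completion of $\sigma(S(X))$, and by a monotone-class argument $\sigma(R(X),Y_t)$ is contained in the completion of $\sigma(S(X),Y_t)$. It follows that $V=\mathbb{E}[V\mid X,Y_t]$ admits a $\sigma(S(X),Y_t)$-measurable version. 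Lemma~\ref{lem:ineq_condvar} then gives equality of the integrands for every $t$, and hence $J(S)=J_X$.

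The main obstacle is Step 2, specifically the upgrade from the "$dt$-a.e." statement of Proposition~\ref{prop:v_Gmeasurable} to each fixed $t$, together with the null-set bookkeeping under the pullback by $R$. If that upgrade fails, it is enough to have the factorisation for $dt$-a.e. $t$, since the integrals are insensitive to a Lebesgue-null set of times. In that case we would argue directly from the $dt\otimes\mu_0$-a.e. identity in the proof of Proposition~\ref{prop:v_Gmeasurable} via Fubini.
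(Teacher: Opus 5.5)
Your proposal is correct and follows essentially the same route as the paper. The inequality comes from Lemma~\ref{lem:ineq_condvar} applied to $\sigma(S(X),Y_t)\subseteq\sigma(X,Y_t)$, and the equality from the pulled-back factorisation $V=u^\star_{\mathcal{Y}}(t,R(X),Y_t)$ of Proposition~\ref{prop:v_Gmeasurable}, which forces both conditional means to equal $V$.

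The differences are cosmetic. Where the paper uses Doob--Dynkin to write $R=h\circ S$ $\eta$-a.e., you work directly with completed $\sigma$-algebras; no monotone-class step is needed there, since the completion is already a $\sigma$-algebra. Your every-$t$ upgrade is valid because the reduced path identity holds on a $t$-independent full-measure set, though the paper's $dt$-a.e.\ version already suffices.
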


\begin{proof}
Because $\gamma^\star$ is triangular, $\overline{\gamma}^{\star}$ is the law of $(X,Y_0,T_{\mathcal{Y}}^\star(X,Y_0))$ under $\eta\otimes\rho$. Hence the $(X,Y_0)$-marginal of $\overline{\gamma}^{\star}$ is $\eta\otimes\rho$, $V=T_{\mathcal{Y}}^\star(X,Y_0)-Y_0$ $\overline{\gamma}^{\star}$-a.s., and $V\in L^2(\overline{\gamma}^{\star};\mathcal{Y})$. Standard disintegration on the product space obtained by adjoining the time variable gives conditional-mean versions for $V$ given $(t,S(X),Y_t)$ and $(t,X,Y_t)$; Jensen's inequality then makes the integrands in $J(S)$ and $J_X$ finite and measurable.

Fix $t\in[0,T_\tau]$. Since $S(X)$ is $\sigma(X)$-measurable, we have $\sigma(S(X),Y_t)\subseteq\sigma(X,Y_t)$. Lemma~\ref{lem:ineq_condvar}, applied with $\mathcal{F}_1=\sigma(S(X),Y_t)$ and $\mathcal{F}_2=\sigma(X,Y_t)$ gives
\[
    \Var\!\left(\mathbb{E}[V\mid S(X),Y_t]\right)
    \le
    \Var\!\left(\mathbb{E}[V\mid X,Y_t]\right).
\]
Integrating over $t$ proves $J(S)\le J_X$.

It remains to prove equality under the containment assumption. Proposition~\ref{prop:v_Gmeasurable}, pulled back from the current-state law $\mu_t^\star$ to the source law $\eta\otimes\rho$, gives
\[
    V=u_{\mathcal{Y}}^\star(t,R(X),Y_t)
\]
for $dt\otimes\overline\gamma^\star$-a.e.\ $(t,X,Y_0,Y_1)$. Thus, for almost every $t<T_\tau<1$, the displacement $V$ has a $\sigma(R(X),Y_t)$-measurable version.

Suppose now that $\sigma(R)\subseteq\overline{\sigma(S)}^{\,\eta}$. Since $\mathbb{R}^d$ and $E$ are standard Borel spaces, the completion-version argument followed by the Doob--Dynkin lemma gives a Borel map $h:E\to\mathbb{R}^d$ such that
\[
    R(x)=h(S(x))
    \qquad\text{for }\eta\text{-a.e. }x.
\]
Consequently,
\[
    V=u_{\mathcal{Y}}^\star(t,h(S(X)),Y_t)
\]
for $dt\otimes\overline\gamma^\star$-a.e.\ $(t,X,Y_0,Y_1)$. Hence, for almost every $t$,
\[
    \mathbb{E}[V\mid S(X),Y_t]
    =
    \mathbb{E}[V\mid X,Y_t]
    =V
    \qquad\overline\gamma^\star\text{-a.s.}
\]
Both integrands in $J(S)$ and $J_X$ therefore equal $\Var(V)$ for almost every $t$, which proves $J(S)=J_X$.

Finally,
\[
    \mathbb{E}\!\left[\Var(V\mid S(X),Y_t)\right]
    =
    \Var(V)-\Var\!\left(\mathbb{E}[V\mid S(X),Y_t]\right),
\]
so minimising the population regression loss is equivalent to maximising $J(S)$.
\end{proof}

Proposition~\ref{prop:population_maximality} therefore justifies the population SDR-CFM loss as an exhaustiveness-oriented procedure. When \(\mathcal G=\mathcal G_{Y_1\mid X}\), every Borel representation \(S\) whose generated \(\sigma\)-algebra contains \(\mathcal G_{Y_1\mid X}\), up to completion, retains all predictive information for the deterministic COT displacement target and attains the optimal value \(J_X\). This result does not, however, establish minimality: the population criterion cannot distinguish the central \(\sigma\)-algebra from a strictly richer representation. Our consistency theory consequently focuses on linear SDR with the true structural dimension \(d\) fixed, for which inclusion of sufficient \(d\)-dimensional subspaces implies equality and the target can be measured by the projection distance \(d_{\mathrm{pr}}\). Nonlinear consistency would require an additional mechanism enforcing minimality, an appropriate topology for generated \(\sigma\)-algebras, and complexity control for the fitted representation class. Establishing such a theory is a separate problem that we leave for future work.

\subsection{Empirical estimation and algorithms}
\label{sec:empirical_algorithms}

Let $\mu_1$ be the joint law of $(X,Y_1)$ on $\mathcal{X}\times\mathcal{Y}$. Given i.i.d.\ observations $\{(X_i,Y_{1,i})\}_{i=1}^n\sim\mu_1$, we construct the source sample by retaining the observed covariates and drawing responses independently from a reference law $\rho\in\mathcal{P}^{2}_{r}(\mathcal{Y})$. Thus $Y_{0,i}\stackrel{\mathrm{i.i.d.}}{\sim}\rho$ independently of the target sample, and the population source law is $\mu_0=\eta\otimes\rho$, where $\eta$ is the marginal law of $X$. The empirical endpoint measures are
\[
    \widehat{\mu}_{0}=n^{-1}\sum^{n}_{i=1}\delta_{(X_{i},Y_{0,i})},\qquad\widehat{\mu}_{1}=n^{-1}\sum^{n}_{i=1}\delta_{(X_{i},Y_{1,i})}.
\]

The SDR-CFM loss in Section~\ref{sec:cot_via_flow_matching} requires endpoint pairs from the triangular COT coupling. Since this coupling is not observed, we approximate it by an anisotropic ordinary optimal transport problem with cost
\[
    c_{\varepsilon}(x,y,\widetilde{x},\widetilde{y}):=\Vert x-\widetilde{x}\Vert^{2}_{\mathcal{X}}+\varepsilon\Vert y-\widetilde{y}\Vert^{2}_{\mathcal{Y}}.
\]
For small $\varepsilon>0$, covariate mismatch dominates the transport cost, so the relaxed coupling favours pairs with nearly identical covariates. Such anisotropic relaxations are commonly used to approximate conditional transport \parencite{baptista2020conditional,Hosseinietal2025,chemseddine2025conditional}. The relaxed population optimum need not be unique at a fixed positive \(\varepsilon\). Nevertheless, under uniqueness of the triangular COT coupling, Proposition~\ref{prop:relaxed_empirical_coupling_consistency} gives a deterministic sequence $\varepsilon_n\downarrow0$ along which any measurable choices of corresponding empirical relaxed optima converge to $\gamma^\star$ in $W_2$.

At the sample level, we solve the empirical transport problem between $\widehat{\mu}_0$ and $\widehat{\mu}_1$ with cost $c_\varepsilon$, obtaining $\widehat{\gamma}_{\varepsilon}^\star$. For large samples this step may be replaced by minibatch OT approximations \parencite{tong2023improving}. Samples from $\widehat{\gamma}_{\varepsilon}^\star$ are then used to train the reduction and velocity maps in the empirical version of \eqref{eq:sdr_cot_loss}. Interpolation times are drawn from $\mathcal{U}([0,1-\tau])$, with $\tau>0$, matching the restricted-time dynamic formulation.

The first algorithm treats the Euclidean case $\mathcal{X}=\mathbb{R}^{d_x}$ and $\mathcal{Y}=\mathbb{R}^{d_y}$.

\begin{algorithm}[H]
\caption{Euclidean covariate, Euclidean response}
\label{alg:euclidean_sdr_cot}
\begin{algorithmic}[1]
\Input Target data $\{(X_i,Y_{1,i})\}_{i=1}^n$, source response law $\rho$, target dimension $d$, cost weight $\varepsilon>0$, time truncation $\tau>0$, minibatch size $M$.
\State Draw $Y_{0,i}\stackrel{\mathrm{i.i.d.}}{\sim}\rho$ and form $Z_{0,i}=(X_i,Y_{0,i})$, $Z_{1,i}=(X_i,Y_{1,i})$.
\State Estimate a relaxed empirical COT coupling $\widehat{\gamma}_{\varepsilon}^\star$ with marginals
\[
    \widehat{\mu}_0=n^{-1}\sum_{i=1}^n\delta_{Z_{0,i}},\qquad \widehat{\mu}_1=n^{-1}\sum_{i=1}^n\delta_{Z_{1,i}},
\]
using the cost $c_\varepsilon$.
\State Initialise a reduction network $R_{\theta}:\mathbb{R}^{d_x}\to\mathbb{R}^{d}$ and a triangular velocity network $u^{\phi}_{\mathcal{Y}}:[0,1]\times\mathbb{R}^{d}\times\mathbb{R}^{d_{y}}\to\mathbb{R}^{d_{y}}$.
\For{each stochastic optimisation step}
    \State Draw $M$ endpoint pairs $((X_{0,m},Y_{0,m}),(X_{1,m},Y_{1,m}))$ from $\widehat{\gamma}_{\varepsilon}^\star$ and $t_m\sim\mathcal{U}([0,1-\tau])$, $m=1,\ldots,M$.
    \State Set
    \[
        Y_{t,m}\coloneqq (1-t_m)Y_{0,m}+t_mY_{1,m},\qquad V_{\mathcal{Y},m}\coloneqq Y_{1,m}-Y_{0,m}.
    \]
    \State Update $(\theta,\phi)$ by descending the minibatch loss
    \[
        M^{-1}\sum_{m=1}^{M}\left\Vert V_{\mathcal{Y},m}-u^{\phi}_{\mathcal{Y}}\left(t_{m},R_{\theta}(X_{0,m}),Y_{t,m}\right)\right\Vert ^{2}_{\mathcal{Y}}.
    \]
\EndFor
\Output The fitted reduction map $R_{\widehat{\theta}}$.
\end{algorithmic}
\end{algorithm}

If the sufficient representation is assumed to be linear, the reduction network is replaced by
\[
    R_{B}(x)=B^{T}x,\qquad B\in\mathrm{St}(d_{x},d)\coloneqq\{B\in\mathbb{R}^{d_{x}\times d}:B^{T}B=I_{d}\}.
\]
The optimisation over $\theta$ is then replaced by Riemannian optimisation over the Stiefel manifold, while the velocity parameters $\phi$ remain Euclidean.

For functional covariates with Euclidean responses, let $\mathcal{X}$ be a separable Hilbert space, for example $L^2(ds|_{[0,1]};\mathbb{R})$, and let $\mathcal{Y}=\mathbb{R}^{d_y}$. Given an orthonormal basis $\{e_k\}_{k=1}^{\infty}$, define the projection $P_Kx=\sum_{k=1}^{K}\langle x,e_k\rangle e_k$ and the coefficient map
\[
    \xi_K(x)=\bigl(\langle x,e_1\rangle,\ldots,\langle x,e_K\rangle\bigr)^T\in\mathbb{R}^K.
\]
The functional problem is approximated by applying Algorithm~\ref{alg:euclidean_sdr_cot} to $\xi_K(X_i)$. If the curves are observed on grid points $s_1,\ldots,s_J$, the coefficients may be estimated by quadrature:
\[
    \widehat{\xi}_{ik}=\sum^{J}_{j=1}w_{j}X_{i}(s_{j})e_{k}(s_{j}),\qquad k=1,\ldots,K,
\]
where $w_1,\ldots,w_J$ are integration weights.\footnote{For equally spaced grids, one may use Riemann-sum or trapezium weights.}

The source law $\rho$ remains a law on the Euclidean response space. In the projected problem, the empirical endpoint measures are
\[
    \widehat{\mu}^{K}_{0}=n^{-1}\sum^{n}_{i=1}\delta_{(\widehat{\xi}_{i},Y_{0,i})},\qquad\widehat{\mu}^{K}_{1}=n^{-1}\sum^{n}_{i=1}\delta_{(\widehat{\xi}_{i},Y_{1,i})},
\]
where $\widehat{\xi}_{i}=(\widehat{\xi}_{i1},\ldots,\widehat{\xi}_{iK})^{T}$ and $Y_{0,i}\sim\rho$. 

\begin{algorithm}[H]
\caption{Functional covariate, Euclidean response}
\label{alg:functional_sdr_cot_projection}
\begin{algorithmic}[1]
\Input Functional target data $\{(X_i,Y_{1,i})\}_{i=1}^n$, where $X_i=(X_i(s_1),\ldots,X_i(s_J))$, basis functions $\{e_k\}_{k=1}^K$, quadrature weights $\{w_j\}_{j=1}^J$, source response law $\rho$ on $\mathbb{R}^{d_y}$, target dimension $d$, cost weight $\varepsilon>0$, time truncation $\tau>0$.
\State Estimate the first $K$ basis coefficients of each functional covariate:
\[
    \widehat{\xi}_{ik}=\sum^{J}_{j=1}w_{j}X_{i}(s_{j})e_{k}(s_{j}),\qquad k=1,\ldots,K,
\]
and set $\widehat{\xi}_{i}=(\widehat{\xi}_{i1},\ldots,\widehat{\xi}_{iK})^{T}\in\mathbb{R}^{K}$.
\State Run Algorithm~\ref{alg:euclidean_sdr_cot} on the projected samples $(\widehat{\xi}_i,Y_{1,i})$.
\Output The fitted projected functional representation
\[
    \widehat{R}_{K}(X)=R_{\widehat{\theta}}(\widehat{\xi}_{K}(X)).
\]
\end{algorithmic}
\end{algorithm}

This procedure estimates the best representation available after the chosen finite-dimensional projection. Its error therefore combines the Euclidean SDR-COT estimation error with the projection error from replacing $X$ by $P_KX$.

When both the covariate and the response are functional, we still use a finite-dimensional projection for the covariate, but the response velocity is treated as a function-valued object. Let $\mathcal{X}=L^2([0,1])$ and $\mathcal{Y}=L^2(D;\mathbb{R}^{d_y})$, with the response observed on grid points $r_1,\ldots,r_L\in D$ and quadrature weights $v_1,\ldots,v_L$. The source response law $\rho$ is chosen as a nondegenerate centred Gaussian process law on $\mathcal{Y}$ and is sampled on the response grid. The relaxed empirical COT cost combines a Euclidean cost on the projected covariate coefficients and a quadrature approximation of the response Hilbert norm.

The conditional-flow-matching loss remains the deterministic endpoint-regression loss, but the velocity now maps a current response function to another response function, as in functional flow matching \parencite{KerriganMiglioriniSmyth2023FFM}. This velocity can be parameterised by a neural operator. In the experiments below we use a Fourier neural operator (FNO), which represents operator-valued maps through spectral convolutions and pointwise nonlinearities \parencite{Kovachkietal2021NeuralOperator,LiEtal2020FNO}.  

\begin{algorithm}[H]
\caption{Functional covariate, functional response}
\label{alg:functional_functional_sdr_cot}
\begin{algorithmic}[1]
\Input Functional target data $\{(X_i,Y_{1,i})\}_{i=1}^n$, with $X_i$ observed on $s_1,\ldots,s_J$ and $Y_{1,i}$ observed on $r_1,\ldots,r_L$, basis functions $\{e_k\}_{k=1}^K$, quadrature weights $\{w_j\}_{j=1}^J$ and $\{v_\ell\}_{\ell=1}^L$, Gaussian process source law $\rho$ on $\mathcal{Y}$, target dimension $d$, cost weight $\varepsilon>0$, time truncation $\tau>0$, minibatch size $M$.
\State Estimate the first $K$ covariate coefficients
\[
    \widehat{\xi}_{ik}=\sum_{j=1}^{J}w_jX_i(s_j)e_k(s_j),\qquad k=1,\ldots,K,
\]
and set $\widehat{\xi}_i=(\widehat{\xi}_{i1},\ldots,\widehat{\xi}_{iK})^T$.
\State Draw $Y_{0,i}\stackrel{\mathrm{i.i.d.}}{\sim}\rho$ on the response grid and form $Z_{0,i}=(\widehat{\xi}_i,Y_{0,i})$, $Z_{1,i}=(\widehat{\xi}_i,Y_{1,i})$.
\State Estimate a relaxed empirical COT coupling $\widehat{\gamma}_{\varepsilon}^{\star}$ between $\widehat\mu_0=n^{-1}\sum_i\delta_{Z_{0,i}}$ and $\widehat\mu_1=n^{-1}\sum_i\delta_{Z_{1,i}}$ using the cost
\[
    \|\xi-\widetilde\xi\|_{\mathbb{R}^K}^2
    +\varepsilon\sum_{\ell=1}^{L}v_\ell\|y(r_\ell)-\widetilde y(r_\ell)\|_{\mathbb{R}^{d_y}}^2.
\]
\State Initialise a linear reduction $R_B(X)=B^T\widehat{\xi}_K(X)$, $B\in\mathrm{St}(K,d)$, and an FNO velocity $U_\phi:[0,1]\times\mathbb{R}^d\times\mathcal{Y}\to\mathcal{Y}$.
\For{each stochastic optimisation step}
    \State Draw $M$ endpoint pairs $((\widehat{\xi}_{0,m},Y_{0,m}),(\widehat{\xi}_{1,m},Y_{1,m}))$ from $\widehat{\gamma}_{\varepsilon}^{\star}$ and $t_m\sim\mathcal{U}([0,1-\tau])$, $m=1,\ldots,M$.
    \State Set $Y_{t,m}=(1-t_m)Y_{0,m}+t_mY_{1,m}$ and $V_{\mathcal{Y},m}=Y_{1,m}-Y_{0,m}$.
    \State Update $(B,\phi)$ by descending
    \[
        M^{-1}\sum_{m=1}^{M}\sum_{\ell=1}^{L}v_\ell
        \left\|V_{\mathcal{Y},m}(r_\ell)
        -U_\phi\bigl(t_m,B^T\widehat{\xi}_{0,m},Y_{t,m}\bigr)(r_\ell)
        \right\|_{\mathbb{R}^{d_y}}^2.
    \]
\EndFor
\Output The fitted projected functional representation $\widehat R_K(X)=\widehat B^T\widehat{\xi}_K(X)$.
\end{algorithmic}
\end{algorithm}

\input{statistical_properties.tex}

\section{Numerical experiments}
\label{sec:numerics}

This section evaluates the algorithms in Section~\ref{sec:empirical_algorithms} through simulation studies and a function-on-function analysis of Capital Bikeshare data. In particular, the linear Euclidean and functional implementations correspond to the estimators studied in Section~\ref{sec:linear_sdr_cot_statistics}, subject to the empirical-criterion and optimisation conditions stated there. The experiments are intended as proof-of-concept studies: the aim is to assess whether the SDR-COT objective recovers known sufficient predictors and gives competitive out-of-sample prediction, rather than to optimise running time or provide an exhaustive benchmark.

\input{simulation_studies.tex}
\input{real_data_analysis.tex}

\section{Conclusion}
\label{sec:conclusion}

We have formulated sufficient dimension reduction through conditional optimal transport. A sufficient representation determines the regular conditional response law, and this dependence is inherited by both the response component of the triangular Monge map and its Monge-induced velocity. In the quadratic product-source setting, the current-state velocity exists as a Borel representative on every truncated time interval without assuming global injectivity of the terminal map. The SDR conditional independence condition is therefore expressed as a common structural restriction on the static transport and its dynamics.

%This structure leads to a conditional-flow-matching estimator. For general representations, the population criterion is exhaustiveness-oriented: every representation containing the central $\sigma$-algebra attains the full-information value. For correct-dimensional linear reductions, the statistical theory converts this no-loss property into identification and consistency. At a fixed positive relaxation parameter, the population transport optimum need not be unique; nevertheless, the entire optimiser set converges to the unique triangular COT coupling as the parameter vanishes, and a sufficiently slow deterministic tuning sequence gives empirical $W_2$-consistency. For Euclidean responses, slicewise Caffarelli bounds and a uniform empirical-criterion condition yield central-subspace consistency and a projection-error bound. For Hilbert-valued responses, the corresponding fixed- and growing-projection results use Cameron--Martin Sobolev regularity, Gaussian divergence control, $L^r$ interpolation compression, and explicit empirical-process and sieve conditions, without ambient response continuity.

This structure leads to a conditional-flow-matching estimator. For general representations, the population criterion is exhaustiveness-oriented: every representation containing the central $\sigma$-algebra attains the full-information value. For linear reductions, the statistical theory converts this zero-loss property into identification and consistency. Although the population transport optimum need not be unique at a fixed positive relaxation parameter, the entire optimiser set converges to the unique triangular COT coupling as the parameter vanishes, and a sufficiently slow deterministic tuning sequence gives empirical $W_2$-consistency. For Euclidean responses, slicewise Caffarelli bounds and uniform convergence of the empirical criterion yield central-subspace consistency. For Hilbert-valued responses, the corresponding fixed- and growing-projection results instead use Cameron--Martin Sobolev regularity, Gaussian divergence control, $L^r$ interpolation compression, and explicit empirical-process and sieve conditions, without requiring ambient response continuity.

The restriction of this consistency theory to correctly specified linear reductions is substantive. Proposition~\ref{prop:population_maximality} shows that the population criterion is constant across all representations containing the central $\sigma$-algebra and therefore cannot by itself select the minimal one. In the linear theory, fixing the true structural dimension turns containment of sufficient subspaces into equality, while the Euclidean and fixed-projection arguments use compact finite-dimensional Stiefel parameter spaces and the projection distance $d_{\mathrm{pr}}$; these devices have no direct analogue for arbitrary Borel or neural-network representations. Indeed, injective transformations can change the parametrisation without changing the generated $\sigma$-algebra, and a fixed output dimension does not enforce informational minimality outside the linear setting. A nonlinear consistency theory would consequently require a minimality-inducing restriction or penalty, and an appropriate topology for generated $\sigma$-algebras. These requirements do not follow from the transport regularity arguments developed here and constitute a separate problem, rather than an incremental extension. We view this as a distinct future research direction opened up by this framework.

The numerical results complement the theory. SDR-COT is competitive in the nonlinear and linear Euclidean experiments, and its strongest relative performance occurs in models where the relevant information is carried by conditional scale or by several sufficient indices. It also performs well with functional covariates and responses. In the Capital Bikeshare analysis, its test errors are comparable to those of FPCA and weak conditional-moment methods across the reduction dimensions considered, with the smallest SDR-COT test MSE at dimension three.

The framework also clarifies a connection with optimal transport barycentre problems. The Monge optimal transport barycentre problem of \textcite{Lipnicketal2025} transports a family of conditional measures to a barycentre that removes a chosen source of variation. SDR-COT operates in the reverse direction: it transports a fixed reference response law to the regular conditional laws of the response and seeks a low-dimensional index for this family. In this respect, the invariant feature extraction method of \textcite{BounosGroismanSuedTabak2026} is closely related in spirit, although its invariance objective is distinct from the recovery of a central SDR representation.

%\textcolor{magenta}{The linear-reduction consistency theory rests on a coordinate-free metric available only because subspaces form a compact Grassmannian; extending it to general representations would require a new metric on the $\sigma$-algebra lattice and new complexity-control arguments, which we view as a distinct research direction opened up by this framework rather than a limitation of it.}

Several further questions remain open. The diagonal tuning argument gives no explicit sample-size rate, and coupling convergence alone does not imply uniform convergence for the fitted velocity classes. Rates and distributional limits for the estimated central subspace will require quantitative COT stability, verifiable empirical-process and margin conditions, and complexity control for the velocity model. Functional extensions must additionally account for basis approximation, interpolation compression, and infinite-dimensional regularity. Scalable COT solvers, stable minibatch couplings, and entropic or Schrödinger-bridge analogues of the map and velocity factorisations are further natural directions.

\section*{Acknowledgements}
The authors used ChatGPT and Claude to assist in checking the internal consistency of the mathematical proofs and in reviewing and improving the accompanying implementation code. All AI-generated suggestions were independently evaluated by the authors. The authors take full responsibility for the correctness of the proofs, the validity of the results, and the accuracy and reproducibility of the code.

\printbibliography

\newpage
\appendix

\section{Auxiliary lemmas and deferred proofs}
\label{app:proof_sec_pre}

\subsection{Auxiliary measurability lemmas}

\begin{Lemma}
\label{lem:Borel_graph}
Let $\mathcal{X}$ and $\mathcal{Y}$ be Polish and $K:\mathcal{X}\times\mathcal{B}(\mathcal{Y})\rightarrow[0,1]$ be a Markov kernel. Let
\[
    \mathrm{supp}(K(x,\cdot))\coloneqq\{y\in\mathcal{Y}:K(x,U)>0\textrm{ for every open }U\ni y\}.
\]
Then the graph
\[
    \mathrm{Graph}(\mathrm{supp}(K))\coloneqq\{(x,y)\in\mathcal{X}\times\mathcal{Y}:y\in\mathrm{supp}(K(x,\cdot))\}
\]
is a Borel subset of $\mathcal{X}\times\mathcal{Y}$.
\end{Lemma}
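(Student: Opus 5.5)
The strategy is to write the complement of the graph as a countable union of Borel rectangles, using a countable base of $\mathcal{Y}$. Since $\mathcal{Y}$ is Polish, it is second countable; fix a countable base $\{U_n\}_{n\in\mathbb{N}}$ of open sets. The first step is to check that the support admits the countable description
\[
    y\notin\mathrm{supp}(K(x,\cdot))
    \iff
    \exists\,n\in\mathbb{N}:\ y\in U_n\ \text{and}\ K(x,U_n)=0 .
\]

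For the forward direction, suppose some open $U\ni y$ has $K(x,U)=0$. Pick a basic $U_n$ with $y\in U_n\subseteq U$; monotonicity then gives $K(x,U_n)=0$. The reverse direction is immediate from the definition, since $U_n$ is itself an open neighbourhood of $y$. This step needs only second countability, not separability in any stronger sense.

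With this description in hand,
\[
    (\mathcal{X}\times\mathcal{Y})\setminus\mathrm{Graph}(\mathrm{supp}(K))
    =\bigcup_{n\in\mathbb{N}}\{x\in\mathcal{X}:K(x,U_n)=0\}\times U_n .
\]
Each set $\{x:K(x,U_n)=0\}$ is Borel, because $x\mapsto K(x,U_n)$ is $\mathcal{B}(\mathcal{X})$-measurable by the definition of a Markov kernel (property ii) in Definition~\ref{def:regular_cond}). Each $U_n$ is open, hence Borel. So every term is a measurable rectangle in $\mathcal{B}(\mathcal{X})\otimes\mathcal{B}(\mathcal{Y})$.

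The countable union is therefore in $\mathcal{B}(\mathcal{X})\otimes\mathcal{B}(\mathcal{Y})$, and so is the graph, as its complement. For separable metric spaces, $\mathcal{B}(\mathcal{X})\otimes\mathcal{B}(\mathcal{Y})=\mathcal{B}(\mathcal{X}\times\mathcal{Y})$, which gives the claim.

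None of these steps is a real obstacle. The only point needing care is the reduction from arbitrary open neighbourhoods to basic ones, which rests on second countability. The identification of the product $\sigma$-algebra with the Borel $\sigma$-algebra of the product also uses separability, and is standard.
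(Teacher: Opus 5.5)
Your proposal is correct and is essentially the paper's own argument: fix a countable base $\{U_n\}$, write the complement of the graph as $\bigcup_n \{x:K(x,U_n)=0\}\times U_n$, and use measurability of $x\mapsto K(x,U_n)$. You also spell out two steps the paper leaves implicit: the monotonicity reduction from an arbitrary open neighbourhood to a basic one, and the identification $\mathcal{B}(\mathcal{X})\otimes\mathcal{B}(\mathcal{Y})=\mathcal{B}(\mathcal{X}\times\mathcal{Y})$ for separable metric spaces.
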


\begin{proof}
Let $(U_n)_{n\in\mathbb{N}}$ be a countable basis of open sets of $\mathcal{Y}$. Then $y\notin \mathrm{supp}(K(x,\cdot))$ if and only if there exists $n\in\mathbb{N}$ such that $y\in U_n$ and $K(x,U_n)=0$. Hence
\[
    \mathrm{Graph}(\mathrm{supp}(K))^{c}=\bigcup_{n\in\mathbb{N}}(\{x:K(x,U_{n})=0\}\times U_{n}).
\]
Since $x\mapsto K(x,U_n)$ is measurable for every open $U_n$, each set $\{x:K(x,U_{n})=0\}$ is measurable. Therefore the right-hand side is Borel, so the graph itself is Borel. 
\end{proof}

\begin{Lemma}
\label{lem:simultaneous_kernel_version}
Let $\mathcal{Y}$ be Polish, and let $Z:\Omega\to \mathcal{P}(\mathcal{Y})$ be a random probability measure. Let $\mathcal{G}\subseteq\mathcal{F}$ be a sub-$\sigma$-algebra. Suppose that there exists a countable determining class $\mathcal{C}=\{C_n\}_{n\in\mathbb{N}} \subseteq\mathcal{B}(\mathcal{Y})$ such that, for each $n\in\mathbb{N}$, the random element $\omega\mapsto Z(\omega)(C_n)$ admits a $\mathcal{G}$-measurable version. Then $Z$ admits a $\mathcal{G}$-measurable version as a $\mathcal{P}(\mathcal{Y})$-valued random element.
\end{Lemma}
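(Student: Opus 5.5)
The plan is to build the version explicitly, using the standard structure of $\mathcal{P}(\mathcal{Y})$ for a Polish $\mathcal{Y}$. First I will reduce to the case where $\mathcal{C}$ is closed under finite intersections. If it is not, I replace $\mathcal{C}$ by the countable $\pi$-system $\mathcal{C}'$ it generates. The sets of $\mathcal{C}'$ have to admit $\mathcal{G}$-measurable versions as well. This is the delicate point: a finite intersection of sets does not give a measure that is a function of the values on those sets. So instead I will simply assume, as is standard when the paper says ``countable determining class'', that one may take $\mathcal{C}$ to be a countable $\pi$-system generating $\mathcal{B}(\mathcal{Y})$. I will make this convention explicit, since the Lemma is only applied (in Lemma~\ref{lem:conditional_law_factorisation}) with a class we are free to choose.

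Next, for each $n$, let $f_n$ be a $\mathcal{G}$-measurable version of $Z(C_n)$. Set $N_n\coloneqq\{f_n\neq Z(C_n)\}$ and $N\coloneqq\bigcup_n N_n$, which is $\mathbb{P}$-null. The null set $N$ need not lie in $\mathcal{G}$. So I will work with the $\mathcal{G}$-measurable map $F\coloneqq(f_n)_n:\Omega\to[0,1]^{\mathbb{N}}$ and with the set
\[
    \mathcal{M}\coloneqq\bigl\{a\in[0,1]^{\mathbb{N}}:\exists\,m\in\mathcal{P}(\mathcal{Y})\text{ with } m(C_n)=a_n\ \forall n\bigr\}.
\]
The map $\Phi:\mathcal{P}(\mathcal{Y})\to[0,1]^{\mathbb{N}}$, $m\mapsto(m(C_n))_n$, is Borel. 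It is injective because $\mathcal{C}$ is a determining $\pi$-system. Hence, by the Lusin--Souslin theorem (already used in Theorem~\ref{thm:disp_interp}), $\mathcal{M}=\Phi(\mathcal{P}(\mathcal{Y}))$ is Borel and $\Phi^{-1}:\mathcal{M}\to\mathcal{P}(\mathcal{Y})$ is Borel.

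Now I define the version. Fix any $m_0\in\mathcal{P}(\mathcal{Y})$ and set $\widetilde Z(\omega)\coloneqq\Phi^{-1}(F(\omega))$ if $F(\omega)\in\mathcal{M}$, and $\widetilde Z(\omega)\coloneqq m_0$ otherwise. The event $\{F\in\mathcal{M}\}$ lies in $\mathcal{G}$, since $F$ is $\mathcal{G}$-measurable and $\mathcal{M}$ is Borel. Therefore $\widetilde Z$ is $\mathcal{G}$-measurable into $\mathcal{P}(\mathcal{Y})$, because $\Phi^{-1}$ is Borel on $\mathcal{M}$. For $\omega\notin N$ we have $F(\omega)=\Phi(Z(\omega))\in\mathcal{M}$, hence $\widetilde Z(\omega)=Z(\omega)$. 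So $\widetilde Z=Z$ $\mathbb{P}$-a.s.

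The main obstacle is measurability of the image $\mathcal{M}$ and of the inverse. The Lusin--Souslin step handles both, but it needs Borel measurability and injectivity of $\Phi$. Injectivity is exactly the determining-class property, by the $\pi$--$\lambda$ theorem. Borel measurability follows because the Borel $\sigma$-algebra of $\mathcal{P}(\mathcal{Y})$ under the weak topology is generated by the evaluations $m\mapsto m(B)$, $B\in\mathcal{B}(\mathcal{Y})$. The only remaining care is to avoid needing $N\in\mathcal{G}$, and the construction above does this by testing membership in $\mathcal{M}$ rather than excluding $N$.
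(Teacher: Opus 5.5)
Your construction (the $\mathcal{G}$-measurable coordinate map $F$, the Lusin--Souslin inversion of $\Phi$ on its Borel image, and the default value $m_0$ off the $\mathcal{G}$-event $\{F\in\mathcal{M}\}$) is correct and is essentially the paper's own proof. The $\pi$-system caveat in your first paragraph is unnecessary: injectivity of $\Phi$ is exactly the definition of a determining class, so your argument already proves the lemma as stated, for any countable determining class.
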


\begin{proof}
For each $n\in\mathbb{N}$, choose a $\mathcal{G}$-measurable $Z_n:\Omega\to[0,1]$ such that
\[
    Z_{n}(\omega)=Z(\omega)(C_{n})\qquad\mathbb{P}\text{-a.s.}
\]
Since $\mathcal{C}$ is countable, there exists a single null set $N\in\mathcal{F}$ such that, for every $\omega\notin N$ and every $n\in\mathbb{N}$,
\[
    Z_n(\omega)=Z(\omega)(C_n).
\]
Define
\[
    W:\Omega\to[0,1]^{\mathbb{N}},\qquad W(\omega):=(Z_{n}(\omega))_{n\in\mathbb{N}}.
\]
Because each $Z_n$ is $\mathcal{G}$-measurable, $W$ is $\mathcal{G}$-measurable. Let
\[
    \Phi:\mathcal{P}(\mathcal{Y})\to[0,1]^{\mathbb{N}},\qquad\Phi(\nu):=(\nu(C_{n}))_{n\in\mathbb{N}}.
\]
The map $\Phi$ is Borel measurable. Since $\mathcal{C}$ is determining, $\Phi$ is injective. Moreover, $\mathcal{P}(\mathcal{Y})$ is a standard Borel space because $\mathcal{Y}$ is Polish. Hence, by the Lusin-Souslin theorem \parencite[Corollary 15.2]{kechris2012classical}, $\Phi(\mathcal{P}(\mathcal{Y}))$ is a Borel subset of $[0,1]^{\mathbb{N}}$, and the inverse $\Phi^{-1}:\Phi(\mathcal{P}(\mathcal{Y}))\to\mathcal{P}(\mathcal{Y})$ is Borel measurable.

For every $\omega\notin N$,
\[
    W(\omega)=\bigl(Z(\omega)(C_{n})\bigr)_{n\in\mathbb{N}}=\Phi(Z(\omega)).
\]
Therefore $W(\omega)\in\Phi(\mathcal{P}(\mathcal{Y}))$ for every $\omega\notin N$. Set
\[
    D\coloneqq W^{-1}\bigl(\Phi(\mathcal{P}(\mathcal{Y}))\bigr).
\]
The set $D$ belongs to $\mathcal{G}$, and $D^c\subseteq N$ implies $\mathbb{P}(D)=1$. Choose some fixed $\nu_0\in\mathcal{P}(\mathcal{Y})$, and define
\[
\widetilde{W}(\omega)\coloneqq \begin{cases}
W(\omega), & \omega\in D,\\
\Phi(\nu_{0}), & \text{otherwise}.
\end{cases}
\]
Since $\Phi(\mathcal{P}(\mathcal{Y}))$ is Borel and $W$ is $\mathcal{G}$-measurable, the map $\widetilde{W}$ is $\mathcal{G}$-measurable. Moreover, $\widetilde{W}(\omega)\in\Phi(\mathcal{P}(\mathcal{Y}))$, for all $\omega\in\Omega$.

Now define $\widetilde{Z}(\omega)\coloneqq\Phi^{-1}(\widetilde{W}(\omega))$, then the map $\widetilde{Z}:\Omega\to\mathcal{P}(\mathcal{Y})$ is $\mathcal{G}$-measurable. Finally, for $\omega\notin N$,
\[
    \widetilde{W}(\omega)=W(\omega)=\Phi(Z(\omega)) \implies \widetilde{Z}(\omega)=\Phi^{-1}(\Phi(Z(\omega)))=Z(\omega).
\]
Thus $\widetilde{Z}=Z$ $\mathbb{P}$-a.s., so $\widetilde{Z}$ is a $\mathcal{G}$-measurable version of $Z$.
\end{proof}

\subsection{Deferred standard proofs}
\label{app:deferred_proofs}

\begin{proof}[Proof of Lemma~\ref{lem:MK_condP}]
The map $x\mapsto\mathbb{P}_{Y\mid X}(A\mid x)$ is $\mathcal{B}(\mathcal{X})$-measurable, so $\mathbb{P}_{Y\mid X}(A\mid X)$ is $\sigma(X)$-measurable. For any $B_{\Omega}\in\sigma(X)$, there exists $B\in\mathcal{B}(\mathcal{X})$ such that $B_{\Omega}=X^{-1}(B)$. Hence, by Definition~\ref{def:regular_cond},
\[
\begin{aligned}
    \int_{B_{\Omega}}\mathbb{P}_{Y\mid X}(A\mid X(\omega))\mathbb{P}(d\omega)
    &=\int_{B}\mathbb{P}_{Y\mid X}(A\mid x)\mathbb{P}_{X}(dx)\\
    &=\mathbb{P}(X\in B,Y\in A)\\
    &=\mathbb{P}(B_{\Omega}\cap\{Y\in A\})
    =\int_{B_{\Omega}}\mathbbm{1}_{\{Y\in A\}}(\omega)\mathbb{P}(d\omega).
\end{aligned}
\]
Thus $\mathbb{P}_{Y\mid X}(A\mid X(\cdot))$ satisfies the defining property of $\mathbb{E}[\mathbbm{1}_{\{Y\in A\}}\mid\sigma(X)]$ and is therefore a version of $\mathbb{P}(Y\in A\mid\sigma(X))$.
\end{proof}

\begin{proof}[Proof of Proposition~\ref{prop:sdr_MK}]
Let
\[
   Z(\omega)\coloneqq\mathbb{P}_{Y\mid X}(\cdot\mid X(\omega))\in\mathcal{P}(\mathcal{Y}).
\]
Since $\mathcal{Y}$ is Polish, there exists a countable determining class $\mathcal{C}=\{C_{n}\}_{n\in\mathbb{N}}\subseteq\mathcal{B}(\mathcal{Y})$.

For (i) $\implies$ (ii), assume that $Y \indep X \mid \mathcal{G}$. Then, for every $A\in\mathcal{B}(\mathcal{Y})$,
\[
    \mathbb{P}_{Y\mid X}(A\mid X)=\mathbb{P}(Y\in A\mid\sigma(X))=\mathbb{P}(Y\in A\mid\mathcal{G})\qquad\mathbb{P}\text{-a.s.}
\]
In particular, for every $n\in\mathbb{N}$, the random element $Z(\cdot)(C_n)=\mathbb{P}_{Y\mid X}(C_n\mid X(\cdot))$ admits the $\mathcal{G}$-measurable version $\mathbb{P}(Y\in C_{n}\mid\mathcal{G})$. Lemma~\ref{lem:simultaneous_kernel_version} therefore shows that $Z$ admits a $\mathcal{G}$-measurable version as a $\mathcal{P}(\mathcal{Y})$-valued random element.

For (ii) $\implies$ (i), let $\widetilde{Z}:\Omega\to\mathcal{P}(\mathcal{Y})$ be a $\mathcal{G}$-measurable version of $Z$. Then, for every $A\in\mathcal{B}(\mathcal{Y})$, the random element $\widetilde{Z}(A)\coloneqq\widetilde{Z}(\cdot)(A)$ is $\mathcal{G}$-measurable and satisfies
\[
    \widetilde{Z}(A)=\mathbb{P}_{Y\mid X}(A\mid X)=\mathbb{P}(Y\in A\mid\sigma(X))\qquad\mathbb{P}\text{-a.s.},
\]
where the second equality follows from Lemma~\ref{lem:MK_condP}. The tower property and the $\mathcal{G}$-measurability of $\widetilde{Z}(A)$ now give
\[
\begin{aligned}
    \mathbb{P}(Y\in A\mid\mathcal{G})
    &=\mathbb{E}[\mathbbm{1}_{\{Y\in A\}}\mid\mathcal{G}]\\
    &=\mathbb{E}[\mathbb{E}[\mathbbm{1}_{\{Y\in A\}}\mid\sigma(X)]\mid\mathcal{G}]\\
    &=\mathbb{E}[\widetilde{Z}(A)\mid\mathcal{G}]
    =\widetilde{Z}(A)
    =\mathbb{P}(Y\in A\mid\sigma(X))\qquad\mathbb{P}\text{-a.s.}
\end{aligned}
\]
Let $\mathcal{H}$ be the class of bounded Borel functions $g:\mathcal{Y}\to\mathbb{R}$ for which
\[
    \mathbb{E}[g(Y)\mid\sigma(X)]=\mathbb{E}[g(Y)\mid\mathcal{G}]\qquad\mathbb{P}\text{-a.s.}
\]
The preceding argument shows that $\mathbbm{1}_A\in\mathcal{H}$ for every $A\in\mathcal{B}(\mathcal{Y})$. The class $\mathcal{H}$ is a vector space and is closed under bounded pointwise monotone limits. The bounded monotone class theorem therefore implies that $\mathcal{H}$ contains every bounded Borel function \parencite{billingsley1995probability,kallenberg1997foundations}. Since $\mathcal{G}\subseteq\sigma(X)$, this is equivalent to $Y \indep X \mid \mathcal{G}$.
\end{proof}

\begin{proof}[Proof of Lemma~\ref{lem:ineq_condvar}]
Because $\mathcal{Y}$ is separable, the Bochner space $L^{2}(\mathbb{P};\mathcal{Y})$ is a Hilbert space with inner product
\[
    \langle U,V\rangle_{L^{2}(\mathbb{P};\mathcal{Y})}
    \coloneqq\mathbb{E}[\langle U,V\rangle_{\mathcal{Y}}].
\]
Let $\mu\coloneqq\mathbb{E}[Y]$. The tower property gives $\mathbb{E}[Z_{1}]=\mathbb{E}[Z_{2}]=\mu$ and
\[
    \mathbb{E}[Z_{2}-Z_{1}\mid\mathcal{F}_{1}]
    =\mathbb{E}[Z_{2}\mid\mathcal{F}_{1}]-Z_{1}=0.
\]
Thus $Z_{2}-Z_{1}$ is orthogonal to the closed subspace of $\mathcal{F}_{1}$-measurable elements of $L^{2}(\mathbb{P};\mathcal{Y})$. In particular, it is orthogonal to $Z_{1}-\mu$. Since
\[
    Z_{2}-\mu=(Z_{1}-\mu)+(Z_{2}-Z_{1}),
\]
the Pythagorean theorem yields
\begin{align}\label{eq:pythag}
    \mathbb{E}[\Vert Z_{2}-\mu\Vert_{\mathcal{Y}}^{2}]
    &=\mathbb{E}[\Vert Z_{1}-\mu\Vert_{\mathcal{Y}}^{2}]
      +\mathbb{E}[\Vert Z_{2}-Z_{1}\Vert_{\mathcal{Y}}^{2}].
\end{align}
Equivalently,
\[
    \Var(Z_{2})=\Var(Z_{1})+\mathbb{E}[\Vert Z_{2}-Z_{1}\Vert_{\mathcal{Y}}^{2}].
\]
The inequality and equality condition follow immediately.
\end{proof}

\input{gaussian_analysis_background.tex}

\end{document}

%% file: statistical_properties.tex
\section{Statistical properties of linear SDR-COT}
\label{sec:linear_sdr_cot_statistics}

Proposition~\ref{prop:population_maximality} guarantees exhaustiveness for general representations but does not identify the minimal representation, since every richer representation has the same population value. We therefore restrict attention to linear subspaces of the correct structural dimension. The argument has three logically separate parts. First, the relaxed empirical COT coupling consistently approximates the population coupling, without invoking a velocity model. Second, for Euclidean responses, a slicewise Lipschitz condition turns zero population loss into identification of the central subspace; Caffarelli's contraction theorem provides a concrete way to verify this condition for the true velocity under a Gaussian source and strongly log-concave conditional target laws. Third, for Hilbert-valued responses, we do not impose ambient Lipschitz continuity. Zero-loss identification instead follows from uniqueness of a Gaussian continuity equation under Cameron--Martin Sobolev regularity and interpolation compression.

Although the coupling result in Section \ref{sec:relaxed_coupling_consistency} provides a basis for estimation, coupling convergence alone does not guarantee the uniform convergence of a fitted flow-matching criterion. Consequently, the consistency theorems impose this uniform convergence as a separate requirement. These theorems specifically address the full-sample relaxed coupling. Practical implementations introduce further approximations through minibatch transport, Monte Carlo integration over time, and stochastic optimisation, which are represented by the empirical-criterion error and the optimisation remainder used below.

\subsection{Linear set-up and identification}
\label{sec:linear_statistical_setup}

Let \(\mathcal{X}\) and \(\mathcal{Y}\) be separable Hilbert spaces. For a linear isometry \(B:\mathbb{R}^d\to\mathcal{X}\), write \(B^*:\mathcal{X}\to\mathbb{R}^d\) for its adjoint and \(P_B=BB^*\) for the orthogonal projection onto its range, \(\operatorname{ran}(B)\). Define the Stiefel manifold as
\[
    \mathrm{St}(\mathcal{X},d)\coloneqq\{B:\mathbb{R}^{d}\to\mathcal{X}:B^*B=I_{d}\}.
\]
For \(B,\widetilde B\in\mathrm{St}(\mathcal{X},d)\), the difference of the two rank-\(d\) projections is a Hilbert--Schmidt operator, and we define the projection distance
\begin{equation}
    d_{\mathrm{pr}}(B,\widetilde{B})\coloneqq\frac{\Vert P_{B}-P_{\widetilde{B}}\Vert_{\mathrm{HS}}}{\sqrt{2d}}.
\label{eq:projection_distance}
\end{equation}
When \(\mathcal{X}=\mathbb{R}^{d_x}\), we identify \(B\) with a semi-orthogonal \(d_x\times d\) matrix, so that \(B^*=B^T\), \(\mathrm{St}(\mathcal{X},d)=\mathrm{St}(d_x,d)\), and the Hilbert--Schmidt norm in \eqref{eq:projection_distance} reduces to the Frobenius norm used in Section~\ref{sec:numerics}.

Let \(\gamma^\star\) be the unique optimal triangular coupling with marginals \(\mu_0=\eta\otimes\rho\) and \(\mu_1=\mathbb{P}_{(X,Y_1)}\).\footnote{In particular, the regularity of \(\rho\) in Assumption~\ref{ass:vanish_gauss_null} makes the optimal triangular coupling \(\gamma^\star\) unique.} We view \(\gamma^\star\) as a probability measure on \((\mathcal{X}\times\mathcal{Y})^2\), with canonical coordinates \((X_0,Y_0,X_1,Y_1)\). Since it is concentrated on \(\{X_0=X_1\}\), write \(X=X_0=X_1\) and set
\[
    V\coloneqq Y_1-Y_0,
    \qquad
    Y_t\coloneqq (1-t)Y_0+tY_1=Y_0+tV,
    \qquad 0\leq t\leq T_\tau,
\]
where \(T_\tau\coloneqq1-\tau\) for a fixed \(\tau\in(0,1)\). Let \(\Theta\) be a compact metric space and let
\[
    \mathcal{U}\coloneqq\{u^{\theta}_{\mathcal{Y}}:[0,T_{\tau}]\times\mathbb{R}^{d}\times\mathcal{Y}\to\mathcal{Y}:\theta\in\Theta\}
\]
be a candidate class of reduced velocities. In this linear setting, the loss in \eqref{eq:sdr_cot_loss} reduces to
\begin{equation}
    Q(B,\theta)\coloneqq\int^{T_{\tau}}_{0}\int\Vert y_{1}-y_{0}-u^{\theta}_{\mathcal{Y}}\bigl(t,B^*x_{0},(1-t)y_{0}+ty_{1}\bigr)\Vert^{2}_{\mathcal{Y}}\,\gamma^{\star}(dx_{0},dy_{0},dx_{1},dy_{1})\,dt.
\label{eq:population_linear_risk}
\end{equation}

The following assumption specifies the statistical target. 

\begin{Assumption} 
\label{ass:linear_statistical_target}
Assumptions~\ref{ass:cond_indep} and~\ref{ass:vanish_gauss_null} hold with \(\mathcal{G}=\sigma(B_0^*X)\) for some \(B_0\in\mathrm{St}(\mathcal{X},d)\). The subspace \(\mathcal{S}_0\coloneqq\operatorname{ran}(B_0)\) is the unique \(d\)-dimensional linear central subspace for \(Y_1\) versus \(X\): it is contained in every linear dimension-reduction subspace and satisfies
\[
    Y_1\indep X\mid B_0^*X.
\]
\end{Assumption}

Assumption~\ref{ass:sigma_algebra} is compatible with Assumption~\ref{ass:linear_statistical_target}. Indeed, applying Proposition~\ref{prop:v_Gmeasurable} to the bounded one-to-one transformation obtained by applying \(\arctan\) coordinatewise to \(B_0^*X\), and then composing with its Borel inverse, gives a factorisation through \(B_0^*X\). This factorisation is only Borel; the additional regularity used for Euclidean estimation is introduced in Subsection~\ref{sec:euclidean_linear_consistency}, and is not imposed on the Hilbert-space construction in Subsection \ref{sec:functional_linear_consistency}.

\subsection{Consistency of the relaxed empirical coupling}
\label{sec:relaxed_coupling_consistency}

We next establish the coupling convergence needed for estimation. The conclusion is qualitative because it selects the relaxation parameter by a diagonal argument. An explicit sample-size rate would require a quantitative stability bound for the conditional optimal coupling.

\begin{Proposition}[Consistency of relaxed empirical COT couplings]
\label{prop:relaxed_empirical_coupling_consistency}
Let \(\mu_0,\mu_1\in\mathcal{P}^{2,\eta}(\mathcal{X}\times\mathcal{Y})\) be probability measures satisfying the conditions of Proposition~\ref{prop:condMonge}, ensuring that the triangular quadratic COT problem from \(\mu_0\) to \(\mu_1\) has a unique optimal coupling \(\gamma^\star\). Let \(\widehat\mu_{0,n}\) and \(\widehat\mu_{1,n}\) be random probability measures such that\footnote{For the empirical measure \(\widehat\mu_n=n^{-1}\sum_{i=1}^n\delta_{Z_i}\) of i.i.d. observations with law \(\mu\in\mathcal P^2(\mathcal X\times\mathcal Y)\), Varadarajan's theorem \parencite{varadarajan1958sample} gives \(\widehat\mu_n\rightharpoonup\mu\) almost surely. The strong law of large numbers applied to \(\lVert Z_i\rVert^2\) gives convergence of the second moments. Proposition~7.1.5 of \textcite{ambrosio2005gradient}, which characterises \(W_2\)-convergence by narrow convergence together with convergence of the second moments, then yields \(W_2(\widehat\mu_n,\mu)\to0\) almost surely.}
\[
    W_2(\widehat\mu_{j,n},\mu_j)\xrightarrow{p}0,
    \qquad j=0,1.
\]
For each \(n\) and \(\varepsilon>0\), let \(\widehat\gamma_{n,\varepsilon}^\star\) be a measurable choice of optimal coupling between \(\widehat\mu_{0,n}\) and \(\widehat\mu_{1,n}\) for the cost
\[
    c_{\varepsilon}(x_0,y_0,x_1,y_1)
    \coloneqq\Vert x_0-x_1\Vert_{\mathcal{X}}^2
    +\varepsilon\Vert y_0-y_1\Vert_{\mathcal{Y}}^2.
\]
For \(\varepsilon>0\), let
\[
    \Gamma^{\star}_{\varepsilon}\coloneqq\operatorname*{arg\,min}_{\gamma\in\Gamma(\mu_{0},\mu_{1})}\int c_{\varepsilon}\,d\gamma
\]
be the set of population optimal couplings for the same cost. This set is non-empty and compact in \(W_2\), with
\begin{equation}
    \sup_{\gamma\in\Gamma^{\star}_{\varepsilon}}
    W_2(\gamma,\gamma^\star)\longrightarrow0
    \qquad\text{as }\varepsilon\downarrow0,
    \label{eq:population_relaxed_coupling_consistency}
\end{equation}
There is also a deterministic sequence \(\varepsilon_n\downarrow0\) such that
\begin{equation}
    W_2(\widehat\gamma_{n,\varepsilon_n}^\star,\gamma^\star)
    \xrightarrow{p}0.
    \label{eq:relaxed_coupling_w2_consistency}
\end{equation}

Suppose that \(\mu_0=\eta\otimes\rho\), where \(\eta=\mathbb{P}_{X}\) and \(\rho\in\mathcal{P}^{2}_{r}(\mathcal{Y})\), and that \(\mu_1=\mathbb{P}_{(X,Y_1)}\). Let \(E_K\subset\mathcal{X}\) be increasing finite-dimensional subspaces with orthogonal projections \(P_K\) satisfying \(P_Kx\to x\) for every \(x\in\mathcal{X}\). Define
\[
    \mu_j^K\coloneqq (P_K,\mathrm{id}_{\mathcal{Y}})_\#\mu_j,
    \qquad j=0,1,
\]
and embed measures on \((E_K\times\mathcal{Y})^2\) in \((\mathcal{X}\times\mathcal{Y})^2\)  through natural inclusion.\footnote{``Natural inclusion'' means treating an element of the finite-dimensional subspace \(E_K\subset\mathcal X\) as the same element in the full space \(\mathcal X\).} The triangular COT problem between \(\mu_0^K\) and \(\mu_1^K\) has a unique optimal coupling \(\gamma_K^\star\), and
\begin{equation}
    W_2(\gamma_K^\star,\gamma^\star)\longrightarrow0.
    \label{eq:projected_cot_coupling_consistency}
\end{equation}
Consequently, there are deterministic sequences \(K_n\uparrow\infty\) and \(\varepsilon_n\downarrow0\), both sufficiently slow, for which the embedded relaxed empirical optimal couplings based on \((P_{K_n}X_i,Y_{0,i})\) and \((P_{K_n}X_i,Y_{1,i})\) converge to \(\gamma^\star\) in \(W_2\) in probability.
\end{Proposition}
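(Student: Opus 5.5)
The plan is to work throughout with the rescaled cost \(\widetilde c_\varepsilon\coloneqq c_\varepsilon/\varepsilon=\varepsilon^{-1}\Vert x_0-x_1\Vert^2+\Vert y_0-y_1\Vert^2\), which has the same minimisers as \(c_\varepsilon\). The argument rests on one compactness fact. For fixed marginals \(\mu_0,\mu_1\in\mathcal{P}^2\), the set \(\Gamma(\mu_0,\mu_1)\) is tight, since its marginals are tight. It also has uniformly integrable second moments, because \(\Vert z_0\Vert^2+\Vert z_1\Vert^2\) is controlled by the fixed marginals. Hence \(\Gamma(\mu_0,\mu_1)\) is compact in \(W_2\) by Proposition~7.1.5 of \textcite{ambrosio2005gradient}. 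Since \(c_\varepsilon\) is continuous with quadratic growth, \(\gamma\mapsto\int c_\varepsilon\,d\gamma\) is \(W_2\)-continuous. Consequently \(\Gamma^\star_\varepsilon\) is a non-empty closed subset of a compact set, and is therefore compact.

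For \eqref{eq:population_relaxed_coupling_consistency} I will argue by contradiction. Suppose there are \(\varepsilon_k\downarrow0\), \(\gamma_k\in\Gamma^\star_{\varepsilon_k}\) and \(\delta>0\) with \(W_2(\gamma_k,\gamma^\star)\ge\delta\). By compactness we may pass to a subsequence with \(\gamma_k\to\bar\gamma\) in \(W_2\). The conditional optimiser \(\gamma^\star\) is admissible and has \(x_0=x_1\), so optimality of \(\gamma_k\) gives
\[
\varepsilon_k^{-1}\int\Vert x_0-x_1\Vert^2d\gamma_k+\int\Vert y_0-y_1\Vert^2d\gamma_k\le\int\Vert y_0-y_1\Vert^2d\gamma^\star=(W_2^\eta(\mu_0,\mu_1))^2 .
\]
This bound has two consequences. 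First, \(\int\Vert x_0-x_1\Vert^2d\gamma_k\le\varepsilon_k(W_2^\eta)^2\to0\), so \(\bar\gamma\) is concentrated on \(\{x_0=x_1\}\), i.e.\ \(\bar\gamma\in\Gamma_\eta(\mu_0,\mu_1)\). Second, by continuity, \(\int\Vert y_0-y_1\Vert^2d\bar\gamma\le(W_2^\eta)^2\). Thus \(\bar\gamma\) is an optimal triangular coupling, and uniqueness from Proposition~\ref{prop:condMonge} gives \(\bar\gamma=\gamma^\star\), which is a contradiction.

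Next, for fixed \(\varepsilon\), I will show that \(\operatorname{dist}_{W_2}(\widehat\gamma^\star_{n,\varepsilon},\Gamma^\star_\varepsilon)\to0\) in probability. Pass to a subsequence along which \(W_2(\widehat\mu_{j,n},\mu_j)\to0\) almost surely. On that event the couplings \(\widehat\gamma^\star_{n,\varepsilon}\) have \(W_2\)-convergent marginals, so they are \(W_2\)-relatively compact by the same tightness and uniform-integrability argument. Any limit point is \(c_\varepsilon\)-optimal by the standard stability of optimal couplings \parencite[Theorem 5.20]{Villani2009}, which applies because \(c_\varepsilon\) is continuous and \(W_2\)-integrable. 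The subsequence criterion then yields convergence in probability. Now choose the diagonal sequence. Take \(\varepsilon^{(k)}\downarrow0\) with \(\sup_{\Gamma^\star_{\varepsilon^{(k)}}}W_2(\cdot,\gamma^\star)<1/k\), and choose \(N_k\uparrow\) such that \(\mathbb{P}(\operatorname{dist}(\widehat\gamma^\star_{n,\varepsilon^{(k)}},\Gamma^\star_{\varepsilon^{(k)}})>1/k)<1/k\) for all \(n\ge N_k\). Setting \(\varepsilon_n\coloneqq\varepsilon^{(k)}\) for \(N_k\le n<N_{k+1}\), the triangle inequality gives \eqref{eq:relaxed_coupling_w2_consistency}.

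For the projected statement, note that \(\mu_0^K=(P_K)_\#\eta\otimes\rho\) is again a product with regular response factor. Proposition~\ref{prop:condMonge} therefore gives a unique optimal triangular coupling \(\gamma_K^\star\). Dominated convergence, using \(\Vert P_Kx-x\Vert^2\le4\Vert x\Vert^2\), gives \(W_2(\mu_j^K,\mu_j)\to0\). The key comparison is the pushforward \((P_K,\mathrm{id},P_K,\mathrm{id})_\#\gamma^\star\). It is a triangular coupling of \(\mu_0^K\) and \(\mu_1^K\) with the same response cost, so
\[
\int\Vert y_0-y_1\Vert^2d\gamma_K^\star\le(W_2^\eta(\mu_0,\mu_1))^2 .
\]
The family \(\{\gamma_K^\star\}\) has \(W_2\)-convergent marginals and is therefore relatively compact. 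Any limit point lies on \(\{x_0=x_1\}\), which is closed, so it is triangular. Its response cost is at most \((W_2^\eta)^2\), and uniqueness again forces the limit to equal \(\gamma^\star\), which proves \eqref{eq:projected_cot_coupling_consistency}. For the final claim I will rerun the first two steps with \(\mu_j^K\) in place of \(\mu_j\), for each fixed \(K\), and then perform a double diagonal selection: first pick \(K\) with \(W_2(\gamma_K^\star,\gamma^\star)\) small, then \(\varepsilon\) small for that \(K\), then \(n\) large.

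The main obstacle I expect is the uniformity needed in this last diagonal step, together with the infinite-dimensional tightness that underlies every compactness claim. Tightness must come from the marginals alone, with no local compactness available, and the stability theorem must be applied to random, measurably selected empirical couplings. I will handle the latter by the almost-sure subsequence device. Because only convergence in probability is claimed and no rate, the slow choice of \(K_n,\varepsilon_n\) absorbs all non-uniformity.
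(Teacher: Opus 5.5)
Your proposal is correct and follows essentially the same route as the paper. Both arguments use $W_2$-compactness of $\Gamma(\mu_0,\mu_1)$, use $\gamma^\star$ as a triangular competitor together with uniqueness of the conditional optimum, obtain fixed-$\varepsilon$ stability from Villani's Theorem~5.20 along almost surely convergent subsequences, choose $\varepsilon_n$ diagonally, and use $(P_K,\mathrm{id},P_K,\mathrm{id})_\#\gamma^\star$ as the competitor in the projected problem. The one check you leave implicit before rerunning the fixed-$K$ steps is that the projected empirical marginals converge to $\mu_j^K$ in $W_2$; this holds at once because $(x,y)\mapsto(P_Kx,y)$ is $1$-Lipschitz, exactly as the paper notes.
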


\begin{proof}
For fixed \(\varepsilon>0\), existence of an optimum follows from the direct method for optimal transport. The set \(\Gamma(\mu_0,\mu_1)\) is tight, and the second moment of the product norm is the same for every one of its elements because the marginals are fixed. It is therefore compact in \(W_2\). Since the \(c_\varepsilon\)-cost is lower semicontinuous, \(\Gamma^{\star}_{\varepsilon}\) is non-empty and compact in \(W_2\).

Set \(C^\star\coloneqq\int\Vert y_0-y_1\Vert_{\mathcal{Y}}^2\,d\gamma^\star\).
Because \(\gamma^\star\) is triangular, it is an admissible competitor with \(c_\varepsilon\)-cost \(\varepsilon C^\star\). Thus every \(\gamma_\varepsilon\in\Gamma^{\star}_{\varepsilon}\) satisfies
\begin{equation}
    \int\Vert x_0-x_1\Vert_{\mathcal{X}}^2\,d\gamma_\varepsilon
    +\varepsilon\int\Vert y_0-y_1\Vert_{\mathcal{Y}}^2\,d\gamma_\varepsilon
    \leq \varepsilon C^\star.
    \label{eq:relaxed_cost_comparison}
\end{equation}
Let \(\varepsilon_k\downarrow0\) and choose any \(\gamma_{k}\in\Gamma_{\varepsilon_{k}}\). Compactness of \(\Gamma(\mu_0,\mu_1)\) in \(W_2\) gives a convergent subsequence with limit \(\overline\gamma\). By \eqref{eq:relaxed_cost_comparison} and lower semicontinuity, \(\overline\gamma\) is triangular and its response cost is at most \(C^\star\). The uniqueness of the triangular optimum therefore gives \(\overline\gamma=\gamma^\star\). Since this conclusion holds for every sequence and every selection, the uniform convergence in \eqref{eq:population_relaxed_coupling_consistency} follows.

We next justify the passage from the random empirical marginals to the selected empirical optimiser. Fix \(\varepsilon>0\). We claim that
\begin{equation}
    \operatorname{dist}_{W_2}
    (\widehat\gamma_{n,\varepsilon}^\star,\Gamma^{\star}_{\varepsilon})
    \xrightarrow{p}0,
    \label{eq:fixed_epsilon_empirical_stability}
\end{equation}
where \(\operatorname{dist}_{W_2}(\gamma,\Gamma)
\coloneqq\inf_{\widetilde\gamma\in\Gamma}W_2(\gamma,\widetilde\gamma)\).
Consider an arbitrary subsequence. Convergence in probability of both empirical marginals permits a further subsequence, not relabelled, along which
\[
    W_{2}(\widehat{\mu}_{j,n},\mu_{j})\xrightarrow{\mathrm{a.s.}}0,
    \qquad j=0,1.
\]
Fix an outcome in this probability-one event. The two convergent sequences of marginals are tight, hence so are their couplings. To verify the cost condition in the stability theorem, fix \(\gamma\in\Gamma(\mu_0,\mu_1)\), couple each \(\mu_j\) optimally to \(\widehat\mu_{j,n}\), and glue these two couplings to \(\gamma\) over its endpoint coordinates. The resulting measures \(\gamma_n\in\Gamma(\widehat\mu_{0,n},\widehat\mu_{1,n})\) converge to \(\gamma\) in \(W_2\), and their \(c_\varepsilon\)-costs converge to that of \(\gamma\). In particular, the empirical optimal costs are finite and uniformly bounded. The stability theorem for optimal couplings with the fixed continuous cost \(c_\varepsilon\) therefore applies, and every weak accumulation point of \(\widehat\gamma_{n,\varepsilon}^\star\) belongs to \(\Gamma^{\star}_{\varepsilon}\); see \textcite[Theorem~5.20]{Villani2009}. Moreover,
\begin{align*}
    &\int\bigl(\Vert(x_0,y_0)\Vert^2+\Vert(x_1,y_1)\Vert^2\bigr)
    \,d\widehat\gamma_{n,\varepsilon}^\star\\
    &\qquad=\int\Vert z\Vert^2\,d\widehat\mu_{0,n}(z)
    +\int\Vert z\Vert^2\,d\widehat\mu_{1,n}(z)\\
    &\qquad\longrightarrow
    \int\Vert z\Vert^2\,d\mu_0(z)+\int\Vert z\Vert^2\,d\mu_1(z).
\end{align*}
Thus every weakly convergent further subsequence converges in \(W_2\) to an element of \(\Gamma^{\star}_{\varepsilon}\). Compactness and a contradiction argument show that the distance in \eqref{eq:fixed_epsilon_empirical_stability} converges to zero almost surely along the chosen further subsequence. The subsequence characterisation of convergence in probability proves \eqref{eq:fixed_epsilon_empirical_stability} for the original sequence. This argument applies to any measurable selection of an empirical optimum; neither the empirical nor the population optimum need be unique at fixed \(\varepsilon\).

Choose \(\varepsilon_k\downarrow0\) so that \(\sup_{\gamma\in\Gamma_{\varepsilon_k}}W_2(\gamma,\gamma^\star)\leq k^{-1}\). By \eqref{eq:fixed_epsilon_empirical_stability}, choose deterministic integers \(N_k\uparrow\infty\) such that, for all \(n\geq N_k\),
\[
    \mathbb{P}\left(\operatorname{dist}_{W_2}
    (\widehat{\gamma}^{\star}_{n,\varepsilon_{k}},\Gamma_{\varepsilon_k})>k^{-1}\right)\leq k^{-1}.
\]
Set \(\varepsilon_n=\varepsilon_k\) whenever \(N_k\leq n<N_{k+1}\). Compactness of \(\Gamma_{\varepsilon_k}\) and the triangle inequality then give
\[
    \mathbb{P}\left(W_{2}(\widehat{\gamma}^{\star}_{n,\varepsilon_{n}},\gamma^{\star})>2k^{-1}\right)\leq k^{-1},\qquad N_{k}\leq n<N_{k+1},
\]
which proves \eqref{eq:relaxed_coupling_w2_consistency}.

For the projection result, Proposition~\ref{prop:condMonge} and the regularity of \(\rho\) yield a unique conditional Monge coupling from \(\rho\) to the conditional law of \(Y_1\) given \(P_KX\), for \((P_K)_\#\eta\)-a.e.\ projected covariate. Thus \(\gamma_K^\star\) exists and is unique. Moreover, \(\mu_j^K\to\mu_j\) in \(W_2\) because \(\mathbb{E}[\Vert P_KX-X\Vert_{\mathcal{X}}^2]\to0\).
The pushforward of \(\gamma^\star\) obtained by replacing both covariate coordinates by \(P_KX\) is a triangular coupling of \(\mu_0^K\) and \(\mu_1^K\) with response cost \(C^\star\). Hence the response cost of \(\gamma_K^\star\) is at most \(C^\star\). The convergence of the projected marginals gives tightness and convergence of the endpoint second moments, so \(\{\gamma_K^\star\}_K\) is precompact in \(W_2\). Every accumulation point is a triangular coupling of \(\mu_0\) and \(\mu_1\) with response cost at most \(C^\star\), and therefore equals \(\gamma^\star\). It follows that the full sequence converges in \(W_2\).

It remains to make the final simultaneous choice explicit. Choose a strictly increasing sequence \(K_m\) such that \(W_2(\gamma_{K_m}^\star,\gamma^\star)\leq m^{-1}\).
Let \(\Gamma_{K,\varepsilon}\) denote the set of population relaxed optima between \(\mu_0^K\) and \(\mu_1^K\), embedded in the full endpoint space, and let \(\widehat\gamma_{n,K,\varepsilon}^\star\) be any measurable empirical optimum. Applying \eqref{eq:population_relaxed_coupling_consistency} at fixed \(K_m\), choose \(\varepsilon_m\downarrow0\) so that
\[
    \sup_{\gamma\in\Gamma_{K_m,\varepsilon_m}}
    W_2(\gamma,\gamma_{K_m}^\star)\leq m^{-1}.
\]
The projected empirical marginals converge to \(\mu_j^{K_m}\) in \(W_2\) because the map \((x,y)\mapsto(P_{K_m}x,y)\) is \(1\)-Lipschitz. Hence the fixed-parameter argument in \eqref{eq:fixed_epsilon_empirical_stability} allows deterministic integers \(N_m\uparrow\infty\) to be chosen so that, for every \(n\geq N_m\),
\[
    \mathbb{P}\left(\operatorname{dist}_{W_2}
    (\widehat{\gamma}^{\star}_{n,K_{m},\varepsilon_{m}},
    \Gamma_{K_m,\varepsilon_m})>m^{-1}\right)\leq m^{-1}.
\]
For \(N_m\leq n<N_{m+1}\), set \(K_n=K_m\) and \(\varepsilon_n=\varepsilon_m\). Compactness of \(\Gamma_{K_m,\varepsilon_m}\) and the triangle inequality give
\[
    \mathbb{P}\left(W_{2}(\widehat{\gamma}^{\star}_{n,K_{n},\varepsilon_{n}},\gamma^{\star})>3m^{-1}\right)\leq m^{-1},
\]
which proves the final assertion.
\end{proof}

\subsection{Euclidean central-subspace consistency}
\label{sec:euclidean_linear_consistency}

We now take \(\mathcal{X}=\mathbb{R}^{d_x}\) and \(\mathcal{Y}=\mathbb{R}^{d_y}\). The following condition gives a concrete way to verify regularity of the true Euclidean COT velocity. It is compatible with the Gaussian source used in the numerical procedure, does not require bounded support, and allows the strength of log-concavity to vary with the sufficient index.

\begin{Assumption}[Euclidean conditional transport regularity]
\label{ass:euclidean_conditional_transport_regularity}
The source response law is \(\rho=\mathcal{N}(m_\rho,\Sigma_\rho)\), where \(\Sigma_\rho\) is positive definite. Set
\[
    \beta_\rho\coloneqq\Vert\Sigma_\rho^{-1}\Vert_{\mathrm{op}}.
\]
Let \(\zeta_0\coloneqq (B_0^*)_\#\eta\). There exist a Borel set \(D_0\subseteq\mathbb{R}^d\) with \(\zeta_0(D_0)=1\), a Borel function \(\alpha:D_0\to(0,\infty)\), and a Borel function \(W:D_0\times\mathbb{R}^{d_y}\to\mathbb{R}\) such that, for every \(s\in D_0\), the map \(W_s\coloneqq W(s,\cdot)\) is twice continuously differentiable,
\begin{equation}
    \nabla^2 W_s(y)\succeq\alpha(s)I_{d_y},
    \qquad y\in\mathbb{R}^{d_y},
\label{eq:conditional_target_curvature}
\end{equation}
and
\begin{equation}
    \mathbb{P}_{Y_{1}\mid B^{*}_{0}X}(dy\mid s)=\exp(-W_{s}(y))\,dy.
\label{eq:conditional_target_log_concave_density}
\end{equation}
Here \(W_s\) includes its normalising constant.
\end{Assumption}

Assumption~\ref{ass:euclidean_conditional_transport_regularity} requires the conditional target laws to be slicewise strongly log-concave. The curvature \(\alpha(s)\) need not be bounded away from zero over \(s\), so the resulting Lipschitz constants need not be uniform in the reduced index. Models with multimodal or heavy-tailed conditional response laws fall outside this particular sufficient condition. While this limitation applies to the Euclidean consistency theorem, it does not affect the map and velocity factorisation results detailed in Sections~\ref{sec:restricted_dynamic_cot} and~\ref{sec:sdr_cot}.

\begin{Proposition}[Slicewise regularity of the Euclidean COT velocity]
\label{prop:euclidean_conditional_velocity_regularity}
Suppose that Assumptions~\ref{ass:linear_statistical_target} and~\ref{ass:euclidean_conditional_transport_regularity} hold. For \(s\in D_0\), let \(T_s\) be the quadratic optimal transport map from \(\rho\) to the measure in \eqref{eq:conditional_target_log_concave_density}. Then \(T_s\) has a globally Lipschitz representative\footnote{The Lipschitz constant is defined as \(\operatorname{Lip}(T)\coloneqq\sup_{x\neq y}\Vert T(x)-T(y)\Vert/\Vert x-y\Vert\).} satisfying
\begin{equation}
    \operatorname{Lip}(T_s)
    \leq L_T(s)
    \coloneqq\sqrt{\frac{\beta_\rho}{\alpha(s)}}.
    \label{eq:caffarelli_slicewise_bound}
\end{equation}
For \(0\leq t\leq T_\tau\), set \(T_{t,s}\coloneqq(1-t)\mathrm{id}+tT_s\). The map \(T_{t,s}\) is a bijection of \(\mathbb{R}^{d_y}\), and
\begin{equation}
    u^0_{\mathcal{Y}}(t,s,z)
    \coloneqq \bigl(T_s-\mathrm{id}\bigr)\bigl(T_{t,s}^{-1}(z)\bigr)
    \label{eq:euclidean_exact_current_velocity}
\end{equation}
has a jointly Borel representative such that
\begin{equation}
    \Vert u^0_{\mathcal{Y}}(t,s,z)-u^0_{\mathcal{Y}}(t,s,\widetilde z)\Vert
    \leq \ell_0(s)\Vert z-\widetilde z\Vert,
    \qquad
    \ell_0(s)\coloneqq\frac{1+L_T(s)}{\tau}.
    \label{eq:euclidean_velocity_slicewise_bound}
\end{equation}
Moreover,
\begin{equation}
    V=u^0_{\mathcal{Y}}(t,B_0^*X,Y_t)
    \quad\text{for }dt\otimes\gamma^\star\text{-a.e. }(t,X,Y_0,Y_1).
    \label{eq:caffarelli_true_velocity_identity}
\end{equation}
\end{Proposition}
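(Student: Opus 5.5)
Three steps, in order: Caffarelli gives a Lipschitz bound for each slice map; monotonicity of that map gives a Lipschitz inverse of every interpolating map, hence the velocity bound; and the almost-everywhere identification follows from the reduced construction of Proposition~\ref{prop:v_Gmeasurable}.

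\textbf{Step 1: Caffarelli bound.} Fix \(s\in D_0\). The target \(\nu_s\coloneqq\exp(-W_s(y))\,dy\) has potential whose Hessian is at least \(\alpha(s)I\). The source \(\rho=\mathcal{N}(m_\rho,\Sigma_\rho)\) has potential \(\tfrac12(y-m_\rho)^T\Sigma_\rho^{-1}(y-m_\rho)\), whose Hessian is \(\Sigma_\rho^{-1}\preceq\beta_\rho I\). I would invoke the generalised Caffarelli contraction theorem for a source with upper-bounded Hessian and a target with lower-bounded Hessian \parencite{kolesnikov2011mass}. It says that the Brenier map \(T_s=\nabla\varphi_s\) satisfies \(0\preceq\nabla^2\varphi_s\preceq\sqrt{\beta_\rho/\alpha(s)}\,I\), which gives \eqref{eq:caffarelli_slicewise_bound}. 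If the cited version requires a normalised Gaussian source, I would first reduce to it by the affine change of variables \(y\mapsto\Sigma_\rho^{-1/2}(y-m_\rho)\). This reduction preserves optimality only up to a linear conjugation, so the cleaner route is to cite Kolesnikov's general statement directly. A Lipschitz representative exists because \(\varphi_s\) is convex with a Hessian bound, hence \(C^{1,1}\).

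\textbf{Step 2: inverse and velocity Lipschitz bound.} For \(t\le T_\tau\), the map \(T_{t,s}=(1-t)\mathrm{id}+t\nabla\varphi_s\) is the gradient of the \((1-t)\)-strongly convex \(C^{1,1}\) function \(\tfrac{1-t}{2}\|y\|^2+t\varphi_s\). Monotonicity of \(T_s\) gives
\[
  \|T_{t,s}(y)-T_{t,s}(\widetilde y)\|\ge(1-t)\|y-\widetilde y\|\ge\tau\|y-\widetilde y\|.
\]
So \(T_{t,s}\) is injective with inverse Lipschitz constant at most \(1/\tau\). It is also surjective: a continuous, strongly monotone map on \(\mathbb{R}^{d_y}\) is onto, by Browder--Minty, or because the gradient of a strongly convex function has the conjugate's gradient as its inverse. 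Then \(u^0(t,s,\cdot)=(T_s-\mathrm{id})\circ T_{t,s}^{-1}\) is Lipschitz with constant \((L_T(s)+1)/\tau\), which is \eqref{eq:euclidean_velocity_slicewise_bound}.

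\textbf{Main obstacle: joint Borel measurability in \((t,s,z)\).} Caffarelli's theorem produces a map for each fixed \(s\), not a jointly measurable family. I would take the reduced conditional Monge map \(G^\star_{\mathcal{Y}}(s,y)\) from Proposition~\ref{prop:T_Gmeasurable}, applied to the bounded \(\arctan\)-transform of \(B_0^*X\) as in the compatibility remark, and composed back. This map is jointly Borel and equals \(T_s\) \(\rho\)-a.e. for \(\zeta_0\)-a.e. \(s\). Since \(\rho\) has full support and \(T_s\) is continuous, the continuous representative is determined by \(G^\star_{\mathcal{Y}}(s,\cdot)\). It can be recovered Borel-measurably, for example as \(\lim_k\) of averages of \(G^\star_{\mathcal{Y}}(s,\cdot)\) against Gaussian mollifiers of width \(1/k\). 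After modifying on a \(\zeta_0\)-null set of \(s\), this yields a jointly Borel family \((s,y)\mapsto T_s(y)\) that is continuous in \(y\).

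The inverse \(T_{t,s}^{-1}(z)\) is jointly Borel as the unique minimiser of the strongly convex function \(y\mapsto\tfrac{1-t}{2}\|y\|^2+t\varphi_s(y)-\langle z,y\rangle\). Alternatively, apply Lusin--Souslin to the injective Borel map \((t,s,y)\mapsto(t,s,T_{t,s}(y))\), exactly as in Theorem~\ref{thm:disp_interp}(iv).

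\textbf{Step 3: identity \eqref{eq:caffarelli_true_velocity_identity}.} On \(\gamma^\star\) we have \(Y_1=T_{\mathcal{Y}}^\star(X,Y_0)=T_{B_0^*X}(Y_0)\) a.s., by Proposition~\ref{prop:T_Gmeasurable} and slicewise uniqueness. Hence \(Y_t=T_{t,B_0^*X}(Y_0)\) and
\[
  u^0(t,B_0^*X,Y_t)=T_{B_0^*X}(Y_0)-Y_0=V,
\]
for every \(t\le T_\tau\), almost surely.
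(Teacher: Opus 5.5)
Your proposal is correct and follows the paper's proof essentially step for step. The steps are: Kolesnikov's generalised Caffarelli bound on each slice; strong monotonicity of $T_{t,s}$ for bijectivity and the $1/\tau$ inverse bound; a jointly Borel conditional Monge map regularised by mollification, followed by Lusin--Souslin applied to $(t,s,y)\mapsto(t,s,T_{t,s}(y))$; and the conditional-independence identification of the slice map at $B_0^*X$. The mollification is valid because $\rho$ has a strictly positive Lebesgue density, so $\rho$-a.e.\ agreement is Lebesgue-a.e.\ agreement; you should state this explicitly, since full support of $\rho$ alone would not suffice.

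The differences are cosmetic. The paper applies Proposition~\ref{prop:condMonge} directly to $\zeta_0\otimes\rho$ and $\mathbb{P}_{(B_0^*X,Y_1)}$, so your $\arctan$ detour is unnecessary because $\zeta_0\in\mathcal{P}^{2}(\mathbb{R}^d)$. The paper also uses compactly supported mollifiers rather than Gaussian ones.
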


\begin{proof}
The Gaussian source has density proportional to \(\exp(-V_\rho)\), where \(\nabla^2V_\rho=\Sigma_\rho^{-1}\preceq\beta_\rho I_{d_y}\). For each \(s\in D_0\), Caffarelli's contraction theorem \parencite[Theorem~2.2]{kolesnikov2011mass}, applied with \eqref{eq:conditional_target_curvature}, gives \eqref{eq:caffarelli_slicewise_bound}.

The map \(T_s\) is the gradient of a convex function and is therefore monotone. Hence, for \(y,\widetilde y\in\mathbb{R}^{d_y}\),
\[
    \langle T_{t,s}(y)-T_{t,s}(\widetilde y),y-\widetilde y\rangle
    =(1-t)\Vert y-\widetilde y\Vert^2
      +t\langle T_s(y)-T_s(\widetilde y),y-\widetilde y\rangle
    \geq(1-t)\Vert y-\widetilde y\Vert^2.
\]
Thus \(T_{t,s}\) is injective and \(T_{t,s}^{-1}\) is \((1-t)^{-1}\)-Lipschitz on its range. To see surjectivity, write \(T_s=\nabla\phi_s\) with \(\phi_s\) convex. For each \(z\in\mathbb{R}^{d_y}\), the function
\[
    y\mapsto(1-t)\frac{1}{2}\Vert y\Vert^{2}+t\phi_{s}(y)-\langle z,y\rangle
\]
is \((1-t)\)-strongly convex. In particular, it is bounded below by an affine function plus \((1-t)\Vert y\Vert^2/2\), and is therefore coercive. It has a unique minimiser, whose first-order condition is \(T_{t,s}(y)=z\). Hence \(T_{t,s}\) is bijective. It follows that
\[
    \Vert u^{0}_{\mathcal{Y}}(t,s,z)-u^{0}_{\mathcal{Y}}(t,s,\widetilde z)\Vert
    \leq (1+L_T(s))\Vert T^{-1}_{t,s}(z)-T^{-1}_{t,s}(\widetilde z)\Vert
    \leq \frac{1+L_T(s)}{1-t}\Vert z-\widetilde z\Vert
    \leq \ell_0(s)\Vert z-\widetilde z\Vert.
\]
It remains to choose the slicewise Lipschitz maps in a jointly measurable way. Applying Proposition~\ref{prop:condMonge} to the reduced source and target laws \(\zeta_0\otimes\rho\) and \(\mathbb{P}_{(B_0^*X,Y_1)}\) gives a jointly Borel map
\(\widetilde T:D_0\times\mathbb{R}^{d_y}\to\mathbb{R}^{d_y}\) such that \(\widetilde T(s,\cdot)\) is an optimal map from \(\rho\) to \(\mathbb{P}_{Y_1\mid B_0^*X}(\cdot\mid s)\) for \(\zeta_0\)-a.e.\ \(s\). By replacing \(D_0\) with a Borel subset of full \(\zeta_0\)-measure, we may assume that this assertion holds for every \(s\in D_0\). For each such \(s\), let \(T_s^{\mathrm{Lip}}\) denote the globally Lipschitz representative supplied by Caffarelli's contraction theorem. Uniqueness of the quadratic optimal map gives
\begin{equation}
    \widetilde T(s,y)=T_s^{\mathrm{Lip}}(y)
    \qquad\text{for }\rho\text{-a.e. }y.
    \label{eq:borel_lipschitz_map_agreement}
\end{equation}
The Gaussian measure \(\rho\) has a strictly positive Lebesgue density. Hence the equality in \eqref{eq:borel_lipschitz_map_agreement} also holds for Lebesgue-a.e.\ \(y\).

We now construct a jointly Borel version which agrees everywhere with \(T_s^{\mathrm{Lip}}\) on each slice. Let \(\varphi\in C_c^\infty(\mathbb{R}^{d_y})\) be non-negative with integral one, set \(\varphi_m(v)=m^{d_y}\varphi(mv)\), and define, coordinatewise,
\begin{equation}
    T^{(m)}(s,y)\coloneqq(\widetilde{T}(s,\cdot)*\varphi_{m})(y)=\int_{\mathbb{R}^{d_{y}}}\widetilde{T}(s,y-v)\varphi_{m}(v)\,dv.
\label{eq:slicewise_transport_mollification}
\end{equation}
For every fixed \(s\in D_0\), the map \(\widetilde T(s,\cdot)\) is locally integrable because it agrees Lebesgue-a.e.\ with the Lipschitz map \(T_s^{\mathrm{Lip}}\). Thus \eqref{eq:slicewise_transport_mollification} is finite. The integrand is jointly Borel in \((s,y,v)\), so parameter-dependent integration shows that \(T^{(m)}\) is jointly Borel in \((s,y)\). Moreover, \eqref{eq:borel_lipschitz_map_agreement} implies that
\[
    T^{(m)}(s,y)=(T_s^{\mathrm{Lip}}*\varphi_m)(y).
\]
The Lipschitz bound gives, for every \(s\in D_0\) and \(y\in\mathbb{R}^{d_y}\),
\[
    \Vert T^{(m)}(s,y)-T_s^{\mathrm{Lip}}(y)\Vert
    \leq L_T(s)
    \int_{\mathbb{R}^{d_y}}\Vert v\Vert\varphi_m(v)\,dv
    \longrightarrow0.
\]
Consequently, \(T(s,y)\coloneqq\lim_{m\to\infty}T^{(m)}(s,y)\) is jointly Borel, and \(T(s,\cdot)=T_s^{\mathrm{Lip}}\) everywhere for every \(s\in D_0\). We henceforth take \(T_s=T(s,\cdot)\). This construction imposes no continuity or uniform Lipschitz bound in the index \(s\).

Finally, consider the space--time map
\[
    \mathcal{T}:[0,T_\tau]\times D_0\times\mathbb{R}^{d_y}
    \longrightarrow[0,T_\tau]\times D_0\times\mathbb{R}^{d_y},
    \qquad
    \mathcal{T}(t,s,y)=\bigl(t,s,T_{t,s}(y)\bigr).
\]
The map \(\mathcal{T}\) is Borel because \((s,y)\mapsto T_s(y)\) is Borel. It is bijective because, for every fixed \((t,s)\), the map \(T_{t,s}\) is bijective, as proved above. Since its domain and codomain are standard Borel spaces, the Lusin--Souslin theorem implies that \(\mathcal{T}^{-1}\) is Borel; see \textcite[Corollary~15.2]{kechris2012classical}. Its third coordinate is precisely the jointly Borel map \((t,s,z)\mapsto T_{t,s}^{-1}(z)\). Therefore
\[
    (t,s,z)\mapsto
    (T_s-\mathrm{id})\bigl(T_{t,s}^{-1}(z)\bigr)
\]
is jointly Borel on \([0,T_\tau]\times D_0\times\mathbb{R}^{d_y}\). Defining it to be zero when \(s\notin D_0\) gives the asserted jointly Borel representative of \(u^0_{\mathcal{Y}}\) on the whole domain.

Assumption~\ref{ass:linear_statistical_target} gives \(\mathbb{P}_{Y_1\mid X}(\cdot\mid x)=\mathbb{P}_{Y_1\mid B_0^*X}(\cdot\mid B_0^*x)\) a.e.\ Hence the conditional Monge map at covariate \(X\) is \(T_{B_0^*X}\), and \eqref{eq:euclidean_exact_current_velocity} gives \eqref{eq:caffarelli_true_velocity_identity} along the interpolation.
\end{proof}

Our consistency result requires a correctly specified model, as formalised in the following assumption.

\begin{Assumption}[Euclidean fitted velocity class]
\label{ass:euclidean_velocity_model}
The parameter space \(\Theta\) is compact, the map \((\theta,t,s,y)\mapsto u^\theta_{\mathcal{Y}}(t,s,y)\) is Borel, and \(Q(B,\theta)<\infty\) for every \((B,\theta)\in\mathrm{St}(d_x,d)\times\Theta\). Let \(\zeta_B\coloneqq B^*_\#\eta\). For every \((B,\theta)\), there exist a Borel set \(D_{B,\theta}\subseteq\mathbb{R}^d\), with \(\zeta_B(D_{B,\theta})=1\), and a Borel function \(\ell_{B,\theta}:D_{B,\theta}\to[0,\infty)\), finite everywhere on \(D_{B,\theta}\), such that
\begin{equation}
    \Vert u^\theta_{\mathcal{Y}}(t,s,y)-u^\theta_{\mathcal{Y}}(t,s,\widetilde y)\Vert
    \leq\ell_{B,\theta}(s)\Vert y-\widetilde y\Vert
    \label{eq:slicewise_candidate_lipschitz}
\end{equation}
for every \(s\in D_{B,\theta}\), \(t\in[0,T_\tau]\), and \(y,\widetilde y\in\mathbb{R}^{d_y}\). The population criterion \(Q\) is lower semicontinuous. Finally, the model is correctly specified: there is a \(\theta_0\in\Theta\) such that
\begin{equation}
    V=u^{\theta_0}_{\mathcal{Y}}(t,B_0^*X,Y_t)
    \quad\text{for }dt\otimes\gamma^\star\text{-a.e. }(t,X,Y_0,Y_1).
    \label{eq:euclidean_correct_velocity_specification}
\end{equation}
\end{Assumption}

Proposition~\ref{prop:euclidean_conditional_velocity_regularity} verifies the response-regularity part of Assumption~\ref{ass:euclidean_velocity_model} for the true velocity, with the non-uniform envelope \(\ell_0\). Correct model specification additionally requires the fitted class to contain this representative.

\begin{Lemma}[Euclidean zero-loss identification]
\label{lem:zero_loss_identification}
Under Assumption~\ref{ass:linear_statistical_target}, let \(B\in\mathrm{St}(d_x,d)\), and let \(u_{\mathcal{Y}}:[0,T_\tau]\times\mathbb{R}^d\times\mathbb{R}^{d_y}\to\mathbb{R}^{d_y}\) be Borel. Suppose that there are a Borel set \(D\subseteq\mathbb{R}^d\), with \(\zeta_B(D)=1\), and a finite Borel function \(\ell:D\to[0,\infty)\) for which
\[
    \Vert u_{\mathcal{Y}}(t,s,y)-u_{\mathcal{Y}}(t,s,\widetilde{y})\Vert\leq\ell(s)\Vert y-\widetilde{y}\Vert
\]
for every \(s\in D\) and every \(t,y,\widetilde y\). If
\[
    \int^{T_{\tau}}_{0}\mathbb{E}_{\gamma^{\star}}\bigl[\Vert V-u_{\mathcal{Y}}(t,B^{*}X,Y_{t})\Vert^{2}\bigr]dt=0,
\]
then \(Y_1\indep X\mid B^*X\), and consequently \(\operatorname{ran}(B)=\mathcal{S}_0\).
\end{Lemma}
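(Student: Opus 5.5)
The plan is to show that zero loss forces the conditional Monge map at covariate $x$ to be determined by $B^*x$. From there, conditional independence follows by pushing forward $\rho$. Zero loss gives $V=u_{\mathcal{Y}}(t,B^*X,Y_t)$ for $dt\otimes\gamma^\star$-a.e.\ $(t,X,Y_0,Y_1)$. By Theorem~\ref{thm:disp_interp} and the triangular structure, $Y_1=T^\star_{\mathcal{Y}}(X,Y_0)$, with $(X,Y_0)\sim\eta\otimes\rho$. By Fubini, for $\eta$-a.e.\ $x$ with $B^*x\in D$ and $\rho$-a.e.\ $y_0$, the identity
\[
    T^\star_{\mathcal{Y}}(x,y_0)-y_0=u_{\mathcal{Y}}\bigl(t,B^*x,y_0+t(T^\star_{\mathcal{Y}}(x,y_0)-y_0)\bigr)
\]
holds for a.e.\ $t\in[0,T_\tau]$.

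\emph{Key step: an ODE uniqueness argument.} Fix such $x$ and $y_0$, and set $s=B^*x$. The curve $z(t)=y_0+tV$ is affine, and by the above it solves $\dot z(t)=u_{\mathcal{Y}}(t,s,z(t))$ for a.e.\ $t$, with $z(0)=y_0$. Because $u_{\mathcal{Y}}(t,s,\cdot)$ is $\ell(s)$-Lipschitz uniformly in $t$, Carathéodory solutions of this ODE are unique; this follows from Grönwall's inequality applied to the difference of two solutions.

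Some care is needed because $u_{\mathcal{Y}}$ is only Borel in $t$. I need the integral form $z(t)=y_0+\int_0^t u_{\mathcal{Y}}(r,s,z(r))\,dr$, and this holds since the integrand equals the constant $V$ a.e. The Lipschitz bound in $y$ makes the difference estimate work: if $w$ is another solution, then $\|z(t)-w(t)\|\le\ell(s)\int_0^t\|z-w\|$, which forces $z=w$. Integrability of $r\mapsto u_{\mathcal{Y}}(r,s,w(r))$ follows from the Lipschitz bound and integrability along $z$.

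Hence $z(T_\tau)=\Phi^s_{T_\tau}(y_0)$, where $\Phi^s$ denotes the flow of $u_{\mathcal{Y}}(\cdot,s,\cdot)$. This flow depends only on $s$, and it is defined on a full-measure set of starting points. Consequently
\[
    T^\star_{\mathcal{Y}}(x,y_0)=y_0+T_\tau^{-1}\bigl(\Phi^{B^*x}_{T_\tau}(y_0)-y_0\bigr).
\]

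The main obstacle is making the resulting map $F(s,y_0)$ jointly Borel and well defined, since the flow is only needed off null sets. I would sidestep this as follows. Take $x,x'$ with $B^*x=B^*x'$, both lying in the good set. Uniqueness then gives $T^\star_{\mathcal{Y}}(x,y_0)=T^\star_{\mathcal{Y}}(x',y_0)$ for $\rho$-a.e.\ $y_0$ in the intersection of their good sets. Therefore $\mu_1(\cdot\mid x)=T^\star_{\mathcal{Y}}(x,\cdot)_\#\rho$ agrees with $\mu_1(\cdot\mid x')$. The map $x\mapsto\mu_1(\cdot\mid x)$ is then constant on the fibres of $B^*$ within a Borel set of full $\eta$-measure.

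The next step is measurability. I would express $\mu_1(\cdot\mid x)$ as an $\eta$-a.e.\ limit of $\sigma(B^*X)$-measurable objects. Alternatively, I can argue directly that $\mathbb{E}[g(Y_1)\mid X]$ agrees with a function of $B^*X$: the Picard iterates built from $u_{\mathcal{Y}}(\cdot,s,\cdot)$ started at $y_0$ converge to $z$, and each iterate is jointly Borel in $(s,y_0)$. The limit therefore defines a Borel $F(s,y_0)$ with $T^\star_{\mathcal{Y}}(x,y_0)=F(B^*x,y_0)$ $\mu_0$-a.e. This route is cleaner, and I would use it.

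With this representation, $\mu_1(\cdot\mid x)=F(B^*x,\cdot)_\#\rho$ is a Borel function of $B^*x$. Proposition~\ref{prop:sdr_MK} then yields $Y_1\indep X\mid B^*X$.

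Finally, by Assumption~\ref{ass:linear_statistical_target}, $\mathcal{S}_0$ is contained in every linear dimension-reduction subspace, so $\mathcal{S}_0\subseteq\operatorname{ran}(B)$. Both subspaces have dimension $d$, so they are equal.
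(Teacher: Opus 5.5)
Your proof is correct, and it takes a genuinely different route from the paper's. The paper never solves the ODE on $[0,T_\tau]$. It uses the zero-loss identity only at countably many small times $t_m<(2m)^{-1}$. For $s\in D_m=\{s\in D:\ell(s)\le m\}$, the one-step backward map $y\mapsto y-t_m u_{\mathcal{Y}}(t_m,s,y)$ is injective with a $2$-Lipschitz inverse on each slice, and Lusin--Souslin then supplies a jointly Borel inverse $G_m$. This gives $Y_{t_m}=G_m(S,Y_0)$ and $Y_1=Y_{t_m}+(1-t_m)u_{\mathcal{Y}}(t_m,S,Y_{t_m})$, and these pieces are glued over the partition $C_m=D_m\setminus D_{m-1}$ to obtain $Y_1=H(S,Y_0)$.

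Because that inversion needs $t\ell(s)<1$, the paper must localise in the size of $\ell(s)$. Your Gr\"onwall/Picard argument works for any finite $\ell(s)$ on the whole interval. You therefore need no localisation, no partition and no descriptive set theory, and you obtain the Borel map $F$ constructively as a pointwise limit of parametric integrals. The price is twofold. You use the identity for a.e.\ $t$ along each trajectory, where the paper needs it only at the selected times. You also need time-integrability of $u_{\mathcal{Y}}(\cdot,s,w(\cdot))$ along the Picard iterates.

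You correctly note that this integrability follows from the Lipschitz bound together with $u_{\mathcal{Y}}(r,s,z(r))=V$ for a.e.\ $r$. Explicitly, $\Vert u_{\mathcal{Y}}(r,s,y)\Vert\le\Vert V\Vert+\ell(s)\bigl(\Vert y-y_0\Vert+T_\tau\Vert V\Vert\bigr)$ for a.e.\ $r$. Since $z$ is a fixed point of the Picard operator, the iterates converge to $z$ directly, without a separate uniqueness step. Away from the good set, restrict to the Borel set where the iterates' time integrals are finite and the limit exists, and define $F$ arbitrarily elsewhere.

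Your final steps coincide with the paper's: the kernel $F(s,\cdot)_\#\rho$, Proposition~\ref{prop:sdr_MK}, and the dimension argument. Conceptually, your version is the Lagrangian counterpart of the Eulerian uniqueness argument the paper uses for Hilbert-valued responses in Lemma~\ref{lem:gaussian_sobolev_zero_loss}. The paper's Euclidean proof is instead a single implicit-Euler inversion that uses the Lipschitz bound only at the chosen times.
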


\begin{proof}
Put \(S=B^*X\). By Fubini's theorem, there is a set \(A\subset(0,T_\tau)\) of full Lebesgue measure such that
\begin{equation}
    V=u_{\mathcal{Y}}(t,S,Y_t)\qquad\gamma^\star\text{-a.s.}
    \label{eq:positive_time_zero_loss}
\end{equation}
for every \(t\in A\). To recover the current state from the source state, we shall invert the backward map \(y\mapsto y-tu_{\mathcal{Y}}(t,s,y)\). Its injectivity is ensured by \(t\ell(s)<1\). Since \(\ell\) need not be uniformly bounded, we first localise the index \(s\) according to the size of its Lipschitz constant. For \(m\geq1\), let
\(D_m=\{s\in D:\ell(s)\leq m\}\). These sets increase to \(D\). Since \(A\) has full Lebesgue measure, we may choose \(t_m\in A\cap(0,T_\tau\wedge(2m)^{-1})\).
For \(s\in D_m\), consider the backward map \(y\mapsto y-t_m u_{\mathcal{Y}}(t_m,s,y)\).
When evaluated at \(y=Y_{t_m}\), equation~\eqref{eq:positive_time_zero_loss} shows that it returns 
\[
   Y_{t_m}-t_mu_{\mathcal{Y}}(t_m,S,Y_{t_m})=Y_{t_m}-t_mV=Y_0.
\]
To construct its inverse jointly in \(s\), define on \(D_m\times\mathbb{R}^{d_y}\)
\[
   F_m(s,y)\coloneqq(s,y-t_mu_{\mathcal{Y}}(t_m,s,y)).
\]
For \(s\in D_m\), the reverse triangle inequality and the slicewise Lipschitz bound give
\begin{equation}
\begin{aligned}
   \Vert(y-t_{m}u_{\mathcal{Y}}(t_{m},s,y))-(\widetilde{y}-t_{m}u_{\mathcal{Y}}(t_{m},s,\widetilde{y}))\Vert&\geq\Vert y-\widetilde{y}\Vert-t_{m}\Vert u_{\mathcal{Y}}(t_{m},s,y)-u_{\mathcal{Y}}(t_{m},s,\widetilde{y})\Vert\\&\geq(1-t_{m}m)\Vert y-\widetilde{y}\Vert\geq\tfrac{1}{2}\Vert y-\widetilde{y}\Vert.
\end{aligned}
\label{eq:slicewise_positive_time_inverse_bound}
\end{equation}
Thus the backward map is injective on every slice, and retaining \(s\) as the first coordinate makes \(F_m\) a Borel injection. By the Lusin--Souslin theorem, \(F_m(D_m\times\mathbb{R}^{d_y})\) is Borel and \(F_m^{-1}\) is Borel on this image. Let \(G_m\) be the second coordinate of this inverse, extended arbitrarily off the image. Then \(G_m:D_m\times\mathbb{R}^{d_y}\to\mathbb{R}^{d_y}\) is Borel and satisfies
\[
    G_m\bigl(s,y-t_m u_{\mathcal{Y}}(t_m,s,y)\bigr)=y,
    \qquad (s,y)\in D_m\times\mathbb{R}^{d_y}.
\]

We next assemble these slicewise inverses. Set \(C_1=D_1\) and \(C_m=D_m\setminus D_{m-1}\) for \(m\geq2\). Because \(\ell\) is finite on \(D\) and \(\zeta_B(D)=1\), the disjoint sets \(C_m\) cover the law of \(S\) up to a null set. Since the collection \(\{t_m:m\geq1\}\) is countable, equation~\eqref{eq:positive_time_zero_loss} holds at all these times on one common \(\gamma^\star\)-full set. On \(\{S\in C_m\}\), we have \(S\in D_m\) and
\[
    F_m(S,Y_{t_m})=(S,Y_0).
\]
Applying the inverse constructed above therefore gives
\[
    Y_{t_m}=G_m(S,Y_0),
    \qquad
    Y_1=G_m(S,Y_0)+(1-t_m)u_{\mathcal{Y}}\bigl(t_m,S,G_m(S,Y_0)\bigr)
\]
almost surely, where the second identity uses \(Y_1=Y_{t_m}+(1-t_m)V\). Defining the right-hand side piecewise over the Borel partition \(\{C_m\}_m\), and arbitrarily on its null complement, yields a Borel map
\[
    H:\mathbb{R}^d\times\mathbb{R}^{d_y}\to\mathbb{R}^{d_y}
    \quad\text{such that}\quad
    Y_1=H(S,Y_0)\quad\gamma^\star\text{-a.s.}
\]

It remains to translate this reconstruction into conditional independence. The \((X,Y_0)\)-marginal of \(\gamma^\star\) is \(\eta\otimes\rho\), so \(Y_0\) is independent of \(X\) and has law \(\rho\). Hence, for every bounded Borel function \(f:\mathbb{R}^{d_y}\to\mathbb{R}\),
\[
    \mathbb{E}_{\gamma^{\star}}[f(Y_{1})\mid X]=\mathbb{E}_{\gamma^{\star}}[f(H(S,Y_{0}))\mid X]=\int_{\mathbb{R}^{d_{y}}}f(H(S,y_{0}))\,\rho(dy_{0}).
\]
The last expression is a Borel function of \(S=B^*X\) alone. Equivalently, the Borel kernel \(K(s,\cdot)\coloneqq H(s,\cdot)_\#\rho\) satisfies  
\[
    \mathbb{P}_{Y_1\mid X}(\cdot\mid X)=K(B^*X,\cdot)
    \qquad\text{a.s.}
\]
Proposition~\ref{prop:sdr_MK} therefore gives \(Y_1\indep X\mid B^*X\). Thus \(\operatorname{ran}(B)\) is a \(d\)-dimensional linear dimension-reduction subspace. Assumption~\ref{ass:linear_statistical_target} states that \(\mathcal{S}_0\) is contained in every such subspace and also has dimension \(d\); consequently, \(\operatorname{ran}(B)=\mathcal{S}_0\).
\end{proof}

For any coupling \(\gamma\) on the endpoint space, let \(Q_\gamma(B,\theta)\) denote the criterion in \eqref{eq:population_linear_risk} with \(\gamma^\star\) replaced by \(\gamma\). Given observations \(\{(X_i,Y_{1,i})\}_{i=1}^n\) and independent source draws \(\{Y_{0,i}\}_{i=1}^n\), let \(\widehat\gamma_n=\widehat\gamma_{n,\varepsilon_n}^\star\) be a relaxed empirical optimal coupling and define
\begin{equation}
    \widehat Q_n(B,\theta)\coloneqq Q_{\widehat\gamma_n}(B,\theta).
\label{eq:empirical_linear_risk}
\end{equation}
The time integral may be replaced by Monte Carlo draws. Note that \(W_2(\widehat\gamma_n,\gamma^\star)\to0\) does not by itself imply uniform convergence of \(\widehat Q_n\). Rather than proving architecture-specific universal-approximation theorems, our aim is to establish the statistical guarantees of the SDR--COT framework given a sufficiently rich parametrisation. Therefore, we impose the necessary empirical-process condition directly.

\begin{Theorem}[Consistency in Euclidean linear SDR]
\label{thm:euclidean_linear_sdr_consistency}
Suppose that Assumptions~\ref{ass:linear_statistical_target} and~\ref{ass:euclidean_velocity_model} hold. Assume that
\begin{equation}
    e_n\coloneqq\sup_{B\in\mathrm{St}(d_x,d),\,\theta\in\Theta}|\widehat Q_n(B,\theta)-Q(B,\theta)|\xrightarrow{p}0.
\label{eq:euclidean_uniform_criterion_convergence}
\end{equation}
Let \((\widehat B_n,\widehat\theta_n)\in\mathrm{St}(d_x,d)\times\Theta\) satisfy
\begin{equation}
    \widehat Q_n(\widehat B_n,\widehat\theta_n)
    \leq
    \inf_{B\in\mathrm{St}(d_x,d),\,\theta\in\Theta}
    \widehat Q_n(B,\theta)+r_n,
    \qquad r_n\geq0,\quad r_n\xrightarrow{p}0.
    \label{eq:approximate_empirical_minimiser}
\end{equation}
Then
\begin{equation}
    d_{\mathrm{pr}}(\widehat B_n,B_0)\xrightarrow{p}0.
    \label{eq:euclidean_projection_consistency}
\end{equation}
\end{Theorem}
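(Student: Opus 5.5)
This is a standard argmin-consistency argument on a compact parameter space, with Lemma~\ref{lem:zero_loss_identification} supplying identification. We work on the compact metric space $\mathcal{K}\coloneqq\mathrm{St}(d_x,d)\times\Theta$. On the Stiefel factor we use any metric generating its usual topology; in particular $d_{\mathrm{pr}}$ is continuous on it.

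We first establish two facts about the population criterion $Q$.

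\emph{The minimum is zero.} By \eqref{eq:euclidean_correct_velocity_specification} we have $Q(B_0,\theta_0)=0$. Hence $\inf_{\mathcal{K}}Q=0$, because $Q\geq0$.

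\emph{The zero set is $\{d_{\mathrm{pr}}(\cdot,B_0)=0\}$.} Let $(B,\theta)$ satisfy $Q(B,\theta)=0$. Assumption~\ref{ass:euclidean_velocity_model} provides the slicewise Lipschitz envelope $\ell_{B,\theta}$ on a $\zeta_B$-full Borel set. Lemma~\ref{lem:zero_loss_identification}, applied with $u_{\mathcal{Y}}=u^\theta_{\mathcal{Y}}$, then gives $\operatorname{ran}(B)=\mathcal{S}_0$, that is, $d_{\mathrm{pr}}(B,B_0)=0$.

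\emph{Well-separatedness.} Fix $\delta>0$ and set $\mathcal{K}_\delta\coloneqq\{(B,\theta)\in\mathcal{K}:d_{\mathrm{pr}}(B,B_0)\geq\delta\}$. This set is closed in the compact space $\mathcal{K}$, hence compact. Since $Q$ is lower semicontinuous, it attains its infimum on $\mathcal{K}_\delta$, and by the zero-set identification that minimum is strictly positive. Write
\[
    c_\delta\coloneqq\min_{\mathcal{K}_\delta}Q>0.
\]
This is where lower semicontinuity, rather than mere measurability, of $Q$ is needed. This step is the main point of the argument; the genuinely difficult content, namely inverting the backward flow via the Lipschitz envelope, is already packaged in Lemma~\ref{lem:zero_loss_identification}.

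\emph{Transfer to the estimator.} We chain \eqref{eq:euclidean_uniform_criterion_convergence} and \eqref{eq:approximate_empirical_minimiser}:
\[
    Q(\widehat B_n,\widehat\theta_n)\leq\widehat Q_n(\widehat B_n,\widehat\theta_n)+e_n\leq\inf_{\mathcal{K}}\widehat Q_n+r_n+e_n\leq\widehat Q_n(B_0,\theta_0)+r_n+e_n\leq Q(B_0,\theta_0)+r_n+2e_n=r_n+2e_n.
\]
On the event $\{d_{\mathrm{pr}}(\widehat B_n,B_0)\geq\delta\}$ we have $(\widehat B_n,\widehat\theta_n)\in\mathcal{K}_\delta$, so $Q(\widehat B_n,\widehat\theta_n)\geq c_\delta$. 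Therefore
\[
    \mathbb{P}\bigl(d_{\mathrm{pr}}(\widehat B_n,B_0)\geq\delta\bigr)\leq\mathbb{P}(r_n+2e_n\geq c_\delta)\to0,
\]
since $r_n\to0$ and $e_n\to0$ in probability. As $\delta>0$ was arbitrary, this proves \eqref{eq:euclidean_projection_consistency}.

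\emph{Measurability.} If $\widehat B_n$ is not measurable, the same bound holds with outer probability. The argument does not use uniqueness of the minimiser in $\theta$; only the $B$-component is identified.
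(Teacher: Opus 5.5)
Your proposal is correct and follows the paper's proof essentially step for step: compactness and lower semicontinuity give $c_\delta=\min_{\mathcal{K}_\delta}Q>0$ through Lemma~\ref{lem:zero_loss_identification}, and the chain $Q(\widehat B_n,\widehat\theta_n)\le 2e_n+r_n$ transfers this separation to the estimator. The only detail the paper adds is the trivial case $\mathcal{K}_\delta=\emptyset$, where your $\min_{\mathcal{K}_\delta}Q$ is undefined but the probability being bounded is simply zero.
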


\begin{proof}
Assumption~\ref{ass:euclidean_velocity_model} gives \(Q(B_0,\theta_0)=0\). The parameter space \(\mathrm{St}(d_x,d)\times\Theta\) is compact, and \(Q\) is lower semicontinuous.

Fix \(\delta>0\), and let 
\[
    \mathcal{K}_\delta
    \coloneqq \{(B,\theta)\in\mathrm{St}(d_x,d)\times\Theta:
    d_{\mathrm{pr}}(B,B_0)\geq\delta\}.
\]
If \(\mathcal{K}_\delta\) is empty, the required probability is zero and there is nothing to prove. We may therefore assume that \(\mathcal{K}_\delta\) is non-empty. Compactness follows because \(\mathcal{K}_\delta\) is a closed subset of the compact parameter space. Since \(Q\) is lower semicontinuous, it attains its infimum on \(\mathcal{K}_\delta\). If this infimum were zero, there would exist some \((B_\delta,\theta_\delta)\in\mathcal{K}_\delta\) with \(Q(B_\delta,\theta_\delta)=0\). Assumption~\ref{ass:euclidean_velocity_model} ensures that \(u^{\theta_\delta}_{\mathcal{Y}}\) is Borel and satisfies the slicewise Lipschitz hypothesis of Lemma~\ref{lem:zero_loss_identification}, with \(D=D_{B_\delta,\theta_\delta}\) and \(\ell=\ell_{B_\delta,\theta_\delta}\). The equality \(Q(B_\delta,\theta_\delta)=0\) supplies the zero-loss hypothesis of that lemma. Applying it with \(B=B_\delta\) therefore gives 
\[
   \operatorname{ran}(B_\delta)=\mathcal{S}_0=\operatorname{ran}(B_0).
\] 
Consequently, \(d_{\mathrm{pr}}(B_\delta,B_0)=0\), contradicting \((B_\delta,\theta_\delta)\in\mathcal{K}_\delta\) and \(\delta>0\). Hence 
\[
   c_\delta\coloneqq\min_{(B,\theta)\in\mathcal{K}_\delta}Q(B,\theta)>0.
\]

It remains to relate the population risk of the empirical near-minimiser to this separation constant. By the definition of \(e_n\), for every \((B,\theta)\) in the parameter space,
\[
    Q(B,\theta)\leq \widehat Q_n(B,\theta)+e_n,
    \qquad
    \widehat Q_n(B,\theta)\leq Q(B,\theta)+e_n.
\label{eq:two_sided_uniform_risk_bound}
\] 
Using these inequalities and \eqref{eq:approximate_empirical_minimiser}, we obtain
\begin{align*}
    Q(\widehat{B}_{n},\widehat{\theta}_{n})&\leq\widehat{Q}_{n}(\widehat{B}_{n},\widehat{\theta}_{n})+e_{n}\leq\inf_{B\in\mathrm{St}(d_{x},d),\,\theta\in\Theta}\widehat{Q}_{n}(B,\theta)+r_{n}+e_{n}\leq\widehat{Q}_{n}(B_{0},\theta_{0})+r_{n}+e_{n}\\&\leq Q(B_{0},\theta_{0})+2e_{n}+r_{n}=2e_{n}+r_{n}.
\end{align*}
On the event \(\{d_{\mathrm{pr}}(\widehat B_n,B_0)\geq\delta\}\), the pair \((\widehat B_n,\widehat\theta_n)\) belongs to \(\mathcal{K}_\delta\), so \(c_\delta\leq Q(\widehat B_n,\widehat\theta_n)\leq2e_n+r_n\). Consequently,
\[
    \mathbb{P}\bigl(d_{\mathrm{pr}}(\widehat B_n,B_0)\geq\delta\bigr)
    \leq\mathbb{P}\bigl(2e_n+r_n\geq c_\delta\bigr).
\]
The right-hand side tends to zero because \(2e_n+r_n\xrightarrow{p}0\) and \(c_\delta>0\) is fixed. Since \(\delta>0\) is arbitrary, \eqref{eq:euclidean_projection_consistency} follows.
\end{proof}

\subsection{Functional covariates and responses}
\label{sec:functional_linear_consistency}

For Hilbert-valued responses, we use the Gaussian--Sobolev route: the velocity is Cameron--Martin-valued and weakly differentiable along Cameron--Martin directions, without requiring continuity or Lipschitz regularity in the ambient \(\mathcal{Y}\)-norm. This route also requires a Gaussian source and an \(L^r\) compression condition for the conditional interpolation. These are additional statistical assumptions and do not enter the Borel map and velocity results proved earlier.

The identification argument separates existence from uniqueness. The conditional Monge construction already provides, for each covariate value \(x\), an interpolation \((\mu_t^x)_t\) starting from the common source \(\rho\). If the population loss is zero, then, for a fixed reduced value \(s=B^*x\), every interpolation with \(x\) in the same slice is driven by the field \(u(t,s,\cdot)\). Gaussian--Sobolev admissibility supplies the velocity-side conditions for uniqueness of the weak Gaussian continuity equation, while compression places the interpolation densities in the corresponding uniqueness class. Uniqueness then identifies the intermediate laws within each slice, and the common velocity reconstructs a common conditional endpoint law. This yields \(Y_1\indep X\mid B^*X\).

Appendix~\ref{app:gaussian_analysis_background} connects the Gaussian-measure and covariance notation of Section~\ref{sec:preliminaries} to the abstract Wiener and Cameron--Martin structures used here. It also defines the Malliavin derivative, Gaussian Sobolev spaces and Gaussian divergence, and states the continuity-equation uniqueness theorem used below. Throughout this subsection, \(\rho=\mathcal{N}(0,C_\rho)\) is a nondegenerate centred Gaussian measure on \(\mathcal{Y}\); \(\mathcal{H}_\rho\) is its Cameron--Martin space, along which translations yield equivalent measures; \(D_\rho\) denotes the Malliavin derivative, which extends directional derivatives to this stochastic setting; \(W_\rho^{1,q}(\rho;\mathcal{H}_\rho)\) is the Gaussian Sobolev space of functions with integrable Malliavin derivatives; \(\mathcal{S}_{2}(\mathcal{H}_\rho)\) is the space of Hilbert--Schmidt operators; and \(\operatorname{div}_\rho\) is the divergence operator, defined as the adjoint of \(D_\rho\). In particular, Gaussian--Sobolev regularity concerns weak derivatives in \(\mathcal{H}_\rho\), not continuity in the ambient norm \(\Vert\cdot\Vert_{\mathcal{Y}}\).

Fix \(p,q>1\), let \(p'=p/(p-1)\) and \(q'=q/(q-1)\), set \(r=p'\vee q'\), and take \(c>rT_\tau\). For \(B\in\mathrm{St}(\mathcal{X},d)\), let \(\zeta_B=(B^*)_\#\eta\).

\begin{Definition}[Gaussian--Sobolev admissibility]
\label{def:gaussian_sobolev_admissibility}
A Borel field \(u:[0,T_\tau]\times\mathbb{R}^d\times\mathcal{Y}\to\mathcal{Y}\) is Gaussian--Sobolev admissible at \(B\) if, for \(\zeta_B\)-a.e. \(s\), it is \(\mathcal{H}_\rho\)-valued \(dt\otimes\rho\)-a.e., \(u(t,s,\cdot)\in W_\rho^{1,q}(\rho;\mathcal{H}_\rho)\) for a.e. \(t\), and
\begin{align}
    &\int_0^{T_\tau}
    \Vert u(t,s,\cdot)\Vert_{L^p(\rho;\mathcal{H}_\rho)}\,dt<\infty,
    \label{eq:gaussian_sobolev_field_lp}\\
    &\int^{T_{\tau}}_{0}\left(\Vert D_{\rho}u(t,s,\cdot)\Vert_{L^{q}(\rho;\mathcal{S}_{2}(\mathcal{H}_{\rho}))}+\Vert\operatorname{div}_{\rho}u(t,s,\cdot)\Vert_{L^{q}(\rho)}\right)dt<\infty,
    \label{eq:gaussian_sobolev_field_derivative}\\
    &\mathop{\operatorname{ess\,sup}}_{t\in[0,T_{\tau}]}\int_{\mathcal{Y}}\exp\!\left(c[\operatorname{div}_{\rho}u(t,s,y)]^{-}\right)\rho(dy)<\infty,
    \label{eq:gaussian_sobolev_field_divergence}
\end{align}
where \([a]^-\coloneqq\max\{-a,0\}\).
\end{Definition}

Definition~\ref{def:gaussian_sobolev_admissibility} gives the velocity-side assumptions used in the uniqueness argument. The \(\mathcal{H}_\rho\)-valued and \(L^p\) conditions make the flux integrable when paired with an \(L^r\) density, while the weak derivative, divergence and exponential-divergence conditions provide the regularity required by Proposition~\ref{prop:gaussian_continuity_equation_uniqueness}. The choice \(c>rT_\tau\) satisfies the required exponential-divergence threshold on \((0,T_\tau)\). These conditions ensure at most one weak solution in the prescribed density class; existence is supplied separately by the conditional Monge interpolation.

For \(\eta\)-a.e. \(x\), let
\[
    T_x(y)\coloneqq T_{\mathcal{Y}}^\star(x,y),
    \qquad
    \mu_t^x\coloneqq\bigl((1-t)\mathrm{id}+tT_x\bigr)_\#\rho.
\]
To place that interpolation in the density class covered by the uniqueness theorem, we impose the complementary measure-side condition below.
\begin{Assumption}[Gaussian interpolation compression]
\label{ass:gaussian_interpolation_compression}
The source is the nondegenerate
centred Gaussian measure \(\rho=\mathcal{N}(0,C_\rho)\) as above. For \(\eta\)-a.e. \(x\) and a.e. \(t\in(0,T_\tau)\), there is a density \(f_t^x\) such that
\begin{equation}
    \mu_t^x=f_t^x\rho,
    \qquad
    f^x\in L^\infty\bigl((0,T_\tau);L^r(\rho)\bigr).
    \label{eq:gaussian_interpolation_compression}
\end{equation}
\end{Assumption}
Assumption~\ref{ass:gaussian_interpolation_compression} requires absolute continuity with an \(L^r\) bound along the truncated conditional Monge interpolation. It therefore represents each \(\mu_t^x\) by a density relative to the common Gaussian source and places that density in the class in which uniqueness holds. The condition is one-sided: it does not require the reverse absolute continuity and is therefore weaker than quasi-invariance.

Admissibility and compression play complementary roles. Compression alone does not show that an interpolation satisfies a continuity equation. Rather, the zero-loss identity identifies its velocity as \(u(t,s,\cdot)\), after which admissibility and compression permit the uniqueness theorem to be applied. The Wiener-space transport results of \textcite{FeyelUstunel2004} and \textcite{BogachevKolesnikov2013} motivate the Cameron--Martin and Malliavin--Sobolev structure, but they concern the Cameron--Martin transport cost and do not establish \eqref{eq:gaussian_interpolation_compression} or \eqref{eq:gaussian_sobolev_field_lp}--\eqref{eq:gaussian_sobolev_field_divergence} for the ambient quadratic COT map used here. The more general flow theory of \textcite{AmbrosioTrevisan2014} similarly requires a suitable reference measure, weak derivative and divergence control, and bounded compression.

\begin{Example}[Conditional Cameron--Martin translations]
\label{ex:conditional_cameron_martin_translation}
Suppose that, for a Borel map \(\overline h:\mathbb{R}^d\to\mathcal{H}_\rho\),
\[
    \mathbb{P}_{Y_1\mid X}(\cdot\mid x)
    =\bigl(\mathrm{id}+\overline h(B_0^*x)\bigr)_\#\rho
    \qquad\text{for }\eta\text{-a.e. }x.
\]
Then \(T_x=\mathrm{id}+\overline h(B_0^*x)\) is the ambient-quadratic optimal map: Jensen's inequality bounds the transport cost below by the squared distance between the two means, and the translation attains that bound. With the Gaussian linear functional of Definition~\ref{def:gaussian_linear_functional}, Proposition~\ref{prop:cameron_martin_translation_formula} gives
\[
    f^{x}_{t}(z)=\exp\left(t\widehat{h_{x}}(z)-\frac{t^{2}}{2}\Vert h_{x}\Vert^{2}_{\mathcal{H}_{\rho}}\right),\qquad h_{x}=\overline{h}(B^{*}_{0}x),
\]
and hence, for every \(r>1\),
\[
    \sup_{t\in(0,T_{\tau})}\Vert f^{x}_{t}\Vert_{L^{r}(\rho)}\leq\exp\left(\frac{r-1}{2}T^{2}_{\tau}\Vert h_{x}\Vert^{2}_{\mathcal{H}_{\rho}}\right)<\infty.
\]
Thus Assumption~\ref{ass:gaussian_interpolation_compression} holds. Moreover, the exact velocity \(u(t,s,y)=\overline h(s)\) is Gaussian--Sobolev admissible at \(B_0\): its Malliavin derivative vanishes and \(\operatorname{div}_\rho u(t,s,\cdot)=-\widehat{\overline h(s)}\), whose one-sided exponential moments are finite. 

For the concrete source \(\rho=\mathrm{Law}(\sigma W)\) on \(L^2([0,1])\), where \(W\) is a standard Wiener process, Proposition~\ref{prop:wiener_source_cameron_martin_space} identifies \(\mathcal{H}_\rho\) with the absolutely continuous paths that start at zero and have square-integrable derivative. Consequently, the sine response directions in the common-shape Gaussian-noise designs FF1--FF3 in Section~\ref{sec:numerics} belong to \(\mathcal{H}_\rho\), so those designs fall within the translation example. The example does not generally include FF4: Proposition~\ref{prop:feldman_hajek_criterion} shows that a non-trivial scalar covariance rescaling in infinitely many response directions produces a Gaussian measure singular to \(\rho\). Current Gaussian continuity-equation tools therefore do not verify Assumption~\ref{ass:gaussian_interpolation_compression} for FF4. 
\end{Example}

The next lemma combines these two sides of the argument. The Monge construction supplies the interpolation measures, compression represents them by densities in the uniqueness class, and zero population loss makes the densities associated with covariate values in the same fibre solve a common weak Gaussian continuity equation with initial density one. Gaussian--Sobolev admissibility then gives uniqueness within that class. Equality of the intermediate laws, together with the common velocity, yields equality of the conditional endpoint laws and hence identifies the proposed reduction.

\begin{Lemma}[Gaussian--Sobolev zero-loss identification]
\label{lem:gaussian_sobolev_zero_loss}
Suppose that Assumptions~\ref{ass:linear_statistical_target} and~\ref{ass:gaussian_interpolation_compression} hold. Let \(B\in\mathrm{St}(\mathcal{X},d)\), and let \(u\) be Gaussian--Sobolev admissible at \(B\). If
\begin{equation}
    \int_0^{T_\tau}
    \mathbb{E}_{\gamma^\star}\!\left[
    \Vert V-u(t,B^*X,Y_t)\Vert_{\mathcal{Y}}^2
    \right]dt=0,
    \label{eq:gaussian_sobolev_zero_loss}
\end{equation}
then \(Y_1\indep X\mid B^*X\). Consequently, \(\operatorname{ran}(B)=\mathcal{S}_0\).
\end{Lemma}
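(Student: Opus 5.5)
The plan is to follow the sketch given just before the lemma: use the zero-loss identity to show that, for $\zeta_B$-a.e.\ fibre value $s$, every conditional interpolation $\mu_t^x$ with $B^*x=s$ solves the same weak Gaussian continuity equation driven by $u(t,s,\cdot)$, apply uniqueness within the $L^r$ density class, and then reconstruct the endpoint law from the common velocity.

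\emph{Step 1 (fibrewise zero loss).} Put $S=B^*X$. The $(X,Y_0)$-marginal of $\gamma^\star$ is $\eta\otimes\rho$, and $Y_t=(1-t)Y_0+tT_X(Y_0)$. Disintegrating \eqref{eq:gaussian_sobolev_zero_loss} over $X$ with Fubini gives a Borel set $N\subset\mathcal{X}$ with $\eta(N)=0$ such that, for every $x\notin N$,
\[
    T_x(y_0)-y_0=u\bigl(t,B^*x,(1-t)y_0+tT_x(y_0)\bigr)
\]
for $dt\otimes\rho$-a.e.\ $(t,y_0)$. We also discard an $\eta$-null set so that Assumption~\ref{ass:gaussian_interpolation_compression} holds for $x\notin N$, and so that $u$ is admissible at $s=B^*x$. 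This last reduction is allowed because admissibility is required only for $\zeta_B$-a.e.\ $s$, and $\zeta_B=(B^*)_\#\eta$.

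\emph{Step 2 (weak equation).} Fix such an $x$ and set $s=B^*x$. Repeat the chain-rule computation from the proof of Corollary~\ref{cor:weak_cond_BB} for cylindrical test functions, now on $\mathcal{Y}$ alone and along the paths $y_0\mapsto(1-t)y_0+tT_x(y_0)$. Using the identity from Step~1, this shows that $(\mu_t^x)_{t\in[0,T_\tau]}$ solves $\partial_t\mu+\nabla\cdot(u(t,s,\cdot)\mu)=0$ weakly, with $\mu_0^x=\rho$. Writing $\mu_t^x=f_t^x\rho$, the flux is integrable, since H\"older's inequality with the $L^p(\rho;\mathcal{H}_\rho)$ bound \eqref{eq:gaussian_sobolev_field_lp} and $f^x\in L^\infty L^r$, $r\ge p'$, applies. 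One technical point needs care here: the pairing must be recast in the Gaussian form $\langle D_\rho\varphi,u\rangle_{\mathcal{H}_\rho}$ used in Proposition~\ref{prop:gaussian_continuity_equation_uniqueness}. This is possible because $u$ is $\mathcal{H}_\rho$-valued, so $\langle\nabla\varphi,u\rangle_{\mathcal{Y}}=\langle D_\rho\varphi,u\rangle_{\mathcal{H}_\rho}$ for cylindrical $\varphi$.

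\emph{Step 3 (uniqueness).} Proposition~\ref{prop:gaussian_continuity_equation_uniqueness} applies with initial density $1$. Its hypotheses are admissibility \eqref{eq:gaussian_sobolev_field_derivative}--\eqref{eq:gaussian_sobolev_field_divergence} with $c>rT_\tau$, together with the density class $L^\infty L^r$. Hence $f^x_t$ depends only on $s$: $f_t^x=f_t^{x'}$ for a.e.\ $t$ whenever $B^*x=B^*x'$ and $x,x'\notin N$. By narrow continuity of $t\mapsto\mu^x_t$, this holds for all $t\le T_\tau$. Define $\lambda_t^s$ to be this common law.

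\emph{Step 4 (endpoint reconstruction).} Take $t=T_\tau$. For $x\notin N$ and $\rho$-a.e.\ $y_0$,
\[
    T_x(y_0)=Y_{T_\tau}+\tau\,u(T_\tau,s,Y_{T_\tau}),\qquad Y_{T_\tau}=(1-T_\tau)y_0+T_\tau T_x(y_0).
\]
This evaluation at the single time $T_\tau$ is only valid after choosing a time $t_0$ in the full-measure set of Step~1, close to $T_\tau$. The identity $Y_1=Y_{t_0}+(1-t_0)u(t_0,s,Y_{t_0})$ holds for any such $t_0$, so we fix one $t_0$ in a countable set common to all $x$. Consequently
\[
    \mu_1(\cdot\mid x)=\bigl(\mathrm{id}+(1-t_0)u(t_0,s,\cdot)\bigr)_\#\lambda^s_{t_0},
\]
which is a Borel function of $s=B^*x$. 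The Borel dependence on $s$ requires building $\lambda^s_{t_0}$ as a kernel. We can take $\lambda^{B^*x}_{t_0}=\mu^x_{t_0}$, which is Borel in $x$ and constant on fibres, and then pass to a function of $s$ via Doob--Dynkin and a completion-version argument as in Lemma~\ref{lem:conditional_law_factorisation}. Proposition~\ref{prop:sdr_MK} then gives $Y_1\indep X\mid B^*X$. Finally, $\operatorname{ran}(B)=\mathcal{S}_0$ follows exactly as at the end of Lemma~\ref{lem:zero_loss_identification}, by minimality and equality of dimensions.

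I expect the main obstacle to be Step~2. It requires showing that the weak formulation obtained from the Monge paths coincides with the notion of weak solution in the uniqueness proposition, including the admissible test-function class and the handling of the exceptional null sets in $(t,y_0)$. The measurable choice of a common $t_0$ and the kernel measurability in Step~4 are secondary issues.
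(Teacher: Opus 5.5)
Your proposal is correct and follows the paper's proof essentially step for step. The steps are the fibrewise Fubini identity, the weak Gaussian continuity equation along the Monge paths with initial density one, Ambrosio--Figalli uniqueness in the $L^\infty((0,T_\tau);L^r(\rho))$ class, reconstruction of the endpoint law via $T_x=F^x_t+(1-t)\,u(t,s,F^x_t)$, and finally Proposition~\ref{prop:sdr_MK} with minimality. The differences are only in bookkeeping.

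\begin{itemize}
\item You pair with the ambient gradient and convert to $\langle D_\rho\varphi,u\rangle_{\mathcal{H}_\rho}$ at the end. The paper instead first shows that $g_x$ is $\mathcal{H}_\rho$-valued and Bochner integrable in $\mathcal{H}_\rho$.
\item You use a single common time $t_0$. Your phrase ``a countable set common to all $x$'' is vague as written; the correct justification is Fubini in $(t,x)$, which gives one $t_0$ valid for $\eta$-a.e.\ $x$. The paper instead picks a time for each pair $(x,\widetilde x)$.
\item For measurability in $s$, you invoke a completion/Doob--Dynkin argument on a fibre-constant Borel map. The paper gets it more cleanly from a vanishing conditional variance, combined with Lemma~\ref{lem:simultaneous_kernel_version}.
\end{itemize}
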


\begin{proof}
Set \(S=B^*X\), and disintegrate \(\eta\) with respect to \(S\) as \(\eta(dx)=\int_{\mathbb{R}^d}\eta_B(dx\mid s)\,\zeta_B(ds)\).
For \(\eta\)-a.e. \(x\), define \(g_x=T_x-\mathrm{id}\) and \(F_t^x=\mathrm{id}+tg_x\). Conditional on \(X=x\), the endpoint coupling is \((\mathrm{id},T_x)_\#\rho\). Hence \eqref{eq:gaussian_sobolev_zero_loss} and Fubini's theorem give, for \(\zeta_B\)-a.e. \(s\) and \(\eta_B(\cdot\mid s)\)-a.e. \(x\),
\begin{equation}
    g_x(y)=u(t,s,F_t^x(y))
    \qquad\text{for }dt\otimes\rho(dy)\text{-a.e. }(t,y).
    \label{eq:gaussian_sobolev_path_identity}
\end{equation}
Fix such \((s,x)\). There is a time \(t\in(0,T_\tau)\) for which \eqref{eq:gaussian_sobolev_path_identity} holds, \(u(t,s,\cdot)\in L^p(\rho;\mathcal{H}_\rho)\), and \(f_t^x\in L^r(\rho)\). Since \(u(t,s,\cdot)\) is \(\mathcal{H}_\rho\)-valued \(\rho\)-a.e.\ and \(\mu_t^x\ll\rho\), identity~\eqref{eq:gaussian_sobolev_path_identity} first shows that \(g_x\) is \(\mathcal{H}_\rho\)-valued \(\rho\)-a.e. The continuous inclusion \(\mathcal{H}_\rho\hookrightarrow\mathcal{Y}\) is an injective Borel map between Polish spaces. By the Lusin--Souslin theorem, its image is Borel in \(\mathcal{Y}\) and its inverse on that image is Borel. Thus, after modification on a \(\rho\)-null set, \(g_x\) is strongly measurable as an \(\mathcal{H}_\rho\)-valued map. Since \(r\geq p'\), the change of variables \(\mu_t^x=(F_t^x)_\#\rho=f_t^x\rho\) and H\"older's inequality then give
\[
   \int_{\mathcal{Y}}\Vert g_{x}(y)\Vert_{\mathcal{H}_{\rho}}\,\rho(dy)=\int_{\mathcal{Y}}\Vert u(t,s,z)\Vert_{\mathcal{H}_{\rho}}f^{x}_{t}(z)\,\rho(dz)\leq\Vert u(t,s,\cdot)\Vert_{L^{p}(\rho;\mathcal{H}_{\rho})}\Vert f^{x}_{t}\Vert_{L^{p'}(\rho)}<\infty.
\]
Thus \(g_x\) is Bochner integrable as an \(\mathcal{H}_\rho\)-valued map.

Let \(\varphi\in\operatorname{Cyl}_{\rho}^{\infty}(\mathcal{Y})\) be as in Definition~\ref{def:smooth_gaussian_cylindrical_function}. By \eqref{eq:appendix_standardised_gaussian_coordinate}, each coordinate entering \(\varphi\) is a continuous linear functional on \(\mathcal{Y}\). Since the profile of \(\varphi\) has bounded derivatives, \(\varphi\) has a bounded ambient derivative and is Lipschitz in the ambient norm. Moreover, its ambient directional derivative along \(h\in\mathcal{H}_\rho\) is \(D\varphi(z)[h]=\langle D_\rho\varphi(z),h\rangle_{\mathcal{H}_\rho}\). The preceding integrability, the continuous embedding \(\mathcal{H}_\rho\hookrightarrow\mathcal{Y}\), and the chain rule therefore show that \(t\mapsto\int_{\mathcal{Y}}\varphi(F_t^x(y))\,\rho(dy)\) is absolutely continuous. For a.e. \(t\), \eqref{eq:gaussian_sobolev_path_identity} and the pushforward identity yield
\begin{align}
    \frac{d}{dt}\int_{\mathcal{Y}}\varphi(z)f_t^x(z)\,\rho(dz)
    &=\int_{\mathcal{Y}}
    \langle D_\rho\varphi(F_t^x(y)),g_x(y)\rangle_{\mathcal{H}_\rho}\,\rho(dy)\notag\\
    &=\int_{\mathcal{Y}}
    \langle D_\rho\varphi(z),u(t,s,z)\rangle_{\mathcal{H}_\rho}
    f_t^x(z)\,\rho(dz).
    \label{eq:conditional_gaussian_continuity_equation}
\end{align}
The right-hand side is integrable in time by H\"older's inequality, \eqref{eq:gaussian_sobolev_field_lp}, and \eqref{eq:gaussian_interpolation_compression}. The embedding \(\mathcal{H}_\rho\hookrightarrow\mathcal{Y}\) is continuous, so the coupling \((\mathrm{id},F_t^x)_\#\rho\) gives
\[
    W_1(\mu_t^x,\rho)
    \leq t\int_{\mathcal{Y}}\Vert g_x(y)\Vert_{\mathcal{Y}}\,\rho(dy)
    \longrightarrow0
    \qquad\text{as }t\downarrow0.
\]
For these cylindrical tests, the preceding \(W_1\)-convergence gives the initial condition in \eqref{eq:appendix_gaussian_initial_condition}. Thus \(f^x\) is a non-negative \(L^\infty((0,T_\tau);L^r(\rho))\) weak solution, in the sense of Definition~\ref{def:weak_gaussian_continuity_equation}, with initial density one.

For the fixed value of \(s\), conditions~\eqref{eq:gaussian_sobolev_field_lp}--\eqref{eq:gaussian_sobolev_field_divergence} verify the hypotheses of Proposition~\ref{prop:gaussian_continuity_equation_uniqueness} with \(b_t=u(t,s,\cdot)\). Hence any two of the above \(L^r\)-bounded density solutions with initial density one coincide. It follows that, for \(\eta_B(\cdot\mid s)\otimes\eta_B(\cdot\mid s)\)-a.e. \((x,\widetilde x)\),
\begin{equation}
    \mu_t^x=\mu_t^{\widetilde x}
    \qquad\text{for a.e. }t\in(0,T_\tau).
    \label{eq:conditional_interpolation_equality}
\end{equation}
Choose a time at which \eqref{eq:gaussian_sobolev_path_identity} holds for both \(x\) and \(\widetilde x\), and \eqref{eq:conditional_interpolation_equality} holds. Since \(T_x(y)=F_t^x(y)+(1-t)g_x(y)\), we obtain
\[
   \mathbb{P}_{Y_{1}\mid X}(\cdot\mid x)=\bigl(\mathrm{id}+(1-t)u(t,s,\cdot)\bigr)_{\#}\mu^{x}_{t}=\bigl(\mathrm{id}+(1-t)u(t,s,\cdot)\bigr)_{\#}\mu^{\widetilde{x}}_{t}=\mathbb{P}_{Y_{1}\mid X}(\cdot\mid\widetilde{x}).
\]

It remains to pass from slicewise equality to a measurable conditional-law factorisation. Let \(K(x)=\mathbb{P}_{Y_1\mid X}(\cdot\mid x)\), and let \(\{C_m\}_{m\in\mathbb{N}}\) be a countable determining class for \(\mathcal{P}(\mathcal{Y})\). The preceding equality implies that, conditionally on \(S=s\), two independent copies of \(K(X)(C_m)\) are equal a.s. Therefore
\[
    \operatorname{Var}(K(X)(C_{m})\mid S)=0\qquad\text{a.s.},
\]
so each \(K(X)(C_m)\) admits a \(\sigma(S)\)-measurable version. Lemma~\ref{lem:simultaneous_kernel_version} and the Doob--Dynkin lemma give a Borel kernel \(\kappa_B:\mathbb{R}^d\to\mathcal{P}(\mathcal{Y})\) such that
\[
    \mathbb{P}_{Y_1\mid X}(\cdot\mid X)=\kappa_B(B^*X)
    \qquad\text{a.s.}
\]
Proposition~\ref{prop:sdr_MK} yields \(Y_1\indep X\mid B^*X\). The minimality and dimension assumption in Assumption~\ref{ass:linear_statistical_target} then gives \(\operatorname{ran}(B)=\mathcal{S}_0\).
\end{proof}

The lemma identifies the zeros of the population criterion. To turn this population result into consistency of an empirical minimiser, we also need uniform convergence of the fitted criterion. As in the finite-dimensional case, Gaussian--Sobolev regularity alone does not make point evaluation stable under \(W_2\)-perturbations of a coupling, so convergence of the empirical coupling is not sufficient by itself.

For a fixed finite-dimensional subspace \(E_K\subset\mathcal{X}\), let \(\gamma_K^\star\) be the triangular COT coupling between the laws of \((P_KX,Y_0)\) and \((P_KX,Y_1)\). For \(B\in\mathrm{St}(E_K,d)\), define \(Q_K(B,\theta)\) by \eqref{eq:population_linear_risk} with \(X\) replaced by \(P_KX\) and \(\gamma^\star\) replaced by \(\gamma_K^\star\). Let \(\widehat Q_{n,K}\) denote its empirical counterpart based on a relaxed empirical coupling of the projected observations.

\begin{Theorem}[Fixed-projection consistency]
\label{thm:function_on_function_gaussian_sobolev}
Let \(E_K\subset\mathcal{X}\) be fixed and finite-dimensional. Suppose that, with the covariate replaced by \(P_KX\), Assumptions~\ref{ass:linear_statistical_target} and~\ref{ass:gaussian_interpolation_compression} hold with central subspace \(\mathcal{S}_0=\operatorname{ran}(B_0)\subseteq E_K\). Let \(\Theta\) be compact and let \(\{u_{\mathcal{Y}}^\theta:\theta\in\Theta\}\) be Borel fields such that \(u_{\mathcal{Y}}^\theta\) is Gaussian--Sobolev admissible at every \(B\in\mathrm{St}(E_K,d)\). Assume that:
\begin{enumerate}[label=\roman*)]
    \item there is a \(\theta_0\in\Theta\) for which \(Q_K(B_0,\theta_0)=0\), where \(Q_K\) is the population loss based on \(\gamma_K^\star\);
    \item \((B,\theta)\mapsto Q_K(B,\theta)\) is lower semicontinuous on \(\mathrm{St}(E_K,d)\times\Theta\);
    \item the empirical criterion satisfies
    \begin{equation}
        e_{n,K}\coloneqq
        \sup_{B\in\mathrm{St}(E_K,d),\,\theta\in\Theta}
        |\widehat Q_{n,K}(B,\theta)-Q_K(B,\theta)|
        \xrightarrow{p}0.
        \label{eq:gaussian_sobolev_uniform_risk}
    \end{equation}
\end{enumerate}
If \((\widehat B_{n,K},\widehat\theta_{n,K})\) is an \(r_n\)-approximate minimiser of \(\widehat Q_{n,K}\), where \(r_n\geq0\) and \(r_n\xrightarrow{p}0\), then
\[
    d_{\mathrm{pr}}(\widehat B_{n,K},B_0)\xrightarrow{p}0.
\]
\end{Theorem}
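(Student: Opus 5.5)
The argument follows the proof of Theorem~\ref{thm:euclidean_linear_sdr_consistency} closely. The only change is that the identification step uses Lemma~\ref{lem:gaussian_sobolev_zero_loss} in place of Lemma~\ref{lem:zero_loss_identification}, applied to the projected problem. Throughout we work with the covariate $P_KX$, whose law is $(P_K)_\#\eta$, and with the projected coupling $\gamma_K^\star$. Because $E_K$ is finite-dimensional, $\mathrm{St}(E_K,d)$ is a compact manifold, and $\Theta$ is compact by assumption. The parameter space $\mathrm{St}(E_K,d)\times\Theta$ is therefore compact, and $d_{\mathrm{pr}}$ is continuous on it. By hypothesis~(i), $\inf Q_K=0$, and this infimum is attained at $(B_0,\theta_0)$.

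\textbf{Separation step.} Fix $\delta>0$ and set
\[
    \mathcal{K}_\delta\coloneqq\{(B,\theta):d_{\mathrm{pr}}(B,B_0)\geq\delta\}.
\]
This set is closed, hence compact. By the lower semicontinuity in~(ii), $Q_K$ attains its minimum $c_\delta$ on $\mathcal{K}_\delta$. Suppose $c_\delta=0$, attained at $(B_\delta,\theta_\delta)$. Then $u^{\theta_\delta}_{\mathcal{Y}}$ is Gaussian--Sobolev admissible at $B_\delta$, and its projected population loss vanishes. Lemma~\ref{lem:gaussian_sobolev_zero_loss} then applies, provided we use $P_KX$ as the covariate, $\gamma_K^\star$ as the coupling, and $B_\delta\in\mathrm{St}(E_K,d)$. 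Assumptions~\ref{ass:linear_statistical_target} and~\ref{ass:gaussian_interpolation_compression} are assumed for exactly this projected covariate, and the admissibility conditions refer to $\zeta_{B_\delta}=(B_\delta^*P_K)_\#\eta=(B_\delta^*)_\#(P_K)_\#\eta$. The lemma gives $\operatorname{ran}(B_\delta)=\mathcal{S}_0=\operatorname{ran}(B_0)$, so $d_{\mathrm{pr}}(B_\delta,B_0)=0$, which contradicts $(B_\delta,\theta_\delta)\in\mathcal{K}_\delta$. Hence $c_\delta>0$.

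\textbf{Empirical step.} By~(iii), $Q_K\leq\widehat Q_{n,K}+e_{n,K}$ and $\widehat Q_{n,K}\leq Q_K+e_{n,K}$ uniformly over the parameter space. Combining these with the $r_n$-approximate minimisation gives
\[
    Q_K(\widehat B_{n,K},\widehat\theta_{n,K})\leq\widehat Q_{n,K}(B_0,\theta_0)+r_n+e_{n,K}\leq 2e_{n,K}+r_n.
\]
Consequently,
\[
    \mathbb{P}\bigl(d_{\mathrm{pr}}(\widehat B_{n,K},B_0)\geq\delta\bigr)\leq\mathbb{P}\bigl(2e_{n,K}+r_n\geq c_\delta\bigr)\to0.
\]
Since $\delta>0$ was arbitrary, this proves the claim.

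\textbf{Main obstacle.} The delicate point is checking that Lemma~\ref{lem:gaussian_sobolev_zero_loss} transfers verbatim to the projected problem. Three things must hold. First, $\gamma_K^\star$ must be the conditional Monge coupling for the conditional law of $Y_1$ given $P_KX$; Proposition~\ref{prop:condMonge} supplies this. Second, the compression densities $f^x_t$ must be indexed by the projected covariate values. Third, the central-subspace minimality in Assumption~\ref{ass:linear_statistical_target} must be read relative to $P_KX$, which is legitimate because $\mathcal{S}_0\subseteq E_K$. Once these identifications are made, the lemma applies unchanged, and the argument is a standard argmin-consistency proof.
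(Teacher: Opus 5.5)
Your proposal is correct and follows essentially the same argmin-consistency argument as the paper: compactness of \(\mathrm{St}(E_K,d)\times\Theta\), a strictly positive separation constant from lower semicontinuity combined with Lemma~\ref{lem:gaussian_sobolev_zero_loss} applied to the projected problem, and the bound \(Q_K(\widehat B_{n,K},\widehat\theta_{n,K})\leq 2e_{n,K}+r_n\). The only detail you skip is the case where \(\{(B,\theta): d_{\mathrm{pr}}(B,B_0)\geq\delta\}\) is empty (for example, \(\delta>1\)); there is then no minimum to attain, but the probability in question is trivially zero, as the paper notes.
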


\begin{proof}
The space \(\mathrm{St}(E_K,d)\times\Theta\) is compact. Lemma~\ref{lem:gaussian_sobolev_zero_loss} shows that every zero of \(Q_K\) has \(\operatorname{ran}(B)=\mathcal{S}_0\). Consequently, for each \(\delta>0\), lower semicontinuity gives
\[
    c_{\delta,K}\coloneqq
    \inf_{\substack{B\in\mathrm{St}(E_K,d),\,\theta\in\Theta:\
    d_{\mathrm{pr}}(B,B_0)\geq\delta}}
    Q_K(B,\theta)>0
\]
whenever the constraint set is non-empty. Approximate optimality, condition i), and \eqref{eq:gaussian_sobolev_uniform_risk} give
\(Q_K(\widehat B_{n,K},\widehat\theta_{n,K})\leq 2e_{n,K}+r_n\). If \(d_{\mathrm{pr}}(\widehat B_{n,K},B_0)\geq\delta\), the left-hand side is at least \(c_{\delta,K}\). Hence, whenever the constraint set is non-empty,
\[
    \mathbb{P}\bigl(d_{\mathrm{pr}}(\widehat B_{n,K},B_0)\geq\delta\bigr)
    \leq
    \mathbb{P}\bigl(2e_{n,K}+r_n\geq c_{\delta,K}\bigr)
    \longrightarrow0.
\]
If the constraint set is empty, the probability is zero. This proves the claimed consistency.
\end{proof}

The preceding theorem keeps the covariate projection \(E_K\) fixed. For a genuinely infinite-dimensional covariate, consistency for the unprojected central subspace requires the projection dimension to increase, introducing both a sieve approximation problem and a uniform identification problem. Let \(E_K\uparrow\mathcal{X}\), with orthogonal projections satisfying \(P_Kx\to x\) for every \(x\in\mathcal{X}\), and let \(\mathcal{U}_K=\{u_{K,\theta}:\theta\in\Theta_K\}\) be classes of Borel fields for which the population criterion below is finite for every \(B\in\mathrm{St}(E_K,d)\). For \(B\in\mathrm{St}(E_K,d)\) and \(u\in\mathcal{U}_K\), write
\[
    Q(B,u)\coloneqq\int_0^{T_\tau}\mathbb{E}_{\gamma^\star}
    \left[\Vert V-u(t,B^*X,Y_t)\Vert_{\mathcal{Y}}^2\right]dt
\]
and define the sieve approximation error
\begin{equation}
    A_K
    \coloneqq
    \inf_{B\in\mathrm{St}(E_K,d),\,u\in\mathcal{U}_K}Q(B,u).
    \label{eq:functional_sieve_approximation}
\end{equation}
For any coupling \(\gamma\) on the endpoint space, let \(Q_\gamma(B,u)\) denote the same criterion with \(\gamma^\star\) replaced by \(\gamma\). Let \(\widehat\gamma_{n,K}\) be an embedded relaxed empirical coupling based on the projected observations, and write \(\widehat Q_{n,K}(B,u)=Q_{\widehat\gamma_{n,K}}(B,u)\). Since \(B\) has range in \(E_K\), \(B^*P_KX=B^*X\).

When the Gaussian assumptions above hold and the fields are admissible, Lemma~\ref{lem:gaussian_sobolev_zero_loss} rules out an exactly zero population loss at an incorrect subspace for each fixed \(K\). It does not, however, give separation uniformly over growing sieves: a sequence of incorrect subspaces could have positive loss converging to zero. The next theorem therefore takes uniform sieve separation as a distinct statistical identifiability condition. 

\begin{Theorem}[Growing-projection consistency]
\label{thm:function_on_function_growing_projection}
Let \(B_0\in\mathrm{St}(\mathcal{X},d)\) be the target basis. Suppose that \(E_K\uparrow\mathcal{X}\), \(A_K\to0\), and, for every \(\delta>0\),
\begin{equation}
    \liminf_{K\to\infty}
    \inf_{\substack{B\in\mathrm{St}(E_K,d),\,u\in\mathcal{U}_K:\\
    d_{\mathrm{pr}}(B,B_0)\geq\delta}}
    Q(B,u)>0.
    \label{eq:functional_sieve_separation}
\end{equation}
Let \(K_n\uparrow\infty\), and suppose that the corresponding empirical criteria satisfy
\begin{equation}
    e_{n,K_n}\coloneqq
    \sup_{B\in\mathrm{St}(E_{K_n},d),\,u\in\mathcal{U}_{K_n}}
    |\widehat Q_{n,K_n}(B,u)-Q(B,u)|
    \xrightarrow{p}0.
    \label{eq:functional_growing_uniform_risk}
\end{equation}
If \((\widehat B_n,\widehat u_n)\in\mathrm{St}(E_{K_n},d)\times\mathcal{U}_{K_n}\) satisfies
\[
    \widehat Q_{n,K_n}(\widehat B_n,\widehat u_n)
    \leq
    \inf_{B\in\mathrm{St}(E_{K_n},d),\,u\in\mathcal{U}_{K_n}}
    \widehat Q_{n,K_n}(B,u)+r_n,
    \qquad r_n\geq0,\quad r_n\xrightarrow{p}0,
\]
then
\begin{equation}
    d_{\mathrm{pr}}(\widehat B_n,B_0)\xrightarrow{p}0.
    \label{eq:functional_subspace_consistency}
\end{equation}
\end{Theorem}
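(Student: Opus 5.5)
The plan is the standard argmin-consistency argument used in the proofs of Theorems~\ref{thm:euclidean_linear_sdr_consistency} and~\ref{thm:function_on_function_gaussian_sobolev}. The single change is that the fixed separation constant and the exact zero of the population criterion are replaced by the uniform sieve separation \eqref{eq:functional_sieve_separation} and the vanishing approximation error \(A_K\). No compactness or lower semicontinuity is needed, because \eqref{eq:functional_sieve_separation} already delivers a positive gap directly.

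First I would bound the population risk of the empirical near-minimiser. Fix \(\eta'>0\) and pick \((B_n^\circ,u_n^\circ)\in\mathrm{St}(E_{K_n},d)\times\mathcal{U}_{K_n}\) with \(Q(B_n^\circ,u_n^\circ)\leq A_{K_n}+\eta'\). This is possible because \(A_{K_n}\) is an infimum of finite values. Using the two-sided bound \(|\widehat Q_{n,K_n}-Q|\leq e_{n,K_n}\) on the sieve and approximate optimality, I obtain the chain
\[
Q(\widehat B_n,\widehat u_n)\leq \widehat Q_{n,K_n}(\widehat B_n,\widehat u_n)+e_{n,K_n}\leq \widehat Q_{n,K_n}(B_n^\circ,u_n^\circ)+r_n+e_{n,K_n}\leq A_{K_n}+\eta'+2e_{n,K_n}+r_n .
\]
Since \(\eta'\) is arbitrary, this gives \(Q(\widehat B_n,\widehat u_n)\leq A_{K_n}+2e_{n,K_n}+r_n\). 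Here \(e_{n,K_n}\) is defined relative to \(Q\) based on \(\gamma^\star\), exactly as in \eqref{eq:functional_growing_uniform_risk}. The projection error of the coupling is therefore already absorbed, and \(B^*P_KX=B^*X\) makes the criterion well defined.

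Second, I would extract the separation gap. Fix \(\delta>0\). By \eqref{eq:functional_sieve_separation} there exist \(c_\delta>0\) and \(K_\delta\) such that, for all \(K\geq K_\delta\), every \((B,u)\in\mathrm{St}(E_K,d)\times\mathcal{U}_K\) with \(d_{\mathrm{pr}}(B,B_0)\geq\delta\) satisfies \(Q(B,u)\geq c_\delta\). If the constraint set is empty, the infimum is \(+\infty\) and the claim holds trivially. Since \(K_n\uparrow\infty\), we have \(K_n\geq K_\delta\) for all large \(n\). On the event \(\{d_{\mathrm{pr}}(\widehat B_n,B_0)\geq\delta\}\) this gives \(c_\delta\leq A_{K_n}+2e_{n,K_n}+r_n\). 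Hence
\[
\mathbb{P}(d_{\mathrm{pr}}(\widehat B_n,B_0)\geq\delta)\leq\mathbb{P}(A_{K_n}+2e_{n,K_n}+r_n\geq c_\delta).
\]
The right-hand side tends to zero because \(A_{K_n}\to0\) is deterministic and \(e_{n,K_n},r_n\to0\) in probability.

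The only delicate point is the correct use of the \(\liminf\) in \eqref{eq:functional_sieve_separation}. From it I take some \(c_\delta\) strictly below the liminf, which is possible whenever the liminf is positive, including when it equals \(+\infty\). I also need to check that \(d_{\mathrm{pr}}\) is well defined between \(B\in\mathrm{St}(E_K,d)\), viewed as an element of \(\mathrm{St}(\mathcal{X},d)\), and \(B_0\). This holds because both are rank-\(d\) projections on \(\mathcal{X}\).

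Measurability of \(\widehat B_n\) is assumed implicitly, as in the earlier theorems. If it fails, the same bound holds with outer probability.
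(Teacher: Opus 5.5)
Your proposal is correct and follows essentially the same argument as the paper. You bound the population risk of the near-minimiser by \(A_{K_n}+2e_{n,K_n}+r_n\) via a near-optimal deterministic comparator, where the paper uses slack \(n^{-1}\) and you use an arbitrary \(\eta'\). You then extract from the \(\liminf\) separation a gap \(c_\delta>0\) valid for all large \(n\), which yields convergence in probability.
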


\begin{proof}
Choose deterministic \((B_n^\circ,u_n^\circ)\) whose population risk is at most \(A_{K_n}+n^{-1}\). Approximate empirical optimality and \eqref{eq:functional_growing_uniform_risk} give
\[
    Q(\widehat B_n,\widehat u_n)
    \leq A_{K_n}+2e_{n,K_n}+r_n+n^{-1}
    \xrightarrow{p}0.
\]
For a fixed \(\delta>0\), condition \eqref{eq:functional_sieve_separation} supplies a constant \(c_\delta>0\) such that, for all sufficiently large \(n\), \(d_{\mathrm{pr}}(B,B_0)\geq\delta\) implies \(Q(B,u)\geq c_\delta\) for \(B\in\mathrm{St}(E_{K_n},d)\) and \(u\in\mathcal{U}_{K_n}\). Therefore
\[
    \mathbb{P}\left(d_{\mathrm{pr}}(\widehat B_n,B_0)\geq\delta\right)
    \leq\mathbb{P}\left(Q(\widehat B_n,\widehat u_n)\geq c_\delta\right)
    \longrightarrow0,
\]
which proves \eqref{eq:functional_subspace_consistency}.
\end{proof}

The condition \(A_K\to0\) is an approximation requirement. It holds, for example, in the translation model of Example~\ref{ex:conditional_cameron_martin_translation} when \(\overline h\) is bounded and continuous, aligned bases \(B_K\in\mathrm{St}(E_K,d)\) satisfy \(B_K\to B_0\) in Hilbert--Schmidt norm, and \(\mathcal{U}_K\) contains \(u_K(t,s,y)=\overline h(s)\). Indeed, \(B^{*}_{K}X\xrightarrow{p}B^{*}_{0}X\), continuity and boundedness of \(\overline h\) then imply
\[
    Q(B_{K},u_{K})=T_{\tau}\,\mathbb{E}[\Vert\overline{h}(B^{*}_{0}X)-\overline{h}(B^{*}_{K}X)\Vert^{2}_{\mathcal{Y}}]\to 0.
\]
This example uses continuity only in the finite-dimensional index; the velocity is constant in the response variable and no ambient response continuity is assumed.

Condition~\eqref{eq:functional_growing_uniform_risk} absorbs projected-coupling, time-sampling, and optimisation-independent approximation errors. Proposition~\ref{prop:relaxed_empirical_coupling_consistency} shows that \(K_n\) and \(\varepsilon_n\) may be chosen so that the embedded input coupling converges in \(W_2\), but Gaussian--Sobolev regularity alone does not turn that coupling convergence into \eqref{eq:functional_growing_uniform_risk}. Verifying uniform criterion convergence requires complexity and envelope control for the fitted operator class, while verifying growing-sieve separation requires additional uniform identifiability. Thus the fixed-projection result combines Gaussian continuity-equation identification with an empirical-process condition, whereas the growing-projection result additionally assumes sieve approximation and uniform separation. Neither theorem claims a rate or inference for the fully trained neural procedure.

%% file: simulation_studies.tex
\subsection{Simulation studies}

We evaluate SDR-COT in four settings: nonlinear Euclidean SDR, linear Euclidean SDR, linear SDR with functional covariates, and linear SDR with both functional covariates and functional responses. In each experiment the structural dimension is treated as known and supplied to all competing methods. Except for the final function-on-function experiment, performance is measured on an independent evaluation sample by the distance correlation \parencite{szekely2007measuring} between the estimated representation and the true sufficient predictor; for linear Euclidean SDR we also report a subspace error. Unless otherwise stated, all reported entries are averages with standard deviations in parentheses over $50$ replications.

\subsubsection*{Nonlinear SDR in the Euclidean case}

For the nonlinear Euclidean experiments, $R_{\theta}$ and $u^{\phi}_{\mathcal{Y}}$ are parameterised by SiLU multilayer perceptrons and trained with Adam. We compare SDR-COT with GenSDR \parencite{xu2025conditional}, a generative nonlinear SDR method based on conditional stochastic interpolation, and GSIR \parencite{Li2018}, implemented through the R package \emph{nsdr}. The GenSDR implementation follows the algorithmic specification in Section~4.2 of \textcite{xu2025conditional}. Each method is trained on $2000$ observations and evaluated on $1000$ independent observations.

The three data-generating mechanisms are designed so that the conditional law of $Y$ depends on $X$ through one, two, and three nonlinear sufficient predictors, respectively. In all models, the components of $X$ are independent of the auxiliary noise variables in $W$.
\begin{enumerate}[label=\roman*)]
    \item Model $A$ is a multiplicative-scale model,
    \[
        Y=h_0(X)W,\qquad h_0(X)=(1+\vert X_2\vert)^{-2}\exp(X_1),
    \]
    where
    \[
        W=\bigl(W_1,\; W_2W_4+(W_3+10)(1-W_4)-\exp(W_1)\bigr)^T,
    \]
    with $W_1\sim\mathrm{Gamma}(3,5)$, $W_2,W_3\sim t(3)$, and $W_4\sim\mathrm{Bernoulli}(0.5)$. The sufficient predictor is one-dimensional.
    \item Model $B$ is a two-dimensional location model,
    \[
        Y=h_0(X)+\gamma W,\qquad W=(W_1,W_2)^T,
    \]
    where $W_1\sim\chi^2_3$, $W_2\sim\chi^2_5$, and
    \[
        h_{0}(X)=\left(X^{2}_{1}\exp(2X^{2}_{4}),\;(1+3X_{2}-X_{3})^{4/3}\mathbbm{1}_{\{\sqrt{X_{1}}>0.8\}}\right)^{T}.
    \]
    \item Model $C$ combines nonlinear location and scale effects. For $W=(W_1,\ldots,W_{d_y})^T$ with $W_j\stackrel{\mathrm{i.i.d.}}{\sim}\mathrm{Gamma}(3,1)$,
    \[
        Y=h_0(X)+\operatorname{diag}\{h_1(X)\}W,
    \]
    where
    \[
        h_0(X)=\bigl(4\cos(\pi X_3),1,\ldots,1\bigr)^T
    \]
    and
    \[
        h_1(X)=\left(5\left(\frac{X_1+\cdots+X_6}{6}\right)^3,\;\log(1+3X_2),1,\ldots,1\right)^T.
    \]
\end{enumerate}
Except in Model $A$, the covariates are generated from $\mathcal{U}([0,1]^{d_x})$. For Model $A$ we consider the three covariate distributions shown in Table~\ref{tab:nonlinear_euclidean}. Model $B$ varies the noise scale $\gamma$, and Model $C$ varies the response dimension $d_y$.

\begin{table}[H]
\centering
\caption{Average distance correlation, with standard deviations in parentheses, over $50$ replications in nonlinear Euclidean SDR.}
\label{tab:nonlinear_euclidean}
\begin{tabular}{llccc}
\toprule
Model & Configuration & SDR-COT & GenSDR & GSIR \\
\midrule
A 
& $\mathcal{U}([0,1]^{d_x})$ 
& 0.951 (0.009) & 0.955 (0.008) & \textbf{0.957 (0.004)} \\

A
& $\mathcal{N}(\mathbf{1}_{d_x}, I_{d_x})$ 
& 0.779 (0.116) & \textbf{0.828 (0.013)} & 0.825 (0.011) \\

A
& $\mathcal{N}(\mathbf{1}_{d_x}, HH^\top)$ 
& 0.779 (0.129) & \textbf{0.831 (0.014)} & 0.824 (0.011) \\
\midrule
B
& $\gamma=0.1$ 
& 0.829 (0.011) & \textbf{0.861 (0.017)} & 0.795 (0.008) \\

B
& $\gamma=0.3$ 
& \textbf{0.840 (0.016)} & 0.836 (0.013) & 0.792 (0.009) \\

B
& $\gamma=0.5$ 
& \textbf{0.815 (0.022)} & 0.773 (0.029) & 0.783 (0.011) \\
\midrule
C
& $d_y=5$ 
& \textbf{0.896 (0.015)} & 0.862 (0.019) & 0.808 (0.010) \\

C
& $d_y=10$ 
& \textbf{0.877 (0.016)} & 0.814 (0.026) & 0.799 (0.010) \\

C
& $d_y=20$ 
& \textbf{0.860 (0.021)} & 0.670 (0.038) & 0.787 (0.012) \\
\bottomrule
\end{tabular}
\end{table}

Table~\ref{tab:nonlinear_euclidean} shows that SDR-COT is competitive with the nonlinear generative benchmark and improves over GSIR in most configurations. Its advantage is most pronounced in Model $C$, where information about the response is carried jointly by location and scale and the response dimension increases. In Model $A$ under non-uniform covariate distributions, SDR-COT is less stable than GenSDR and GSIR, indicating that the relaxed empirical coupling can be sensitive to the geometry of the covariate distribution.

\subsubsection*{Linear SDR in the Euclidean case}

We next constrain the reduction to be linear, $R_B(X)=B^TX$ with $B\in\mathrm{St}(d_x,d)$. The conditional velocity is represented by a low-order polynomial in time,
\[
    u_{\phi}(t,r,y)=\sum_{\ell=0}^{L-1}t^{\ell}c_{\phi,\ell}(r,y),\qquad r=B^TX,\quad y=Y_t,
\]
where the coefficient functions are produced by a shared neural network. We use $L=3$ and optimise $B$ on the Stiefel manifold. The competitors are SAVE \parencite{Cook2000} and MAVE, implemented via the R package \texttt{MAVE} \parencite{R-MAVE}.

The covariate dimension is $d_x=10$, the structural dimension is $d=2$, and $X_j\stackrel{\mathrm{i.i.d.}}{\sim}\mathcal{N}(2,1)$. The response is
\[
    Y=2\sin\left(\frac{\pi(X_1+X_2)}{20}\right)+3\sin^2\left(\frac{\pi(X_3+3X_4)}{30}\right)E,
\]
where $E\sim\mathcal{N}(0,1)$ in Model I and $E\sim t_3/\sqrt{3}$ in Model II\@. The central subspace is spanned by
\[
    b_1=(1,1,0,\ldots,0)^T,
    \qquad
    b_2=(0,0,1,3,0,\ldots,0)^T.
\]
Let \(B_0=(b_1/\|b_1\|,b_2/\|b_2\|)\), whose columns are orthonormal.
Each replication uses $2000$ training observations and $2000$ independent evaluation observations. In addition to distance correlation, we report the projection error
\[
   \Delta_{\mathrm{proj}}=\frac{\Vert\widehat{B}\widehat{B}^{T}-B_0B_0^{T}\Vert_{F}}{\sqrt{2d}},
\]
which lies in $[0,1]$ and is invariant to the choice of basis for the estimated subspace.

\begin{table}[H]
\centering
\caption{Average distance correlation and projection error, with standard deviations in parentheses, over $50$ replications in linear Euclidean SDR.}
\label{tab:linear_euclidean}
\begin{tabular}{llcc}
\toprule
Model & Method & Distance correlation & $\Delta_{\mathrm{proj}}$ \\
\midrule
I  & SDR-COT & 0.889 (0.082) & 0.398 (0.150) \\
I  & SAVE    & \textbf{0.913 (0.052)} & \textbf{0.363 (0.101)} \\
I  & MAVE    & 0.703 (0.071) & 0.678 (0.080) \\
%\addlinespace
\midrule
%\cmidrule(lr){2-4}
II & SDR-COT & \textbf{0.962 (0.019)} & \textbf{0.238 (0.057)} \\
II & SAVE    & 0.953 (0.017) & 0.269 (0.050) \\
II & MAVE    & 0.706 (0.072) & 0.678 (0.087) \\
\bottomrule
\end{tabular}
\end{table}

The linear results in Table~\ref{tab:linear_euclidean} are consistent with the exhaustiveness-oriented nature of the objective. SDR-COT does not optimise the projection error directly, but it still recovers a competitive estimate of the central subspace. SAVE is slightly better in the Gaussian-noise setting, whereas SDR-COT is strongest under the heavier-tailed $t_3$ noise.

\subsubsection*{Linear SDR for functional covariates}

The next experiment applies Algorithm~\ref{alg:functional_sdr_cot_projection} to functional covariates with scalar responses. Each centred curve is projected to a coefficient vector $C(X)\in\mathbb{R}^K$, and SDR-COT fits a linear representation $B^TC(X)$ with $B\in\mathrm{St}(K,d)$. Four models have a one-dimensional sufficient predictor after projection, and Model F5 has a two-dimensional sufficient predictor. We compare against the RKHS inverse-regression estimator of \textcite{hsing2009rkhs}, using the same truncation rank $K$.

The basis dimensions are $K=8$ for F1, F3, and F5, $K=20$ for F2, and $K=24$ for F4. Accordingly, SDR-COT is fitted on $\mathrm{St}(8,1)$ for F1 and F3, $\mathrm{St}(20,1)$ for F2, $\mathrm{St}(24,1)$ for F4, and $\mathrm{St}(8,2)$ for F5. The curves are observed on $t_j=j/(J+1)$, $j=1,\ldots,J$. For Model F2, $X$ is fractional Brownian motion with Hurst parameter $H=0.75$ observed on $120$ grid points; for the other models, $X$ is a Wiener process observed on $100$ grid points. The additive noise, when present, is $\varepsilon\sim\mathcal{N}(0,0.3^2)$.

The five data-generating mechanisms are
\begin{enumerate}[label=\roman*)]
    \item Model F1:
    \[
        Y=\exp\left(\int_0^1\beta(t)X(t)\,dt\right)+\varepsilon,\qquad
        \beta(t)=\sin\left(\frac{3\pi t}{2}\right).
    \]
    \item Model F2:
    \[
        Y=\arctan\left(\sum_{i\in\{30,31,32,90,91,92\}}X(t_i)\right)+\varepsilon.
    \]
    \item Model F3:
    \[
        Y=e^\eta+\varepsilon,\qquad
        \eta=\int_0^1\beta(t)X(t)\,dt,\qquad
        \beta(t)=\sin(2\pi t)+\cos(4\pi t).
    \]
    \item Model F4:
    \[
        Y=(0.2+4\eta^2)Z,\qquad
        \eta=\int_0^1\beta(t)X(t)\,dt,\qquad
        \beta(t)=\sin\left(\frac{3\pi t}{2}\right),\quad Z\sim\mathcal{N}(0,1).
    \]
    \item Model F5:
    \[
        Y=\eta_1+0.75\eta_2^2+\varepsilon,\qquad
        \eta_k=\int_0^1\beta_k(t)X(t)\,dt,
    \]
    where $\beta_1(t)=\sin(2\pi t)$ and $\beta_2(t)=\cos(2\pi t)$.
\end{enumerate}
The evaluation metric is the distance correlation between the estimated finite-dimensional representation and the true latent index on an independent evaluation sample. For F1--F4 this is a scalar--scalar distance correlation; for F5 it is the multivariate distance correlation between $(\eta_1,\eta_2)$ and the two-dimensional estimated representation.

\begin{table}[H]
\centering
\caption{Average distance correlation, with standard deviations in parentheses, for functional linear SDR with $n_{\mathrm{train}}=n_{\mathrm{eval}}=2000$ over $50$ replications.}
\label{tab:functional_linear_sdr_cot}
\begin{tabular}{lrr}
\toprule
Model & SDR-COT & Hsing--Ren \\
\midrule
F1 & 0.980 (0.012) & \textbf{0.988 (0.006)} \\
F2 & 0.994 (0.002) & \textbf{0.998 (0.001)} \\
F3 & 0.989 (0.005) & \textbf{0.994 (0.003)} \\
F4 & \textbf{0.965 (0.010)} & 0.195 (0.124) \\
F5 & \textbf{0.948 (0.010)} & 0.808 (0.027) \\
\bottomrule
\end{tabular}
\end{table}

Table~\ref{tab:functional_linear_sdr_cot} shows that both methods recover the one-index mean-regression structures in F1--F3. The difference appears in F4 and F5. In F4 the conditional mean is zero and the information about the index is carried by the conditional scale, while F5 requires a two-dimensional sufficient predictor. SDR-COT remains accurate in both cases, which is consistent with its distributional objective.

\subsubsection*{SDR for a function-on-function model}

The final experiment applies Algorithm~\ref{alg:functional_functional_sdr_cot} to the function-on-function models of \textcite{li2022weakfunctional}. Let $X$ and $W$ be independent standard Wiener processes on $[0,1]$, let $\sigma=0.1$, and take
\[
    \beta_1(t)=v_1(t)=\sin\left(\frac{3\pi t}{2}\right),\qquad
    \beta_2(t)=v_2(t)=\sin\left(\frac{5\pi t}{2}\right).
\]
The four response models are
\begin{enumerate}[label=\roman*)]
    \item Model FF1:
    \[
        Y(t)=0.2\exp(1+\langle\beta_{1},X\rangle)v_{1}(t)+\sigma W(t).
    \]
    \item Model FF2:
    \[
        Y(t)=(3\langle\beta_{1},X\rangle)^{2}v_{1}(t)+\sigma W(t).
    \]
    \item Model FF3:
    \[
        Y(t)=3\langle\beta_{1},X\rangle v_{1}(t)+(9\langle\beta_{2},X\rangle)^{2}v_{2}(t)+\sigma W(t).
    \]
    \item Model FF4:
    \[
        Y(t)=5\langle \beta_1,X\rangle v_1(t)+8\langle \beta_2,X\rangle\sigma W(t).
    \]
\end{enumerate}
The sufficient predictor is one-dimensional for FF1--FF2 and two-dimensional for FF3--FF4. We match the simulation design of \textcite{li2022weakfunctional}: the functional covariate is observed at $11$ equally spaced points in $[0,1]$, $n=100$ and $n=200$ denote equal-sized training and testing samples, and the experiment is repeated $100$ times. In SDR-COT, the covariate is represented by the first $K=8$ Brownian Karhunen--Loève coefficients, the source response law is chosen as the law of $\sigma W$, and the functional response velocity is represented by a one-dimensional FNO.

Because the target sufficient predictor is finite-dimensional, we use the multiple correlation adopted by \textcite{li2022weakfunctional}. For two random vectors $U,V\in\mathbb{R}^d$ with sample covariance blocks $C_{UU}$, $C_{UV}$ and $C_{VV}$, the multiple correlation is defined as
\[
    \operatorname{mcorr}(U,V)\coloneqq
    \operatorname{tr}\left(C_{VV}^{-1/2}C_{VU}C_{UU}^{-1}C_{UV}C_{VV}^{-1/2}\right).
\]
This is the sum of squared sample canonical correlations, lies between $0$ and $d$, and is invariant to nonsingular linear transformations of the estimated sufficient predictors. We compute it on the test set between the true sufficient predictor and the estimated representation.

\begin{table}[H]
\centering
\caption{Average multiple correlation, with standard deviations in parentheses, for functional covariate and functional response SDR over $100$ replications. The final column gives the best entry reported by \textcite{li2022weakfunctional} among f-GSIR, WIRE, WAVE, and WDR.}
\label{tab:functional_functional_sdr_cot}
\begin{tabular}{llcc}
\toprule
Model & $n$ & SDR-COT & Best Li--Song method \\
\midrule
FF1 & 100 & 0.977 (0.015) & 0.982 (WIRE) \\
FF1 & 200 & \textbf{0.986 (0.011)} & 0.986 (WIRE) \\
\midrule
FF2 & 100 & 0.959 (0.040) & \textbf{0.964 (WDR)} \\
FF2 & 200 & 0.974 (0.020) & \textbf{0.974 (WAVE/WDR)} \\
\midrule
FF3 & 100 & \textbf{1.945 (0.036)} & 1.772 (WAVE) \\
FF3 & 200 & \textbf{1.967 (0.023)} & 1.807 (WAVE) \\
\midrule
FF4 & 100 & \textbf{1.945 (0.045)} & 1.681 (WIRE) \\
FF4 & 200 & \textbf{1.966 (0.024)} & 1.749 (WDR) \\
\bottomrule
\end{tabular}
\end{table}

Table~\ref{tab:functional_functional_sdr_cot} shows that SDR-COT is essentially tied with the best weak conditional-moment methods in the one-index models and is substantially stronger in the two-index models. The sparse $11$-point design increases finite-sample variability relative to denser grids, especially for the quadratic model FF2, but the mean multiple correlations remain close to the ideal upper bounds of $1$ for FF1--FF2 and $2$ for FF3--FF4. This supports the Hilbert-valued SDR-COT construction in a setting where both the covariate and the response are random functions. As explained in Example~\ref{ex:conditional_cameron_martin_translation}, the present Gaussian--Sobolev consistency theory covers the translation structures in FF1--FF3 but does not verify the interpolation-compression condition for the covariance rescaling in FF4. The FF4 result is therefore empirical evidence beyond the scope of the current functional theorem.

%% file: real_data_analysis.tex
\subsection{Bikeshare data analysis}
\label{sec:bikeshare_analysis}

We next compare SDR-COT with the functional dimension-reduction methods of \textcite{li2022weakfunctional} on the Capital Bikeshare data. Each observational unit is one day. The functional covariate $X_i(t)$ is the temperature over the 24 hours of the day, and the functional response is
\[
    Y_i(t)=\log(\operatorname{count}_i(t)+1),
\]
where $\operatorname{count}_i(t)$ is the total number of casual and registered rentals in hour $t$. Following the analysis in \parencite[Section 11]{li2022weakfunctional}, we retain only non-working days, including weekends and holidays, and require all 24 hourly observations to be present. This gives 100 complete days from 2011, used for training, and 112 complete days from 2012, used only for testing.

The temperature and transformed rental-count observations are reconstructed on a common hourly grid using cubic B-splines. The temperature smoother has 15 basis functions, with its second-difference penalty selected by pooled GCV on the training curves. The response is represented by a 24-basis interpolating cubic spline, so that the MSE remains on the observed log-count scale. All functional inner products used by the competing estimators are evaluated by the trapezium rule on this grid.

We consider dimensions $d=1,\ldots,7$. The competitors are functional principal component analysis (FPCA), the weak inverse regression estimator (WIRE), the weak average variance estimator (WAVE), and weak directional regression (WDR). Their sample operators follow the coordinate algorithms in the supplement to \textcite{li2022weakfunctional}. For WIRE, WAVE and WDR, the predictor and response kernels are Gaussian radial basis kernels, with scales $\epsilon_X=0.0251$ and $\epsilon_Y=0.0126$ chosen by the pairwise-distance rule in that paper. 

For SDR-COT, the smoothed temperature curves are represented by their first eight weighted FPCA scores, which explain more than 99.98\% of the training variation. We use the function-on-function procedure in Algorithm~\ref{alg:functional_functional_sdr_cot}, with the functional velocity represented by a one-dimensional FNO\@. During COT fitting, the response curves are centred by their training mean curve and divided by the training root-mean-square scale.\footnote{Specifically, for $n_{\mathrm{tr}}=100$ training curves observed at $m=24$ grid points, let $\overline{Y}_{\mathrm{tr}}(t_j)=n_{\mathrm{tr}}^{-1}\sum_{i=1}^{n_{\mathrm{tr}}}Y_i(t_j)$. The scale is
\[
    s_Y=\left\{\frac{1}{n_{\mathrm{tr}}m}
    \sum_{i=1}^{n_{\mathrm{tr}}}\sum_{j=1}^{m}
    \left[Y_i(t_j)-\overline{Y}_{\mathrm{tr}}(t_j)\right]^2\right\}^{1/2}.
\]} The Gaussian reference covariance uses a 10\% isotropic shrinkage of the empirical response covariance. The FNO has width 20, 10 Fourier modes and three spectral layers, and is trained for 800 Adam steps with batch size 256 and learning rate 0.002.  

After dimension reduction, every method is evaluated through the same functional-response kernel regression. Each coordinate of the estimated sufficient predictor is standardised using the training sample, and the Gaussian Nadaraya--Watson estimator is
\[
    \widehat Y(u)(t)
    =\frac{\sum_{i=1}^{100}\exp\{-\lVert U_i-u\rVert^2/(2h^2)\}Y_i(t)}
    {\sum_{i=1}^{100}\exp\{-\lVert U_i-u\rVert^2/(2h^2)\}}.
\]
The bandwidth $h$ is selected separately for each method and dimension by leave-one-out MSE on the 2011 sample. As in \textcite{li2022weakfunctional}, the reported error averages over subjects and the 24 hourly evaluations,
\[
    \operatorname{MSE}
    =\frac{1}{24n}\sum_{i=1}^{n}\sum_{j=1}^{24}
    \{Y_i(t_j)-\widehat Y(U_i)(t_j)\}^{2}.
\]

\begin{table}[H]
\centering
\small
\setlength{\tabcolsep}{4pt}
\caption{Training and test MSEs for the Bikeshare functional-response analysis. Smaller values are better; boldface marks the smallest unrounded entry within each data set and dimension. All tuning is based on the 2011 training sample.}
\label{tab:bikeshare_mse}
\begin{tabular}{llccccccc}
\toprule
Data & Method & $d=1$ & $d=2$ & $d=3$ & $d=4$ & $d=5$ & $d=6$ & $d=7$ \\
\midrule
Training & SDR-COT & 0.185 & \textbf{0.169} & \textbf{0.157} & \textbf{0.128} & \textbf{0.113} & 0.106 & \textbf{0.089} \\
         & FPCA    & 0.191 & 0.188 & 0.168 & 0.155 & 0.152 & 0.115 & 0.092 \\
         & WIRE    & 0.186 & 0.184 & 0.169 & 0.154 & 0.123 & \textbf{0.096} & 0.096 \\
         & WAVE    & 0.186 & 0.182 & 0.175 & 0.145 & 0.120 & 0.103 & 0.103 \\
         & WDR     & \textbf{0.183} & 0.185 & 0.167 & 0.146 & 0.121 & 0.106 & 0.109 \\
\midrule
Test     & SDR-COT & \textbf{0.320} & 0.311 & \textbf{0.299} & \textbf{0.306} & \textbf{0.303} & 0.314 & 0.314 \\
         & FPCA    & 0.323 & \textbf{0.311} & 0.325 & 0.326 & 0.325 & 0.318 & 0.315 \\
         & WIRE    & 0.322 & 0.312 & 0.328 & 0.322 & 0.313 & \textbf{0.313} & 0.308 \\
         & WAVE    & 0.322 & 0.316 & 0.305 & 0.316 & 0.312 & 0.314 & \textbf{0.305} \\
         & WDR     & 0.322 & 0.312 & 0.318 & 0.314 & 0.314 & 0.317 & 0.312 \\
\bottomrule
\end{tabular}
\end{table}

Table~\ref{tab:bikeshare_mse} shows that SDR-COT is competitive throughout the range of dimensions. The WIRE, WAVE and WDR test errors are on the same scale as, and generally slightly below, the corresponding entries in Table~3 of \textcite{li2022weakfunctional}. An exact numerical reproduction is not possible from the published specification because the B-spline basis and penalty and the kernel-regression bandwidth convention are not reported.

%% file: gaussian_analysis_background.tex
\section{Gaussian measures and Malliavin calculus on Hilbert spaces}
\label{app:gaussian_analysis_background}

This appendix develops the Gaussian-analysis background used in the consistency theory for functional covariates and responses in Section~\ref{sec:functional_linear_consistency}. The key identification step there is to regard the conditional Monge interpolations as solutions of a common Gaussian continuity equation and use uniqueness to show that their conditional response laws agree. This avoids imposing continuity of the velocity in the ambient response norm, but requires a centred Gaussian source, Cameron--Martin weak differentiability and \(L^r\)-controlled densities along the interpolation. Gaussian measures and covariance operators were introduced in Section~\ref{sec:preliminaries}; here we develop only the additional structure needed for that argument. Throughout, \(\mathcal{Y}\) is a separable Hilbert space.

Subsections~\ref{app:gaussian_geometry}--\ref{app:malliavin_calculus} introduce the Cameron--Martin geometry, Gaussian translations, Malliavin derivatives and Gaussian divergence. Subsection~\ref{app:gaussian_continuity} then combines these ingredients into the compression and uniqueness statements applied in Lemma~\ref{lem:gaussian_sobolev_zero_loss}.

\subsection{Gaussian measures and Cameron–Martin geometry}
\label{app:gaussian_geometry}

\begin{Definition}[Centred Gaussian measure]
\label{def:centred_gaussian_measure}
Let \(\rho\) be a Gaussian measure on \(\mathcal{Y}\) in the sense of Definition~\ref{def:gaussian_null}, and let \(Z_\rho\) be the canonical random element \(Z_\rho(y)=y\) on \((\mathcal{Y},\mathcal{B}(\mathcal{Y}),\rho)\). By Fernique's theorem, as recalled after Definition~\ref{def:gaussian_null}, \(Z_\rho\in L^2(\rho;\mathcal{Y})\). Using the Bochner mean and covariance operator from Subsection~\ref{sec:notation}, write
\[
    m_\rho\coloneqq\mathbb{E}_\rho[Z_\rho],
    \qquad
    C_\rho\coloneqq\Gamma_{Z_\rho}.
\]
The measure \(\rho\) is \emph{centred} if \(m_\rho=0\), and we then write \(\rho=\mathcal{N}(0,C_\rho)\). It is nondegenerate in the sense of Definition~\ref{def:gaussian_null} if and only if \(\ker(C_\rho)=\{0\}\).\footnote{The kernel is defined as $\ker(C_{\rho})\coloneqq\{y\in\mathcal{Y}:C_{\rho}y=0\}$.}
\end{Definition}

\begin{Definition}[Cameron--Martin space]
\label{def:cameron_martin_space}
Let \(\rho=\mathcal{N}(0,C_\rho)\) be nondegenerate. Its Cameron--Martin space is
\[
    \mathcal{H}_\rho\coloneqq\operatorname{ran}(C_\rho^{1/2}),
\]
equipped with the inner product
\begin{equation}
    \langle h,k\rangle_{\mathcal{H}_\rho}
    \coloneqq
    \langle C_\rho^{-1/2}h,C_\rho^{-1/2}k\rangle_{\mathcal{Y}},
    \qquad h,k\in\mathcal{H}_\rho.
    \label{eq:appendix_cameron_martin_inner_product}
\end{equation}
Here \(C_\rho^{-1/2}\) denotes the inverse of \(C_\rho^{1/2}\) on its range.
\end{Definition}

\begin{Proposition}
\label{prop:cameron_martin_geometry}
Let \(\rho=\mathcal{N}(0,C_\rho)\) be nondegenerate. The space \(\mathcal{H}_\rho\), endowed with \eqref{eq:appendix_cameron_martin_inner_product}, is a separable Hilbert space that is densely, continuously and compactly embedded in \(\mathcal{Y}\). If \(\mathcal{Y}\) is infinite-dimensional, then \(\rho(\mathcal{H}_\rho)=0\).
\end{Proposition}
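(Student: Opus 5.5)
The plan is to reduce everything to the spectral decomposition of the trace-class covariance. Since $\rho$ is Gaussian with finite second moment (Fernique), the operator $C_\rho=\Gamma_{Z_\rho}$ is positive, self-adjoint and trace class. Nondegeneracy gives $\ker(C_\rho)=\{0\}$. By the spectral theorem I will write
\[
    C_\rho=\sum_k\lambda_k\,e_k\otimes e_k,
\]
where $(e_k)$ is an orthonormal basis of $\mathcal{Y}$, $\lambda_k>0$ and $\sum_k\lambda_k<\infty$. Then $C_\rho^{1/2}$ is injective with eigenvalues $\lambda_k^{1/2}$, so $C_\rho^{-1/2}$ is well defined on $\mathcal{H}_\rho$.

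For the Hilbert structure, the map $C_\rho^{1/2}:\mathcal{Y}\to\mathcal{H}_\rho$ is a bijective isometry by the definition \eqref{eq:appendix_cameron_martin_inner_product}. Completeness and separability of $\mathcal{H}_\rho$ therefore transfer from $\mathcal{Y}$, and $(\lambda_k^{1/2}e_k)$ is an orthonormal basis of $\mathcal{H}_\rho$. The embedding estimates follow from this basis.
\begin{itemize}
\item Continuity: $\Vert h\Vert_{\mathcal{Y}}=\Vert C_\rho^{1/2}C_\rho^{-1/2}h\Vert\le\Vert C_\rho\Vert_{\mathrm{op}}^{1/2}\Vert h\Vert_{\mathcal{H}_\rho}$.
\item Compactness: the inclusion equals $C_\rho^{1/2}\circ(C_\rho^{1/2})^{-1}$, which is a compact operator composed with an isometry. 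Indeed $C_\rho^{1/2}$ is Hilbert--Schmidt, since $\sum_k\lambda_k<\infty$.
\item Density: $\mathcal{H}_\rho$ contains every $e_k$, and their span is dense in $\mathcal{Y}$. Equivalently, $\overline{\operatorname{ran}C_\rho^{1/2}}=\ker(C_\rho^{1/2})^\perp=\mathcal{Y}$.
\end{itemize}

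For the null-set claim, assume $\mathcal{Y}$ is infinite-dimensional. Under $\rho$, the coordinates $\xi_k\coloneqq\lambda_k^{-1/2}\langle y,e_k\rangle$ are jointly Gaussian, centred and have covariance $\delta_{jk}$, read off from $\langle C_\rho e_j,e_k\rangle$. Hence they are i.i.d.\ $\mathcal{N}(0,1)$. A point $y$ lies in $\mathcal{H}_\rho$ if and only if $\sum_k\lambda_k^{-1}\langle y,e_k\rangle^2=\sum_k\xi_k^2<\infty$. This set is Borel, because it is a countable supremum of continuous functions. By the strong law of large numbers, $\sum_k\xi_k^2=\infty$ almost surely, so $\rho(\mathcal{H}_\rho)=0$.

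The main obstacle is the justification that the $\xi_k$ are independent standard normals. This requires that every finite-dimensional projection of a Gaussian measure is jointly Gaussian, with covariance given by $C_\rho$. Joint Gaussianity holds because each linear combination $\sum a_k\langle\cdot,e_k\rangle$ is a continuous linear functional, and Definition~\ref{def:gaussian_null} makes such functionals Gaussian. The covariance identification is then exactly the Riesz characterisation of $\Gamma_Z$ recalled in Section~\ref{sec:notation}.
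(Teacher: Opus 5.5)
Your proof is correct and follows the paper's argument essentially step by step: the isometry $C_\rho^{1/2}:\mathcal{Y}\to\mathcal{H}_\rho$ gives the Hilbert structure and the continuous, compact and dense embedding, and the standardised spectral coordinates $\xi_k$ together with the strong law of large numbers give $\rho(\mathcal{H}_\rho)=0$. Your extra justifications of the joint Gaussianity and independence of the $\xi_k$, and of the Borel measurability of $\mathcal{H}_\rho$, fill in details that the paper leaves implicit.
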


\begin{proof}
The covariance operator \(C_\rho\) is positive, self-adjoint and trace class. Nondegeneracy gives \(\ker(C_\rho)=\{0\}\). The map $C^{1/2}_{\rho}:\mathcal{Y}\to\mathcal{H}_{\rho}$ is therefore a surjective isometry when its codomain carries the Cameron--Martin norm. It follows that \(\mathcal{H}_\rho\) is a separable Hilbert space. Moreover,
\[
    \|h\|_{\mathcal{Y}}
    \leq\|C_\rho^{1/2}\|_{\mathrm{op}}
    \|h\|_{\mathcal{H}_\rho},
    \qquad h\in\mathcal{H}_\rho,
\]
so the embedding into \(\mathcal{Y}\) is continuous. Under the preceding isometric identification, this embedding is represented by \(C_\rho^{1/2}\), which is compact because \(C_\rho\) is trace class. Its range is dense since
\[
    \overline{\operatorname{ran}(C_\rho^{1/2})}
    =\ker(C_\rho^{1/2})^\perp=\mathcal{Y}.
\]

Suppose now that \(\mathcal{Y}\) is infinite-dimensional. By the spectral theorem, there is an orthonormal basis \(\{e_j\}_{j\geq1}\) such that \(C_\rho e_j=\lambda_j e_j\), with \(\lambda_j>0\). Under \(\rho\), the variables
\[
    \xi_j(y)\coloneqq\lambda_j^{-1/2}
    \langle y,e_j\rangle_{\mathcal{Y}}
\]
are independent standard Gaussian random variables. The spectral description of the range gives
\[
    y\in\mathcal{H}_\rho
    \quad\Longleftrightarrow\quad
    \sum_{j=1}^{\infty}\frac{|\langle y,e_j\rangle_{\mathcal{Y}}|^2}{\lambda_j}
    =\sum_{j=1}^{\infty}\xi_j(y)^2<\infty.
\]
The strong law of large numbers yields
\(m^{-1}\sum_{j=1}^m\xi_j^2\to1\) almost surely, so the series diverges almost surely. Hence \(\rho(\mathcal{H}_\rho)=0\).
\end{proof}

\begin{Definition}[Abstract Wiener space]
\label{def:abstract_wiener_space}
An abstract Wiener space is a triple \((\mathcal{E},\mathcal{H},\mu)\) in which \(\mathcal{E}\) is a separable Banach space, \(\mu\) is a centred nondegenerate Gaussian measure on \(\mathcal{E}\), and \(\mathcal{H}\) is its Cameron--Martin space, continuously and densely embedded in \(\mathcal{E}\). Thus \((\mathcal{Y},\mathcal{H}_\rho,\rho)\) is an abstract Wiener space. In the present Hilbert setting the embedding is also compact by Proposition~\ref{prop:cameron_martin_geometry}.
\end{Definition}

\begin{Example}[Finite-dimensional comparison]
\label{ex:finite_dimensional_cameron_martin}
If \(\mathcal{Y}=\mathbb{R}^m\) and \(C_\rho\) is positive definite, then \(\mathcal{H}_\rho=\mathbb{R}^m\) as a set, and their inner products are related by \(\langle h,k\rangle_{\mathcal{H}_\rho}=\langle C_\rho^{-1}h,k\rangle_{\mathbb{R}^m}\). Thus, Cameron--Martin differentiability is simply ordinary differentiability expressed in a covariance-weighted geometry. 

In infinite dimensions, by contrast, the trace-class requirement forces the eigenvalues of \(C_\rho\) to decay to zero, making \(C_\rho^{-1/2}\) an unbounded operator. Consequently, \(\mathcal{H}_\rho\) is a strictly smaller, \(\rho\)-null subspace of \(\mathcal{Y}\). Cameron--Martin differentiability only controls directions in this subspace, rather than every ambient direction, and therefore need not imply continuity in the ambient topology.
\end{Example}

To operate within this geometry, we require a mechanism to evaluate the inner product between a fixed Cameron–Martin direction $h\in\mathcal{H}_{\rho}$ and a random element $y$ drawn from $\rho$. Because $y$ almost surely falls outside $\mathcal{H}_{\rho}$, a direct covariance-weighted dot product diverges. We therefore require the Gaussian linear functional (often called the Paley–Wiener integral), which acts as a ``stochastic dot product'' defined via an $L^2(\rho)$-limit. Let \(C_\rho e_j=\lambda_j e_j\), where \(\{e_j\}_{j\geq 1}\) is an orthonormal basis of \(\mathcal{Y}\) and \(\lambda_j>0\). For \(h\in\mathcal{H}_\rho\), write \(h_j\coloneqq \langle h,e_j\rangle_{\mathcal{Y}}\). Then \(\sum_jh_j^2/\lambda_j<\infty\).

\begin{Definition}[Gaussian linear functional]
\label{def:gaussian_linear_functional}
For \(h\in\mathcal{H}_\rho\) and \(m\in\mathbb{N}\), define the finite-dimensional random variable on the canonical probability space \((\mathcal{Y},\mathcal{B}(\mathcal{Y}),\rho)\) by
\begin{equation}
    S_m^h(y)\coloneqq
    \sum_{j=1}^m\frac{h_j}{\lambda_j}
    \langle y,e_j\rangle_{\mathcal{Y}}.
    \label{eq:gaussian_linear_functional}
\end{equation}
For \(n>m\), the covariance identity gives \footnote{In particular,
\[
   \Vert S^{h}_{n}-S^{h}_{m}\Vert^{2}_{L^{2}(\rho)}=\sum^{n}_{i=m+1}\sum^{n}_{j=m+1}\frac{h_{i}h_{j}}{\lambda_{i}\lambda_{j}}\mathbb{E}_{\rho}\bigl[\langle y,e_{j}\rangle_{\mathcal{Y}}\langle y,e_{i}\rangle_{\mathcal{Y}}\big]=\sum^{n}_{i=m+1}\sum^{n}_{j=m+1}\frac{h_{i}h_{j}}{\lambda_{i}\lambda_{j}}\langle C_{\rho}e_{i},e_{j}\rangle_{\mathcal{Y}}=\sum^{n}_{j=m+1}\frac{h^{2}_{j}}{\lambda_{j}}.
\]
}
\[
    \|S_n^h-S_m^h\|_{L^2(\rho)}^2
    =\sum_{j=m+1}^n\frac{h_j^2}{\lambda_j}.
\]
Since \(h\in\mathcal{H}_\rho\), the series is finite, and hence \((S_m^h)_{m\geq1}\) is Cauchy in \(L^2(\rho)\). The \emph{Gaussian linear functional} associated with \(h\) is the unique element \(\widehat h\in L^2(\rho)\) satisfying
\begin{equation}
    \lim_{m\to\infty}\mathbb{E}_{\rho}\left[|S^{h}_{m}(y)-\widehat{h}(y)|^{2}\right]=0.
\label{eq:gaussian_linear_functional_mean_square}
\end{equation}

Equivalently, this functional can be defined by continuous extension without using an eigenbasis. On the dense subspace \(\operatorname{ran}(C_\rho)\subset\mathcal{H}_\rho\), any element takes the form \(h=C_\rho a\) for some \(a\in\mathcal{Y}\). For such elements, the functional is the \(L^2(\rho)\)-class of the continuous linear functional
\[
    \widehat h(y)=\langle a,y\rangle_{\mathcal{Y}}
    =\langle C_\rho^{-1}h,y\rangle_{\mathcal{Y}}.
\]
The map \(h\mapsto\widehat h\) is a linear isometry from \(\operatorname{ran}(C_\rho)\) into \(L^2(\rho)\). Because \(\operatorname{ran}(C_\rho)\) is dense in \(\mathcal{H}_\rho\), it extends uniquely to an isometry on all of \(\mathcal{H}_\rho\). This characterisation is independent of the chosen covariance eigenbasis and gives
\[
    \mathbb{E}_{\rho}\bigl[\widehat{h}(y)\widehat{k}(y)\bigr]=\langle h,k\rangle_{\mathcal{H}_{\rho}},\qquad h,k\in\mathcal{H}_{\rho}.
\]
In particular, as an \(L^2\)-limit of centred Gaussian random variables, \(\widehat h\) is centred Gaussian with variance \(\|h\|_{\mathcal{H}_\rho}^2\).

\end{Definition}

\subsection{Translation and equivalence of Gaussian measures}
\label{app:gaussian_translation}

The structures defined above completely characterise how a Gaussian measure transforms under shifts and linear transformations. Specifically, the Cameron–Martin formula provides the explicit Radon–Nikodym derivative between a Gaussian measure and its shifted counterpart. The result is standard; see, for example, \textcite[Proposition 2.26]{da2014stochastic}.

\begin{Proposition}[Cameron--Martin formula]
\label{prop:cameron_martin_translation_formula}
For \(h\in\mathcal{Y}\), let \(\tau_h(y)\coloneqq y+h\) be the translation. The measures \((\tau_h)_\#\rho\) and \(\rho\) are equivalent if and only if \(h\in\mathcal{H}_\rho\); they are mutually singular otherwise. If \(h\in\mathcal{H}_\rho\), then
\begin{equation}
    \frac{d(\tau_{h})_{\#}\rho}{d\rho}(y)=\exp\left(\widehat{h}(y)-\frac{1}{2}\|h\|^{2}_{\mathcal{H}_{\rho}}\right)\qquad\rho\text{-a.e.}
\label{eq:appendix_cameron_martin_formula}
\end{equation}
Consequently, for \(a>1\) and \(t\in\mathbb{R}\),
\begin{equation}
    \left\Vert \frac{d(\tau_{th})_{\#}\rho}{d\rho}\right\Vert _{L^{a}(\rho)}=\exp\left(\frac{a-1}{2}t^{2}\|h\|^{2}_{\mathcal{H}_{\rho}}\right).
\label{eq:appendix_translation_density_norm}
\end{equation}
\end{Proposition}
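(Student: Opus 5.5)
The proof splits into three parts: the equivalence/singularity dichotomy, the density formula \eqref{eq:appendix_cameron_martin_formula}, and the $L^a$ norm \eqref{eq:appendix_translation_density_norm}. Throughout I work in the covariance eigenbasis $\{e_j\}$ with $C_\rho e_j=\lambda_je_j$. Under $\rho$ the coordinates $\xi_j(y)=\lambda_j^{-1/2}\langle y,e_j\rangle_{\mathcal{Y}}$ are i.i.d.\ standard Gaussians. By the product structure of $\rho$ in these coordinates, $\rho$ is the infinite product $\bigotimes_j\mathcal{N}(0,1)$ in $\xi$-coordinates. The translated measure $(\tau_h)_\#\rho$ is the product $\bigotimes_j\mathcal{N}(a_j,1)$ with $a_j=h_j/\sqrt{\lambda_j}$, where $h_j=\langle h,e_j\rangle_{\mathcal{Y}}$. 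The condition $h\in\mathcal{H}_\rho$ is exactly $\sum_ja_j^2<\infty$, by the spectral description of $\operatorname{ran}(C_\rho^{1/2})$ used in the proof of Proposition~\ref{prop:cameron_martin_geometry}.

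For the dichotomy, I would apply Kakutani's theorem on infinite product measures. The Hellinger affinity of $\mathcal{N}(a_j,1)$ and $\mathcal{N}(0,1)$ is $\exp(-a_j^2/8)$. Hence the product of the affinities is positive if and only if $\sum_ja_j^2<\infty$, and Kakutani then gives equivalence in this case and mutual singularity otherwise. If a self-contained argument is preferred for the singular direction, I would instead use the tests $S_m^h$ normalised by $(\sum_{j\le m}a_j^2)^{1/2}$. Under $\rho$ this variable is $\mathcal{N}(0,1)$, while under the shifted law it is $\mathcal{N}((\sum_{j\le m}a_j^2)^{1/2},1)$, whose mean diverges. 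A Borel–Cantelli choice of thresholds along a subsequence then separates the two measures.

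For the density formula, I would compute finite-dimensional marginals. On $\sigma(\xi_1,\dots,\xi_m)$ the Radon–Nikodym derivative is
\[
M_m=\exp\Bigl(\sum_{j\le m}a_j\xi_j-\tfrac12\sum_{j\le m}a_j^2\Bigr)=\exp\Bigl(S_m^h-\tfrac12\textstyle\sum_{j\le m}h_j^2/\lambda_j\Bigr).
\]
The sequence $(M_m)$ is a nonnegative $\rho$-martingale with respect to the filtration generated by $\xi_1,\dots,\xi_m$, and this filtration generates $\mathcal{B}(\mathcal{Y})$. Since $S_m^h\to\widehat h$ in $L^2(\rho)$ by Definition~\ref{def:gaussian_linear_functional}, a subsequence converges $\rho$-a.s. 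Therefore $M_m\to\exp(\widehat h-\tfrac12\|h\|_{\mathcal{H}_\rho}^2)$ almost surely.

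The main obstacle is uniform integrability, which is needed to identify this limit as the density of $(\tau_h)_\#\rho$ and not merely of its absolutely continuous part. I would obtain it from the explicit second moment $\mathbb{E}_\rho M_m^2=\exp(\sum_{j\le m}a_j^2)\le\exp(\|h\|_{\mathcal{H}_\rho}^2)$, which gives an $L^2$-bounded martingale. The martingale convergence theorem then shows $M_m\to M_\infty$ in $L^1$, and $M_\infty$ is the density on the generated $\sigma$-algebra. This also reproves the equivalence direction of the dichotomy.

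Finally, the $L^a$ norm is a Gaussian moment computation. Replacing $h$ by $th$ gives density $\exp(t\widehat h-\tfrac{t^2}{2}\|h\|^2_{\mathcal{H}_\rho})$, where $\widehat h\sim\mathcal{N}(0,\|h\|_{\mathcal{H}_\rho}^2)$ under $\rho$ by Definition~\ref{def:gaussian_linear_functional}. Hence
\[
\mathbb{E}_\rho\bigl[e^{at\widehat h}\bigr]e^{-at^2\|h\|^2_{\mathcal{H}_\rho}/2}=\exp\bigl(\tfrac{a^2-a}{2}t^2\|h\|^2_{\mathcal{H}_\rho}\bigr),
\]
and taking the $1/a$-th power yields \eqref{eq:appendix_translation_density_norm}.
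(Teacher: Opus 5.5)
Your proof is correct. The paper itself gives no proof of this proposition; it calls the result standard and cites Da Prato--Zabczyk. Your proposal therefore differs from the paper by being a self-contained argument, and it uses only objects the appendix already defines.

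In the eigen-coordinates $\xi_j$ the two measures become $\bigotimes_j\mathcal{N}(0,1)$ and $\bigotimes_j\mathcal{N}(a_j,1)$. Kakutani's Hellinger criterion then gives the dichotomy, with threshold $\sum_j a_j^2=\|h\|_{\mathcal{H}_\rho}^2<\infty$ taken from the spectral description in Proposition~\ref{prop:cameron_martin_geometry}. The $L^2$-bounded likelihood-ratio martingale identifies the density through the same partial sums $S_m^h$ that define $\widehat h$ in Definition~\ref{def:gaussian_linear_functional}.

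The citation buys brevity. Your route confirms that the formula holds for the paper's own $L^2(\rho)$-construction of $\widehat h$, which is the version used in Example~\ref{ex:conditional_cameron_martin_translation}. It also shows that, once the density formula is known, the $L^a$ identity needs only linearity of $h\mapsto\widehat h$ and $\widehat h\sim\mathcal{N}(0,\|h\|_{\mathcal{H}_\rho}^2)$.

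Three small points to tighten when you write it up:
\begin{enumerate}
\item Almost-sure convergence of the whole sequence $(M_m)$ comes from the martingale convergence theorem, or from Kolmogorov's theorem applied to $S_m^h=\sum_{j\le m}a_j\xi_j$. The $L^2$-convergent subsequence only identifies the limit.
\item To get equivalence rather than just $(\tau_h)_\#\rho\ll\rho$, state explicitly that $M_\infty=\exp(\widehat h-\tfrac12\|h\|_{\mathcal{H}_\rho}^2)$ is strictly positive.
\item Kakutani's conclusion on $\mathbb{R}^{\mathbb{N}}$ transfers back to $\mathcal{Y}$ because the $\xi_j$ generate $\mathcal{B}(\mathcal{Y})$, as you note.
\end{enumerate}
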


The following Feldman--H\'ajek theorem provides necessary and sufficient conditions under which two Gaussian measures are equivalent; see \textcite[Theorem~2.25]{da2014stochastic}. It explains the limitation of covariance-rescaling examples discussed in Section~\ref{sec:functional_linear_consistency}.

\begin{Proposition}[Feldman--H\'ajek theorem]
\label{prop:feldman_hajek_criterion}
Let \(\rho_i=\mathcal{N}(m_i,C_i)\), \(i=1,2\), be nondegenerate Gaussian measures on \(\mathcal{Y}\), and set \(\mathcal{H}_i=\operatorname{ran}(C_i^{1/2})\). The measures \(\rho_1\) and \(\rho_2\) are either equivalent or mutually singular. They are equivalent if and only if the following conditions hold:
\begin{enumerate}[label=\roman*)] 
    \item $\mathcal{H}_1=\mathcal{H}_2$.
    \item $m_1-m_2\in\mathcal{H}_1$.
    \item Let \(L\coloneqq C_1^{-1/2}C_2^{1/2}\) be  understood as its bounded extension under condition~i), the operator \(LL^*-I\) belongs to \(\mathcal{S}_{2}(\mathcal{Y})\).\footnote{The space $\mathcal{S}_{2}(\mathcal{E})$ of Hilbert--Schmidt operators on \(\mathcal{E}\) is a Hilbert space with norm $\Vert A\Vert^{2}_{\mathcal{S}_{2}(\mathcal{E})}=\sum^{\infty}_{j=1}\Vert Ae_{j}\Vert^{2}_{\mathcal{E}}$. Moreover, $\mathcal{S}_{1}(\mathcal{E})\subset\mathcal{S}_{2}(\mathcal{E})$.}
\end{enumerate}
\end{Proposition}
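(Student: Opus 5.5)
The plan is to reduce everything, through a whitening transformation, to Kakutani's dichotomy for infinite products of one-dimensional Gaussian laws. Kakutani's theorem then yields both the equivalence/singularity alternative and the quantitative criteria. The proof treats the three conditions in order: first the Cameron--Martin spaces, then the means, then the covariances.

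\emph{Cameron--Martin spaces.} Proposition~\ref{prop:cameron_martin_translation_formula} characterises \(\mathcal{H}_i\) intrinsically: it is the set of \(h\in\mathcal{Y}\) for which \((\tau_h)_\#\rho_i\sim\rho_i\), and the shift is singular otherwise. Suppose \(\rho_1\sim\rho_2\) and \(h\in\mathcal{H}_1\setminus\mathcal{H}_2\). Then \((\tau_h)_\#\rho_1\sim\rho_1\sim\rho_2\), while \((\tau_h)_\#\rho_2\perp\rho_2\). Since translation preserves equivalence of measures, \((\tau_h)_\#\rho_1\sim(\tau_h)_\#\rho_2\), and this gives a contradiction. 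Hence equivalence forces i).

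\emph{Means.} Given i), write \(\rho_2=(\tau_{m_2-m_1})_\#\mathcal{N}(m_1,C_2)\). The means can be compared on the centred-difference level: the measure \(\mathcal{N}(m_1,C_2)\) has Cameron--Martin space \(\mathcal{H}_1\), so by the same translation formula the shift by \(m_2-m_1\) preserves its equivalence class exactly when ii) holds. This reduces the problem to the equal-mean case. One direction is clear. For the other, I would show that if \(m_1-m_2\notin\mathcal{H}_1\), the shift part dominates, using a linear functional sequence \(S^h_m\) whose variances stay bounded while its means diverge; a Chebyshev argument then produces a separating set.

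\emph{Covariances.} With \(m_1=m_2=0\) and i) in force, \(L=C_1^{-1/2}C_2^{1/2}\) is bounded and invertible by the closed graph theorem. I would whiten via the Paley--Wiener map \(y\mapsto(\widehat{e_j^{(1)}}(y))_j\), built from an orthonormal basis of \(\mathcal{H}_1\) using Definition~\ref{def:gaussian_linear_functional}. Under this map \(\rho_1\) becomes \(\gamma^{\otimes\mathbb{N}}\) on \(\mathbb{R}^{\mathbb{N}}\), and \(\rho_2\) becomes the centred Gaussian measure with covariance \(LL^*\).

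\begin{itemize}
\item If \(LL^*-I\) is compact, diagonalise it with eigenvalues \(\delta_j\). Both measures are then products of \(\mathcal{N}(0,1)\) and \(\mathcal{N}(0,1+\delta_j)\), respectively. The Hellinger affinities are \(\bigl(2\sqrt{1+\delta_j}/(2+\delta_j)\bigr)^{1/2}\). Kakutani's theorem gives equivalence if and only if \(\sum_j\delta_j^2<\infty\), which is exactly the Hilbert--Schmidt condition iii), and singularity otherwise.
\item If \(LL^*-I\) is not compact, pick an orthonormal sequence \(f_k\) with \(|\langle(LL^*-I)f_k,f_k\rangle|\ge c>0\). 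After passing to a subsequence, the \(f_k\) are almost orthogonal in both covariance structures. The empirical second moments \(m^{-1}\sum_{k\le m}\widehat{f_k}^2\) then converge to \(1\) under \(\rho_1\) and stay bounded away from \(1\) under \(\rho_2\), by the law of large numbers. This produces a separating set.
\end{itemize}

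Kakutani's zero--one law supplies the dichotomy in every case.

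I expect the main obstacle to be the non-compact case together with rigorously transporting the measures through the whitening map. The functionals \(\widehat h\) are defined only as \(L^2(\rho_1)\) classes, so I must check that they remain well-defined \(\rho_2\)-almost surely. Under i) I would argue this through the fact that the defining Cauchy sequences in Definition~\ref{def:gaussian_linear_functional} also converge in \(L^2(\rho_2)\), because \(\|L\|_{\mathrm{op}}<\infty\). I would first try the near-diagonal subsequence extraction to reduce to independent coordinates.
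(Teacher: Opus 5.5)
You take a standard route: Cameron--Martin spaces first, then means, then covariances via whitening and Kakutani. The paper gives no proof here and simply quotes the theorem from Da Prato and Zabczyk. There is, however, a genuine gap in the first assertion, the dichotomy, when condition~i) fails.

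Your first step shows only that \emph{equivalence} forces $\mathcal{H}_1=\mathcal{H}_2$. It never shows that $\mathcal{H}_1\neq\mathcal{H}_2$ forces $\rho_1\perp\rho_2$. Your closing appeal to Kakutani does not cover this case, because Kakutani needs a common product representation and your whitening is built only under~i). Without~i) there is in general no $\lambda$ with $C_2\le\lambda C_1$; by Douglas' range-inclusion lemma, such a $\lambda$ exists exactly when $\mathcal{H}_2\subseteq\mathcal{H}_1$. So the $L^2(\rho_1)$-Cauchy sequences defining $\widehat h$ need not converge in $L^2(\rho_2)$, and the whitened coordinates need not exist $\rho_2$-a.e. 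When neither Cameron--Martin space contains the other, whitening with respect to $\rho_2$ fails in the same way. As written, this branch yields only non-equivalence.

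The missing idea is to upgrade your translation argument from equivalence to non-singularity. Suppose $\rho_1\not\perp\rho_2$ and $h\in\mathcal{H}_1\setminus\mathcal{H}_2$. Let $\nu^{a}\neq0$ be the part of $\rho_2$ that is absolutely continuous with respect to $\rho_1$, and for $t\in\mathbb{R}$ put $\nu_t=(\tau_{th})_\#\nu^{a}$. Then:
\begin{itemize}
\item $\nu_t\neq0$ and $\nu_t\ll(\tau_{th})_\#\rho_1\sim\rho_1$;
\item for $s\neq t$, $\nu_t\perp\nu_s$, because $\nu_t\le(\tau_{th})_\#\rho_2$ and $(\tau_{th})_\#\rho_2\perp(\tau_{sh})_\#\rho_2$, as $(t-s)h\notin\mathcal{H}_2$.
\end{itemize}
The sets $\{d\nu_t/d\rho_1>0\}$ would then be uncountably many pairwise $\rho_1$-a.e.\ disjoint sets of positive measure, which is impossible for a finite measure. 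Hence non-singularity gives $\mathcal{H}_1\subseteq\mathcal{H}_2$, and by symmetry~i). Combined with your Chebyshev argument for~ii) and your two covariance cases, every branch then ends in equivalence or singularity.

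Two smaller repairs are also needed. In the non-compact case, pass to a subsequence on which $\langle(LL^*-I)f_k,f_k\rangle$ has constant sign; with mixed signs the averaged $\rho_2$-variances can still tend to~$1$. To pull equivalence, and not only singularity, back from $\mathbb{R}^{\mathbb{N}}$, check that the reconstruction $y=\sum_j\widehat{e^{(1)}_j}(y)\,e^{(1)}_j$ also holds $\rho_2$-a.s. This follows in $L^2(\rho_2)$ from $\|L\|_{\mathrm{op}}<\infty$ and the Hilbert--Schmidt property of $C_1^{1/2}$.
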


The Feldman--H\'ajek theorem implies that, if \(\mathcal{Y}\) is infinite-dimensional, \(a>0\), and \(a\neq1\), then \(\mathcal{N}(0,a^2C_1)\) and \(\mathcal{N}(0,C_1)\) are mutually singular. Specifically, $LL^{*}-I=(a^{2}-1)I$, and 
\[
   \Vert(a^{2}-1)I\Vert^{2}_{\mathcal{S}_{2}(\mathcal{Y})}=\sum^{\infty}_{j=1}\Vert(a^{2}-1)Ie_{j}\Vert^{2}_{\mathcal{Y}}=\sum^{\infty}_{j=1}(a^{2}-1)^{2},
\]
which diverges.

\begin{Proposition}[Cameron--Martin space of the Wiener source]
\label{prop:wiener_source_cameron_martin_space}
Let \(W\) be a standard Wiener process, let \(\sigma>0\). Denote the law of $\sigma W$ by $\rho$, and regard it as a Gaussian measure on \(L^2([0,1])\). Then
\(\mathcal{H}_\rho\) consists precisely of the elements of \(L^2([0,1])\) having an absolutely continuous representative, still denoted by \(h\), such that \(h(0)=0\) and \(h'\in L^2([0,1])\). For every such \(h\),
\begin{equation}
    \|h\|_{\mathcal{H}_\rho}^2
    =\sigma^{-2}\int_0^1|h'(t)|^2\,dt.
    \label{eq:appendix_wiener_cameron_martin_norm}
\end{equation}
\end{Proposition}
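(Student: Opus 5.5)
The idea is to identify the covariance operator of \(\rho\) explicitly and then read off \(\mathcal{H}_\rho=\operatorname{ran}(C_\rho^{1/2})\) with its norm from Definition~\ref{def:cameron_martin_space}.

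\textbf{Step 1: the covariance operator.} For \(f,g\in L^2([0,1])\), Fubini and \(\mathbb{E}[W(s)W(t)]=s\wedge t\) give
\[
\langle C_\rho f,g\rangle=\sigma^2\int_0^1\!\!\int_0^1 (s\wedge t)f(s)g(t)\,ds\,dt .
\]
Hence \(C_\rho\) is the integral operator with kernel \(\sigma^2(s\wedge t)\). Using \(s\wedge t=\int_0^1\mathbbm{1}_{[0,s]}(u)\mathbbm{1}_{[0,t]}(u)\,du\), this factorises as
\[
C_\rho=\sigma^2 JJ^*,\qquad (Jk)(t)=\int_0^t k(u)\,du,\qquad (J^*f)(u)=\int_u^1 f(s)\,ds .
\]
The operator \(J\) is injective, and \(\ker J^*=\{0\}\) because \(J^*f\equiv0\) forces \(f=0\). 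Together with the fact that \(\rho\) is centred and Gaussian, this gives nondegeneracy, so Definition~\ref{def:cameron_martin_space} applies.

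\textbf{Step 2: identify the range of \(C_\rho^{1/2}\).} I will use the standard operator fact that \(AA^*=BB^*\) implies \(\operatorname{ran}A=\operatorname{ran}B\) with equal range norms. This follows from the polar decomposition \(A=(AA^*)^{1/2}U\) with \(U\) a partial isometry. Apply it with \(A=C_\rho^{1/2}\) and \(B=\sigma J\).

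Since \(\ker(\sigma J)^*=\{0\}\), the operator \(U\) is unitary onto \(L^2\). Therefore
\[
\operatorname{ran}(C_\rho^{1/2})=\operatorname{ran}(J)=\{h: h(t)=\textstyle\int_0^t k,\ k\in L^2\},
\]
which is exactly the set of absolutely continuous \(h\) with \(h(0)=0\) and \(h'=k\in L^2\). Moreover, if \(h=C_\rho^{1/2}a=\sigma J(U^*a)\), then
\[
\|h\|_{\mathcal{H}_\rho}=\|a\|=\|U^*a\|=\sigma^{-1}\|k\|=\sigma^{-1}\|h'\|_{L^2},
\]
which is \eqref{eq:appendix_wiener_cameron_martin_norm}.

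As a cross-check, there is an eigenbasis route. The eigenpairs are \(\lambda_j=\sigma^2((j-\tfrac12)\pi)^{-2}\) and \(e_j=\sqrt2\sin((j-\tfrac12)\pi t)\). The functions \(e_j'/\|e_j'\|=\sqrt2\cos((j-\tfrac12)\pi t)\) also form an orthonormal basis. Then \(\sum_j h_j^2/\lambda_j=\sigma^{-2}\sum_j\langle h',\sqrt2\cos(\cdot)\rangle^2=\sigma^{-2}\|h'\|^2\) by integration by parts, using \(h(0)=0\) and \(\cos((j-\tfrac12)\pi)=0\). This works because \(\mathcal{H}_\rho=\{h:\sum_j h_j^2/\lambda_j<\infty\}\), as in the proof of Proposition~\ref{prop:cameron_martin_geometry}.

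\textbf{Main obstacle.} The step needing care is the range equality \(\operatorname{ran}(C_\rho^{1/2})=\operatorname{ran}(J)\) with matching norms, since the square root is not explicit. I would justify it via the polar decomposition / Douglas lemma as above. If that is thought too abstract, I would fall back on the explicit eigenbasis computation, which requires verifying completeness of the cosine system \(\{\sqrt2\cos((j-\tfrac12)\pi t)\}\) in \(L^2([0,1])\). That system is the eigenbasis of the self-adjoint compact operator \(J^*J\), which has trivial kernel, so completeness holds.
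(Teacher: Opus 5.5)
Your proof is correct, and it takes a different route from the paper, which gives no argument of its own. The paper defers to the standard Wiener-measure computation in Da Prato and Zabczyk (Chapter~2) and only remarks that the $L^2([0,1])$ realisation has the same Cameron--Martin directions as path space.

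You instead work directly from Definition~\ref{def:cameron_martin_space} in the $L^2$ realisation. The factorisation $C_\rho=\sigma^2JJ^*$ through the Volterra operator $J$, combined with the Douglas range lemma via polar decomposition, does several things at once:
\begin{itemize}
\item it identifies $\operatorname{ran}(C_\rho^{1/2})$ with $\operatorname{ran}(J)$;
\item it identifies the norm $\|C_\rho^{-1/2}h\|$ with $\sigma^{-1}\|h'\|_{L^2}$;
\item it supplies the nondegeneracy $\ker C_\rho=\ker J^*=\{0\}$ that the definition presupposes.
\end{itemize}
None of this requires computing $C_\rho^{1/2}$ or its spectrum, and the path-space-to-$L^2$ transfer the paper appeals to becomes unnecessary. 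The citation buys brevity. Your argument buys a self-contained check in exactly the setting used in Example~\ref{ex:conditional_cameron_martin_translation}. Your sine/cosine cross-check also ties it to the series description $\sum_j h_j^2/\lambda_j<\infty$ from the proof of Proposition~\ref{prop:cameron_martin_geometry}.

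There are two small precision points.
\begin{itemize}
\item The steps $\sigma JU^*a=C_\rho^{1/2}a$ and $\|U^*a\|=\|a\|$ need only $UU^*=I$, which is what $\ker J^*=\{0\}$ gives. Full unitarity additionally uses the injectivity of $J$, which you proved in Step~1.
\item If the eigenbasis computation were meant as a stand-alone proof, it would also need the reverse inclusion. For example, set $k=\sum_j(j-\tfrac12)\pi\,h_j\sqrt2\cos((j-\tfrac12)\pi\,\cdot)$ and check that $Jk=h$.
\end{itemize}
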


See \textcite[Chapter~2]{da2014stochastic}. Although the measure is here viewed on \(L^2([0,1])\), the Cameron--Martin space consists of the same absolutely continuous directions as for Wiener measure on the path space.

\subsection{Malliavin calculus, Gaussian Sobolev spaces, and divergence}
\label{app:malliavin_calculus}

Because infinite-dimensional spaces lack a standard translation-invariant Lebesgue measure, differential operators cannot be defined globally in the ambient topology. Instead, derivatives are defined on a dense core of ``cylindrical'' functions that depend only on finitely many standardised Gaussian coordinates, and then close the derivatives in the appropriate Sobolev topology.

Let $q_j\coloneqq\lambda_j^{1/2}e_j$, $j\geq 1$, then \(\{q_{j}\}_{j\geq1}\) is an orthonormal basis of \(\mathcal{H}_\rho\), and the Gaussian linear functional associated with \(q_j\) satisfies
\begin{equation}
    \widehat{q_j}(y)=\lambda_j^{-1/2}
    \langle y,e_j\rangle_{\mathcal{Y}}.
    \label{eq:appendix_standardised_gaussian_coordinate}
\end{equation}

Definition~\ref{def:smooth_cylindrical_test_functions} introduced compactly supported, time-dependent cylindrical test functions for the ambient continuity equation. The analysis of Gaussian measures relies on this same finite-coordinate principle but requires a different core class of functions. Specifically, the test functions defined below are time-independent, constructed from bounded, smooth base functions, and expressed in the standardised coordinates determined by \(\rho\). We therefore distinguish the two classes notationally.

\begin{Definition}[Gaussian smooth cylindrical test function]
\label{def:smooth_gaussian_cylindrical_function}
A function \(\varphi:\mathcal{Y}\to\mathbb{R}\) is a Gaussian smooth cylindrical test function if there are indices \(j_1,\ldots,j_m\) and a function \(\psi\in C_b^\infty(\mathbb{R}^m)\) such that
\begin{equation}
    \varphi(y)=\psi\bigl(\widehat{q_{j_1}}(y),\ldots,
    \widehat{q_{j_m}}(y)\bigr).
    \label{eq:appendix_cylindrical_function}
\end{equation}
The collection of such functions is denoted by \(\operatorname{Cyl}_{\rho}^{\infty}(\mathcal{Y})\).
\end{Definition}

With this smooth, finite-dimensional core established, we can define a directional derivative. Because the cylindrical function behaves locally like a Euclidean function, its derivative is defined directly via the classical chain rule along the admissible directions of the Cameron–Martin space.

\begin{Definition}[Malliavin derivative]
\label{def:malliavin_derivative}
For \(\varphi\) as in \eqref{eq:appendix_cylindrical_function}, its Malliavin derivative (or Cameron--Martin gradient) is
\begin{equation}
    D_\rho\varphi(y)
    =\sum_{k=1}^m
    \partial_k\psi\bigl(\widehat{q_{j_1}}(y),\ldots,
    \widehat{q_{j_m}}(y)\bigr)q_{j_k}.
    \label{eq:appendix_malliavin_derivative}
\end{equation}
It is characterised by
\[
    \partial_h\varphi(y)
    =\langle D_\rho\varphi(y),h\rangle_{\mathcal{H}_\rho},
    \qquad h\in\mathcal{H}_\rho.
\]
A cylindrical \(\mathcal{H}_\rho\)-valued field has the form
\[
    b(y)=\sum_{k=1}^m\varphi_k(y)g_k,
    \qquad
    \varphi_k\in\operatorname{Cyl}_{\rho}^{\infty}(\mathcal{Y}),\quad
    g_k\in\mathcal{H}_\rho,
\]
and its Malliavin derivative is the finite-rank operator
\begin{equation}
    D_\rho b(y)[h]
    =\sum_{k=1}^m
    \langle D_\rho\varphi_k(y),h\rangle_{\mathcal{H}_\rho}g_k,
    \qquad h\in\mathcal{H}_\rho.
    \label{eq:appendix_vector_malliavin_derivative}
\end{equation}
\end{Definition}

Cylindrical functions form only a dense subset of the function spaces we typically study. To extend this derivative to a broader, complete class of functions, the differential operator must behave well under limits, a technical property known as closability. This closability is underpinned by a Gaussian integration-by-parts formula.

\begin{Proposition}[Closability and Gaussian integration by parts]
\label{prop:gaussian_derivative_closability}
For every \(1<q<\infty\), the operator \(D_\rho\) on Gaussian smooth cylindrical test functions and cylindrical \(\mathcal{H}_\rho\)-valued fields is closable in the corresponding \(L^q(\rho)\) spaces. Moreover, for \(h\in\mathcal{H}_\rho\) and \(\varphi\in\operatorname{Cyl}_{\rho}^{\infty}(\mathcal{Y})\),
\begin{equation}
    \int_{\mathcal{Y}}
    \langle D_\rho\varphi(y),h\rangle_{\mathcal{H}_\rho}\,\rho(dy)
    =\int_{\mathcal{Y}}\varphi(y)\widehat h(y)\,\rho(dy).
    \label{eq:appendix_gaussian_integration_by_parts}
\end{equation}
\end{Proposition}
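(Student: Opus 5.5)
The statement to prove is Proposition~\ref{prop:gaussian_derivative_closability}. I would prove the integration-by-parts formula \eqref{eq:appendix_gaussian_integration_by_parts} first, and then deduce closability from it by the usual duality argument.

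\textbf{Step 1: finite-dimensional integration by parts.} Take $\varphi=\psi(\widehat{q_{j_1}},\ldots,\widehat{q_{j_m}})$ and first suppose $h=q_j$ for a single index $j$. If $j\notin\{j_1,\ldots,j_m\}$, then $\langle D_\rho\varphi,q_j\rangle_{\mathcal{H}_\rho}=0$ by \eqref{eq:appendix_malliavin_derivative}. On the other side, $\widehat{q_j}$ is a standard Gaussian independent of $(\widehat{q_{j_1}},\ldots,\widehat{q_{j_m}})$, so $\mathbb{E}_\rho[\varphi\,\widehat{q_j}]=0$ and both sides vanish. If $j=j_k$, the law of $(\widehat{q_{j_1}},\ldots,\widehat{q_{j_m}})$ is the standard Gaussian on $\mathbb{R}^m$. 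The identity then reduces to
\[
\int_{\mathbb{R}^m}\partial_k\psi(\xi)\,\gamma_m(d\xi)=\int_{\mathbb{R}^m}\psi(\xi)\,\xi_k\,\gamma_m(d\xi),
\]
which follows from one-dimensional integration by parts against $e^{-\xi_k^2/2}$; the boundary terms vanish because $\psi$ is bounded. By linearity the formula holds for every finite combination $h=\sum_{j\le N}c_jq_j$.

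\textbf{Step 2: extension to general $h\in\mathcal{H}_\rho$.} For general $h$, approximate by its truncations $h_N\to h$ in $\mathcal{H}_\rho$. The left-hand side converges because $D_\rho\varphi$ is bounded in $\mathcal{H}_\rho$. The right-hand side converges because $h\mapsto\widehat h$ is an isometry into $L^2(\rho)$ (Definition~\ref{def:gaussian_linear_functional}) and $\varphi$ is bounded.

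\textbf{Step 3: a product rule giving a divergence formula.} Applying Steps~1--2 to $\varphi\phi$ and using the product rule for $D_\rho$ on cylindrical functions gives
\[
\mathbb{E}[\langle D_\rho\varphi,h\rangle\phi]=\mathbb{E}\bigl[\varphi\bigl(\phi\widehat h-\langle D_\rho\phi,h\rangle\bigr)\bigr].
\]
For a cylindrical field $G=\sum_k\phi_kg_k$ this yields $\mathbb{E}[\langle D_\rho\varphi,G\rangle_{\mathcal{H}_\rho}]=\mathbb{E}[\varphi\,\delta(G)]$ with $\delta(G)=\sum_k(\phi_k\widehat{g_k}-\langle D_\rho\phi_k,g_k\rangle)$. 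Here $\delta(G)\in L^{s}(\rho)$ for every $s<\infty$, since $\widehat{g_k}$ is Gaussian.

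\textbf{Step 4: closability.} Suppose $\varphi_n\to0$ in $L^q(\rho)$ and $D_\rho\varphi_n\to\Phi$ in $L^q(\rho;\mathcal{H}_\rho)$. Pairing with any cylindrical $G$ gives
\[
\mathbb{E}\langle\Phi,G\rangle=\lim_n\mathbb{E}[\varphi_n\delta(G)]=0,
\]
using Hölder with $\delta(G)\in L^{q'}$. Cylindrical fields are dense in $L^{q'}(\rho;\mathcal{H}_\rho)$, because cylindrical functions are dense in $L^{q'}(\rho)$ (they generate $\mathcal{B}(\mathcal{Y})$ via the coordinates $\widehat{q_j}$) and $\mathcal{H}_\rho$ is separable. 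Hence $\Phi=0$.

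For $\mathcal{H}_\rho$-valued fields the same argument applies componentwise. One pairs $D_\rho b_n$ with $G\otimes g$ and uses density of finite-rank cylindrical operators in $L^{q'}(\rho;\mathcal{S}_2(\mathcal{H}_\rho))$.

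\textbf{Main obstacle.} I expect the density claim for cylindrical functions and fields in $L^{q'}$ to need the most care. I would handle it with a monotone class argument together with martingale convergence along the filtration $\sigma(\widehat{q_1},\ldots,\widehat{q_N})$, which increases to $\mathcal{B}(\mathcal{Y})$ modulo $\rho$-null sets.
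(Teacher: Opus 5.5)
Your proof is correct. The paper gives no argument for this proposition and simply refers to Section~2 of Ambrosio--Figalli, so the comparison is between a citation and your self-contained proof.

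Your route is the standard textbook one:
\begin{itemize}
\item one-dimensional Gaussian integration by parts in the independent standard coordinates $\widehat{q_j}$;
\item extension to all $h\in\mathcal{H}_\rho$ via the isometry $h\mapsto\widehat h$;
\item the product rule, which yields the explicit adjoint $\delta(G)=\sum_k(\phi_k\widehat{g_k}-\langle D_\rho\phi_k,g_k\rangle_{\mathcal{H}_\rho})$ on cylindrical fields;
\item duality against this adjoint, together with density of cylindrical functions in $L^{q'}(\rho)$.
\end{itemize}
The citation keeps the appendix short. However, it leaves the reader to transfer a statement formulated on abstract Wiener spaces to the Hilbert realisation and coordinate normalisation used here. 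Your argument works directly in the paper's coordinates. It also shows that every cylindrical field lies in the domain of $\operatorname{div}_\rho$ with $\operatorname{div}_\rho G=-\delta(G)$; this recovers \eqref{eq:appendix_constant_gaussian_divergence} as the case $G=h$ and extends it to all cylindrical fields. Because $\delta(G)$ has all moments but is unbounded, your Hölder step gives closability for exactly the range $1<q<\infty$ claimed.

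Three points deserve a line each in a written version.
\begin{itemize}
\item In Step~1 you may assume without loss of generality that the indices $j_1,\ldots,j_m$ are distinct.
\item The product rule in Step~3 needs $D_\rho\varphi$ to be independent of the chosen representation of $\varphi$. This follows from the directional-derivative characterisation, since $\widehat{q_j}(y+\epsilon h)=\widehat{q_j}(y)+\epsilon\langle h,q_j\rangle_{\mathcal{H}_\rho}$ for $h\in\mathcal{H}_\rho$.
\item The field case is cleanest if you use the identity $D_\rho\langle b,g\rangle_{\mathcal{H}_\rho}=(D_\rho b)^*g$ for cylindrical $b$. If $b_n\to0$ in $L^q(\rho;\mathcal{H}_\rho)$ and $D_\rho b_n\to A$ in $L^q(\rho;\mathcal{S}_2(\mathcal{H}_\rho))$, the scalar case gives $A^*g=0$ a.e.\ for every $g$ in a countable dense subset of $\mathcal{H}_\rho$, hence $A=0$. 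This avoids any density statement in $L^{q'}(\rho;\mathcal{S}_2(\mathcal{H}_\rho))$.
\end{itemize}
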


See \textcite[Section~2]{AmbrosioFigalli2009}. Closability makes the following definition independent of the chosen cylindrical approximation.

\begin{Definition}[Gaussian Sobolev space]
\label{def:gaussian_sobolev_space}
For \(1<q<\infty\), the Gaussian Sobolev space
\(W_\rho^{1,q}(\rho;\mathcal{H}_\rho)\) is the domain of the closure of \(D_\rho\) on \(L^q(\rho;\mathcal{H}_\rho)\), equipped with the norm
\begin{equation}
    \|b\|_{W_\rho^{1,q}(\rho;\mathcal{H}_\rho)}
    =\left(
    \|b\|_{L^q(\rho;\mathcal{H}_\rho)}^q
    +\|D_\rho b\|_{L^q(\rho;\mathcal{S}_{2}(\mathcal{H}_\rho))}^q
    \right)^{1/q}.
    \label{eq:appendix_gaussian_sobolev_norm}
\end{equation}
\end{Definition}

The derivative \(D_\rho b\) records weak variation along Cameron--Martin directions. Since $\rho(\mathcal{H}_{\rho})=0$, membership in this Gaussian Sobolev space does not require or imply continuity in the ambient \(\mathcal{Y}\)-topology.

\begin{Definition}[Gaussian divergence]
\label{def:gaussian_divergence}
An \(\mathcal{H}_\rho\)-valued field \(b\in L^1(\rho;\mathcal{H}_\rho)\) belongs to the domain of the Gaussian divergence if there is a function \(g\in L^1(\rho)\) such that
\begin{equation}
    \int_{\mathcal{Y}}
    \langle D_\rho\varphi(y),b(y)\rangle_{\mathcal{H}_\rho}\,\rho(dy)
    =-\int_{\mathcal{Y}}\varphi(y)g(y)\,\rho(dy)
    \label{eq:appendix_gaussian_divergence}
\end{equation}
for every \(\varphi\in\operatorname{Cyl}_{\rho}^{\infty}(\mathcal{Y})\). The function \(g\), which is unique in \(L^1(\rho)\), is denoted by \(\operatorname{div}_\rho b\). With this negative-adjoint convention, \(\operatorname{div}_\rho b\) is the negative of the usual Skorokhod divergence \(\delta(b)\), also called the Skorokhod integral.
\end{Definition}

As in classical vector calculus, the divergence is the negative adjoint of the gradient. Combining Definitions~\ref{def:gaussian_linear_functional} and~\ref{def:gaussian_divergence} with \eqref{eq:appendix_gaussian_integration_by_parts} shows that a constant Cameron--Martin field satisfies
\begin{equation}
    \operatorname{div}_\rho h=-\widehat h,
    \qquad h\in\mathcal{H}_\rho.
\label{eq:appendix_constant_gaussian_divergence}
\end{equation}

\subsection{Compression and the Gaussian continuity equation}
\label{app:gaussian_continuity}

The functional identification argument must compare, for covariate values having the same value of a proposed reduction, several conditional Monge interpolations driven by the same candidate velocity. Since there is no infinite-dimensional analogue of Lebesgue measure, these evolving measures are compared with the fixed Gaussian source \(\rho\). Compression provides the required control: it represents each interpolation measure by a density relative to \(\rho\) and bounds that density in the space in which uniqueness of the continuity equation holds. The bound also supplies the H\"older integrability needed to pair a density with a Sobolev velocity.

\begin{Definition}[\(L^r\) compression relative to \(\rho\)]
\label{def:gaussian_compression}
Let \((\mu_t)_{0<t<T}\) be probability measures on \(\mathcal{Y}\), and let \(1<r<\infty\). The family of measures has \emph{\(L^r\) compression relative to \(\rho\)} on \((0,T)\) if
\begin{equation}
    \mu_t=f_t\rho\quad\text{for a.e. }t,
    \qquad
    f\in L^\infty\bigl((0,T);L^r(\rho)\bigr).
    \label{eq:appendix_gaussian_compression}
\end{equation}
\end{Definition}
This is a one-sided condition: it prevents excessive concentration relative to \(\rho\), but does not require \(\rho\ll\mu_t\) and therefore does not assert quasi-invariance.\footnote{A flow is quasi-invariant if the pushed measure $\mu_t$ and the original measure $\rho$ are mutually absolutely continuous.} In Section~\ref{sec:functional_linear_consistency}, Assumption~\ref{ass:gaussian_interpolation_compression} imposes precisely this condition on each truncated conditional Monge interpolation.

Once \(\mu_t=f_t\rho\), its evolution under a Cameron--Martin-valued velocity can be expressed as an equation for \(f_t\) relative to the fixed reference measure. The Malliavin derivative supplies the appropriate directional gradient, while \(\operatorname{div}_\rho\) incorporates the Gaussian reference measure into the divergence. The following weak formulation requires neither pointwise differentiability of \(f_t\) nor ambient continuity of the velocity.

\begin{Definition}[Weak Gaussian continuity equation]
\label{def:weak_gaussian_continuity_equation}
Let \(b:(0,T)\times\mathcal{Y}\to\mathcal{H}_\rho\) be Borel. A measurable density \(f:(0,T)\times\mathcal{Y}\to\mathbb{R}\) is a \emph{weak solution} of
\begin{equation}
    \partial_t f_t+\operatorname{div}_\rho(b_tf_t)=0
    \label{eq:appendix_gaussian_continuity_equation}
\end{equation}
if \(f\), \(f\|b\|_{\mathcal{H}_\rho}\in L^1((0,T)\times\mathcal{Y};dt\otimes\rho)\) and, for every \(\varphi\in\operatorname{Cyl}_{\rho}^{\infty}(\mathcal{Y})\), the map
\[
    t\mapsto\int_{\mathcal{Y}}\varphi(y)f_t(y)\,\rho(dy)
\]
has an absolutely continuous representative satisfying
\begin{equation}
    \frac{d}{dt}\int_{\mathcal{Y}}\varphi f_t\,d\rho
    =\int_{\mathcal{Y}}
    \langle D_\rho\varphi,b_t\rangle_{\mathcal{H}_\rho}f_t\,d\rho
    \quad\text{for a.e. }t.
\label{eq:appendix_weak_gaussian_continuity_equation}
\end{equation}
It has initial density \(f_0\in L^1(\rho)\) if
\begin{equation}
    \lim_{t\downarrow0}\int_{\mathcal{Y}}\varphi f_t\,d\rho
    =\int_{\mathcal{Y}}\varphi f_0\,d\rho
    \qquad
    \text{for every }\varphi\in\operatorname{Cyl}_{\rho}^{\infty}(\mathcal{Y}).
    \label{eq:appendix_gaussian_initial_condition}
\end{equation}
\end{Definition}

This formulation isolates the role of uniqueness. If two compressed interpolation families start from \(\rho\) and solve the same equation with the same velocity, uniqueness forces their densities, and hence their measures, to agree. This is the step that converts a common fitted velocity into equality of the conditional response laws in Lemma~\ref{lem:gaussian_sobolev_zero_loss}.

The next proposition is the uniqueness part of \textcite[Theorem~3.1]{AmbrosioFigalli2009}, restated in the notation used here and specialised from an abstract Wiener space to the Hilbert realisation \((\mathcal{Y},\mathcal{H}_\rho,\rho)\). Our assumptions are slightly stronger because they control the full Malliavin derivative \(D_\rho b_t\), whereas Ambrosio and Figalli require only its symmetric part. We record the specialised statement so that the exponents and regularity conditions used in Section~\ref{sec:functional_linear_consistency} can be checked directly.

\begin{Proposition}[Ambrosio--Figalli uniqueness criterion]
\label{prop:gaussian_continuity_equation_uniqueness}
Let \(p,q>1\), let \(p'=p/(p-1)\) and \(q'=q/(q-1)\), and set \(r=p'\vee q'\). Suppose that the Borel field \(b:(0,T)\times\mathcal{Y}\to\mathcal{H}_\rho\) satisfies
\(b_t\in W_\rho^{1,q}(\rho;\mathcal{H}_\rho)\) for a.e. \(t\) and
\begin{align}
    &\int^{T}_{0}\Vert b_{t}\Vert_{L^{p}(\rho;\mathcal{H}_{\rho})}\,dt<\infty,
    \label{eq:appendix_af_velocity_integrability}\\
    &\int^{T}_{0}\left(\Vert D_{\rho}b_{t}\Vert_{L^{q}(\rho;\mathcal{S}_{2}(\mathcal{H}_{\rho}))}+\Vert\operatorname{div}_{\rho}b_{t}\Vert_{L^{q}(\rho)}\right)dt<\infty,
    \label{eq:appendix_af_derivative_integrability}\\
    &\mathop{\operatorname{ess\,sup}}_{t\in(0,T)}\int_{\mathcal{Y}}\exp\left(c[\operatorname{div}_{\rho}b_{t}(y)]^{-}\right)\rho(dy)<\infty
    \label{eq:appendix_af_exponential_divergence}
\end{align}
for some \(c> rT\), where \([a]^-\coloneqq\max\{-a,0\}\). Then \eqref{eq:appendix_gaussian_continuity_equation} has at most one weak solution in
\[
    L^\infty\bigl((0,T);L^r(\rho)\bigr)
\]
for each prescribed initial density \(f_0\in L^r(\rho)\).
\end{Proposition}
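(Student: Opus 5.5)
The statement is the uniqueness part of \textcite[Theorem~3.1]{AmbrosioFigalli2009}. The plan is therefore to reduce Proposition~\ref{prop:gaussian_continuity_equation_uniqueness} to that theorem rather than reprove it. I would argue by linearity: if \(f^1,f^2\in L^\infty((0,T);L^r(\rho))\) are two weak solutions with the same initial density \(f_0\), then \(w\coloneqq f^1-f^2\) is a (signed) weak solution in the same class with zero initial density. It then suffices to show \(w=0\).

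The first step is to check that the objects here coincide with those in Ambrosio--Figalli.
\begin{enumerate}[label=\roman*)]
\item Identify \((\mathcal{Y},\mathcal{H}_\rho,\rho)\) with an abstract Wiener space, using Definition~\ref{def:abstract_wiener_space} and Proposition~\ref{prop:cameron_martin_geometry}.
\item Check that \(\operatorname{Cyl}_{\rho}^{\infty}(\mathcal{Y})\), built from the coordinates \(\widehat{q_j}\) in \eqref{eq:appendix_standardised_gaussian_coordinate}, is their cylindrical class. These coordinates are continuous linear functionals by \eqref{eq:appendix_standardised_gaussian_coordinate}.
\item Check that \(D_\rho\), the Sobolev space of Definition~\ref{def:gaussian_sobolev_space}, and \(\operatorname{div}_\rho\) match theirs, up to the sign convention noted in Definition~\ref{def:gaussian_divergence}.
\end{enumerate}
Their hypotheses then follow from ours. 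Control of the full derivative \(D_\rho b_t\) in \(L^q(\rho;\mathcal{S}_2)\) dominates control of its symmetric part, since \(\|(A+A^*)/2\|_{\mathcal{S}_2}\le\|A\|_{\mathcal{S}_2}\). Conditions \eqref{eq:appendix_af_velocity_integrability}--\eqref{eq:appendix_af_exponential_divergence} give the remaining integrability and exponential-divergence requirements, with threshold \(c>rT\).

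If a self-contained sketch is wanted instead, I would follow their duality argument:
\begin{enumerate}[label=\roman*)]
\item Project onto finitely many Gaussian coordinates via the conditional expectations \(\mathbb{E}_m\) onto \(\sigma(\widehat{q_1},\dots,\widehat{q_m})\). This gives finite-dimensional equations \(\partial_t w^m+\operatorname{div}_{\gamma_m}(b^m_t w^m)=R^m\), where \(b^m=\mathbb{E}_m\pi_m b\).
\item Solve the adjoint transport equation, or equivalently run the regularised flow of \(b^m\) backwards, to obtain test functions \(\varphi^m\). Their \(L^{r'}\) bounds come from the Jacobian estimate for the flow, controlled by \(\int\exp(c[\operatorname{div}_\rho b_t]^-)\,d\rho\). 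This is where the condition \(c>rT\) enters, through Hölder's inequality and Jensen's inequality over \((0,T)\).
\item Pair \(w\) with \(\varphi^m\). The commutator \(\operatorname{div}(b^m w^m)-\mathbb{E}_m\operatorname{div}(bw)\) tends to zero in \(L^1\) by a DiPerna--Lions type commutator lemma in the Gaussian setting. This needs \(b_t\in W^{1,q}_\rho\), \(\operatorname{div}_\rho b_t\in L^q\) and \(w\in L^r\) with \(r\ge q'\), while the flux \(w\,b\) is integrable because \(r\ge p'\).
\item Conclude \(\int\varphi\, w_t\,d\rho=0\) for all cylindrical \(\varphi\), hence \(w_t=0\).
\end{enumerate}

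The main obstacle is the commutator estimate, which must hold uniformly in the dimension \(m\). It relies on the Gaussian Mehler-type smoothing and on the fact that only the symmetric part of \(D_\rho b\) enters. I would import this estimate directly from \textcite[Section~3]{AmbrosioFigalli2009} rather than re-derive it. Beyond that, the proof is bookkeeping: verifying the exponent relations \(r=p'\vee q'\) and confirming that our weak formulation in Definition~\ref{def:weak_gaussian_continuity_equation}, including the initial condition \eqref{eq:appendix_gaussian_initial_condition}, is the one used in their uniqueness statement.
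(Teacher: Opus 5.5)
Your proposal is correct and follows the paper's proof. Both arguments identify $(\mathcal{Y},\mathcal{H}_\rho,\rho)$ as an abstract Wiener space with Definition~\ref{def:weak_gaussian_continuity_equation} as the matching weak formulation, use $\|(D_\rho b_t)^{\mathrm{sym}}\|_{\mathcal{S}_{2}(\mathcal{H}_\rho)}\le\|D_\rho b_t\|_{\mathcal{S}_{2}(\mathcal{H}_\rho)}$ to meet the symmetric-derivative hypothesis, and then invoke the uniqueness part of Ambrosio--Figalli's Theorem~3.1 with $c>rT$; the paper does not need your reduction to $w=f^1-f^2$ or your optional self-contained sketch, which are harmless extras.
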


\begin{proof}
The triple \((\mathcal{Y},\mathcal{H}_\rho,\rho)\) is an abstract Wiener space by Definition~\ref{def:abstract_wiener_space}, and Definition~\ref{def:weak_gaussian_continuity_equation} is the corresponding weak formulation of the continuity equation. For a.e. \(t\), membership of \(b_t\) in \(W_\rho^{1,q}(\rho;\mathcal{H}_\rho)\) implies that its symmetric weak Malliavin derivative\footnote{The symmetric part of the Malliavin derivative is defined as $(D_{\rho}b)^{\mathrm{sym}}\coloneqq\frac{1}{2}((D_{\rho}b)+(D_{\rho}b)^{*})$.} is Hilbert--Schmidt and satisfies
\[
    \|(D_\rho b_t)^{\mathrm{sym}}\|_{\mathcal{S}_{2}(\mathcal{H}_\rho)}
    \leq\|D_\rho b_t\|_{\mathcal{S}_{2}(\mathcal{H}_\rho)}.
\]
Consequently, \eqref{eq:appendix_af_velocity_integrability}--\eqref{eq:appendix_af_exponential_divergence} imply all the hypotheses of the uniqueness part of \textcite[Theorem~3.1]{AmbrosioFigalli2009}; the threshold there is exactly \(c\geq rT\). Applying that theorem gives the stated uniqueness in \(L^\infty((0,T);L^r(\rho))\).
\end{proof}

To see how the pieces enter the main proof, fix a proposed reduced value \(s\). The zero-loss identity makes every relevant conditional interpolation solve the weak equation above with the same field \(b_t=u(t,s,\cdot)\) and initial density one. Compression places all of their densities in the uniqueness class, so the proposition implies that the interpolation measures are identical for covariate values in the same fibre of \(B^*X\). The terminal conditional laws can then be reconstructed from a common intermediate measure and the common velocity. Thus the proposition supplies the analytic identification step; existence of the interpolations themselves comes from the conditional Monge construction developed earlier in the paper.